\definecolor{darkblue}{rgb}{0,0,0.3}
\definecolor{darkgreen}{rgb}{0,0.4,0}
\setlist[enumerate]{label={\upshape(\arabic*)},partopsep=.5ex,leftmargin=*}
\setlist[itemize]{leftmargin=*}
\theoremstyle{plain}
\newtheorem{thm}{Theorem}[section]
\newtheorem{quests}[thm]{Questions}
\newtheorem{quest}[thm]{Question}
\newtheorem{lem}[thm]{Lemma}
\newtheorem{cor}[thm]{Corollary}
\newtheorem{prop}[thm]{Proposition}
\theoremstyle{definition}
\newtheorem{rmk}[thm]{Remark}
\newtheorem{rmks}[thm]{Remarks}
\numberwithin{equation}{section}
\theoremstyle{remark}
\newtheorem{Case}{Case}
\def\clap#1{\hbox to 0pt{\hss#1\hss}}
\def\mathrlap{\mathpalette\mathrlapinternal}
\def\mathrlapinternal#1#2{%
\rlap{$\mathsurround=0pt#1{#2}$}}
\DeclareFontFamily{U}{russian}{}
\DeclareFontShape{U}{russian}{m}{n}
        { <5><6> wncyr5
        <7><8><9> wncyr7
        <10><10.95><12><14.4><17.28><20.74><24.88> wncyr10 }{}
\DeclareSymbolFont{Russian}{U}{russian}{m}{n}
\DeclareSymbolFontAlphabet{\mathcyr}{Russian}
\let\@math@cyr\mathcyr
\renewcommand{\mathcyr}[1]{\@math@cyr{\cyracc #1}}
\newcommand{\vinbas}{\text{\rotatebox[origin=c]{-90}{$\in$}}}
\newcommand{\vinhaut}{\text{\rotatebox[origin=c]{90}{$\in$}}}
\newcommand{\congru}[3]{#1 \equiv #2 \!\!\mod #3}
\newcommand{\isoto}{\myxrightarrow{\,\sim\,}}
\def\myrightarrow{{\setbox\z@\hbox{$\rightarrow$}\dimen0\ht\z@\multiply\dimen0 6\divide\dimen0 10\ht\z@\dimen0\box\z@}}
\def\myrightarrowfill@{\arrowfill@\relbar\relbar\myrightarrow}
\newcommand{\myxrightarrow}[2][]{\ext@arrow 0359\myrightarrowfill@{#1}{#2}}
\newcommand{\et}{{\mathrm{\acute et}}}
\newcommand{\oc}{{\bar c}}
\newcommand{\ox}{{\bar x}}
\newcommand{\oy}{{\bar y}}
\newcommand{\oz}{{\bar z}}
\newcommand{\oo}{{\bar \omega}}
\newcommand{\dA}{{\widetilde{\mathcal A}}}
\newcommand{\cA}{{\mathcal A}}
\newcommand{\sC}{{\mathscr C}}
\newcommand{\sE}{{\mathscr E}}
\newcommand{\sF}{{\mathscr F}}
\newcommand{\sH}{{\mathscr H}}
\newcommand{\sK}{{\mathscr K}}
\newcommand{\sO}{{\mathscr O}}
\newcommand{\sQ}{{\mathscr Q}}
\newcommand{\tX}{{\widetilde X}}
\newcommand{\torsion}{{\mathrm{tors}}}
\renewcommand{\L}{{\mathbf L}}
\renewcommand{\P}{{\mathbf P}}
\newcommand{\Q}{{\mathbf Q}}
\newcommand{\R}{{\mathbf R}}
\newcommand{\C}{{\mathbf C}}
\newcommand{\bS}{{\mathbf S}}
\newcommand{\Z}{{\mathbf Z}}
\newcommand{\CH}{\mathrm{CH}}
\newcommand{\nr}{\mathrm{nr}}
\newcommand{\Tor}{\mathrm{Tor}}
\newcommand{\Gal}{\mathrm{Gal}}
\newcommand{\GL}{\mathrm{GL}}
\newcommand{\cl}{\mathrm{cl}}
\newcommand{\rk}{\mathrm{rk}}
\renewcommand{\deg}{\mathrm{deg}}
\newcommand{\Pic}{\mathrm{Pic}}
\newcommand{\Br}{\mathrm{Br}}
\newcommand{\Spec}{\mathrm{Spec}}
\renewcommand{\phi}{\varphi}
\renewcommand{\emptyset}{\varnothing}
\newcommand{\RR}{{\mathrm{R}}}
\newcommand{\OO}{{\mathrm{O}}}
\newcommand{\U}{{\mathrm{U}}}
\newcommand{\BO}{{\mathrm{BO}}}
\newcommand{\MU}{{\mathrm{MU}}}
\newcommand{\BU}{{\mathrm{BU}}}
\newcommand{\KR}{{\mathrm{KR}}}
\newcommand{\KO}{{\mathrm{KO}}}
\newcommand{\KT}{{\mathrm{K3}}}
\newcommand{\ind}{{\mathrm{ind}}}
\newcommand{\alg}{{\mathrm{alg}}}
\newcommand{\ac}{{\mathrm{ac}}}
\newcommand{\an}{{\mathrm{an}}}
\renewcommand{\top}{{\mathrm{top}}}
\newcommand{\Gr}{{\mathrm{Gr}}}
\newcommand{\coind}{{\mathrm{coind}}}
\newcommand{\pt}{{\mathrm{pt}}}
\newcommand{\Sq}{{\mathrm{Sq}}}
\newcommand{\Ker}{{\mathrm{Ker}}}
\newcommand{\Hom}{{\mathrm{Hom}}}
\newcommand{\Ext}{{\mathrm{Ext}}}
\newcommand{\RHom}{R\mathscr{H}\mkern-4muom}
\newcommand{\Homrond}{\mathscr{H}\mkern-4muom}
\newcommand{\ci}{\mathcal{C}^{\infty}}
\newcommand{\Nr}{\mathrm{N}}
\newcommand{\Id}{\mathrm{Id}}
\def\loccit{\emph{loc}.\kern3pt \emph{cit}.{}\xspace}
\def\eg{e.g.\kern.3em}
\def\ie{i.e.\ }
\def\resp {\text{resp.}\ }
 \renewcommand{\tocsection}[3]{%
   \indentlabel{\@ifnotempty{#2}{\bfseries\ignorespaces#1 #2\quad}}\bfseries#3}
 \renewcommand{\tocsubsection}[3]{%
   \indentlabel{\@ifnotempty{#2}{\hspace{1.6em}\ignorespaces#1 #2\quad}}#3}
\let\@wraptoccontribs\wraptoccontribs\makeatother
\date{June 10th, 2024; revised on April 30th, 2026}
\title{The Wu relations in real algebraic geometry}
\author{Olivier Benoist}
\address{D\'epartement de math\'ematiques et applications, \'Ecole normale sup\'erieure et CNRS, 45~rue d'Ulm, 75230 Paris Cedex 05, France}
\email{olivier.benoist@ens.fr}
\author{Olivier Wittenberg}
\address{Institut Galil\'ee, Universit\'e Sorbonne Paris Nord, 99~avenue Jean-Baptiste Cl\'ement, 93430 Villetaneuse, France}
\email{wittenberg@math.univ-paris13.fr}
\begin{document}

\begin{abstract}
We construct and study relations between Chern classes and Galois cohomology classes in the $\Gal(\C/\R)$-equivariant cohomology of real algebraic varieties with no real points. We give applications to the topology of their sets of complex points, and to sums of squares problems. In particular, we show that $-1$ is a sum of $2$ squares in the function field of any smooth projective real algebraic surface with no real points and with vanishing geometric genus, as well as higher-dimensional generalizations of this result.
\end{abstract}

\maketitle

\section{Introduction}
\label{sec:intro}

The aim of this article is to show that tools from classical algebraic topology---namely,
universal relations between characteristic classes of manifolds, via Wu classes and, more generally,
via right
actions of the Steenrod algebra---can be used in an equivariant stably complex
setting (see \S\ref{parintrorelations}) to derive new results in real algebraic geometry.
These concern the topology of real algebraic varieties (see \S\ref{parintrotopology}) and
sums of squares problems (see~\S\ref{parintrosquares}).
In the latter direction, we  answer
an old question of van Hamel
about levels of function fields of real algebraic surfaces,
improve Pfister's bound on the level of function fields of higher-dimensional real algebraic varieties
in the case of geometrically uniruled threefolds and fourfolds,
and obtain the best known quantitative results about Hilbert's 17th problem on sums of squares
in $\R(x_1,\dots,x_n)$ in the case of polynomials of degree~$d\leq n$.

\subsection{Relations between tautological classes}
\label{parintrorelations}

Let~$X$ be a smooth algebraic variety over~$\R$ such that $X(\R)=\emptyset$. Set $n=\dim(X)$.
Let~$\Z(j)$ denote the abelian group~$\Z$ endowed with
the action of $G=\Gal(\C/\R)\simeq\Z/2$
by multiplication by~$(-1)^j$,
and~$X(\C)$ the space of complex points of $X$, endowed with the natural action of~$G$.

\subsubsection{Tautological classes}

We are interested in tautological classes that live in
the Borel $G$\nobreakdash-equivariant cohomology groups
$H^i_G(X(\C),\Z(j))$ (see~\S\ref{notation}).  These come in two sorts.

A first source of tautological classes is the cohomology of the group~$G$
itself: one can consider the image $\omega \in H^1_G(X(\C),\Z(1))$ of the
generator of $H^1(G,\Z(1))\simeq \Z/2$ by the pull-back morphism
$H^1(G,\Z(1))=H^1_G(\pt,\Z(1))\to H^1_G(X(\C),\Z(1))$, as well as its
powers $\omega^i \in H^i_G(X(\C),\Z(i))$ with respect to cup product, for
$i\geq 0$.

Tautological classes also arise from a second source; namely, the tangent
bundle of~$X(\C)$ has $G$\nobreakdash-equivariant Chern classes
$c_i\in H^{2i}_G(X(\C),\Z(i))$ for $i \in \{1,\dots,\dim(X)\}$.  These
were introduced by
Kahn~\cite{kahnchern}.

\subsubsection{The Wu relations}

Determining which of the~$\omega^i$ vanish is a delicate problem.
The following theorem, which we establish
in~\textsection\ref{seccomputations},
addresses this question for surfaces and serves as a first step toward the
substantially more general results discussed below.
It answers a question raised by Krasnov in \mbox{\cite[p.~758]{Krasnov}},
and  has applications both to the topology of real algebraic surfaces
and to sums of squares problems (see
Theorem~\ref{coindexsurfacesintro} and \textsection\ref{subsubsec:levelsurf}).
 
\begin{thm}[Corollary \ref{cor:omega3}]
\label{omega3intro}
Let $X$ be a smooth algebraic surface over~$\R$.
If $X(\R)=\varnothing$, then $\omega^3=0$ in $H^3_G(X(\C),\Z(3))$.
\end{thm}

We refer to Remarks \ref{remsomega3} for more context and for a short direct proof of Theorem \ref{omega3intro}.

The main thrust of the present article is the observation that the relation
$\omega^3=0$ given by Theorem \ref{omega3intro} when $n=2$ fits, in higher
dimensions, into a maze of polynomial relations between $\omega$ and the
$c_i$. To give one example, the identity
\begin{equation}
\label{relationintro}
\omega^5=\omega(c_1^2+c_2)\,\,\textrm{ in }\,\,H^5_G(X(\C),\Z(5))
\end{equation}
always holds when $n=4$ (see Remark \ref{evendimrmks} (iii)). We refer to Proposition \ref{lowrelations} for more examples of such polynomial relations (in equivariant cohomology with~$\Z/2$ coefficients, or, equivalently,
between the reductions modulo~$2$ of~$\omega$ and of the~$c_i$) in low dimensions.

That such relations exist, at least modulo~$2$, can be seen as follows.  It was discovered by Wu \cite{Wu} that some polynomials in the Stiefel--Whitney classes of the tangent bundle vanish for all compact~$\ci$ manifolds of a fixed dimension. These relations were refined and eventually fully understood in successive works of Dold \cite{Dold}, Massey \cite{Massey} and Brown and Peterson \cite{BP, BP2}---as it turns out, they can always be explained by an analysis of the interaction between Poincar\'e duality and Steenrod operations.  Writing out the Stiefel\nobreakdash--Whitney classes of the tangent bundle of
the compact~$\ci$ manifold
$X(\C)/G$ as functions of the reductions modulo $2$ of $\omega$ and of the~$c_i$ (see Proposition~\ref{propequivw}
and Remark~\ref{rk:SetS/G}~(i)) then yields relations of the desired form, at least when~$X$ is proper.

One can further exploit the complex structure of $X(\C)$ and its compatibility with the $G$-action to produce additional relations, much in the same way that Massey \cite{Massey} and Brown and Peterson \cite[Theorem~4.3]{BP} construct additional relations between Stiefel\nobreakdash--Whitney classes for manifolds that are assumed to be orientable. This refinement is already needed to prove that $\omega^3=0$ when $n=2$,
as
 the latter vanishing  does \emph{not}
follow from the universal polynomial
relations satisfied by Stiefel--Whitney
classes of (orientable) compact~$\ci$ fourfolds applied to $X(\C)/G$
(see Remark~\ref{rk:4.1versusBP} below).

The fundamental role played by the complex structure of~$X(\C)$
in all of the theorems stated in~\textsection\ref{sec:intro}
encouraged us to cast the construction of our relations in its natural topological setting: that of stably complex $G$\nobreakdash-equivariant manifolds with no $G$\nobreakdash-fixed points (see \S\ref{parstablycomplex}).  Two benefits of this extended framework are that it allows us to show that our relations also hold in the $G$-equivariant cohomology of $X(\C)\setminus X(\R)$ for any smooth variety over $\R$ (and more generally of complex manifolds endowed with a fixed-point free antiholomorphic involution), and that it is sometimes easier to construct interesting examples in this setting (see \eg Remark~\ref{remexemplessc}~(iii)).

\subsubsection{Relations modulo \texorpdfstring{$2$}{2}}

The next theorem states the universal polynomial relations that we produce
between the reductions modulo~$2$ of~$\omega$ and of the~$c_i$, for an
arbitrary stably complex~$\ci$ $G$\nobreakdash-manifold~$M$ of dimension~$n$
with
$M^G=\emptyset$.
To formulate it, we introduce two more pieces of notation.
First,
following Adams and Brown--Peterson,
we need
a certain right action of the mod~$2$ Steenrod algebra~$\cA$ on the cohomology groups $H^*_G(M,\Z/2)$,
denoted $(x)a \in H^{k+l}_G(M,\Z/2)$ for $x \in H^k_G(M,\Z/2)$ and $a \in \cA$ homogeneous of
degree~$l$.
Its definition
can be found in~\textsection\ref{parrightaction}
or in \cite[\textsection6]{BP} (noting that $H^*_G(M,\Z/2)=H^*(M/G,\Z/2)$).
When~$M$ is compact,
it is characterised by the equality
$x \smile a(y) = (x)a \smile y$ for $y \in H^{2n-k-l}_G(M,\Z/2)$
(see \cite[Corollary~6.3]{BP}).
Secondly, we denote by $R_k^l \subset \cA$ the subgroup spanned by the elements
$\Sq^I$ when~$I$ ranges over the degree~$l$ admissible sequences of integers
satisfying the explicit conditions listed
in \cite[Theorem~4.4~(a)--(d)]{BP} (with $s=k$).

\begin{thm}[Theorem~\ref{reltopo}]
\label{introth:relmod2}
Let~$M$ be a stably complex~$\ci$ $G$-manifold of dimension~$n$ with $M^G=\emptyset$.
Let $\oo \in H^1_G(M,\Z/2)$ and $\oc_i \in H^i_G(M,\Z/2)$
denote the reductions modulo~$2$
of~$\omega$
and of the $G$\nobreakdash-equivariant Chern classes of the stable tangent bundle of~$M$.
Let $k,l,e,b_1,\dots,b_n$ be nonnegative integers such that $e + 2 \sum_{i=1}^n ib_i = 2n-l-k$.
Let $a \in R^l_k$.
\begin{enumerate}[(i)]
\item If $k<l$, then $(\oo^e \oc_1^{b_1} \dots \oc_n^{b_n})\Sq^l=0$
in $H^{2n-k}_G(M,\Z/2)$.
\item If $n+e+\sum_{i=1}^n ib_i$ is even, then\vphantom{\raise 8pt\hbox{x}} $(\oo^e \oc_1^{b_1} \dots \oc_n^{b_n})a=0$
in $H^{2n-k}_G(M,\Z/2)$.
\end{enumerate}
\end{thm}

Even though not immediately apparent,
the relations asserted by Theorem~\ref{introth:relmod2} are indeed polynomial
relations between~$\oo$ and the~$\oc_i$ that do not depend on the
manifold~$M$ (of fixed dimension~$n$).  This is because the classes~$\oo$ and~$\oc_i$
come, by
pull-back, from the $G$\nobreakdash-equivariant cohomology of the
classifying space~$\BU$, and there is a natural right action, depending
only on~$n$, of~$\cA$ on $H^*_G(\BU,\Z/2)=\Z/2[\oo, (\oc_i)_{i\geq 1}]$
such that the pull-back map $H^*_G(\BU,\Z/2) \to H^*_G(M,\Z/2)$ respects
the right actions of~$\cA$
(see~\textsection\textsection\ref{parGeqvb}--\ref{parrightaction}).
In addition, for any given~$n$, this right action of~$\cA$ on
$\Z/2[\oo, (\oc_i)_{i\geq 1}]$ can be made explicit by combining
Proposition~\ref{propequivw},
which expresses
the $G$\nobreakdash-equivariant Stiefel--Whitney classes of~$M$ in terms of~$\oo$
and the~$\oc_i$,
with Wu's formula for the (left) action of~$\cA$ on Stiefel--Whitney classes.

\subsubsection{Integral relations}

There are no nontrivial integral polynomial relations between the Chern classes of all smooth projective complex varieties of a fixed dimension.
(To see it,  reduce to the case of Chern numbers and apply \cite[Proposition 4.3.8]{Kochman} with $\mathfrak{B}=\BU$ and $E=K(\Z)$, taking into account that the Hurewicz map $\pi_*(\MU)\to H_*(\MU,\Z)$ is a rational isomorphism (see \cite[\S II.8]{AdamsMU}) and that $\pi_*(\MU)$ is generated by classes of smooth projective complex varieties (see \cite[Corollary II.10.8]{AdamsMU}).) 
It is therefore a striking feature of the relations we uncover (as opposed to Brown and Peterson's) that some of them hold integrally,  as in Theorem \ref{omega3intro} or in~(\ref{relationintro}).

Let us state the universal polynomial relations that we obtain between~$\omega$ and the~$c_i$.
To this end, consider the group homomorphism
$\beta_j:\Z/2[\oo, (\oc_i)_{i\geq 1}] \to \Z[\omega, (c_i)_{i\geq 1}]/(2\omega)$,
for $j \in \{0,1\}$,
defined by
$\beta_j\big(\oo^e \oc_1^{b_1} \dots \oc_n^{b_n}\big)=(j + e + \sum_{i=1}^n ib_i)\omega^{e+1} c_1^{b_1} \dots c_n^{b_n}$.
We note that for any $P \in \Z/2[\oo, (\oc_i)_{i\geq 1}]$
and any $j \in \{0,1\}$,
evaluating $\beta_j(P)$ on $\omega \in H^1_G(M,\Z(1))$
and on the $c_i \in H^{2i}_G(M,\Z(i))$ yields an element of $H^*_G(M,\Z(j))$.

\begin{thm}[Theorem~\ref{reltopoZ}]
\label{introth:relint}
Let~$M$ be a stably complex~$\ci$ $G$-manifold of dimension~$n$ with $M^G=\emptyset$.
Let $P \in \Z/2[\oo, (\oc_i)_{i\geq 1}]$.
Let $k,l,e,b_1,\dots,b_n$ be nonnegative integers
such that $e + 2 \sum_{i=1}^n ib_i = 2n-2^{l+1}k$
and that $n+e+\sum_{i=1}^n ib_i$ is even.
\begin{enumerate}[(i)]
\item
If~$P$ is any of the polynomials that Theorem~\ref{introth:relmod2} asserts to vanish in $H^*_G(M,\Z/2)$,
then $\beta_0(P)$ vanishes in $H^*_G(M,\Z)$ and~$\beta_1(P)$ vanishes in $H^*_G(M,\Z(1))$.

\item
If~$P$ is the polynomial obtained by writing out
$(\oo^e \oc_1^{b_1} \dots \oc_n^{b_n}) \Sq^{2^lk}\cdots\Sq^{2k}\Sq^k$\vphantom{\raise 8pt\hbox{x}}
in terms of~$\oo$ and the~$\oc_i$ as indicated in the paragraph that follows Theorem~\ref{introth:relmod2},
then $\beta_0(P)$ vanishes in $H^*_G(M,\Z)$ and~$\beta_1(P)$ vanishes in $H^*_G(M,\Z(1))$.
\end{enumerate}
\end{thm}

\subsubsection{Proving and using the relations}

To prove Theorems \ref{introth:relmod2} and \ref{introth:relint}, we first reduce to the case where $M$ is compact.  With integral coefficients, a source of significant difficulties for this reduction
is the possible failure of the Mittag--Leffler condition---see~\S\ref{parZpolynomial} and~\S\ref{parintrelations}. In the compact case, the theorems are proved by the Brown--Peterson method of exploiting Poincar\'e duality.
For Theorem \ref{introth:relint}, we make use of an integral version of Poincar\'e duality that pairs cohomology groups with coefficients in $\Z$ and $\Q/\Z$; see~\S\ref{parPoincare}.

Despite the explicit computability of the right actions of the Steenrod algebra appearing in the
statements of Theorems~\ref{introth:relmod2} and~\ref{introth:relint},
it is not easy to determine precisely which
relations follow from those output by these two theorems.  This is a major
obstacle one must overcome when applying them to concrete problems, as we
do in \S\ref{parintrotopology} and~\S\ref{parintrosquares} below.

\subsection{Applications to the topology of \texorpdfstring{$X(\C)$}{X(C)}}
\label{parintrotopology}

Among the relations between $\omega$ and the~$c_i$ that we construct, some do not involve the $c_i$ (as for example in Theorem~\ref{omega3intro}).  It turns out to be difficult even to pinpoint exactly which of our relations only involve~$\omega$.  Our best result in this direction is the following generalization of Theorem \ref{omega3intro}.

\begin{thm}[see Corollary \ref{th:omega2n-1ouvert}]
\label{omega2n-1intro}
Let $X$ be a smooth variety of dimension $n$ over $\R$ with~$X(\R)=\varnothing$.
If $n$ is not of the form $2^k-1$, then $\omega^{2n-1}=0$ in $H^{2n-1}_G(X(\C),\Z(2n-1))$.
\end{thm}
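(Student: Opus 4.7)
The plan is to derive Theorem~\ref{omega2n-1intro} as a specialization of the general construction of integral relations in Theorem~\ref{reltopoZ}. Since $X$ is smooth over~$\R$ with $X(\R)=\varnothing$, the complex manifold $X(\C)$ carries a free antiholomorphic $G$\nobreakdash-action, and via the stably complex framework of~\S\ref{parstablycomplex} it (or $X(\C)\setminus X(\R)$ in the non-proper case handled by Corollary~\ref{th:omega2n-1ouvert}) satisfies the hypotheses of Theorem~\ref{reltopoZ}. That theorem provides, in each cohomological degree, integral polynomial identities between $\omega$ and the equivariant Chern classes $c_i\in H^{2i}_G(X(\C),\Z(i))$ of the tangent bundle.

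I would then extract the identities in cohomological degree~$2n-1$. By degree considerations, any such relation takes the form
\[
\omega^{2n-1}=\sum_{I\neq 0} a_I(n)\,\omega^{i_0}\prod_{j\geq 1}c_j^{i_j} \quad \text{in } H^{2n-1}_G(X(\C),\Z(2n-1)),
\]
where $I=(i_0,i_1,\ldots)$ runs over nontrivial multi-indices of weighted degree $i_0+2\sum_{j\geq 1}j\, i_j=2n-1$ with $i_0\geq 1$, and the coefficients $a_I(n)$ depend only on~$n$. Since $\omega$ is $2$\nobreakdash-torsion, these coefficients are effectively determined modulo~$2$, and the conclusion $\omega^{2n-1}=0$ is equivalent to the simultaneous vanishing of all $a_I(n)$ modulo~$2$ for $I\neq(2n-1,0,\ldots)$.

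The heart of the argument is this last step. The coefficients $a_I(n)$ are produced by combining Kahn's formula (Proposition~\ref{propequivw}) for the Stiefel--Whitney classes of $X(\C)/G$ in terms of the reductions of $\omega$ and the $c_i$, with the Wu--Massey--Brown--Peterson identities obtained from Poincar\'e duality on the $2n$-manifold $X(\C)/G$ together with the Steenrod algebra. Because the Steenrod operations act on $H^\ast(BG,\F_2)=\F_2[\omega]$ via $\Sq^k(\omega^m)=\binom{m}{k}\omega^{m+k}$, the $a_I(n)$ can be tracked as polynomial expressions in binomial coefficients in~$n$. A Lucas\nobreakdash-type analysis of the $2$-adic expansion of~$n$ then reveals that the simultaneous vanishing of all such coefficients is controlled by the binary digits of~$n$ and occurs precisely when $n+1$ is not a power of~$2$, that is, when $n\neq 2^k-1$.

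The main obstacle is this final combinatorial identification. The relations of Theorem~\ref{reltopoZ} admit many equivalent reformulations arising from different choices in the Wu-style computation, and one must organize them so as to isolate each coefficient $a_I(n)$ and make its dependence on the $2$-adic digits of~$n$ transparent, rather than hidden behind large cancellations. A secondary technical point, needed for the open version stated in Corollary~\ref{th:omega2n-1ouvert}, is to extend the argument from proper~$X$ using Poincar\'e--Lefschetz duality in compactly supported form on $(X(\C)\setminus X(\R))/G$, which is precisely what the stably complex formalism of~\S\ref{parstablycomplex} is designed to handle.
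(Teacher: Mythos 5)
There is a genuine gap: your plan assumes that the degree~$2n-1$ relations supplied by Theorem~\ref{reltopoZ} can be written as $\omega^{2n-1}=\sum_{I\neq 0}a_I(n)\,\omega^{i_0}\prod_j c_j^{i_j}$ and that the conclusion $\omega^{2n-1}=0$ is ``equivalent to the simultaneous vanishing of all $a_I(n)$ modulo~$2$'', to be checked by a Lucas-type analysis of binomial coefficients. Both halves of this fail. First, the Chern-class terms in these universal relations do \emph{not} vanish when $n\neq 2^k-1$: already for $n=4$ the relevant relation is $\omega^5=\omega(c_1^2+c_2)$ (see~(\ref{relationintro})), and in the paper's proof of Theorem~\ref{omega2n-1} the expression $(v_n-\oo^n)\,\oo^{n-2}$ (Case~1) genuinely contains monomials $\oo^{2n-2d-2}P(\oc)$ with $0<d<n$ and nonzero coefficients. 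Second, the implication you need goes the wrong way: even when such terms are present, $\omega^{2n-1}$ can (and does) vanish on every $M$ of dimension~$n$, but for reasons that are not visible in the universal coefficients. Moreover, it is not even known that every valid relation lies in $K_{\Z(j),n}$ (this is Question~\ref{qKJZ}), so ``extracting all identities in degree $2n-1$ from Theorem~\ref{reltopoZ}'' cannot by itself decide whether $\omega^{2n-1}$ vanishes.

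The missing idea is the induction on dimension that kills the Chern-monomial terms: the paper proves (Lemma~\ref{lemchernstab}) that any homogeneous polynomial $P(c(M))$ of degree~$d$ is the push-forward $\pi_*1$ from a stably complex $\ci$ $G$-manifold $M'$ of dimension $n-d$ (constructed by transversality against algebraic cycles in a Grassmannian, after resolving singularities), so that by the projection formula (Corollary~\ref{lemmarec}) the vanishing of $\omega^e$ in dimension $n-d$ forces $\omega^e P(c)=0$ in dimension~$n$. This geometric transfer, fed into the induction hypothesis~(\ref{hyprec}), is what disposes of the terms $\oo^{2n-2d-2}P(\oc)$; the remaining work is a four-case analysis according to the residue of~$n$ modulo powers of~$2$, using the explicit Wu-class formula (Corollary~\ref{Wukappa}), the coefficient lemmas \ref{lemNnk} and \ref{wuexact}, and the right-action identities (\ref{Cartanr}), (\ref{wur}); Lucas' theorem enters only once, in the last case, to check that two specific binomial coefficients are even. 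Your proposal contains none of this inductive and geometric machinery, and without it the purely combinatorial coefficient analysis you describe cannot reach the statement. (Your remark about the non-proper case is also slightly off target: the open version follows because Theorem~\ref{omega2n-1} is proved for arbitrary, possibly noncompact, stably complex $G$-manifolds via exhaustion by compact pieces, doubles, and the $\Z$-polynomial lifting of Proposition~\ref{liftlin}, rather than by a compactly supported duality argument.)
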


The hypotheses that $n$ is not of the form $2^k-1$, and that $X$ is smooth, cannot be dispensed with (see Propositions~\ref{prop:omegai quadriques} and \ref{singularomega3}).
The exponent $2n-1$ in Theorem \ref{omega2n-1intro} (and in its extension to stably complex $G$-equivariant manifolds) is optimal for infinitely many values of $n$ not of the form $2^k-1$ (see Proposition \ref{prop:omegai quadriques} and Theorem \ref{nonalgexamples}),  but not for all of them
(see Proposition \ref{omegalow} for an improvement when $n=6$, and see Question~\ref{questvanomega}).

The question of the vanishing of the $\omega^e$ (and of their reductions modulo $2$) on real algebraic varieties with no real points was raised and studied by Krasnov in \cite{Krasnov}.  His nontrivial results in this direction concern the vanishing of these classes on particular kinds of varieties (mostly surfaces),  and not on all varieties of a fixed dimension as in Theorem~\ref{omega3intro} and in its generalization Theorem \ref{omega2n-1intro}.

When $n$ is not of the form $2^k-1$, the vanishing of $\omega^{2n-1}$ established in Theorem \ref{omega2n-1intro} is a topological restriction on the $G$-space (\ie on the topological space with involution)~$X(\C)$. One can use this vanishing to control a more classical invariant of this $G$-space: its coindex.
The \textit{coindex} $\coind(S)$ of a $G$-space $S$ with $S^G=\varnothing$ was defined by
Yang~\cite{YangII} (under the name ``$B$\nobreakdash-index'') and by Conner and Floyd~\cite{CF1,CF2} to be the smallest nonnegative integer~$m$ such that there exists a $G$-equivariant map $f\colon S\to\bS^m$, where the sphere $\bS^m$ is endowed with the antipodal involution
(or $\coind(S)=\infty$ if no such~$m$ exists).
 It is a measure of the topological complexity of $S$.  Theorem \ref{omega2n-1intro} implies:

\begin{thm}[see Theorems \ref{coindexn} and \ref{coindexn2}]
\label{coindexnintro}
Let $X$ be a variety of dimension $n$ over $\R$ with $X(\R)=\varnothing$. If $n$ is not of the form $2^k-1$, then 
$\coind(X(\C))\leq 2n-1$.  If moreover $X$ is smooth with no proper irreducible component, then $\coind(X(\C))\leq 2n-2$. 
\end{thm}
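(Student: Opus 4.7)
The plan is to convert the cohomological vanishing $\omega^{2n-1}=0$ supplied by Theorem~\ref{omega2n-1intro} into the existence of a $G$-equivariant map $X(\C)\to\bS^m$, via equivariant obstruction theory applied to the classifying map of the free $G$-action on $X(\C)$.

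Since $X(\R)=\varnothing$, the group $G$ acts freely on $X(\C)$ and $X(\C)/G$ admits a CW structure of real dimension at most~$2n$ (by semi-algebraic triangulation). A $G$-equivariant map $X(\C)\to\bS^m$ amounts to a lift of the classifying map $X(\C)/G\to BG=\R P^\infty$ through the inclusion $\R P^m\hookrightarrow\R P^\infty$. The homotopy fibre of the latter is $\bS^m$, which is $(m-1)$-connected; obstructions to a lift therefore live in cohomology groups of $X(\C)/G$ in degrees $\geq m+1$, and the primary obstruction (in degree~$m+1$) identifies, with the standard conventions on the twisted coefficient system, with $\omega^{m+1}\in H^{m+1}_G(X(\C),\Z(m+1))$.

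For the smooth case of~(1), I take $m=2n-1$: the primary obstruction $\omega^{2n}=\omega\cdot\omega^{2n-1}$ vanishes by Theorem~\ref{omega2n-1intro}, and higher obstructions live in degrees exceeding $\dim(X(\C)/G)=2n$ and so vanish automatically. For~(2), the irreducible components of a smooth variety being pairwise disjoint, $\coind(X(\C))$ equals the maximum of the coindices of the components; we may therefore assume $X$ is irreducible, smooth and non-proper, in which case $X(\C)/G$ is a non-compact smooth $2n$-manifold and hence has the homotopy type of a CW complex of dimension at most $2n-1$ (for instance, by a proper Morse function without local maximum). Taking $m=2n-2$, the primary obstruction is $\omega^{2n-1}$, which vanishes by Theorem~\ref{omega2n-1intro}, and higher obstructions again vanish for dimensional reasons.

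For the remaining case of~(1), in which $X$ may be singular, I propose to reduce to the smooth case by means of an equivariant resolution of singularities $\pi\colon\tilde X\to X$; since $\pi$ maps real points to real points, $\tilde X(\R)=\varnothing$, so the smooth case provides a $G$-equivariant map $f\colon\tilde X(\C)\to\bS^{2n-1}$. The fibres of $\pi$ over the singular locus $X_{\mathrm{sing}}(\C)$ have real dimension at most $2(n-1)=2n-2$, which is the connectivity of $\bS^{2n-1}$. An equivariant cellular approximation argument, carried out relative to $X_{\mathrm{sm}}(\C)$ (where $\pi$ is an isomorphism), should allow $f$ to be homotoped to a map that is constant on the fibres of~$\pi$, hence descends to a $G$-equivariant map $X(\C)\to\bS^{2n-1}$. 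The main obstacle is the globalisation of this cellular approximation: it should reduce, via obstruction theory over $X(\C)/G$, to the vanishing of cohomology classes in degrees $\leq 2n-1$, which is controlled again by Theorem~\ref{omega2n-1intro} applied inductively to smooth subvarieties of smaller dimension.
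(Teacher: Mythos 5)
Your treatment of the smooth cases is correct and is essentially the paper's own argument (Theorem \ref{coindexn}): obstruction theory on $X(\C)/G$, identification of the primary obstruction with $\omega^{m+1}$ via functoriality (equivalently Borsuk--Ulam), vanishing of the higher obstructions for dimension reasons, with Theorem \ref{omega2n-1} supplying $\omega^{2n-1}=0$; the reduction of the second bound to irreducible non-proper components together with the fact that an open $2n$-manifold has the homotopy type of a complex of dimension $\leq 2n-1$ is likewise what the paper does. The divergence, and the gap, is in the singular case of the first bound.

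There, the step ``an equivariant cellular approximation argument \ldots{} should allow $f$ to be homotoped to a map that is constant on the fibres of $\pi$'' is not justified, and the proposed fallback is off target: the obstructions one meets along the exceptional and singular loci concern spaces of real dimension $\leq 2n-2$ mapping to the $(2n-2)$-connected sphere $\bS^{2n-1}$, so what is needed there has nothing to do with applying Theorem \ref{omega2n-1intro} ``inductively to smooth subvarieties of smaller dimension''. Moreover, if one instead runs obstruction theory directly on the singular $X(\C)$ (the paper's route, Theorem \ref{coindexn2}), the class whose vanishing is required is the degree-$2n$ primary obstruction $\omega_X^{2n}$, and for singular $X$ this is \emph{not} automatic: $H^{2n}_G(X(\C),\Z(2n))$ need not be torsion-free, and already $\oo^3_X$ can be nonzero on a singular surface (Proposition \ref{singularomega3}). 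The paper supplies exactly this missing input as Proposition \ref{propsing}: via the Leray spectral sequence of a resolution $\pi:\tX\to X$ one shows that the pullback $H^{2n}_G(X(\C),\Z(2n))\to H^{2n}_G(\tX(\C),\Z(2n))$ is injective, so Corollary \ref{th:omega2n-1ouvert} gives $\omega^{2n}_X=0$, and then obstruction theory on the $2n$-dimensional semialgebraic quotient $X(\C)/G$ concludes. Your descent-of-maps idea can in fact be repaired without this proposition: since $\pi$ is proper and an isomorphism over $X\setminus X_{\mathrm{sing}}$, the square exhibiting $X(\C)$ as the pushout of $\tX(\C)\hookleftarrow E(\C)\rightarrow X_{\mathrm{sing}}(\C)$, with $E=\pi^{-1}(X_{\mathrm{sing}})$, is cocartesian; both $E(\C)$ and $X_{\mathrm{sing}}(\C)$ have dimension $\leq 2n-2$, so a $G$-map $h:X_{\mathrm{sing}}(\C)\to\bS^{2n-1}$ exists and any two $G$-maps $E(\C)\to\bS^{2n-1}$ are $G$-homotopic; the $G$-homotopy extension property then lets you replace $f|_{E(\C)}$ by $h\circ\pi|_{E(\C)}$ and glue. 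As written, however, the singular case is not proved.
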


Our investigation of the coindex of $G$-spaces of the form $X(\C)$ also yields a complete understanding of this invariant when $X$ has dimension at most $2$ (see Proposition \ref{coindexcurves} for the easy case of curves and Proposition~\ref{prop2obs} and Lemma \ref{coind0ou1} for surfaces). In particular, relying on Theorem \ref{omega3intro}, we prove the following result:

\begin{thm}[Theorems \ref{coindexsurfaces} and \ref{coindexblowup}]
\label{coindexsurfacesintro}
The possible values of $\coind(X(\C))$ for an irreducible smooth proper surface $X$ over $\R$ with $X(\R)=\varnothing$ are $0$, $1$, $2$ and $3$.  For all such~$X$, 
there exists a projective birational morphism $\pi\colon Y\to X$ with $\coind(Y(\C))\leq 2$.
\end{thm}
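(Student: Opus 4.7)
The upper bound $\coind(X(\C))\leq 3$ follows directly from Theorem~\ref{coindexnintro} applied to $n=2$, noting that $2$ is not of the form $2^k-1$; the input needed is the vanishing of $\omega^3$ provided by Theorem~\ref{omega3intro}. Hence the possible values of $\coind(X(\C))$ lie in $\{0,1,2,3\}$.

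To show that each of these four values is actually attained, I would exhibit explicit examples of smooth proper irreducible surfaces $X$ over $\R$ with $X(\R)=\emptyset$. The value $0$ is achieved by taking any smooth irreducible projective complex surface $Y$ and viewing it as an $\R$-scheme through the composition $Y\to\Spec(\C)\to\Spec(\R)$: the resulting $G$-space $X(\C)=Y(\C)\sqcup Y(\C)$ admits the obvious equivariant map to $\bS^0$. The value $3$ should be realized by the anisotropic smooth projective quadric $\{x_0^2+x_1^2+x_2^2+x_3^2=0\}\subset\P^3_\R$, with sharpness witnessed by a higher-order obstruction whose non-triviality is detectable via Proposition~\ref{prop:omegai quadriques}. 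The intermediate values $1$ and $2$ I would produce from products $C\times_\R C'$ of curves with no real points whose individual coindices, pinned down by Proposition~\ref{coindexcurves}, push the coindex of the product into the desired bracket. In each case, the exact value among $\{0,1,2,3\}$ is distinguished via the primary obstruction computations in Proposition~\ref{prop2obs} and Lemma~\ref{coind0ou1}.

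For the second assertion, the starting point is again $\omega^3=0$ on $X(\C)$ (Theorem~\ref{omega3intro}), which ensures that the primary obstruction to the existence of a $G$-equivariant map $X(\C)\to\bS^2$ vanishes. What remains is a secondary obstruction class in $H^4_G(X(\C),\Z(4))$, a group of manageable size since $X(\C)/G$ is a closed oriented $4$-manifold (antiholomorphic involutions on complex surfaces being orientation-preserving). The plan is to construct a projective birational morphism $\pi:Y\to X$ as a composition of blow-ups at carefully chosen closed points of degree~$2$ in $X$, so as to annihilate this secondary class. Each such blow-up enlarges $H^2_G$ and $H^4_G$ by controlled classes coming from the exceptional divisor (a pair of conjugate projective lines swapped by $G$), and the core of the argument is to show that a well-chosen finite sequence of such blow-ups kills the obstruction. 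The main obstacle I anticipate is proving that the secondary obstruction is always in the image of the cohomological contribution of some such blow-up; I expect this step to rely on the integral refinements of the Wu-type relations constructed earlier in the paper, together with a direct analysis of how the obstruction transforms under an elementary blow-up.
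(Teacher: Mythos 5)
Your reduction to the bound $\coind(X(\C))\leq 3$ and your examples for the values $0$, $1$, $2$ are fine and essentially parallel the paper (which uses $\P^1_\R\times_\R C$ with $C$ equal to $\P^1_\C$, a genus~$1$ curve without real points, and the anisotropic conic). The proposal breaks down at the value $3$, which is the delicate point. The anisotropic quadric surface $Q=\{x_0^2+x_1^2+x_2^2+x_3^2=0\}\subset\P^3_\R$ does \emph{not} have coindex $3$: its discriminant is a square, so each ruling of $Q_\C$ is Galois-stable and is parametrized by the anisotropic conic $C$ (in fact $Q\simeq C\times_\R C$); composing the $G$-equivariant projection $Q(\C)\to C(\C)$ with a $G$-map $C(\C)\to\bS^2$, which exists because $\coind(C(\C))=2$ by Proposition~\ref{coindexcurves}, gives $\coind(Q(\C))\leq 2$. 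Moreover, no argument based on Proposition~\ref{prop:omegai quadriques} can certify coindex $3$ for any surface: that proposition only concerns the classes $\omega^e$, and $\omega^3$ vanishes on every smooth surface with no real points (Theorem~\ref{omega3intro}), so the obstruction to coindex $\leq 2$ is necessarily a secondary one, invisible to powers of $\omega$. The paper realizes the value $3$ by Enriques surfaces without real points (Proposition~\ref{Enriques}); the proof rules out a class $a\in H^2(X(\C),\Z(1))$ with $a^2=0$ and $\Nr_{\Z(2)}(a)=\omega_X^2$ (the criterion of Proposition~\ref{prop2obs}) by using $\Pic(X_\C)\isoto H^2(X(\C),\Z(1))$, the parity of $\dim|L+2lA_\C|$ on an Enriques surface, and the absence of nontrivial real forms of even-dimensional projective spaces, to show that any such $a$ would be algebraic over $\R$ and hence killed by $\Nr_{\Z(2)}$, contradicting $\omega_X^2\neq 0$. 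Nothing in your sketch supplies an argument of this kind.

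For the blow-up statement your text is a program rather than a proof: the decisive step is exactly the one you flag as an ``anticipated obstacle'', and the mechanism you guess (hitting the secondary obstruction with exceptional contributions via the integral Wu-type relations) is not how the proof goes --- note also that the secondary obstruction class depends on the choice of map on the $3$-skeleton, which is why the paper works with the invariant criterion of Proposition~\ref{prop2obs} instead. The actual argument: since $\omega_X^3=0$, the real--complex sequence~(\ref{rc}) yields $d\in H^2(X(\C),\Z(1))$ with $\Nr_{\Z(2)}(d)=\omega_X^2$; after adding a multiple of an ample class one may assume $d^2\geq 0$, and $d^2$ is \emph{even} because $d$ is $G$-invariant (Lemma~\ref{normomega2}) and $d\smile\sigma(d)$ lifts to $H^4_G(X(\C),\Z(2))$ (Lemma~\ref{Geqauto}), whose image in $H^4(X(\C),\Z(2))=\Z$ consists of even integers. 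Blowing up four closed (degree~$2$) points and writing $d^2/2=\sum_{i=1}^4\lambda_i^2$ by Lagrange's four-square theorem, the class $a:=\pi^*d+\sum_i\lambda_i c_i$ (where the $c_i$ are the exceptional classes, with $c_i^2=-2$ and $\Nr_{\Z(2)}(c_i)=0$) satisfies $a^2=0$ and $\Nr_{\Z(2)}(a)=\omega_Y^2$, so Proposition~\ref{prop2obs} gives $\coind(Y(\C))\leq 2$. The evenness of $d^2$ combined with the four-square trick is the missing idea; beyond $\omega^3=0$ it does not rely on the relations of Section~\ref{secrelations}.
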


\subsection{Applications to sums of squares}
\label{parintrosquares}

Let $F$ be a field.  Pfister defined the \textit{level}~$s(F)$ of $F$ to be the smallest integer $s$ such that $-1$ is a sum of $s$ squares in $F$, if such an integer exists,  or $+\infty$ otherwise. When $s(F)$ is finite, it is a power of~$2$, by \cite[Satz~4]{PfisterStufe}. If~$X$ is an irreducible smooth variety of dimension $n$ over $\R$, it is a consequence of Artin's solution to Hilbert's 17th problem \cite{Artin17} that $s(\R(X))$ is finite if and only if~$X(\R)=\varnothing$. In this case, Pfister \cite[Theorem~2]{Pfister2} has shown that $s(\R(X))\leq 2^n$.  It is not known if this bound is optimal when $n\geq 3$.  It is indeed the best possible for~$n=0$ (take~$X=\Spec(\C)$), for $n=1$ (take $X$ to be the anisotropic conic over $\R$), and for $n=2$ by the Cassels--Ellison--Pfister theorem (see \cite{cep} or \cite[Theorem]{ctnoetherlefschetz}, and combine it with \cite[Lemma~1.2]{ctnoetherlefschetz}).

\subsubsection{The level of real algebraic varieties}

In contrast with the classical works of Brown and Peterson \cite{BP,BP2},
the relations that we produce involve cohomology classes of very different
natures.
On the one hand, the class $\omega$ comes from the Galois cohomology of the base field $\R$, and influences the arithmetic of the function field~$\R(X)$ of~$X$. On the other hand, the classes $c_i$ come from algebraic cycles and in particular they vanish on a dense Zariski open subset of $X$ (for~$i\geq 1$).

The different roles played by these two kinds of classes can be leveraged to study the level of real functions fields.  Indeed, it follows from the Milnor conjecture, proved by Voevodsky~\cite{Voevodsky}, that the level of $\R(X)$ is $\leq 2^{k-1}$ if and only if $\omega^k$ vanishes on some dense Zariski open subset of $X$ (see Lemma \ref{lowlevelcrit}).  Consequently, a relation between~$\omega$ and the $c_i$ nontrivially involving the monomial $\omega^k$ would imply that $s(\R(X))\leq 2^{k-1}$.

This remark does not immediately lead to improvements to Pfister's $s(\R(X))\leq 2^n$ bound, because all our relations live in degree $\geq n+1$.  However,  under appropriate geometric hypotheses on $X$,  one can combine this strategy with additional arguments (notably, when $n$ is odd, 
Bloch--Ogus theory applied as in \cite{Hilbert17}),  to improve Pfister's~$2^n$ bound on the level (see Theorems \ref{evendim}, \ref{odddim} and \ref{conicbundles}). Let us only state here the following concrete consequence (which also relies, when $n=3$, on Voisin's proof of the integral Hodge conjecture for uniruled complex threefolds \cite{voisinthreefolds}).

\begin{thm}[Corollaries \ref{coreven} and \ref{cor3}]
\label{uniruledintro}
Let $X$ be an irreducible smooth proper variety of dimension $n\geq 1$ over~$\R$ with $X(\R)=\varnothing$.  Assume that $X_{\C}$ is uniruled, and that $n$ is even or that $n\leq 3$. Then $s(\R(X))\leq 2^{n-1}$.
\end{thm}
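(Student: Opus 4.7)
The natural plan is to split into the two cases $n$ even and $n=3$ and to apply, respectively, the general bounds from Theorem~\ref{evendim} and Theorem~\ref{odddim}, after verifying that their hypotheses are met when $X_\C$ is uniruled.

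The common starting point is the dictionary between level bounds and the vanishing of powers of $\omega$ on dense Zariski opens, provided by Lemma~\ref{lowlevelcrit} and ultimately by Voevodsky's proof of the Milnor conjecture: it reduces showing $s(\R(X))\le 2^{n-1}$ to exhibiting a dense Zariski open $U\subseteq X$ on which $\omega^n$ vanishes. The polynomial relations between $\omega$ and the equivariant Chern classes constructed in Theorems~\ref{reltopo} and~\ref{reltopoZ} yield, after restriction to the complement of the supports of the $c_i$ (which is a dense open since each $c_i$ with $i\ge 1$ is an algebraic class of positive codimension), the vanishing of $\omega^{n+1}$ on such an open---this already recovers Pfister's bound. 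The whole point of Theorems~\ref{evendim} and~\ref{odddim} is to gain the extra power of $\omega$ under additional geometric hypotheses, which under uniruledness I expect to be automatically satisfied.

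In the even-dimensional case, the hypothesis of Theorem~\ref{evendim} should---by analogy with the $n=2$ case of Theorem~\ref{levelsurfaceintro}, of which it is said to be a higher-dimensional generalization---amount to the vanishing of the top coherent cohomology $H^n(X,\sO_X)$, together with possibly related unramified cohomology conditions. Both are classical consequences of the uniruledness of $X_\C$: there are no nontrivial holomorphic $n$-forms on a uniruled complex variety, and the relevant higher unramified cohomology groups are killed by Bloch--Ogus theory as in~\cite{Hilbert17}. So one obtains $s(\R(X))\le 2^{n-1}$ directly.

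In the case $n=3$, Theorem~\ref{odddim} has a more subtle hypothesis reflecting the difficulty with odd dimensions (which also explains why the excerpt singles out $n$ odd as requiring Bloch--Ogus theory and, for $n=3$, Voisin's integral Hodge conjecture). Concretely, the plan is to use Voisin's theorem~\cite{voisinthreefolds}, asserting the integral Hodge conjecture for $1$-cycles on uniruled complex threefolds, in order to represent by algebraic cycles the integral class in $H^4(X(\C),\Z)$ which obstructs trading $\omega^{n+1}$ for $\omega^n$ on a dense open. The main technical point, and what I expect to be the chief obstacle, is to convert Voisin's statement---phrased on $X_\C$ and for ordinary $1$-cycles---into the $G$-equivariant algebraicity input required by Theorem~\ref{odddim}, accounting for the fact that the Galois action exchanges a cycle with its complex conjugate so that one must produce cycles defined over $\R$ up to $2$-torsion. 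Once this descent step is performed, Theorem~\ref{odddim} gives $s(\R(X))\le 2^{n-1}=4$, concluding the proof.
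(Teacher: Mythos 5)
Your overall plan --- splitting into the cases $n$ even and $n=3$ and checking that uniruledness forces the hypotheses of Theorems~\ref{evendim} and~\ref{odddim} --- is exactly the route the paper takes (Corollaries~\ref{coreven} and~\ref{cor3}). The gaps are in the verification of those hypotheses, which is where the actual content of the corollaries lies. For $n$ even, the hypothesis of Theorem~\ref{evendim} is not $H^n(X,\sO_X)=0$ but the stronger condition that every class in $H^n(X(\C),\Q)$ has coniveau $\geq 1$; deducing the latter from the former is an instance of the generalized Hodge conjecture, and the paper explicitly records (Remark~\ref{evendimrmks}~(ii)) that it does not know how to work with $H^n(X,\sO_X)=0$ alone when $n\geq 4$. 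So your justification via the absence of holomorphic $n$-forms on a uniruled variety would not establish what is needed. The correct deduction from uniruledness is cycle-theoretic, via Proposition~\ref{prophyp}: uniruledness implies that $\CH_0(X_\C)$ is supported on a divisor, and the Bloch--Srinivas decomposition of the diagonal then kills $H^n_{\nr}$ and forces the coniveau condition on $H^n(X(\C),\Q)$.

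For $n=3$, the hypotheses of Theorem~\ref{odddim} are two conditions on the complex variety $X_\C$ alone: the vanishing of $H^3_{\nr}(X(\C),\Q)$ (again supplied by Proposition~\ref{prophyp}~(i)$\Rightarrow$(iii), which your sketch does not address) and the absence of $2$-torsion in $H^{4}(X(\C),\Z)/N^2H^{4}(X(\C),\Z)$. No $G$-equivariant algebraicity statement, and in particular no descent of $1$-cycles to $\R$, is required: all the Galois-theoretic work is internal to the proof of Theorem~\ref{odddim} itself. Voisin's theorem enters simply because, for a smooth proper threefold, the torsion of $H^{4}(X(\C),\Z)/N^2H^{4}(X(\C),\Z)$ coincides with the defect group of the integral Hodge conjecture for $1$-cycles (Remark~\ref{remamplehyp}~(ii)), and this defect vanishes for uniruled threefolds by \cite{voisinthreefolds}. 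Your proposed ``descent step'' is thus both a misidentification of the obstacle and left unexecuted, so as written the $n=3$ argument has a hole exactly where the corollary's proof does its work.
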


When $n\geq 5$ is odd,  the conclusion of Theorem \ref{uniruledintro} is still valid under the additional hypothesis that the group $H^{n+1}(X(\C),\Z)/N^2H^{n+1}(X(\C),\Z)$ has no $2$-torsion, where $N^{\bullet}$ denotes the coniveau filtration (see Remark \ref{remsodd} (ii)). This additional hypothesis might always hold when $X_{\C}$ is uniruled (see Question \ref{q:torsionfree}).

\subsubsection{The level of real algebraic surfaces}
\label{subsubsec:levelsurf}

In the particular case of real algebraic surfaces, the relation on which Theorem \ref{uniruledintro} relies is precisely $\omega^3=0$ (see Theorem \ref{omega3intro}). In this case, the proof of Theorem \ref{uniruledintro} can be combined with the Lefschetz $(1,1)$ theorem to improve Pfister's bound for real surfaces with vanishing geometric genus, thereby answering in the negative a question of van Hamel \cite[Remark~3.5~(ii)]{vanhameldivisors}.

\begin{thm}[Corollary \ref{2dim}]
\label{levelsurfaceintro}
Let $X$ be an irreducible smooth proper surface over $\R$ with~$X(\R)=\varnothing$ and $H^2(X,\sO_X)=0$. 
Then $s(\R(X))\leq 2$.
\end{thm}

Theorem \ref{levelsurfaceintro} was known to hold in some particular cases: for geometrically rational surfaces (Parimala and Sujatha \cite[Theorem 5]{parimalasujatha})
and more generally for surfaces with~$H^1(X(\C),\Z/2)=0$ and $H^2(X,\sO_X)=0$ (Silhol, see \cite[Corollary 2.4]{vanhameldivisors}, see also Krasnov \cite[Corollary 2.4 (ii)]{Krasnov}), as well as for Enriques surfaces (Krasnov \cite[Corollary 2.4 (iii) and Theorem~2.1]{Krasnov}, Sujatha and van Hamel \cite[Theorem 3.2]{sujathavanhamel};
these surfaces satisfy~$H^1(X(\C),\Z/2)=\Z/2$ and~$H^2(X,\sO_X)=0$).  Theorem \ref{levelsurfaceintro} encompasses more examples, such as Campedelli surfaces. 
We note that the arguments of~\cite{Krasnov} and~\cite{sujathavanhamel} about Enriques surfaces
rely on specific geometric properties of these surfaces that are not susceptible to generalization.

Although there do exist surfaces with $H^2(X,\sO_X)\neq 0$ and $s(\R(X))=2$ (for instance, products of two curves of genus $\geq 1$ with no real points), the hypothesis that~${H^2(X,\sO_X)=0}$ in Theorem \ref{levelsurfaceintro} is essentially optimal.  This is apparent in Colliot-Th\'el\`ene's construction of real algebraic surfaces with $s(\R(X))=4$ based on the Noether--Lefschetz theorem \cite{ctnoetherlefschetz}, which makes essential use of the nontriviality of the Hodge structure on the degree $2$ Betti cohomology group $H^2(X(\C),\Z)$ (see Remark \ref{evendimrmks} (i)).

\subsubsection{Hilbert's 17th problem in low degree}

The quantitative version of Hilbert's 17th problem aims at writing a nonnegative polynomial $f\in\R[x_1,\dots,x_n]$ as a sum of few squares of rational functions. Pfister's celebrated theorem \cite[Theorem~1]{Pfister2} shows that $2^n$ squares always suffice.  Better bounds on the level of real function fields give rise to improvements to Pfister's theorem (see \S\ref{parH17} for more context and more details). To demonstrate the applicability of our results on the level, we use them to significantly lower Pfister's bounds for polynomials of low degree (in the spirit of \cite{Hilbert17}, with much better bounds under a stronger hypothesis on the degree).

\begin{thm}[Theorem \ref{thlowerdegree}]
\label{th:intro17}
Let $f\in\R[x_1,\dots,x_n]$ be a nonnegative polynomial of degree~$d$. If $2\leq d\leq n$ and $(d,n)\neq(2,2)$, then $f$ is a sum of $2^{n-1}$ squares in $\R(x_1,\dots, x_n)$. 
\end{thm}

\subsection{Organization of the text}

Section \ref{secprelim} gathers generalities on $G$-equivariant algebraic topology that are used throughout the text.
Section \ref{secrelations} is devoted to the construction of the relations between the Galois cohomology class~$\omega$ and the Chern classes $c_i$ which form the heart of this article. We work in the appropriate topological generality, that of stably complex $\ci$ $G$-manifolds.  In Section \ref{seccomputations}, we explain how to use the formalism developed in Section \ref{secrelations} to produce concrete relations, and we put forward those that will be useful in the remainder of the text. Section~\ref{secomegai} focuses on relations of the form~$\omega^e=0$, and investigates the optimality of our results in this direction. 

Applications to the coindex of real algebraic varieties with no real points, and to the level of real function fields (as well as to Hilbert's 17th problem), are respectively given in Sections~\ref{seccoindex} and \ref{seclevel}. In Section~\ref{seclevel}, we work over an arbitrary real closed field, which is the natural generality for sums of squares problems.

\subsection{Acknowledgements}

We thank Bruno Kahn for a suggestion that led to an improvement to Theorem~\ref{odddim}, Slava Kharlamov for drawing our attention to Krasnov's article~\cite{Krasnov}, and the referee for their careful work and thoughtful comments.

\subsection{Notation and conventions}
\label{notation}

If~$A$ is an abelian group and~$m$ is an integer, we denote by $A[m]$ the $m$\nobreakdash-torsion subgroup of~$A$
and write $A/m$ for $A/mA$.  We write $A_{\torsion}$ for the torsion subgroup of~$A$, and we write $A/\mathrm{tors}$
for $A/A_{\torsion}$.

A \textit{variety} $X$ over a field $k$ is a separated scheme of finite type over $k$.  If $l/k$ is a field extension, we let $X(l)$ denote the set of $l$-points of $X$.  The variety $X$ is said to be a \textit{curve} (\resp a \textit{surface}, \resp a \textit{threefold}) if it has pure dimension $1$ (\resp $2$, \resp $3$).

Manifolds are Hausdorff and second-countable.
A \textit{space} is a Hausdorff compactly generated topological space (see \cite[Chapter 5]{Mayconcise}). 
By the cohomology of a space with values in an abelian group (or a local system, see \cite[\S 3.H]{Hatcher}), we always mean singular cohomology.  
We only use sheaf cohomology for locally contractible spaces (such as CW complexes), 
in which case it recovers singular cohomology (see \cite[Theorem~1.2]{Petersen}).

Let~$\R$ and $\C$ be the fields of real and complex numbers.
Let $G:=\Gal(\C/\R)\simeq \Z/2$ be generated by the complex conjugation $\sigma\in G$.  A \textit{$G$\nobreakdash-space} (\resp a \textit{$\ci$ $G$-manifold}) is a space (\resp a $\ci$ manifold) endowed with a continuous (\resp $\ci$) $G$-action. 
If $X$ is a variety over $\R$, then $X(\C)$ is naturally a $G$-space with $X(\C)^G=X(\R)$.
The quotient space of the antipodal action of $G$ on the contractible space $EG:=\bS^{\infty}$ is the classifying space  $BG:=EG/G=\P^{\infty}(\R)$ of $G$.

Let $S$ be a $G$-space, and let $T:=(S\times EG)/G$ be the space associated with it by the Borel construction. A $G$-module~$A$ induces a local system on $T$,  which we still denote by $A$.  We consider the $G$-equivariant cohomology groups $H^k_G(S,A):=H^k(T,A)$ in the sense of Borel.  These groups encompass the cohomology groups $H^k(G,A):=H^k_G(\pt, A)=H^k(BG,A)$ of~$G$. 

Let $\Z(j)$ be the $G$-module $\Z$ on which $\sigma$ act by $(-1)^j$ (it only depends on the parity of~$j$).  
If $A$ is a $G$-module, we set $A(j):=A\otimes_{\Z}\Z(j)$.
Let $\omega\in H^1(G,\Z(1))\simeq \Z/2$ be the generator and $\oo\in H^1(G,\Z/2)=\Z/2$ be its reduction modulo $2$. For $e\geq 1$, the $e$th cup powers $\omega^e\in H^e(G,\Z(e))\simeq \Z/2$ and $\oo^e\in H^e(G,\Z/2)\simeq \Z/2$  are the nonzero classes. If~$S$ is a $G$-space, we let $\omega^e_S$ and $\oo^e_S$ denote the pull-backs of
$\omega^e$ and $\oo^e$ in $H^e_G(S,\Z(e))$ and in $H^e_G(S,\Z/2)$ by the second projection $T \to BG$, where~$T$ is as above.
 If $X$ is a variety over~$\R$, we write $\omega^e_X$ and $\oo^e_X$ instead of $\omega^e_{X(\C)}$ and $\oo^e_{X(\C)}$. When no confusion is possible, we still denote these classes by $\omega^e$ and $\oo^e$.

We use $G$\nobreakdash-equivariant sheaf cohomology only for locally contractible $G$-spaces~$S$.  With values in a $G$-module $A$, this recovers Borel $G$-equivariant cohomology,  by the $G$\nobreakdash-equivariant sheaf cohomology isomorphisms $H^k_G(S,A)\isoto H^k_G(S\times EG,A)= H^k((S\times EG)/G,A)$ resulting from the Leray spectral sequence for $S\times EG\to S$ and from the first spectral sequence of \cite[Th\'eor\`eme~5.2.1]{tohoku}.
Let $\sF$ be a $G$-equivariant sheaf on a $G$-space $S$.  The second spectral sequence of \cite[Th\'eor\`eme~5.2.1]{tohoku}, called \textit{the Hochschild--Serre spectral sequence}, reads
\begin{equation}
\label{HS}
E_2^{p,q}=H^p(G,H^q(S,\sF))\Rightarrow H^{p+q}_G(S,\sF).
\end{equation}
Set $\sF(j):=\sF\otimes_\Z\Z(j)$ for $j\in\Z$ and $\sF[G]:=\sF\otimes_\Z\Z[G]$.
The natural short exact sequence $0\to \sF(1)\to \sF[G]\to \sF\to 0$ gives rise to \textit{the real-complex long exact sequence}
\begin{equation}
\label{rc}
\cdots\to H^k_G(S,\sF(1))\to H^k(S,\sF)\xrightarrow{\Nr_{\sF}} H^k_G(S,\sF)\xrightarrow{\omega} H^{k+1}_G(S,\sF(1))\to\cdots,
\end{equation}
where the right arrow is the cup product with the class $\omega\in H^1(G,\Z(1))$ of the extension $0\to \Z(1)\to\Z[G]\to \Z\to 0$,  and where $\Nr_{\sF}$ is called the \textit{norm map} (see \cite[(1.7)]{bw1}).

Given a graded ring $H^*=(H^i)_{i \geq 0}$, we shall freely use expressions of the form $h=\sum_{i \geq 0} h_i$
to denote elements of $\prod_{i\geq 0}H^i$, and we shall refer
to equalities between such expressions as equalities \emph{in the graded ring $H^*$}.

\section{\texorpdfstring{$G$}{G}-equivariant preliminaries}
\label{secprelim}

\subsection{\texorpdfstring{$G$}{G}-equivariant vector bundles}
\label{parGeqvb}

Recall that $G=\Gal(\C/\R)$ and that $\sigma\in G$ denotes complex conjugation.  
A $G$\textit{-equivariant real} (\resp \textit{complex}) \textit{vector bundle} on a $G$\nobreakdash-space~$S$ 
is a real (\resp complex) vector bundle~$E$ on~$S$ endowed with a continuous action of~$G$ on~$E$ such that
the projection map $E\to S$ is $G$\nobreakdash-equivariant and such that for any $s \in S$,
the map $E_s \to E_{\sigma(s)}$ induced by~$\sigma$
is $\R$-linear (\resp $\C$\nobreakdash-antilinear).
The $G$-equivariant complex vector bundles are exactly, under another name, the Real vector bundles introduced by Atiyah in~\cite{Atiyah}.
Our terminology is justified by the fact that we view~$G$ as a group endowed with an action on~$\C$ rather than as an abstract group.
Similarly, we define \textit{$G$\nobreakdash-equivariant~$\ci$ real} (\resp \textit{complex}) \textit{vector bundles} on $\ci$ $G$\nobreakdash-manifolds by requiring that the action of $G$ on~$E$ be~$\ci$.

Let $E$ be a $G$-equivariant real vector bundle on $S$. A \textit{stable complex structure} on $E$ is
an equivalence class of pairs $(k,J)$ consisting of an integer $k\geq 0$ and a structure $J$ of $G$\nobreakdash-equivariant complex vector bundle on the $G$-equivariant real vector bundle $E\oplus (S \times \C^{k})$ (i.e.\ $J$ is an endomorphism of the real vector bundle $E \oplus (S \times \C^k)$
such that $J^2=-1$ and $\sigma J=-J \sigma$), where the equivalence relation
is generated by the condition
that $(k,J)$ and $(k+1,(J,i))$ are equivalent.
If a
$G$\nobreakdash-equivariant real vector bundle~$E$ on~$S$ admits a stable complex structure,
then the rank~$r$ of~$E$ is even, and any stable complex structure on~$E$
determines a $G$\nobreakdash-equivariant isomorphism between the orientation sheaf of~$E$ and $\Z(r/2)$.

Let $\Gr_r(\C^N)$ denote the Grassmannian of complex $r$-planes in $\C^N$ and
set $${\BU(r):=\bigcup_{N\geq 0}\Gr_r(\C^N)}\rlap.$$
The natural action of~$G$ on~$\C^N$ induces actions of~$G$
on~$\Gr_r(\C^N)$ and on~$\BU(r)$, and turns the tautological rank~$r$ complex
vector bundles on these spaces into
$G$\nobreakdash-equivariant complex vector bundles.
Denote by $[S,S']_G$ the set of $G$\nobreakdash-equivariant homotopy classes of $G$-equivariant continuous maps between two $G$-spaces $S$ and $S'$. If $S$ is a paracompact $G$\nobreakdash-space, pulling back
the tautological bundle induces an identification between $[S,\BU(r)]_G$ and the set of isomorphism classes of $G$\nobreakdash-equivariant complex vector bundles of rank~$r$ on~$S$ (if $S$ is compact, see \cite[Proposition II.1]{Edelson}; in general, run the proofs of \cite[Chap.~4,  \S 12]
{Husemoller} $G$\nobreakdash-equivariantly).

Viewing $\BU(r)$ as a subspace of~$\BU(r+1)$ via the map $V \mapsto \C \oplus V$,
we set $$\BU:=\bigcup_{r\geq 0}\BU(r)\rlap.$$
The \textit{$\KR$-theory} of a $G$-space $S$ is by definition $\KR(S):=[S,\BU\times\Z]_G$ endowed with its natural group structure (see \eg \cite[p.~215]{Dugger}).  In view of the above description of~$[S,\BU(r)]_G$, a $G$-equivariant complex vector bundle~$E$ on a paracompact $G$-space $S$ induces a class $[E]\in\KR(S)$,  the factor $\Z$ corresponding to the rank of the bundle.  When~$S$ is moreover compact,  the group $\KR(S)$ can be identified---this is the original point of view taken in \cite{Atiyah}---with the Grothendieck group of the category of $G$-equivariant complex vector bundles on $S$ (the non-equivariant proof in \cite[Chapter II, Theorem 1.33]{karoubibook} can be adapted to the $G$\nobreakdash-equivariant setting).

\subsection{Stably complex \texorpdfstring{$G$}{G}-manifolds}
\label{parstablycomplex}

A \textit{stably complex $\ci$ $G$-manifold of dimension $n$} is  a $\ci$ $G$-manifold~$M$
of dimension~$2n$ equipped with a stable complex structure on the $G$-equivariant real vector bundle~$TM$;
we write $(M,k,J)$ instead of~$M$
when we need to refer to a representative $(k,J)$ of the given stable complex structure on~$TM$.

These are particular cases (minor details put aside) of the $R$-manifolds considered in \cite[\S 2]{Fujiibordism}
or of the Real manifolds of \mbox{\cite[\S2]{Hu}} (which may have dimension $(p,q)$ with $p,q\in\Z$; the ones we consider correspond to the case $p=q=n$). 
When we allow $M$ to have a boundary (\eg in \S\ref{parstablycxconstr}), we always write so explicitly.

The orientation sheaf of a stably complex $\ci$ $G$-manifold $M$ of dimension $n$ is canonically and $G$\nobreakdash-equivariantly isomorphic to~$\Z(n)$.
When $M^G=\varnothing$,  we will use this observation
when applying Poincar\'e duality on the $\ci$ manifold $M/G$;
it implies, for instance, that the group
$H^{2n}_G(M,\Z(n))$ is torsion-free.
We define the stable tangent bundle of
the stably complex~$\ci$ $G$-manifold $(M,k,J)$ to be $\tau_M:=[(TM\oplus(M \times \C^{k}),J)]-[(M \times \C^{k},i)]\in\KR(M)$.

An \textit{almost complex $\ci$ $G$\nobreakdash-manifold of dimension $n$} is
a $\ci$ $G$\nobreakdash-manifold~$M$ of dimension~$2n$ equipped with a structure $J$ of $G$-equivariant complex vector bundle on the $G$\nobreakdash-equivariant real vector bundle $TM$.
An almost complex $\ci$ $G$\nobreakdash-manifold $(M,J)$ gives rise
to the stably complex $\ci$ $G$\nobreakdash-manifold $(M,0,J)$.
The corresponding class~$\tau_M$ is represented by the $G$-equivariant $\ci$ complex vector bundle~$(TM,J)$. Natural examples include $n$\nobreakdash-dimensional complex manifolds endowed with an antiholomorphic involution, and more particularly sets of complex points of smooth varieties of dimension $n$ over $\R$.

\subsection{Construction of stably complex \texorpdfstring{$G$}{G}-manifolds}
\label{parstablycxconstr}

We present, for later use in~\S\ref{parrelations}, in \S\ref{parintrelations} and in \S\ref{parnonalgexamples}, a few techniques to construct stably complex $\ci$ $G$-manifolds.

\vspace{.5em}

Let $(M,k,J)$ be a compact  
stably complex $\ci$ $G$-manifold of dimension $n$ as in~\S\ref{parstablycomplex}. Let $E$ be a $G$\nobreakdash-equivariant $\ci$ complex vector bundle of rank $r\leq n$ on $M$.  
Let $N\subset M$ be the zero locus of a $G$-equivariant section of $E$ that is transverse to the zero section.
One can endow the compact~$\ci$ $G$\nobreakdash-manifold~$N$ with the structure of a stably complex~$\ci$ $G$\nobreakdash-manifold of dimension $n-r$ as follows.  Let~$F$ be a $G$\nobreakdash-equivariant~$\ci$ complex vector bundle of rank $s$ on $M$ that is a stable opposite of $E$, \ie such that there exists an isomorphism of $G$\nobreakdash-equivariant complex vector bundles $E\oplus F\simeq M \times \C^{r+s}$ (use \cite[Lemma II.2]{Edelson}).
The $G$\nobreakdash-equivariant real vector bundle
$TN\oplus (N \times \C^{r+s+k})$ can then be identified with $TN\oplus E|_N\oplus F|_N\oplus (N \times \C^{k})\simeq TM|_N\oplus(N \times \C^{k})\oplus F|_N$, which has a natural structure of $G$-equivariant complex vector bundle.

\vspace{.5em}

Let $(M,k,J)$ be a compact stably complex $\ci$ $G$-manifold of dimension $n$ with boundary~$\partial M$.
We define the \textit{opposite} $-M$ of $M$ to be the stably complex $\ci$ $G$\nobreakdash-manifold $(M,k+1,(J,-i))$. We now explain how to construct the \textit{double} $S$ of $M$ by appropriately gluing $M$ and $-M$ together along their boundary.

The $G$-equivariant real vector bundle $(TM\oplus(M \times \C^{k}))|_{\partial M}$ on $\partial M$ splits as a direct sum $E\oplus L\oplus L'$,  where $E\subset (TM\oplus (M \times \C^{k}))|_{\partial M}$ is the largest real subbundle
 of $T(\partial M)\oplus(\partial M \times \C^{k})$ that is stable under~$J$, where $L'$ is a supplement of $E$ in $T(\partial M)\oplus(\partial M \times \C^{k})$ that is stable under~$G$, and where $L:=J(L')$ (so that $L'=J(L)$).
We see that~$L$ can be identified with the normal bundle of $\partial M$ in~$M$.  As a consequence,
it is a trivial $G$-equivariant real line bundle on~$\partial M$.

Consider the following two complex structures on the $G$-equivariant real vector bundle $E\oplus L\oplus L'\oplus (\partial M \times \C)$ on $\partial M$. The first one is~$(J,i)$.  The second one is the conjugate of $(J,-i)$ by $(\Id,-\Id,\Id,\Id)$.
As $L \oplus L'$ is isomorphic, as a $G$\nobreakdash-equivariant
real line bundle on~$\partial M$, to $\partial M \times \C$,
Lemma~\ref{homotopyJ} below implies the existence
of a homotopy (constant on the factor~$E$) between these two complex structures such that, keeping the $G$-action constant, the bundle $E\oplus L\oplus L'\oplus (\partial M \times \C)$ remains a $G$-equivariant complex vector bundle along the whole homotopy.

Define $S$ to be the compact $\ci$ $G$-manifold obtained by gluing $M$ and $-M$ along their common boundary $\partial M$ by inserting between them a cylinder $\partial M\times [0,1]$ (to perform the gluing,  make use of $G$-equivariant collars).
 To endow $TS$ with a stable complex structure $(k+1,J_S)$, we use the complex structure $(J,i)$ on the piece $M$, the complex structure~$(J,-i)$ on the piece $-M$, and the above homotopy on the cylinder $\partial M\times [0,1]$ joining them. These complex structures do glue because $(\Id,-\Id,\Id,\Id)$ is induced by the differential of the gluing map. It only remains to smooth the resulting complex structure.

\begin{lem}
\label{homotopyJ}
There is a continuous family $(J_t)_{t\in [0,1]}$ of $\R$-linear automorphisms of $\C^2$~with 
\begin{enumerate}[(i)]
\item $J_0=(i,i)$ and  $J_1=(-i,-i)$;
\item $J_t^2=-\Id$ and $J_t(\bar z_1,\bar z_2)=-\overline{J_t(z_1,z_2)}$ for all $t\in[0,1]$ and $(z_1,z_2)\in\C^2$.
\end{enumerate}
\end{lem}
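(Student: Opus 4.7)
The plan is to recognise the two constraints in (ii) as the statement that each $J_t$ is a complex structure on $\C^2$ which anticommutes with the involution $\sigma\colon(z_1,z_2) \mapsto (\bar z_1, \bar z_2)$, and then to exhibit a path from $J_0$ to $J_1$ inside the space of such $J$. Writing $V_+ = \R^2$ and $V_- = i\R^2$ for the $\pm1$-eigenspaces of $\sigma$ in $\C^2$, the anticommutation relation $J\sigma = -\sigma J$ forces $J(V_+)\subseteq V_-$ and $J(V_-)\subseteq V_+$, and then $J^2 = -\Id$ forces the restriction $A := J|_{V_+}\colon V_+ \to V_-$ to be an isomorphism. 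Conversely, any $\R$-linear isomorphism $A\colon V_+\to V_-$ extends uniquely to a $J$ satisfying both conditions of~(ii) (by setting $J|_{V_-} := -A^{-1}$).

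Thus the space of $J$ satisfying~(ii) is homeomorphic to $\mathrm{Iso}_\R(V_+,V_-) \cong \GL_2(\R)$. The endpoints $J_0 = i\Id$ and $J_1 = -i\Id$ correspond, after identifying $V_-$ with $V_+$ via multiplication by $i$, to $+I$ and $-I$, both of which lie in the identity component $\GL_2^+(\R)$. Any continuous path between them in $\GL_2^+(\R)$ then lifts to the desired family $(J_t)$.

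For concreteness I would write down the path explicitly. A convenient choice is
\[
J_t(z_1, z_2) \;=\; \bigl(\,i\cos(\pi t)\, z_1 + i\sin(\pi t)\,\bar z_2,\ \ i\cos(\pi t)\, z_2 - i\sin(\pi t)\,\bar z_1\,\bigr),
\]
which under the identification $\C^2 \cong \mathbf{H}$, $(z_1,z_2) \mapsto z_1 + z_2 j$, is simply left multiplication by the unit quaternion $\cos(\pi t)\,i + \sin(\pi t)\,k$. Verifying (i) and (ii) then reduces to a direct calculation: the evaluations at $t=0$ and $t=1$ are immediate; the identity $J_t^2 = -\Id$ follows by expanding the composition and collecting terms using $\cos^2(\pi t)+\sin^2(\pi t)=1$ together with the fact that $iI$ and the extra term anticommute in $\mathrm{End}_\R(\C^2)$; and the anticommutation with $\sigma$ is checked termwise using $\overline{iw} = -i\bar w$.

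No genuine obstacle is expected. The only point to get right is the sign in the anticommutation check, which dictates the precise coefficients in the explicit formula; the above ansatz has been arranged so that these signs work out.
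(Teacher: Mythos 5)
Your proof is correct and is essentially the paper's argument: the paper writes the same reduction in the real basis, taking $J_t=\begin{pmatrix} 0 & -(K_t)^{-1}\\ K_t & 0\end{pmatrix}$ with $(K_t)$ a path from $\Id$ to $-\Id$ in $\GL_2(\R)$, which is exactly your identification of the space of admissible $J$ with $\mathrm{Iso}_\R(V_+,V_-)\cong\GL_2(\R)$ and the observation that $\pm\Id$ lie in the same component; your explicit trigonometric formula is just the special case where $K_t$ is a rotation path (and it does verify (i) and (ii), even though it is left multiplication by $\cos(\pi t)i-\sin(\pi t)k$ rather than $\cos(\pi t)i+\sin(\pi t)k$ under your quaternionic identification).
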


\begin{proof}
In the $\R$-basis $((1,0), (0,1),(i,0),(0,i))$ of $\C^2$, choose $J_t:=
\begin{pmatrix} 
0 & -(K_t)^{-1} \\
K_t& 0
\end{pmatrix},
$ where $(K_t)_{t\in [0,1]}$ is a path joining $\Id$ to $-\Id$ in $\GL_2(\R)$.
\end{proof}

\subsection{The Steenrod algebra}
\label{parSteenalg}

Let $\cA$ be the mod $2$ Steenrod algebra (see~\cite[Chapter~I, Chapter~II]{Steenrod}).
It is a graded $\Z/2$-algebra generated by the Steenrod squares $(\Sq^i)_{i\geq 1}$ subject to the Adem relations.  We let $\Sq^0\in\cA$ be the unit, and we set $\Sq:=\sum_{i\geq 0}\Sq^i$.
 The algebra~$\cA$ functorially acts on the mod $2$ relative cohomology of all pairs of topological spaces. 
The element $\Sq^1$ acts as the Bockstein. For any $i\geq 0$, the element $\Sq^i$ acts as the squaring map in degree $i$, and as zero in degree $> i$.
 The coproduct $\psi\colon\cA\to\cA\otimes_{\Z/2} \cA$ given by $\psi(\Sq^i)=\sum_{j+k=i}\Sq^j\otimes\Sq^k$ reflects the Cartan formula 
\begin{equation}
\label{Cartan}
\Sq^i(x\smile y)=\sum_{j+k=i}\Sq^j(x)\smile\Sq^k(y) 
\end{equation}
describing the compatibility of Steenrod squares with the cup product.
The antipode $\chi\colon\cA\to\cA$,  characterized by $\chi(\Sq)\Sq=\Sq\chi(\Sq)=1$, is an involutive anti-automorphism of the graded algebra $\cA$. 
These maps turn $\cA$ into a cocommutative Hopf algebra.

As $\Sq(\oo^j)=\Sq(\oo)^j=(\oo+\oo^2)^j$,  the $\cA$-action on $H^*(G,\Z/2)=\Z/2[\oo]$ is given by 
\begin{equation}
\label{Aomega}
\Sq^i(\oo^j)=\binom{j}{i}\,\oo^{i+j}.
\end{equation}  
In addition,  one computes that
\begin{equation}
\label{chiSqomega}
\Sq\bigg(\sum_{l\geq 0}\oo^{2^l}\bigg)=\sum_{l\geq 0}\Sq\big(\oo^{2^l}\big)=\sum_{l\geq 0}\big(\oo^{2^l}+\oo^{2^{l+1}}\big)=\oo
\textrm{ \hspace{.2em}and hence\hspace{.2em} }
\chi(\Sq)(\oo)=\sum_{l\geq 0}\oo^{2^l}.
\end{equation}

Define $\dA:=H^*(G,\Z/2)\otimes_{\Z/2} \cA$ endowed with the twisted product
$$(b\otimes a)(b'\otimes a'):=\sum_i (b \smile a^{(1)}_i(b'))\otimes a^{(2)}_ia',$$
where $\psi(a)=\sum_i a^{(1)}_i\otimes a^{(2)}_i$. 
 Using \cite[Theorem 2.13]{Molnar} and the Hopf algebra structure on $H^*(G,\Z/2)$
(see \cite[Theorem~4.41]{mccleary}), one can give $\dA$ a natural structure of cocommutative Hopf algebra,  with antipode $\chi\colon\dA\to\dA$ satisfying $\chi(\oo)=\oo$.  

Let $S$ be a $G$-space. Letting $\cA$ act on $H^*_G(S,\Z/2)$ by means of the Borel construction
and letting $H^*(G,\Z/2)$ act on $H^*_G(S,\Z/2)$  in the natural way
induces an action of the algebra~$\dA$ on $H^*_G(S,\Z/2)$. (To see this, use (\ref{Cartan}).)
 In fact, Greenlees showed in~\cite[Theorem~2.7]{Greenlees} that~$\dA$ can be identified with the algebra of stable operations in mod $2$ Borel equivariant cohomology.

\subsection{Characteristic classes}
\label{parcharclasses}

A $G$-equivariant real vector bundle $E$ on a $G$\nobreakdash-space $S$ induces a real vector bundle $F$ on the space $T:=(S\times EG)/G$ associated with it by the Borel construction.
  We define the \textit{$G$\nobreakdash-equivariant Stiefel--Whitney classes} $w_{G,i}(E)\in H^i_G(S,\Z/2)=H^i(T,\Z/2)$ of $E$ to be the usual Stiefel\nobreakdash--Whitney classes of~$F$ (see \cite{milnorstasheff}). The $G$-equivariant Stiefel--Whitney classes of a $G$\nobreakdash-equivariant complex vector bundle are by definition those of the underlying $G$-equivariant real vector bundle. 
As usual, we set $w_G(E):=\sum_{i\geq 0}w_{G,i}(E)$.

The \textit{Wu classes} $u_i(E)\in H^i(S,\Z/2)$ of a real vector bundle $E$ on a space $S$ are defined by the equation $u(E)=\sum_{i\geq 0} u_i(E):=\chi(\Sq)(w(E))$. The $G$-equivariant Wu classes $u_{G,i}(E)\in H^i_G(S,\Z/2)$ of a $G$\nobreakdash-equivariant real vector bundle $E$ on a $G$\nobreakdash-space~$S$ are defined by the analogous formula $u_G(E)=\sum_{i\geq 0}u_{G,i}(E):=\chi(\Sq)(w_G(E))$.

Stiefel--Whitney classes and Wu classes,
and their $G$-equivariant versions,
can be associated not only with vector bundles but more generally with classes in appropriate $K$\nobreakdash-theory groups. (Use \cite[Lemma 10.3]{milnorstasheff} to reduce to the case of compact spaces, represent $K$\nobreakdash-theory classes by differences of bundles, and use the Whitney formula.)

Let $E$ be a $G$-equivariant complex vector bundle on a $G$\nobreakdash-space $S$.
Kahn has defined functorial \textit{$G$\nobreakdash-equivariant Chern classes} $c_i(E)\in H^{2i}_G(S,\Z(i))$.
They satisfy a Whitney formula, refine the non-equivariant Chern classes and are characterized by
similar axioms (see \cite[Th\'eor\`eme~2, Proposition~2]{kahnchern}; see also \cite[\textsection4]{pitschscherer}).
 We let $c(E)=\sum_{i\geq 0} c_i(E)$ and  $\oc(E)=\sum_{i\geq 0} \oc_i(E)$ denote the total equivariant Chern class  and its reduction mod~$2$.
The $G$-equivariant Chern classes are stable in the sense that $c(E\oplus (S \times \C))=c(E)$,  by the Whitney formula and the vanishing of $H^2(G,\Z(1))$. 

Kahn computed in \cite[Th\'eor\`eme 3]{kahnchern} that 
\begin{equation}
\label{cohoBUn}
\bigoplus_{j\in\Z/2,\, k\geq 0}H^k_G(\BU(r),\Z(j))=\Z[\omega, c_1,\dots, c_r]/(2\omega)\rlap,
\end{equation}
where the $c_i$ are the $G$-equivariant Chern classes of the tautological bundle.
This is an isomorphism of $(\Z/2 \times \Z)$\nobreakdash-graded rings.  The ring structure of the left-hand side
is induced by the ring structure
on $\Z\oplus\Z(1)$ for which the square of a generator of $\Z(1)$ is~$1\in\Z$.
In the right-hand side, the grading is defined by $\deg(\omega)=(1, 1)$ and $\deg(c_i)=(i,2i)$ for all~$i \geq 1$.
It follows from (\ref{cohoBUn}) (using \cite[Lemma 2]{Milnorlim} and noting that the derived inverse limit there vanishes in our situation as the Mittag--Leffler condition is satisfied) that
\begin{equation}
\label{cohoBU}
\bigoplus_{j\in\Z/2,\, k\geq 0}H^k_G(\BU,\Z(j))=\Z[\omega, (c_i)_{i\geq 1}]/(2\omega)\rlap.
\end{equation}
Using the short exact sequence
$0 \to \Z \to \Z \to \Z/2 \to 0$, it can be deduced
that
\begin{equation}
\label{cohoBU2}
H^*_G(\BU,\Z/2)=\Z/2[\oo, (\oc_i)_{i\geq 1}]\rlap.
\end{equation}

Formula (\ref{cohoBU}) allows us to define $G$-equivariant Chern classes for arbitrary classes in $\KR$\nobreakdash-theory.
One can therefore speak of the Chern classes $c_i(M):=c_i(\tau_M)\in H^{2i}_G(M,\Z(i))$ and $\oc_i(M):=\oc_{i}(\tau_M)\in H^{2i}_G(M,\Z/2)$ of a stably complex $\ci$ $G$-manifold~$M$.  When no confusion is possible, we write~$c_i$ and $\oc_i$ instead of $c_i(M)$ and $\oc_i(M)$. 

\begin{prop}
\label{propequivw}
Let $S$ be a $G$-space. For $\kappa\in\KR(S)$, one has
\begin{align}
\label{cw}
w_G(\kappa)=\sum_{i\geq 0}\oc_i(\kappa)(1+\oo)^{\rk(\kappa)-i}
\end{align}
in the graded ring $H^*_G(S,\Z/2)$.
\end{prop}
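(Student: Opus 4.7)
The plan is to reduce the identity to the universal line bundle on~$\BU(1)$ via the splitting principle.

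I would first check that both sides of~\eqref{cw} are multiplicative with respect to the group law on $\KR(S)$. For the left-hand side, this is the Whitney formula for equivariant Stiefel--Whitney classes, recorded in~\S\ref{parcharclasses}. For the right-hand side, it follows from the Whitney formula $\oc_k(\kappa'+\kappa'')=\sum_{i+j=k}\oc_i(\kappa')\oc_j(\kappa'')$ by a short algebraic manipulation: after collecting powers of $1+\oo$ one verifies
\begin{align*}
\sum_k \oc_k(\kappa'+\kappa'')(1+\oo)^{r'+r''-k}=\left(\sum_i \oc_i(\kappa')(1+\oo)^{r'-i}\right)\left(\sum_j \oc_j(\kappa'')(1+\oo)^{r''-j}\right),
\end{align*}
where $r'=\rk(\kappa')$ and $r''=\rk(\kappa'')$.

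By functoriality, it then suffices to verify~\eqref{cw} for the class of the tautological bundle on $\BU(r)$ for each $r\geq 0$, both sides living in the polynomial ring $H^*_G(\BU(r),\Z/2)=\Z/2[\oo,\oc_1,\dots,\oc_r]$ (a consequence of~\eqref{cohoBU2}). The classifying map $\BU(1)^r\to\BU(r)$ of the sum of the tautological line bundles induces the ring morphism sending $\oc_i$ to the $i$-th elementary symmetric polynomial in the $\oc_1^{(j)}$, which is injective by the fundamental theorem on symmetric functions. Combined with the multiplicativity of both sides, this reduces the formula to the case of the tautological line bundle on~$\BU(1)$.

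The final step is a direct computation for a $G$-equivariant complex line bundle $L$ on a $G$-space $S$. Its underlying real rank~$2$ bundle has orientation sheaf canonically $G$-equivariantly isomorphic to $\Z(1)$ (see~\S\ref{parstablycomplex}); the orientation local system of the Borel bundle therefore corresponds to the pullback of $\oo$ from $BG$, and so $w_{G,1}(L)=\oo$. To determine $w_{G,2}$ of the tautological bundle in $H^2_G(\BU(1),\Z/2)=\Z/2\cdot\oo^2\oplus\Z/2\cdot\oc_1$, I would note that the coefficient of $\oc_1$ equals~$1$ by the non-equivariant identity $w_2=\oc_1$ for complex line bundles, and that the coefficient of $\oo^2$ vanishes, as can be read off by pulling back along the classifying map $BG\to\BU(1)$ of the $G$-module $\C(1)$: this $G$-module has $\oc_1=0$, its underlying real $G$-bundle is $\R\oplus\R(1)$, and the Borel construction yields a trivial line bundle in direct sum with the tautological line bundle on $BG=\P^\infty(\R)$, whose total Stiefel--Whitney class is $1+\oo$. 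Hence $w_G(L)=1+\oo+\oc_1(L)$, which matches the right-hand side of~\eqref{cw} for $r=1$.

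The main obstacle is the algebraic verification of multiplicativity on the right-hand side together with identifying $w_{G,1}$ through the $G$-equivariant orientation sheaf; once these are established and the equivariant splitting principle is invoked, the universal computation on $\BU(1)$ is routine.
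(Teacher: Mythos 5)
Your proof is correct, and its skeleton coincides with the paper's: reduce by the Whitney formula and the splitting principle to the rank-one case, then verify $w_{G,1}(L)=\oo$ and $w_{G,2}(L)=\oc_1(L)$. The differences are in two places. First, the rank-one, degree-two step: the paper identifies $w_{G,2}(L)$ with $\oc_1(L)$ for an arbitrary line bundle directly, by comparing the mod~$2$ $G$-equivariant Euler class (Kahn's Th\'eor\`eme~5) with the top Stiefel--Whitney class of the induced real bundle on the Borel construction (\cite[Property 9.5]{milnorstasheff}); you instead pin down the universal class in $H^2_G(\BU(1),\Z/2)=\Z/2\,\oo^2\oplus\Z/2\,\oc_1$ by two restrictions, to the non-equivariant fiber and to a $G$-fixed point of $\BU(1)$ classifying the trivial equivariant line bundle, whose Borel bundle is the trivial real line plus the tautological line on $BG$. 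This is a legitimate alternative that avoids Euler classes, at the cost of needing the mod~$2$ structure of $H^*_G(\BU(1),\Z/2)$, which follows from Kahn's integral computation \eqref{cohoBUn} via a Bockstein argument rather than from \eqref{cohoBU2} as you cite, and similarly your injectivity claim for $H^*_G(\BU(r),\Z/2)\to H^*_G(\BU(1)^r,\Z/2)$ is exactly the equivariant splitting principle of \cite[Th\'eor\`eme~1]{kahnchern} that the paper invokes. Second, the reduction to the universal case: your appeal to "multiplicativity plus functoriality" glosses over the fact that for a general (non-compact, possibly non-paracompact) $G$-space a class $\kappa\in\KR(S)$ need not be a difference of bundles, and indeed $w_G(\kappa)$ is only defined via the limit $H^*_G(S,\Z/2)=\varprojlim_C H^*_G(C,\Z/2)$ over $G$-invariant compact subsets; this limit argument (which also handles $\BU$ itself, and the passage from the tautological bundle on $\BU(r)$ to virtual classes of arbitrary rank, using $w_G(S\times\C)=1+\oo$) is precisely the paper's first step, and should be made explicit, but it is the same standard device and not a conceptual gap in your argument.
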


\begin{proof}
It is a consequence of \cite[Lemma 10.3]{milnorstasheff} that $\varprojlim_{C\subset S}H_G^*(C,\Z/2)=H_G^*(S,\Z/2)$, where $C$ runs over all
 compact subsets of $S$ that are stable under~$G$. One can therefore assume that~$S$ is compact, and hence that $\kappa$ is represented by a difference of $G$-equivariant complex vector bundles. 
By the splitting principle for $G$-equivariant complex vector bundles (see \cite[Th\'eor\`eme 1]{kahnchern}) and the Whitney formula,
noting that the right-hand side of~\eqref{cw} is additive in~$\kappa$,
  we may assume that $\kappa$ is the class of a line bundle~$L$.
Equation~(\ref{cw}) then reduces to the two identities $w_{G,1}(L)=\oo$ and $w_{G,2}(L)=\oc_1(L)$.  

To prove the first one,  let $F$ be the real vector bundle of rank $2$ on $T:=(S\times EG)/G$ associated with~$L$. Its pull-back to $S\times EG$ is orientable (as is any complex line bundle),  and the action of the deck transformation $\sigma\in G$ reverses its orientation. We deduce that the double cover associated with $w_1(F)\in H^1(T,\Z/2)$ is exactly $S\times EG\to T$,  whose class is $\oo\in H^1(T,\Z/2)$. It follows that $w_{G,1}(L)=\oo$. 

To prove the second one, we note that $\oc_1(L)$ equals the $G$-equivariant Euler class mod~$2$ of $L$ by \cite[Th\'eor\`eme 5]{kahnchern}, and that the latter equals $w_{G,2}(L)$ by \cite[Property 9.5]{milnorstasheff} (applied to the vector bundle $F$ on $T$).
\end{proof}

\begin{rmks}
\label{rk:SetS/G}
Let~$S$ be a $G$\nobreakdash-space with $S^G=\emptyset$ and $\pi\colon S \to S/G$ be the quotient map.
Assume that~$G$ acts properly discontinuously on~$S$ (i.e.\ that~$\pi$ is a covering space).

(i)
The categories of $G$\nobreakdash-equivariant real vector bundles on~$S$ and
 real vector bundles on~$S/G$ are equivalent, via the functors
$E \mapsto (\pi_*E)^G$ and $F \mapsto \pi^*F$.
The $G$\nobreakdash-equivariant
Stiefel--Whitney classes of a $G$\nobreakdash-equivariant real vector bundle on~$S$ are nothing
but the non-equivariant
Stiefel--Whitney classes of the corresponding real vector bundle on~$S/G$,
through the natural isomorphisms $H^i_G(S,\Z/2)=H^i(S/G,\Z/2)$.

(ii)
In contrast, the complex structure on a $G$\nobreakdash-equivariant complex vector bundle on~$S$
does not descend to a complex structure on the corresponding real vector bundle on~$S/G$.
For this reason $G$\nobreakdash-equivariant Chern classes cannot simply be defined in terms of
non-equivariant Chern classes in the cohomology of~$S/G$.
\end{rmks}

\subsection{Right actions of the Steenrod algebra}
\label{parrightaction}

One must credit Adams \cite{Adams} for first introducing right $\cA$-actions to study characteristic classes. The following general construction is due to Brown and Peterson \cite[\S 6]{BP}.  Let $F$ be a rank $r$ real vector bundle on a space $T$.  Let $F^*\subset F$ be the complement of the zero section. The cup product with the Thom class of $F$ (an element of $H^r(F,F^*,\Z/2)$) gives rise to the Thom isomorphism $\phi_F\colon H^*(T,\Z/2)\isoto H^{*+r}(F,F^*,\Z/2)$ (see \cite[Theorem~10.2]{milnorstasheff}).
For $x\in H^k(T,\Z/2)$ and~$a\in\cA$ of degree $l$, we define $(x)a\in H^{k+l}(T,\Z/2)$ by the formula
\begin{equation}
\label{rightaction}
\phi_F((x)a)=\chi(a)(\phi_F(x)).
\end{equation}
This right action of $\cA$ on $H^*(T,\Z/2)$ depends on $F$.
We stress that it does not commute, in general, with the left action of~$\cA$
on $H^*(T,\Z/2)$.

From (\ref{rightaction}) and from Thom's definition of the Stiefel--Whitney classes \cite[p.~91]{milnorstasheff}, it follows at once that 
\begin{equation}
\label{rightaction1}
(x)\chi(\Sq)=w(F)\smile\Sq(x)
\end{equation}
and that
\begin{equation}
\label{rightaction2}
(x)\Sq=u(-[F])\smile\chi(\Sq)(x).
\end{equation}
In addition,  a computation based on (\ref{rightaction2}) and on the Cartan formula (\ref{Cartan}) shows that
\begin{equation}
\label{Cartanr}
(x)\Sq^i\smile y=\sum_{j+k=i}(x\smile \Sq^j(y))\Sq^k
\end{equation}
for all $x,y\in H^*(T,\Z/2)$.

The vector bundles~$F$ and $F\oplus (T\times \R)$ induce the same right action, by multiplicativity of Thom classes 
and since $\cA$ acts trivially (on the left) on the Thom class of a trivial bundle.  
Define $\KO'(T):=\varprojlim_{C\subset T}\KO(C)$, where $\KO$ denotes real $K$-theory
(see~\cite[Chapter II, Example~1.11]{karoubibook})
and $C$ runs over all compact subsets of $T$.
Fix $\kappa\in\KO'(T)$.
The restriction of~$\kappa$ to any compact subset $C\subset T$
is stably represented by a vector bundle (argue as in \cite[Corollary 1.4.14]{AtiyahKTheory}), which gives rise to a well-defined right action of $\cA$ on $H^*(C,\Z/2)$. We therefore obtain a right action of $\cA$ on the graded ring $\varprojlim_{C\subset T}H^*(C,\Z/2)=H^*(T,\Z/2)$ (see \cite[Lemma 10.3]{milnorstasheff}), depending (only) on~$\kappa$.

Now, let $S$ be a $G$-space,  and set $T:=(S\times EG)/G$. Fix an element $\kappa\in\KO'(T)$.
The above construction yields a right action of $\cA$ on $H^*(T,\Z/2)=H^*_G(S,\Z/2)$.  One can promote this action to a right action of $\dA$ on $H^*_G(S,\Z/2)$ by using the exact same formula~(\ref{rightaction}) (after having stably represented $\kappa$ by the class of a vector bundle on a compact subset of $T$). Since $\chi(\oo)=\oo$, the induced action of $H^*(G,\Z/2)$ on $H^*_G(S,\Z/2)$ is the natural one. The resulting action of $\dA$ depends on the choice of $\kappa$.

\begin{prop}
\label{droitegauche}
Let $M$ be a compact $\ci$ $G$-manifold with $M^G=\varnothing$.  Let $\dA$ act on  $H^*_G(M,\Z/2)$ on the right as above,  by means of the element $\kappa\in \KO((M\times EG)/G)$ induced by $-[TM]$.
For all $x,y\in H^*_G(M,\Z/2)$ and $a\in\dA$ homogeneous of respective degrees $\deg(x)$, $\deg(y)$, $\deg(a)$
with $\deg(x)+\deg(y)+\deg(a)=\dim(M)$, one has
\begin{equation}
\label{Wuidentity}
x\smile a(y)=(x)a\smile y.
\end{equation}
\end{prop}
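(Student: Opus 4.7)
My plan is to reduce the statement to a classical non-equivariant Wu-type identity on the quotient manifold $N := M/G$ and then to exploit the Hopf-algebra structure of $\cA$. Since $M^G=\varnothing$, the group $G$ acts freely on~$M$, so~$N$ is a compact smooth manifold of dimension $n := \dim(M)$, and the projection $(M\times EG)/G \to N$ is a homotopy equivalence (its fibre $EG$ being contractible); hence $H^*_G(M,\Z/2) \cong H^*(N,\Z/2)$. Under this identification, the class~$\kappa$ corresponds to $-[TN]\in\KO(N)$, the action of $\cA\subseteq\dA$ is the usual Steenrod action, and the action of $\oo\in H^*(G,\Z/2)$ is cup product with the class in $H^1(N,\Z/2)$ of the double cover $M\to N$. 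The right action of $\dA$ is then that defined, via formula~(\ref{rightaction}), from any representative $F$ of $-[TN]$, for instance a stable normal bundle of an embedding $N\hookrightarrow\bS^{n+k}$.

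I would next argue that it suffices to verify~(\ref{Wuidentity}) for $a$ in a generating set of $\dA$ as a $\Z/2$-algebra, namely for $a=\oo$ and for $a=\Sq^i$. Indeed, both sides of~(\ref{Wuidentity}) are $\Z/2$-linear in $a$, and if the identity holds for $a$ and $a'$ (for all $x,y$ of appropriate degrees), then
\[x\smile(aa')(y)=x\smile a(a'(y))=(x)a\smile a'(y)=((x)a)a'\smile y=(x)(aa')\smile y,\]
the last equality following from the associativity of the right action, which is a formal consequence of $\chi$ being an anti-automorphism of $\dA$.

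For $a=\oo$, a direct check from~(\ref{rightaction}), using $\chi(\oo)=\oo$ and the fact that multiplication by (the pullback of) $\oo$ commutes with the Thom class, yields $(x)\oo=\oo\smile x$, so both sides of~(\ref{Wuidentity}) reduce to $x\smile \oo\smile y$. The core calculation, and the main obstacle, is the case $a=\Sq^i$. Here I would apply formula~(\ref{rightaction2}) to obtain $(x)\Sq^i = \sum_{j+k=i} u_k\smile\chi(\Sq^j)(x)$, where $u_k := u_k(-[F]) = u_k(TN) \in H^k(N,\Z/2)$ is the $k$-th Wu class of~$N$. Cupping with~$y$, invoking the classical Wu formula $u_k\smile z = \Sq^k(z)$ in $H^n(N,\Z/2)$ for $z\in H^{n-k}(N,\Z/2)$, expanding each $\Sq^k$ via the Cartan formula~(\ref{Cartan}), and simplifying by means of the Hopf-algebra antipode identity $\sum_{l+j=p}\Sq^l\chi(\Sq^j)=\delta_{p,0}$ (valid because $\cA$ is cocommutative) should collapse the resulting double sum to the single term $x\smile\Sq^i(y)$, finishing the proof.
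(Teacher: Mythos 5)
Your argument is correct, and its skeleton coincides with the paper's: since the action is free, identify $H^*_G(M,\Z/2)$ with $H^*(M/G,\Z/2)$ and $\kappa$ with $-[T(M/G)]$, reduce by additivity and by the associativity $((x)a)a'=(x)(aa')$ (which, as you say, is formal from $\chi$ being an anti\nobreakdash-automorphism) to the generators $\oo$ and $\Sq^i$ of $\dA$, and dispose of $a=\oo$ directly. The one genuine divergence is the $\Sq^i$ case: the paper at this point simply quotes Brown and Peterson \cite[Corollary 6.3]{BP} for the compact $\ci$ manifold $M/G$, whereas you re-derive that non-equivariant statement, using (\ref{rightaction2}), the equality $u_k(T(M/G))\smile z=\Sq^k(z)$ in top degree, the Cartan formula (\ref{Cartan}), and the relation $\sum_{l+j=p}\Sq^l\chi(\Sq^j)=\delta_{p,0}$; the double sum does collapse to $x\smile\Sq^i(y)$ exactly as you claim (the degree hypothesis enters only to place $\chi(\Sq^j)(x)\smile y$ in the complementary degree needed for the Wu identity). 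So your proof is complete and self-contained where the paper's is citation-based, at the cost of importing one classical geometric fact.

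Two small points of attribution. The identity $\sum_{l+j=p}\Sq^l\chi(\Sq^j)=\delta_{p,0}$ is just the degreewise form of $\Sq\,\chi(\Sq)=1$, i.e.\ the characterization of the antipode recalled in \S\ref{parSteenalg}; cocommutativity is not what makes it true. And the statement that $u(T(M/G))=\chi(\Sq)(w(T(M/G)))$ is the class satisfying $u_k\smile z=\Sq^k(z)$ on $H^{n-k}$ of a closed manifold is precisely Wu's theorem (see \cite[\S 11]{milnorstasheff}); it deserves an explicit reference, since it is the ingredient standing in for the citation of \cite{BP}.
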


\begin{proof}
It suffices to verify the validity of (\ref{Wuidentity}) for generators of $\dA$. When $a=\oo$, the assertion is clear. When $a\in\cA$, it is \cite[Corollary 6.3]{BP} applied to the compact~$\ci$ manifold $M/G$, in view of the identification $H^*_G(M,\Z/2)=H^*(M/G,\Z/2)$.
\end{proof}

\section{Relations between tautological classes}
\label{secrelations}

In this section, we construct many relations between powers of $\oo$ (or $\omega$) and Chern classes, in the spirit of \cite{BP,BP2}. We work in the topological setting of stably complex $G$\nobreakdash-manifolds, which is the natural generality in which our relations are valid. We prove, in particular, Theorems~\ref{introth:relmod2} and~\ref{introth:relint} stated in the introduction.

\subsection{Mod \texorpdfstring{$2$}{2} relations}
\label{parrelations}

Fix $n\geq 0$.  Let $\Theta_n^{\top}$ (\resp $\Theta_n^{\ac}$, \resp $\Theta_n^{\an}$, \resp $\Theta_n^{\alg}$) denote the collection of all stably complex~$\ci$ $G$-manifolds of dimension $n$ with no $G$-fixed points (\resp of all almost complex~$\ci$ $G$-manifolds of dimension~$n$ with no $G$\nobreakdash-fixed points, \resp of all $n$\nobreakdash-dimensional complex manifolds endowed with a fixed-point free antiholomorphic involution, \resp of all sets of complex points of smooth algebraic varieties of dimension~$n$ over $\R$ with no real points).
Fix $?\in\{\top, \ac,\an,\alg\}$.  The evaluation of $P\in \Z/2[\oo,(\oc_i)_{i\geq 1}]$ at $M\in \Theta_n^?$ is
by definition the cohomology class
  $P(\oo,\oc(M))\in H^*_G(M,\Z/2)$,
where
$P(\oo,\oc(M))$ stands for
 $P(\oo,\oc_1(M),\oc_2(M),\dots)=P(\oo,\oc_1(\tau_M),\oc_2(\tau_M),\dots)$
(see \textsection\ref{parcharclasses}).
We define $J_n^{?}\subset\Z/2[\oo,(\oc_i)_{i\geq 1}]$ to be the ideal of those polynomials that vanish on all $M\in\Theta_n^?$.

\begin{quest}
\label{qtopalg}
Are the inclusions $J_n^{\top}\subset J_n^{\ac}\subset J_n^{\an}\subset J_n^{\alg}$ equalities?
\end{quest}

 In other words, are all relations that hold in the algebraic setting also valid in the analytic setting, and furthermore also in the almost complex or stably complex settings? We shall prove in Theorem \ref{thtopac} that $J_n^{\top}=J_n^{\ac}$, but the other equalities are open. We do not even know if the powers of $\oo$ that belong to $J_n^{\alg}$ also belong to $J_n^{\top}$ (see Remark \ref{remexemplessc} (iii) below).

We now explain how to produce elements of $J_n^{\top}$.  Over each $G$-invariant compact subset of $\BU\times\Z\times EG$, the universal $\KR$-theory element in $\KR(\BU\times \Z\times EG)$ can be represented as a difference of $G$-equivariant complex vector bundles.
 Forgetting the complex structure and quotienting by the action of $G$ yields an element $\kappa\in \KO'((\BU\times\Z\times EG)/G)$. 
Let~$\kappa_{n}\in \KO'((\BU\times EG)/G)$ be its restriction to the connected component corresponding to~$n\in\Z$.  We let $\dA$ act on the right on $H^*_G(\BU,\Z/2)$ as in \S\ref{parrightaction}, by means of $-\kappa_{n}$. We insist that this right action depends on $n$.

Let $R^l_k\subset\cA$ be the set of degree $l$ elements of the Steenrod algebra which, when applied to the universal degree $k$ class in $H^k(K(\Z/2,k),\Z/2)$, give rise to a class which lifts to $H^{k+l}(K(\Z/2,k),\Z)_{\torsion}$.  A complete description of the $R^l_k$ appears in \cite[Theorem~4.4]{BP}.  In addition, we identify $H^{i}_G(\BU,\Z(n))/2$ with the image of the reduction mod $2$ map $H^{i}_G(\BU,\Z(n))\to H^{i}_G(\BU,\Z/2)$. We then define
\begin{equation}
\label{defKn}
K_n=\sum_{k<l}(H^{2n-l-k}_G(\BU,\Z/2))\Sq^l+\sum_{k,l}(H^{2n-l-k}_G(\BU,\Z(n))/2)R_k^l,
\end{equation}
which we view as a subgroup of $H^*_G(\BU,\Z/2)=\Z/2[\oo, (\oc_i)_{i\geq 1}]$ (see (\ref{cohoBU2})).

The first summand
in~\eqref{defKn} (resp.\ the group~$K_n$)
is the stably complex $G$\nobreakdash-equivariant analogue
of the relations exhibited by Brown and Peterson in \cite[Theorem~3.5]{BP}
(resp.\ in \cite[Theorem~4.3]{BP}) between the Stiefel--Whitney
classes of compact $\ci$ manifolds (resp.\ of compact oriented $\ci$ manifolds)
of dimension~$2n$ (see also Remark~\ref{rk:4.1versusBP}).

\begin{thm}
\label{reltopo}
One has $K_n\subset J_n^{\top}$.
\end{thm}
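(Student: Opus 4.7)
The plan is to exploit Proposition~\ref{droitegauche}, which converts the right $\dA$-action on $H^*_G(M,\Z/2)$ (defined via $-[TM]$) into ordinary cup products, and to apply Poincar\'e duality on the compact $\ci$ manifold $M/G$, whose orientation sheaf is canonically $\Z(n)$. Granting first that $M\in\Theta_n^\top$ is compact, let $f\colon M\to\BU$ be the classifying map of $\tau_M$; then $f^*\kappa_n$ coincides with the $\KO$-class of $TM$, so by naturality of the right $\dA$-action along $f$, the pullback $f^*P$ of any $P\in K_n$ is a sum of terms of the form $(x)\Sq^l$ with $k<l$, or $(y)a$ with $a\in R^l_k$, now computed with respect to the right action of Proposition~\ref{droitegauche} on $H^*_G(M,\Z/2)$. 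By the perfect mod~$2$ Poincar\'e duality pairing with values in $H^{2n}_G(M,\Z/2)\simeq(\Z/2)^{\pi_0(M/G)}$, it suffices to show that each such term, cupped with an arbitrary class $z$ of complementary degree, vanishes.

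For the first summand, with $x\in H^{2n-l-k}_G(M,\Z/2)$ and $z\in H^k_G(M,\Z/2)$ and $k<l$, Proposition~\ref{droitegauche} gives $(x)\Sq^l\smile z=x\smile\Sq^l(z)$, which vanishes since $\Sq^l$ annihilates cohomology classes of degree $<l$. For the second summand, lift $y$ to $\tilde y\in H^{2n-l-k}_G(M,\Z(n))$, and let $z\in H^k_G(M,\Z/2)$ be classified by $g\colon(M\times EG)/G\to K(\Z/2,k)$. Since $a\in R^l_k$, there exists a torsion integral lift $\widetilde{a(\iota_k)}\in H^{k+l}(K(\Z/2,k),\Z)$ of $a(\iota_k)$, so that $\widetilde{a(z)}:=g^*\widetilde{a(\iota_k)}\in H^{k+l}_G(M,\Z)$ is a torsion integral lift of $a(z)$. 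Proposition~\ref{droitegauche} gives $(y)a\smile z=y\smile a(z)$, whose integral lift $\tilde y\smile\widetilde{a(z)}\in H^{2n}_G(M,\Z(n))$ is torsion, hence zero, because $H^{2n}_G(M,\Z(n))$ is torsion-free by Poincar\'e duality on $M/G$ with values in its orientation sheaf. Reducing modulo~$2$ yields $y\smile a(z)=0$.

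The principal obstacle I anticipate is the non-compact case, since both Proposition~\ref{droitegauche} and the torsion-freeness of $H^{2n}_G(M,\Z(n))$ rely on compactness; one presumably reduces to the compact case via a $G$-equivariant exhaustion of~$M$ by compact stably complex $G$-submanifolds with boundary, whose doubles (constructed as in \S\ref{parstablycxconstr}) are compact stably complex $G$-manifolds to which the preceding argument applies. A minor naturality point also deserves verification: that under a classifying map $f\colon M\to\BU$, the right $\dA$-action on $H^*_G(\BU,\Z/2)$ defined via $-\kappa_n$ and the one on $H^*_G(M,\Z/2)$ defined via $-[TM]$ are intertwined by~$f^*$; this is immediate from the fact that forgetting the complex structure of~$\tau_M$ recovers $[TM]$ in $\KO$-theory and that the construction of the right action is natural in the underlying bundle.
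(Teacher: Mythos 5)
Your argument is correct and takes essentially the same route as the paper: in the compact case you combine Proposition~\ref{droitegauche} with Poincar\'e duality on $M/G$ and the torsion-freeness of $H^{2n}_G(M,\Z(n))$, handling the two kinds of generators of $K_n$ (the $(x)\Sq^l$ with $l>k$, and the $(\bar y)a$ with $a\in R^l_k$ via a torsion integral lift of $a(z)$ pulled back from $K(\Z/2,k)$) exactly as the paper does. Your proposed reduction of the noncompact case by exhausting $M$ by compact stably complex $G$\nobreakdash-submanifolds with boundary and passing to their doubles is precisely the paper's argument; the only detail left implicit is the passage to the limit, which the paper justifies by $H^*_G(M,\Z/2)=\varprojlim_i H^*_G(M_i,\Z/2)$, the Mittag--Leffler condition holding because the groups $H^*_G(M_i,\Z/2)$ are finite.
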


\begin{proof}
Let $M$ be a stably complex $\ci$ $G$-manifold of dimension~$n$ with $M^G=\varnothing$.  Let \mbox{$\tau\colon M\to \BU\times \Z$} be a $G$-equivariant continuous map classifying the stable tangent bundle $\tau_M\in\KR(M)$.
  Let $\dA$ act on the right on $H^*_G(M,\Z/2)$ by means of the class $-\tau^*\kappa\in\KO'((M\times EG)/G)$ induced by $-[TM]$.

\begin{Case} 
\label{Case1mod2}
$M$ is compact.
\end{Case}

Fix $x\in H^{2n-l-k}_G(\BU,\Z/2)$. Then
for all $z\in H^k_G(M,\Z/2)$,
 Proposition~\ref{droitegauche} shows that
$$\tau^*((x)\Sq^l)\smile z=(\tau^*x)\Sq^l\smile z=\tau^*x\smile\Sq^l(z)\rlap.$$
If $l>k$, then $\Sq^l(z)=0$ and hence
$\tau^*((x)\Sq^l)\smile z=0$ for all
 $z\in H^k_G(M,\Z/2)$.  By Poincar\'e duality on $M/G$, it follows that $\tau^*((x)\Sq^l)=0$, i.e.\ $(x)\Sq^l$ vanishes on $M$.

Now choose $y\in H^{2n-l-k}_G(\BU,\Z(n))$ and let $\bar y\in H^{2n-l-k}_G(\BU,\Z/2)$ be its reduction mod~$2$.  For $a\in R^l_k$ and $z\in  H^k_G(M,\Z/2)$, Proposition~\ref{droitegauche} shows that
\begin{equation}
\label{equalideg}
\tau^*((\bar y)a)\smile z=(\tau^*\bar y)a\smile z=\tau^*\bar y\smile a(z).
\end{equation}
By the choice of $a$, there exists $t\in H^{k+l}_G(M,\Z)_{\torsion}$ lifting $a(z)$.  As $H^{2n}_G(M,\Z(n))$ is torsion-free (see \S\ref{parstablycomplex}),  the class $\tau^*y\smile t$ vanishes. So does $\tau^*\bar y\smile a(z)$, which is its reduction mod~$2$.  As this holds for all~$z$, we deduce, thanks to~(\ref{equalideg}) and to Poincar\'e duality on~$M/G$, that~$\tau^*((\bar y)a)=0$, i.e.\ that $(\bar y)a$ vanishes on $M$.

\begin{Case} 
\label{Case2mod2}
$M$ is not compact.
\end{Case}

Fix $\alpha\in K_n$. Choose a $G$\nobreakdash-invariant proper $\ci$ map $f\colon M\to \R$.
  By Sard's theorem,  one can exhaust~$M$ by sublevel sets $(M_i)_{i\geq 1}$ of $f$ that are compact stably complex~$\ci$ $G$\nobreakdash-manifolds with boundary.  Let~$S_i$ be the double of $M_i$ (as in~\S\ref{parstablycxconstr}). The relation $\alpha=0$ holds on the $S_i$ by the compact case, hence on the $M_i$ by restriction, and hence on $M$ because $H^*_G(M,\Z/2)=\varprojlim_i H^*_G(M_i,\Z/2)$ (the Mittag--Leffler condition is satisfied because the $H^*_G(M_i,\Z/2)$ are finite).
\end{proof}

\begin{rmk}[comparison with the Brown--Peterson relations]
\label{rk:4.1versusBP}
If~$M$ is a compact stably complex~$\ci$ $G$\nobreakdash-manifold of dimension~$n$ with $M^G=\emptyset$,
any polynomial relation between the Stiefel--Whitney classes of the compact~$\ci$ manifold $M/G$ of
dimension~$2n$ induces,
by Proposition~\ref{propequivw} and
Remark~\ref{rk:SetS/G}~(i),
a polynomial relation between~$\oo$ and the reductions modulo~$2$ of the
$G$\nobreakdash-equivariant Chern classes of~$M$.
In this way,  the
relations universally satisfied by the
Stiefel--Whitney classes of compact~$\ci$ manifolds of dimension~$2n$
(resp.\ of oriented compact~$\ci$ manifolds of dimension~$2n$, when~$n$ is even),
described in \cite[Theorem~3.5]{BP} (resp.\ in \cite[Theorem~4.3]{BP}),
give rise to an ideal $K_n^{\mathrm{BP}} \subset\Z/2[\oo,(\oc_i)_{i\geq 1}]$
(resp.\ an ideal $K_n^{\mathrm{BPo}} \subset\Z/2[\oo,(\oc_i)_{i\geq 1}]$,
noting that~$M/G$ is orientable if~$n$ is even).
From the definition of~$K_n$ and the explicit descriptions of $K_n^{\mathrm{BP}}$
and $K_n^{\mathrm{BPo}}$,
it is apparent that $K_n^{\mathrm{BP}} \subset K_n$
and, if $n$ is even, that $K_n^{\mathrm{BP}} \subset K_n^{\mathrm{BPo}} \subset K_n$.
We point out, however, that these inclusions are strict.
For example, one computes that $\oo^3 \in K_2$ but that $\oo^3 \notin K_2^{\mathrm{BP}}$
and in fact $\oo^3\notin K_2^{\mathrm{BPo}}$.
In particular, the vanishing of~$\oo^3$
when $n=2$
does not follow from the well-known fact that the Wu classes~$u_i$ of~$M/G$
vanish for $i>n$.
\end{rmk}

We do not know whether the relations that we have just constructed are the only ones.

\begin{quest}
\label{qKJ}
Is the inclusion $K_n\subset J_n^{\top}$ an equality?
\end{quest}

\begin{rmks}
\label{remrelmod2}
(i)
Positive answers to non-equivariant versions of Questions \ref{qtopalg} and~\ref{qKJ} were obtained by Brown and Peterson in \cite[Theorems 1.2 and 1.3]{BP2}.

(ii)
We do not even know if $K_n$ is an ideal of $\Z/2[\oo, (\oc_i)_{i\geq 1}]$. 
Whether the ideal of $\Z/2[\oo, (\oc_i)_{i\geq 1}]$ generated by $K_n$ is equal to $J_n^{\top}$ is a natural weakening of Question~\ref{qKJ}.

(iii)
Let us briefly describe an even more general way of producing elements of $J_n^{\top}$.
Fix $0\leq k\leq 2n$. Let $G$ act trivially on the Eilenberg--MacLane space $K(\Z/2,k)$, and denote by $z\in H^k(K(\Z/2,k),\Z/2)$ the tautological class. Consider the cohomology group 
$$B_k:=H^*_G(\BU\times K(\Z/2,k),\Z/2)=H^*_G(\BU,\Z/2)\otimes_{\Z/2}H^*(K(\Z/2,k),\Z/2),$$
and let $\dA$ act on $B_k$ on the right as in \S\ref{parrightaction}, by means of $pr_1^*(-\kappa_{n})$. 
Let $C_k\subset B_k$ be the subgroup generated by all the elements of degree $2n$ of $B_k$ of the form $(b)a\smile b'-b\smile a(b')$, where $a\in\dA$ and $b,b'\in B_k$ are homogeneous.  Finally, define $K'_n\subset H^{*}_G(\BU,\Z/2)$ to be the graded subgroup whose degree $2n-k$ component is 
$\{c\in H^{2n-k}_G(\BU,\Z/2)\mid c\smile z\in C_k\}$.

One can verify that $K'_n\subset J_n^{\top}$ (reduce to the case of compact manifolds as in Case \ref{Case2mod2} of the proof of Theorem \ref{reltopo}, and apply Proposition \ref{droitegauche} and Poincar\'e duality as in Case \ref{Case1mod2} of the same proof), that $K'_n\subset H^{*}_G(\BU,\Z/2)$ is an ideal,  and that this procedure recovers all the relations of Theorem~\ref{reltopo}, in the sense that $K_n\subset K'_n$. 
(We will not use these facts.)

The question whether $K'_n=J_n^{\top}$ is an even weaker version of Question \ref{qKJ} than the one considered in (ii). We do not know if this mechanism leads to more relations than those in~$K_n$, \ie if the inclusion $K_n\subset K'_n$ is strict. However, its principle will be put to use in the proof of Proposition \ref{o8QZ} to construct an interesting relation with $\Q/\Z$ coefficients.
\end{rmks}

\subsection{Poincar\'e duality}
\label{parPoincare}

To construct integral relations between $\omega$ and the $c_i$ in \S\ref{parintrelations}, 
we rely on Poincar\'e duality, as we did in the proof of Theorem~\ref{reltopo}.
As we could not locate, in the existing literature,
the exact version of Poincar\'e duality that we need (for local systems of finitely generated
abelian groups on
possibly non-orientable manifolds with boundary), we derive it below.
In its statement, we use $\widetilde{A}$ as a shorthand for $A \otimes_{\Z}\widetilde{\Z}$
and write $H^k_c$
to denote cohomology with compact support.

\begin{prop}
\label{Poincareduality}
Let $M$ be a compact topological manifold of dimension~$n$ with boundary~$\partial M$.
Set $\mathring M = M \setminus \partial M$.
Let~$\widetilde \Z$ be the orientation sheaf of~$\mathring M$.
Let~$\L$ be a locally constant sheaf of abelian groups on~$M$, with finitely generated stalks.
For any abelian group~$A$,
there is a canonical ``trace'' homomorphism
$H^n_c(\mathring M,\widetilde A) \to A$.
\begin{enumerate}
\item Together with cup product, the trace homomorphism induces, for any $k \in \Z$,
a pairing
\begin{align*}
H^k(M,\L)\times H^{n-k}_c(\mathring M,\Homrond(\L|_{\mathring M},\widetilde{\Q/\Z}))\to \Q/\Z
\end{align*}
which is non-degenerate on both sides and identifies 
$H^{n-k}_c(\mathring M,\Homrond(\L|_{\mathring M},\widetilde{\Q/\Z}))$
with the Pontrjagin dual
of the finitely generated abelian group $H^k(M,\L)$.
\item Assume that~$\L$ has torsion-free stalks.
Together with cup product, the trace homomorphism induces, for any $k \in \Z$,
a perfect (unimodular) pairing
\begin{align*}
H^k(M,\L)/\mathrm{tors} \times H^{n-k}_c(\mathring M,\Homrond(\L|_{\mathring M},\widetilde \Z))/\mathrm{tors}\to \Z\rlap.
\end{align*}
It identifies each of the 
two
 finitely generated torsion-free abelian groups
$H^k(M,\L)/\mathrm{tors}$
and
$H^{n-k}_c(\mathring M,\Homrond(\L|_{\mathring M},\widetilde \Z))/\mathrm{tors}$
with the $\Z$\nobreakdash-linear dual of the other one.
\end{enumerate}
\end{prop}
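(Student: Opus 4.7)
The plan is to deduce both parts from classical Poincar\'e--Lefschetz duality for the compact topological manifold with boundary $(M,\partial M)$, combined with standard universal coefficient arguments.

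First, I would construct the trace map. Since $\mathring M$ is a topological $n$-manifold with orientation sheaf $\widetilde\Z$, one has a canonical isomorphism $H^n_c(\mathring M,\widetilde\Z)\cong\Z^{\pi_0(\mathring M)}$; summing over components gives the trace to~$\Z$, and tensoring with~$A$ produces the required map $H^n_c(\mathring M,\widetilde A)\to A$. The pairings of (1) and (2) are then defined by composing this trace with the cup product
\[
H^k(M,\L)\otimes H^{n-k}_c(\mathring M,\Homrond(\L|_{\mathring M},\widetilde B))\to H^n_c(\mathring M,\widetilde B),
\]
where $B=\Q/\Z$ in (1) and $B=\Z$ in (2), using that the cup product of a class on~$M$ with a compactly supported class on~$\mathring M$ is naturally compactly supported on $\mathring M$.

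For part (2), I would exploit the torsion-free hypothesis on~$\L$ to write $\Homrond(\L|_{\mathring M},\widetilde\Z)=\L^\vee|_{\mathring M}\otimes\widetilde\Z$ with $\L^\vee:=\Homrond(\L,\Z)$. Lefschetz duality, obtained by capping with the relative fundamental class $[M,\partial M]\in H_n(M,\partial M;\widetilde\Z)$, then yields isomorphisms
\[
H^k(M,\L)\xrightarrow{\sim}H_{n-k}(M,\partial M;\L\otimes\widetilde\Z)\quad\text{and}\quad H^{n-k}_c(\mathring M,\L^\vee\otimes\widetilde\Z)\xrightarrow{\sim}H_k(M,\L^\vee),
\]
under which the cup-product pairing corresponds to the tautological evaluation pairing between $H_{n-k}(M,\partial M;\L\otimes\widetilde\Z)$ and $H_k(M,\L^\vee)$. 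Finite generation of all groups in play follows from compactness of~$M$, and the universal coefficient theorem shows that, modulo torsion, this evaluation pairing is perfect.

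For part (1), the same strategy goes through with $\Q/\Z$ coefficients and drops the torsion-freeness hypothesis. Since $\Q/\Z$ is an injective $\Z$-module, the functor $\Homrond(-,\widetilde{\Q/\Z})$ is exact, so Lefschetz duality combined with the universal coefficient theorem identifies the right-hand side of the pairing with the Pontrjagin dual $\Hom(H^k(M,\L),\Q/\Z)$ of the finitely generated abelian group $H^k(M,\L)$. Non-degeneracy on both sides then follows from Pontrjagin self-duality of finitely generated abelian groups. The main technical obstacle is the bookkeeping: checking that the pairing induced by cup product corresponds, under the Lefschetz duality isomorphisms, to the tautological evaluation pairing between homology and cohomology (and not merely to some a priori different pairing). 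This compatibility is a standard verification that can be carried out at the level of singular chains and cochains, using the associativity of the cap and cup products together with the naturality of the trace.
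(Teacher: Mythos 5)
Your proof is correct in substance, but it follows a genuinely different route from the paper. You argue via classical Poincar\'e--Lefschetz duality: a hand-built trace using $H^n_c(\mathring M,\widetilde\Z)\cong\Z^{\pi_0(\mathring M)}$, cap product with the relative fundamental class $[M,\partial M]$, the cup--cap adjunction, and universal coefficients/Pontrjagin duality. The paper instead runs everything through Verdier duality in the derived category: the trace comes from $f^!A$ (via $j^*f^!A=\widetilde A[n]$ and $i^*f^!A=0$), the pairing statements come from the isomorphism $\Hom_{D^+(\pt)}(R\Gamma(M,\sF)[k],A)\cong H^{-k}(M,\RHom(\sF,f^!A))$ together with $\RHom(\L,j_!\widetilde A)=j_!\Homrond(j^*\L,\widetilde A)$ (valid when $A$ is divisible or $\L$ has torsion-free stalks), finiteness being quoted from Iversen; both parts are then handled uniformly by the derived-category facts $\Hom_{D^+(\pt)}(\sC,\Q/\Z)=\Hom(H^0(\sC),\Q/\Z)$ and $\Hom_{D^+(\pt)}(\sC,\Z)/\mathrm{tors}=\Hom(H^0(\sC),\Z)$. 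Your approach is more elementary and explicit, but it shifts the burden onto classical inputs that deserve to be named: Lefschetz duality with arbitrary local coefficients for (possibly non-triangulable, non-orientable) compact topological manifolds with boundary, the collar theorem to identify $H^*_c(\mathring M,-)$ with $H^*(M,\partial M;-)$, finite generation of $H^k(M,\L)$ (compactness alone is a little glib; one needs, e.g., that compact manifolds are homotopy equivalent to finite complexes, or an ANR/\v Cech argument), and a local-coefficient universal coefficient theorem, which works because $\Hom(C_*(M,\partial M;\L\otimes\widetilde\Z),B)$ is literally the cochain complex with coefficients in $\Homrond(\L\otimes\widetilde\Z,B)$ and the chain complex is $\Z$-free (automatic in (1) since $\Q/\Z$ is injective, and guaranteed in (2) by the torsion-free stalks). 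One wording slip: there is no ``tautological evaluation pairing'' between the two homology groups $H_{n-k}(M,\partial M;\L\otimes\widetilde\Z)$ and $H_k(M,\L^\vee)$; the correct formulation is to apply duality to only one factor and match the cup-product pairing with the Kronecker pairing between cohomology and homology via $\langle x\smile y,[M,\partial M]\rangle=\pm\langle y,x\cap[M,\partial M]\rangle$, after which your universal-coefficient and Pontrjagin-duality arguments go through. What each approach buys: yours avoids the six-functor machinery entirely; the paper's packages the trace, the duality, and both coefficient cases into short formal manipulations and is closer in spirit to the equivariant sheaf-theoretic framework used elsewhere in the text.
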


\begin{proof}
Let $D^+(V)$ denote the bounded below derived category of sheaves of abelian groups on a space~$V$.
We recall that Verdier duality furnishes
a functor $f^!\colon D^+(\pt) \to D^+(M)$ and,
for any sheaf of abelian groups~$\sF$ on~$M$
and any abelian group~$A$,
an isomorphism
\begin{align}
\label{eq:verdier}
\Hom_{D^+(\pt)}(R\Gamma(M,\sF)[k],A)=H^{-k}(M,\RHom(\sF,f^!A))
\end{align}
for any $k\in\Z$
(see \cite[Theorem~3.1.5, Proposition~3.1.10]{kashiwaraschapira}).
Let $i\colon\partial M \hookrightarrow M$ and $j\colon\mathring M \hookrightarrow M$ denote the inclusions.
One has $j^*f^!A=\widetilde A[n]$
(see \cite[Proposition~3.3.2, Proposition~3.3.6]{kashiwaraschapira})
and $i^*f^!A=0$
(see \cite[Proposition~3.1.12]{kashiwaraschapira}, to be applied locally),
In view of this and of the isomorphism~\eqref{eq:verdier} for $\sF=\Z$ and $k=0$,
the natural morphism $\Z \to R\Gamma(M,\Z)$ induces the desired trace map
$H^n_c(\mathring M,\widetilde A) \to A$.

The finiteness assertions of Proposition~\ref{Poincareduality}
follow from \cite[Chapter~III, Proposition~10.2]{iversen}.
To complete the proof of Proposition~\ref{Poincareduality}, it remains to apply the results recalled above,
with $A=\Q/\Z$ for part~(1) and with $A=\Z$ for part~(2),
and to make the following two observations.
First, one has
$\RHom(\L,j_!\widetilde{A}) = j_! \Homrond(j^*\L, \widetilde{A})$
if either $A$ is a divisible abelian group or if~$\L$ has torsion-free stalks.
Secondly,
if~$\sC$ is a bounded complex of abelian groups,
one has
$\Hom_{D^+(\pt)}(\sC,\Q/\Z) = \Hom(H^0(\sC),\Q/\Z)$,
and if $H^1(\sC)$ is finitely generated,
then also
 $\Hom_{D^+(\pt)}(\sC,\Z)/\mathrm{tors} = \Hom(H^0(\sC),\Z)$.
These last two canonical isomorphisms result from \cite[Example~3.2]{kellerderived}.
\end{proof}

\subsection{\texorpdfstring{$\Z$}{Z}-polynomial maps on abelian groups}
\label{parZpolynomial}

The construction of integral relations between $\omega$ and the $c_i$ runs into difficulties related to the possible failure of the Mittag\nobreakdash--Leffler condition (on noncompact manifolds).  Proposition \ref{liftlin}
is used in \S\ref{parintrelations} to overcome them.

Let $A$ be an abelian group. A map $f\colon A\to\Z$ is said to be $\Z$-\textit{polynomial} if for all $N\geq 1$ and all $a_1,\dots,a_N\in A$, there exists $P\in \Z[x_1,\dots,x_N]$ such that
for all $(x_1,\dots, x_N)\in\Z^N$,
the equality $f(\sum_{i=1}^N x_ia_i)=P(x_1,\dots,x_N)$ holds. 
This definition may be viewed as a particular case of Roby's polynomial laws \cite{Roby}.  
We say that $f$ is of degree $d$ (\resp is homogeneous of degree $d$) if so is $P$ for all choices of $(a_i)_{1\leq i\leq N}$. 

\begin{prop}
\label{liftlin}
Let $A$ be a countable abelian group. Fix $m\geq 1$.  If a group morphism $f\colon A\to\Z/m$ admits a $\Z$-polynomial lift $g\colon A\to\Z$, it also admits a $\Z$-linear lift $h\colon A\to \Z$.
\end{prop}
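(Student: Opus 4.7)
The plan is to reduce to the case where $A$ is torsion-free, to write $A$ as a countable filtered union of finitely generated free subgroups, and to use the polynomial structure of $g$ to produce compatible linear lifts on each piece.

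First, if $t\in A$ has finite order $n$, the polynomial $x\mapsto g(a+xt)\in\Z[x]$ is periodic of period $n$ in~$x$, hence constant; therefore $g(a+t)=g(a)$ for all $a\in A$. It follows that both $g$ and $f=g\bmod m$ factor through $A/A_\torsion$. I may therefore assume that $A$ is torsion-free, and write $A=\bigcup_n A_n$ with $A_n\subset A_{n+1}$ finitely generated; each $A_n$ is then free of some finite rank $r_n$.

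My aim is to construct linear lifts $h_n:A_n\to\Z$ of $f|_{A_n}$ compatibly with the inclusions $A_n\hookrightarrow A_{n+1}$, so that $h:=\varinjlim h_n$ gives a linear lift of $f$. A natural candidate is the basis-independent additive map $\alpha_1:A\to\Z$ defined by taking $\alpha_1(a)$ to be the coefficient of $x$ in $g(xa)\in\Z[x]$; its additivity follows from the bivariate polynomial $g(x_1a+x_2b)\in\Z[x_1,x_2]$ by comparing the coefficients of $x_1$, of $x_2$, and of $x_1$ in the specialization $x_2=x_1$. However, $\alpha_1$ may fail to lift $f$ mod~$m$: for $A=\Z$, $g(n)=n^2$ and $m=2$ we have $\alpha_1=0$ while $f\ne 0$, because $n^2\equiv n\pmod 2$ as a function even though the monomial coefficient of $x$ in $x^2$ vanishes.

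The heart of the proof, and its main obstacle, is to correct $\alpha_1$ using the higher-degree parts of $g$. The polynomial $g$ admits a decomposition $g=\sum_{d\ge 0}g_d$ in which $g_d(a)$ is the coefficient of $x^d$ in $g(xa)$, and each $g_d:A\to\Z$ is a $\Z$-polynomial map homogeneous of degree~$d$. The residual homomorphism $e:=f-(\alpha_1\bmod m)$ admits the polynomial lift $\sum_{d\ge 2}g_d$, whose degree-$1$ part vanishes. I would then proceed by induction along an enumeration $a_1,a_2,\dots$ of $A$: at each step one extends the partial linear lift to a new generator, respecting both the congruence mod~$m$ and the divisibility conditions forced by previously chosen values. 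The main subtlety arises when the new generator $a_n$ satisfies a nontrivial relation $ka_n\in\langle a_1,\dots,a_{n-1}\rangle$, which requires $k\mid h(ka_n)$; ensuring this divisibility at every step, using the polynomial structure of~$g$ (for instance, the additivity and compatibility of~$\alpha_1$ together with the relations among the $g_d$'s imposed by $f$ being linear), is the technical heart of the proposition.
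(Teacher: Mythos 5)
Your outline stops exactly where the proof has to happen, so there is a genuine gap. You reduce correctly to the torsion-free case and you correctly observe that the degree-one component $\alpha_1=g_1$ is additive but need not lift $f$; after that, the entire content of the proposition is deferred to the inductive extension step, which you yourself describe as ``the technical heart'' without carrying it out. Concretely, when you enlarge $A_n$ to $A_{n+1}$ you must choose the value on a new generator so as to satisfy simultaneously the congruence mod $m$ and the divisibility constraints $k\mid h_n(ka)$ coming from relations $ka\in A_n$, and nothing in the proposal shows that such a choice exists or, more seriously, that it can be made without obstructing later stages: relations involving generators that appear only much later are invisible at stage $n$, so a greedy construction can paint itself into a corner. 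That this is a real obstruction and not a routine verification is shown by the group $A\subset\Q$ of rationals with square-free denominator and the nonzero homomorphism $f:A\to\Z/2$: one has $\Hom(A,\Z)=0$, so $f$ admits no linear lift, yet every restriction of $f$ to a finitely generated (hence free) subgroup lifts linearly. Thus any argument of your shape must use the existence of the polynomial lift $g$ globally, to steer the choices at every finite stage, and the proposal gives no mechanism for this (the higher components $g_d$ are not homomorphisms, and you do not explain how they constrain $h_{n+1}$).

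The paper's proof avoids the extension problem altogether by a structural decomposition: since $A$ is countable, one can write $A=A'\oplus A''$ with $A'$ free and $\Hom(A'',\Z)=0$ (the reference cited is \cite[Lemma 7]{NuRo}), and it suffices to treat the two summands separately. On the free summand any homomorphism to $\Z/m$ lifts linearly by choosing values on a basis. On $A''$ one proves that \emph{every} $\Z$-polynomial map to $\Z$ is constant: for maps of bounded degree, by induction on the degree using the difference map $a\mapsto g(a+b)-g(a)$, which has smaller degree, hence is constant, making $a\mapsto g(a)-g(0)$ linear and therefore zero; in general, via the homogeneous components $g_d$ that you also introduced. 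Consequently $g|_{A''}$ is constant, so $f|_{A''}$ is constant and hence zero, and the zero map lifts it. To salvage your approach you would need either to prove such a decomposition yourself or to replace the greedy induction by an argument extracting from $g$ control over all future divisibility constraints at once; as written, the proposal does not prove the proposition.
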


\begin{proof}
As $A$ is countable, one can write $A=A' \oplus A''$, where $A'$ is a free abelian group and $\Hom(A'',\Z)=0$
(see \cite[Lemma 7]{NuRo}).  We may suppose that $A=A'$ or that $A=A''$.  As the conclusion of the proposition always holds if $A$ is free, we may therefore assume that $\Hom(A,\Z)=0$.  To conclude the proof, it now suffices to show that every $\Z$-polynomial map $g\colon A \to \Z$ is constant.

Assume first that $g$ has degree $d$ for some $d\geq 0$.  We argue by induction on~$d$. If~$d=0$, the conclusion is clear, so we suppose that $d>0$. For all $b\in A$, the formula $a\mapsto g(a+b)-g(a)$ defines a $\Z$-polynomial map of degree $\leq d-1$ on $A$, which is constant by the induction hypothesis. It follows that the map $a\mapsto g(a)-g(0)$ is $\Z$-linear on $A$, and hence identically zero since $\Hom(A,\Z)=0$. This shows that $g$ is constant.

In the general case, we let $g_d\colon A\to \Z$ be the map associating with $a$ the coefficient of~$x^d$ in the polynomial map $x\mapsto g(xa)$. The map $g_d$ is $\Z$-polynomial and homogeneous of degree $d$,  hence identically zero if $d>0$.  It follows that $x\mapsto g(xa)$ is constant, from which we deduce that $g(a)=g(0)$.  We have proved that $g$ is constant.
\end{proof}

\subsection{Integral relations}
\label{parintrelations}

We now construct relations with integral coefficients between~$\omega$ and the~$c_i$.  
Keep the notation of \S \ref{parrelations}.  For all $j\in\Z$, identify $H^*_G(\BU,\Z(j))$ with a subgroup of $\Z[\omega,(c_i)_{i\geq 1}]/(2\omega)$ as in (\ref{cohoBU}).
Fix $?\in\{\top, \ac,\an,\alg\}$. As in \S\ref{parrelations}, one can evaluate an element $P\in H^*_G(\BU,\Z(j))$ on $M\in \Theta^?_n$ to get a class $P(\omega, c(M))\in H^*_G(M,\Z(j))$.  Let~$J_{\Z(j),n}^{?}$ be the subset of $H^*_G(\BU,\Z(j))\subset\Z[\omega,(c_i)_{i\geq 1}]/(2\omega)$ consisting of those classes that vanish on all $M\in\Theta^?_n$.

Denote by $\beta_{\Z(j)}$ the boundary maps associated with the short exact sequence of $G$\nobreakdash-modules $0\to \Z(j)\xrightarrow{2}\Z(j)\to\Z/2\to 0$. Then define
\begin{equation}
\label{defKnZ}
K_{\Z(j),n}=\beta_{\Z(j)}\bigg(K_{n}+\sum_{k,l\geq 0} \Big(H^{2n-2^{l+1}k}_G(\BU,\Z(n))/2\Big)\Sq^{2^lk}\cdots\Sq^{2k}\Sq^k\bigg),
\end{equation}
which we view as a subgroup of $H^*_G(\BU,\Z(j))\subset\Z[\omega, (c_i)_{i\geq 1}]/(2\omega)$ (see (\ref{cohoBU2})).

\begin{thm}
\label{reltopoZ}
For $j\in\Z$, one has $K_{\Z(j),n}\subset J_{\Z(j),n}^{\top}$.
\end{thm}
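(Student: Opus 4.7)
The plan is to mimic the structure of the proof of Theorem~\ref{reltopo}, splitting $K_{\Z(j),n}$ according to the two summands in its definition~(\ref{defKnZ}). The summand $\beta_{\Z(j)}(K_n)$ is handled immediately: for $\gamma \in K_n$, Theorem~\ref{reltopo} gives $\tau^*\gamma = 0$, so by naturality of the Bockstein one has $\tau^*\beta_{\Z(j)}(\gamma) = \beta_{\Z(j)}(\tau^*\gamma) = 0$.

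The substance lies in the second summand. Fix $y \in H^{2n-2^{l+1}k}_G(\BU,\Z(n))$ with mod 2 reduction $\bar y$, set $a := \Sq^{2^lk}\cdots\Sq^{2k}\Sq^k$ and $\bar\xi := \tau^*((\bar y)a) \in H^{2n-k}_G(M,\Z/2)$; assume first that $M$ is compact. By Proposition~\ref{Poincareduality}(1) applied to $M/G$ (a compact topological $2n$-manifold with orientation sheaf $\Z(n)$), proving $\beta_{\Z(j)}(\bar\xi)=0$ reduces to showing $\beta_{\Z(j)}(\bar\xi)\smile\eta = 0$ in $\Q/\Z$ for every $\eta \in H^{k-1}_G(M,\Q/\Z(n-j))$. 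I would factor $\beta_{\Z(j)} = \delta_{\Z(j)} \circ \iota$ through the $G$-equivariant inclusion $\iota:\Z/2 \hookrightarrow \Q/\Z(j)$, $1 \mapsto 1/2$, and apply the graded Leibniz rule for the connecting map $\delta$ to the identically vanishing product $\iota(\bar\xi)\smile\eta \in H^*_G(M,\Q/\Z\otimes_\Z\Q/\Z) = 0$; this rewrites the desired pairing as $\pm\,\iota\bigl(\bar\xi\smile\pi(\delta\eta)\bigr)$, where $\pi(\delta\eta) \in H^k_G(M,\Z/2)$ is the mod 2 reduction of the torsion class $\delta\eta \in H^k_G(M,\Z(n-j))$. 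Proposition~\ref{droitegauche}, combined with the elementary identity $\Sq^{2^lk}\cdots\Sq^k(z) = z^{2^{l+1}}$ (iterating $\Sq^d(w) = w^2$ when $\deg w = d$), then transforms this into $\pm\iota\pi\bigl(\tau^*y \smile (\delta\eta)^{2^{l+1}}\bigr)$. Since $(\delta\eta)^{2^{l+1}}$ is torsion (a power of a torsion class), the product $\tau^*y \smile (\delta\eta)^{2^{l+1}}$ is torsion in $H^{2n}_G(M,\Z(n))$; but this last group is torsion-free, so the expression vanishes, which settles the compact case.

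For non-compact $M$, I would reduce to the compact case as in Case~2 of the proof of Theorem~\ref{reltopo}: exhaust $M$ by $G$-invariant compact stably complex sublevel sets $M_i$ with boundary, double each $M_i$ to a compact $S_i \in \Theta_n^{\top}$ (see \S\ref{parstablycxconstr}), apply the compact case to each $S_i$, and restrict to $M_i$. The passage to the limit works because each $H^*_G(M_i,\Z(j))$ is finitely generated (by compactness of $M_i$), so the chain of restriction images is Noetherian and Mittag-Leffler holds; Proposition~\ref{liftlin}, applied to the $\Z$-polynomial map $\tilde z \mapsto \tau^*y \smile \tilde z^{2^{l+1}}$ on the countable group $H^k_G(M_i,\Z(n-j))/\mathrm{tors}$, provides an alternative route by producing $\Z$-linear lifts directly. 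The main conceptual obstacle is the Bockstein--Leibniz computation in the compact case: unlike in Theorem~\ref{reltopo}, where one simply invokes torsion-freeness of $H^{2n}_G(M,\Z(n))$, here one must first pass through the $\Q/\Z$-coefficient Poincaré duality and exploit the vanishing $\Q/\Z\otimes_\Z\Q/\Z = 0$ in order to reduce the integer-coefficient vanishing problem to a torsion computation of the same type.
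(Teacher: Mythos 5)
Your compact case is essentially the paper's own argument (pair $\beta_{\Z(j)}$ of the class against $H^{k-1}_G(M,\Q/\Z(n-j))$ via Proposition~\ref{Poincareduality}~(1), rewrite the pairing through the mod~$2$ reduction of the torsion class $\delta\eta$, apply Proposition~\ref{droitegauche} together with the iterated squaring identity, and use torsion-freeness of $H^{2n}_G(M,\Z(n))$), so that part is fine. The gap is in the noncompact case, which is in fact where the real difficulty of the theorem lies. Your justification ``each $H^*_G(M_i,\Z(j))$ is finitely generated, so the chain of restriction images is Noetherian and Mittag--Leffler holds'' is false: Noetherianity gives the \emph{ascending} chain condition, whereas Mittag--Leffler requires the \emph{descending} chain of images to stabilize, and a decreasing chain of subgroups of a finitely generated abelian group need not stabilize (e.g.\ $\Z\supset 2\Z\supset 4\Z\supset\cdots$). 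This possible failure of Mittag--Leffler with integral coefficients is precisely the difficulty the paper flags (it is the reason Proposition~\ref{liftlin} exists at all). Moreover, what you actually need is not Mittag--Leffler for the groups but nonemptiness of $\varprojlim_i L_i$, where $L_i$ is the set of integral lifts of $\bar\gamma_i:=((\tau^*\bar y)\Sq^{2^lk}\cdots\Sq^k)|_{M_i}$; each $L_i$ is a coset of $2H^{2n-k}_G(M_i,\Z(j))$, an infinite group in general, and an inverse limit of nonempty infinite sets can perfectly well be empty. Knowing that each $\bar\gamma_i$ lifts (which is what the compact case applied to the doubles $S_i$ gives you) does not by itself produce a compatible system of lifts, hence does not give an integral lift of the class on $M$, i.e.\ the vanishing of $\beta_{\Z(j)}$ there.

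Your appeal to Proposition~\ref{liftlin} does not close this gap, because you apply it to a single finitely generated group $H^k_G(M_i,\Z(n-j))/\mathrm{tors}$, where the statement carries no content and, in any case, produces nothing compatible as $i$ varies. The paper's argument applies it instead to the countable but non-finitely-generated colimit $A=\varinjlim_i A_i$ with $A_i=H^k_G(M_i,\partial M_i,\Z(n-j))$ (the relative groups, not the absolute ones, are needed so that Lefschetz duality, Proposition~\ref{Poincareduality}~(2) on $M_i/G$, applies and so that the $A_i$ form a direct system), to the $\Z$-polynomial map $g$ induced by $z\mapsto (\tau^*y)|_{M_i}\smile z^{2^{l+1}}$. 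The resulting single $\Z$-linear functional $h:A\to\Z$ lifting $g$ mod $2$ restricts compatibly to all $M_i$; by duality, the set $F_i$ of lifts $\xi_i$ of $\bar\gamma_i$ that in addition induce $h|_{A_i}$ under cup product is nonempty and \emph{finite} (two members differ by a torsion class), and an inverse limit of nonempty finite sets is nonempty. Only then does one obtain a compatible system $(\xi_i)$, a class $\xi\in H^{2n-k}_G(M,\Z(j))$ inducing it, and (using Mittag--Leffler for the finite mod~$2$ groups to identify the reduction of $\xi$) the desired vanishing of $\beta_{\Z(j)}((\bar y)\Sq^{2^lk}\cdots\Sq^k)$ on $M$. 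This mechanism --- global linear functional from Proposition~\ref{liftlin} on the colimit, finiteness of the constrained lift-sets, compactness of the inverse limit --- is the missing idea in your sketch; without it the noncompact case remains unproved.
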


\begin{proof}
That $\beta_{\Z(j)}(K_{n})\subset J_{\Z(j),n}^{\top}$ is a consequence of Theorem \ref{reltopo}.  We now fix $k,l\geq 0$ and a class $x\in H^{2n-2^{l+1}k}_G(\BU,\Z(n))$. Let $\bar x\in H^{2n-2^{l+1}k}_G(\BU,\Z/2)$ be the reduction of $x$ mod~$2$.
Our goal is to show that $\beta_{\Z(j)}((\bar x)\Sq^{2^lk}\cdots\Sq^{2k}\Sq^k)\in J_{\Z(j),n}^{\top}$.
To do so, we fix a stably complex $\ci$ $G$-manifold $M$ of dimension $n$ with $M^G=\varnothing$ and we assume, as we may, that $M/G$ is connected.  Consider a $G$-equivariant map $\tau\colon M\to \BU\times \Z$ classifying $\tau_M\in\KR(M)$, and the right action of $\dA$ on $H^*_G(M,\Z/2)$ induced by $-\tau^*\kappa=-[TM]$. 

\setcounter{Case}{0}
\begin{Case} 
$M$ is compact.
\end{Case}

Define $y:=(\tau^*\bar x)\Sq^{2^lk}\cdots\Sq^{2k}\Sq^k\in H^{2n-k}_G(M,\Z/2)$. By Proposition~\ref{Poincareduality}~(1) applied on $M/G$, the class~$\beta_{\Z(j)}(y)$ vanishes if and only if $\beta_{\Z(j)}(y)\smile t=0$ for all classes $t\in H^{k-1}_G(M,\Q/\Z(j'))$,
where $j':=n-j$.  Let $\delta$ and $\bar\delta$ be the boundary maps of the short exact sequences of $G$\nobreakdash-modules \begin{equation}
\label{2ses}
0\to \Z(j')\to \Q(j')\to \Q/\Z(j')\to 0\textrm{ and }0\to\Z/2\to\Q/\Z(j')\xrightarrow{2}\Q/\Z(j')\to 0.
\end{equation}
The class $\beta_{\Z(j)}(y)\smile t\in H^{2n}_G(M,\Q/\Z(n))$ is the image of $y\smile \bar\delta(t)\in H^{2n}_G(M,\Z/2)$ by the injective morphism $\Z/2=H^{2n}_G(M,\Z/2)\to H^{2n}_G(M,\Q/\Z(n))=\Q/\Z$ induced by the injection $\Z/2\hookrightarrow\Q/\Z(n)$ (apply \cite[Lemma 07MC]{SP}). 
We deduce that $\beta_{\Z(j)}(y)=0$ if and only if $y\smile \bar\delta(t)=0$ for all $t\in H^{k-1}_G(M,\Q/\Z(j'))$.

Fix $z\in H^{k}_G(M,\Z(j'))$ and let $\bar z$ be its reduction mod $2$.
Proposition~\ref{droitegauche} yields
\begin{equation}
\label{powerof2}
y\smile\bar z=(\tau^*\bar x)\Sq^{2^lk}\cdots\Sq^{2k}\Sq^k\smile \bar z=\tau^*\bar x\smile\Sq^{2^lk}\cdots\Sq^{2k}\Sq^k(\bar z)=\tau^*\bar x\smile\bar z\,^{2^{l+1}}.
\end{equation}
The obvious morphism from the first to the second short exact sequence of (\ref{2ses}) shows that $\bar\delta(t)$ is the reduction mod $2$ of $\delta(t)$.  Applying (\ref{powerof2}) with $z=\delta(t)$ and $\bar z=\bar\delta(t)$ shows that $y\smile\bar\delta(t)$ is the reduction mod $2$ of
$\tau^*x\smile \delta(t)^{2^{l+1}}\in H^{2n}_G(M,\Z(n))$. As $\delta(t)$ is torsion and $H^{2n}_G(M,\Z(n))$ is torsion-free (see \S\ref{parstablycomplex}), we conclude that $y\smile\bar\delta(t)=0$ and hence that~$\beta_{\Z(j)}(y)=0$. We have shown that $\beta_{\Z(j)}((\bar x)\Sq^{2^lk}\cdots\Sq^{2k}\Sq^k)$ vanishes on~$M$.

\begin{Case} 
$M$ is not compact.
\end{Case}

Exhaust~$M$ by an increasing sequence $(M_i)_{i\geq 1}$ of compact $G$\nobreakdash-submanifolds of dimension~$n$ with boundary,  as in the proof of Theorem \ref{reltopo}.  We may suppose that the~$M_i/G$ are connected.  Let~$S_i$ be the double of $M_i$ (as in~\S\ref{parstablycxconstr}).  By the compact case treated above, the class $\beta_{\Z(j)}((\bar x)\Sq^{2^lk}\cdots\Sq^{2k}\Sq^k)$ vanishes on $S_i$, hence on $M_i$.  We deduce the existence of $\gamma_i\in H^{2n-k}_G(M_i,\Z(j))$ whose reduction mod $2$ is $\bar\gamma_i:=((\tau^*\bar x)\Sq^{2^lk}\cdots\Sq^{2k}\Sq^k)|_{M_i}$.

Set $A_i:=H^{k}_G(M_i,\partial M_i,\Z(n-j))$ and $A:=\varinjlim_i A_i$.
 Define $g_i\colon A_i\to \Z$ by
$$g_i(z):=(\tau^*x)|_{M_i}\smile z^{2^{l+1}}\in H^{2n}_G(M_i,\partial M_i,\Z(n))=\Z$$
for $z\in A_i$. If $\oz\in H^{k}_G(M_i,\partial M_i,\Z/2)$ denotes the reduction of $z$ mod~$2$, then
the reduction of $g_i(z)$ mod~$2$ is equal to $\bar\gamma_i\smile \bar z$ in $H^{2n}_G(M_i,\partial M_i,\Z/2)=\Z/2$. 
 Indeed, this identity can be checked after composition with the isomorphism $H^{2n}_G(M_i,\partial M_i,\Z/2)\isoto H^{2n}_G(S_i,\Z/2)$, where it results from~(\ref{powerof2}) applied on $S_i$ (to the image of $z$ in $H^{k}_G(S_i,\Z(n-j))$).

The $g_i$ fit together to give rise to a $\Z$-polynomial map $g\colon A\to \Z$.  As the $A_i$ are countable because $M_i$ is compact, so is $A$. We can therefore apply Proposition \ref{liftlin} to find a $\Z$-linear map $h\colon A\to \Z$ such that the reductions mod~$2$ of $g$ and $h$ coincide.

By Proposition \ref{Poincareduality}~(2) applied on $M_i/G$,  the morphism 
\begin{equation}
\label{cuppmap}
H^{2n-k}_G(M_i,\Z(j))\to\Hom(A_i,\Z)
\end{equation}
induced by cup product is surjective and its kernel is $H^{2n-k}_G(M_i,\Z(j))_{\torsion}$.
We can therefore find $\delta_i\in H^{2n-k}_G(M_i,\Z(j))$ whose image by (\ref{cuppmap}) is $h|_{A_i}$.  As $h|_{A_i}$ mod~$2$ is given by $z\mapsto \bar\gamma_i\smile \bar z$,  the image of $\delta_i-\gamma_i$ by (\ref{cuppmap}) is divisible by $2$. It follows that there exist
$\varepsilon_i\in H^{2n-k}_G(M_i,\Z(j))$ and $\zeta_i\in H^{2n-k}_G(M_i,\Z(j))_{\torsion}$ with $\delta_i+\zeta_i=\gamma_i+2\varepsilon_i$.

Let $F_i\subset H^{2n-k}_G(M_i,\Z(j))$ be the subset of those $\xi_i\in H^{2n-k}_G(M_i,\Z(j))$ whose reduction mod $2$ is $\bar\gamma_i$ and whose image by (\ref{cuppmap}) is $h|_{A_i}$. This set is nonempty (it contains $\delta_i+\zeta_i=\gamma_i+2\varepsilon_i$)  and finite (any two of its members differ by an element of the finite set $H^{2n-k}_G(M_i,\Z(j))_{\torsion}$). It is therefore possible to pick elements $\xi_i\in F_i$ in a compatible way, so $(\xi_i)_{i\geq 1}\in \varprojlim_{i}H^{2n-k}_G(M_i,\Z(j))$.  Let $\xi\in H^{2n-k}_G(M,\Z(j))$ be an element inducing $(\xi_i)_{i\geq 1}$. 

The reduction mod $2$ of $\xi$ is equal to $(\bar\gamma_i)_{i\geq 1}$ in $\varprojlim_{i}H^{2n-k}_G(M_i,\Z/2)=H^{2n-k}_G(M,\Z/2)$ (the Mittag--Leffler condition is satisfied because the groups $H^{*}_G(M_i,\Z/2)$ are finite), and hence equals $(\tau^*\bar x)\Sq^{2^lk}\cdots\Sq^{2k}\Sq^k$.  We deduce that $\beta_{\Z(j)}((\tau^*\bar x)\Sq^{2^lk}\cdots\Sq^{2k}\Sq^k)=0$.  The class $\beta_{\Z(j)}((\bar x)\Sq^{2^lk}\cdots\Sq^{2k}\Sq^k)$ therefore vanishes on~$M$.
\end{proof}

We record the following integral counterparts of Questions \ref{qtopalg} and \ref{qKJ}.

\begin{quest}
\label{qtopalgZ}
Are the inclusions $J_{\Z(j),n}^{\top}\hspace{-.1em}\subset\hspace{-.1em} J_{\Z(j),n}^{\ac}\hspace{-.1em}\subset \hspace{-.1em}J_{\Z(j),n}^{\an}\hspace{-.1em}\subset \hspace{-.1em}J_{\Z(j),n}^{\alg}$ 
equalities for $j\in\Z$?
\end{quest}

\begin{quest}
\label{qKJZ}
Is the inclusion $K_{\Z(j),n}\subset J_{\Z(j),n}^{\top}$ an equality for $j\in\Z$?
\end{quest}

\begin{rmks}
(i) 
The equality $J_{\Z(j),n}^{\top}=J_{\Z(j),n}^{\ac}$ is proved in Theorem \ref{thtopac} below, but the other cases of Question \ref{qtopalgZ} are open.

(ii) 
One could also consider weakenings of Question \ref{qKJZ}, in the spirit of Remarks \ref{remrelmod2} (ii) and~(iii).

(ii)
Similar questions with $\Q/\Z$ coefficients may also be of interest (see~\S\ref{parQZ}). We do not develop them here.
\end{rmks}

\subsection{Stably complex versus almost complex \texorpdfstring{$G$}{G}-manifolds}
\label{stacpar}

In Theorem \ref{thtopac}, we show that exactly the same relations hold in the stably complex and in the almost complex cases.

The next proposition is a $G$-equivariant analogue of (a well-known improvement of) \cite[Theorem 1.7]{Thomas}.
Let $E$ be a $G$-equivariant real vector bundle of rank $r$ on a $G$\nobreakdash-space~$S$.
Letting~$\widetilde{\Z}$ denote its $G$\nobreakdash-equivariant orientation sheaf,
we define the \textit{$G$-equivariant Euler class} $e(E)\in H^r_G(S,\widetilde{\Z})$
of $E$ to be the usual (twisted) Euler class of the real vector bundle on $(S \times EG)/G$ induced by~$E$.
If~$E$ is endowed with a stable complex structure $(k,J)$, so that $r=2n$ for an integer~$n$,
we view~$e(E)$ as an element of
$H^{2n}_G(S,\Z(n))$ via the $G$\nobreakdash-equivariant identification $\widetilde{\Z} = \Z(n)$ determined by~$(k,J)$,
and we denote by $c_n(E) \in H^{2n}_G(S,\Z(n))$
the~$n$th Chern class of $(E,k,J)$.

\begin{prop}
\label{stunst}
Let $S$ be a $G$-space with $S^G=\varnothing$, such  that
$S/G$ has the homotopy type of a $CW$ complex of dimension $\leq 2n$.  Let $(k,J)$ be a stable complex structure 
on a $G$-equivariant real vector bundle $E$ of rank $2n$ on $S$. Then~$E$ admits a compatible structure of $G$\nobreakdash-equivariant complex vector bundle if and only if $c_n(E)=e(E)$ in~$H^{2n}_G(S,\Z(n))$.
\end{prop}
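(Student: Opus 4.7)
The ``only if'' direction is immediate: if $E$ admits a compatible $G$-equivariant complex structure $J'$, then $(E,J')$ is a $G$-equivariant complex vector bundle of rank~$n$, and its top Chern class equals the Euler class of the underlying $G$-equivariant oriented real bundle.  By the splitting principle \cite[Th\'eor\`eme~1]{kahnchern}, this reduces to the case of line bundles, where the identity $c_1=e$ is \cite[Th\'eor\`eme~5]{kahnchern} (the same reduction appearing at the end of the proof of Proposition~\ref{propequivw}).  Stability of $G$-equivariant Chern classes then ensures that $c_n(E,J')$ coincides with the $c_n(E)$ computed from the stable structure~$(k,J)$.

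For the ``if'' direction, the plan is to carry out a $G$-equivariant version of Thomas's obstruction-theoretic argument~\cite[Theorem~1.7]{Thomas}.  Giving a $G$-equivariant complex structure on $E$ refining $(k,J)$ is equivalent to producing a $G$-equivariant section over~$S$ of the fiber bundle $\mathcal{J}(E)\to S$ whose fiber at $p\in S$ is the space of complex structures on $E_p$, endowed with the twisted $G$-action $J\mapsto -\sigma J\sigma^{-1}$, and compatible with the stabilized section provided by~$(k,J)$.  Equivalently, setting $T:=(S\times EG)/G$, one seeks to lift the classifying map $T\to \BO(2n)\times_{\BO}(EG\times_G \BU)$ to $EG\times_G \BU(n)$.

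Since $\dim T\leq 2n$, obstruction theory produces a single primary obstruction, lying in $H^{2n}(T,\mathcal{L})$ for a local coefficient system $\mathcal{L}$ determined by the $G$-action on the top nonvanishing homotopy group of the fiber of the unstabilization map.  The non-equivariant Thomas theorem identifies the universal class of this obstruction, for trivial $G$, with $c_n-e$ in $H^{2n}(-,\Z)$.  To upgrade to the $G$-equivariant setting the key point is to check that the $G$-action on this $\Z$-coefficient is multiplication by $(-1)^n$: this reflects the fact that the involution $J\mapsto -J$ on complex structures on $\R^{2n}$ preserves or reverses the induced orientation according to the parity of~$n$.  Consequently $\mathcal{L}=\Z(n)$, and the obstruction lies in $H^{2n}(T,\Z(n))=H^{2n}_G(S,\Z(n))$, where, via naturality from the universal example and the canonical identification between the $G$-equivariant orientation sheaf of a stably complex bundle of dimension~$n$ and~$\Z(n)$ recorded in \S\ref{parstablycomplex}, it will be identified with $c_n(E)-e(E)$.

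The hard part is the careful treatment of the twist: tracking the $G$-action on the top homotopy group of the obstruction fiber through the Borel construction, and verifying that the orientation conventions making Thomas's non-equivariant obstruction equal to $c_n-e$ carry over to the correct sign and the correct coefficient system $\Z(n)$ in the $G$-equivariant setting.  Once this is in place, the conclusion follows by applying the non-equivariant Thomas theorem over~$T$ and pulling the statement back to~$S$.
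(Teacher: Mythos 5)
Your ``only if'' direction is fine and agrees with the paper (Kahn's Th\'eor\`eme~5 plus stability of equivariant Chern classes). The problem is in the ``if'' direction, at the step where you say the conclusion ``follows by applying the non-equivariant Thomas theorem over~$T$.'' A $G$-equivariant complex vector bundle in this paper is a Real bundle: $\sigma$ acts $\C$-antilinearly. Consequently the bundle that $E$ induces on $T=(S\times EG)/G$ via the Borel construction is only a real rank-$2n$ bundle, and the stably complex structure $(k,J)$ does \emph{not} descend to a stable complex structure on it (the structure group over $T$ is an extension of $G$ by $\U$, not $\U$ itself). So there is no non-equivariant lifting problem of Thomas's type over $T$ to which his theorem applies. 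What your plan actually requires is an identification of the \emph{universal equivariant} primary obstruction with $c_n-e$ in the $G$-equivariant cohomology of the universal object; naturality from the non-equivariant universal example only determines your candidate class modulo the kernel of the restriction $H^{2n}_G(S,\Z(n))\to H^{2n}(S,\Z)$, which is in general nonzero (it contains classes pulled back from $BG$), so ``tracking the twist'' and fixing the coefficient system $\Z(n)$ is not enough. This missing universal computation is the actual content of the statement, not a routine verification.

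The paper avoids it by a two-step argument you may want to compare with. First, the $G$-map $\phi:S\to\BU$ classifying $(E,k,J)$ lifts $G$-equivariantly to $\psi:S\to\BU(n)$ with no hypothesis at all, because the homotopy fiber of $\BU(n)\to\BU$ is $2n$-connected and $\dim(S/G)\leq 2n$; this produces an honest $G$-equivariant complex bundle $F$ of rank $n$ that is stably isomorphic to $E$. The question then becomes whether $E\simeq F$ as $G$-equivariant \emph{real} bundles: destabilizing via the fibrations $\bS^m\to\BO(m)\to\BO(m+1)$ kills all obstructions down to $E\oplus(S\times\R)\simeq F\oplus(S\times\R)$ for dimension reasons, and the single remaining obstruction is $e(F)-e(E)\in H^{2n}_G(S,\Z(n))$ (a classical computation, cited to Liao). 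This is where the hypothesis enters: $e(F)=c_n(F)=c_n(E)=e(E)$ by Kahn's Th\'eor\`eme~5, stability of equivariant Chern classes, and the assumption. If you want to salvage your single-obstruction approach, you would need to supply the equivariant analogue of Thomas's universal identification; otherwise the argument should be restructured along these lines.
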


\begin{proof}
The direct implication results from \cite[Th\'eor\`eme 5]{kahnchern}. We prove the converse.

Let $\phi\colon S\to\BU$ be a $G$-equivariant map classifying~$(E,k,J)$. 
The fiber sequences $\bS^{2m+1}=\U(m+1)/\U(m)\to\BU(m)\to\BU(m+1)$ show that the homotopy fiber of $\BU(n)\to\BU$ is $2n$-connected. As $\dim(S)\leq 2n$, it therefore follows from obstruction theory (see \cite[Corollary 34.3]{Steenrod}) applied $G$-equivariantly (\ie on the space~$S/G$) that~$\phi$ lifts (up to $G$-equivariant homotopy) to a $G$-equivariant map $\psi\colon S\to \BU(n)$.

Let $F$ be the $G$-equivariant complex vector bundle of rank $n$ on $S$ classified by $\psi$.  To prove the proposition, we will show that $E$ and $F$ are isomorphic $G$\nobreakdash-equivariant real vector bundles. By the construction of $F$, we know that $E\oplus(S \times \C^N)$ and $F\oplus(S\times\C^N)$ are isomorphic $G$\nobreakdash-equivariant real vector bundles for $N\gg 0$. 
Applying obstruction theory on~$S/G$ as above (using the fiber sequences $\bS^{m}=\OO(m+1)/\OO(m)\to\BO(m)\to\BO(m+1)$) shows that $E\oplus(S \times \R)\simeq F\oplus (S \times \R)$ as $G$\nobreakdash-equivariant real vector bundles, since all obstructions vanish for dimension reasons.  Moreover, the only obstruction to $E$ being $G$-equivariantly isomorphic to~$F$ is then equal to $e(F)-e(E)\in H^{2n}_G(S,\Z(n))$ (while we could not find a reference stating this classical fact in this exact form, it follows from the obstruction computation \mbox{\cite[(12.5)]{Liao}}).
It remains to use \cite[Th\'eor\`eme 5]{kahnchern}, the stability of $G$-equivariant Chern classes (see~\S\ref{parcharclasses}) and the hypothesis that $c_n(E)=e(E)$ to compute that
$e(F)=c_n(F)=c_n(E)=e(E)\textrm{ in }H^{2n}_G(S,\Z(n))$.
\end{proof}

We deduce at once the following $G$-equivariant extension of \cite[Theorem 1.1]{Sutherland}.

\begin{cor}
\label{corstac}
Let $M$ be a compact stably complex $\ci$ $G$-manifold of dimension~$n$ with $M^G=\varnothing$ and $M/G$ connected.  Then $M$ has a compatible structure of almost complex~$\ci$ $G$-manifold if and only if  $c_n(M)=\chi_{\top}(M)/2$ in $H^{2n}_G(M,\Z(n))=\Z$.
\end{cor}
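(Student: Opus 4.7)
The plan is to deduce Corollary~\ref{corstac} as a direct application of Proposition~\ref{stunst} to the $G$-equivariant real vector bundle $E = TM$ equipped with its given stably complex structure. The hypothesis $M^G = \varnothing$ is given, and the $\ci$ quotient manifold $M/G$ is compact of dimension $2n$, hence has the homotopy type of a $CW$\nobreakdash-complex of dimension $\leq 2n$ (via any smooth triangulation or handle decomposition), so the hypotheses of Proposition~\ref{stunst} are satisfied. Applying that proposition with $E = TM$ yields: $TM$ carries a compatible structure of $G$\nobreakdash-equivariant complex vector bundle (i.e., $M$ admits a compatible almost complex $G$-manifold structure) if and only if $c_n(TM) = e(TM)$ in $H^{2n}_G(M, \Z(n))$. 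Now $c_n(M) := c_n(\tau_M) = c_n(TM)$ by stability of the equivariant Chern classes (see~\S\ref{parcharclasses}), so the only remaining task is to identify $e(TM)$ with $\chi_{\top}(M)/2$ under the canonical isomorphism $H^{2n}_G(M,\Z(n)) = \Z$.

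Next I would spell out this identification. The canonical $G$\nobreakdash-equivariant isomorphism between the orientation sheaf of $M$ and $\Z(n)$ recalled in~\S\ref{parstablycomplex} shows that the local system $\Z(n)$ on $M/G$ coming from the $G$\nobreakdash-module $\Z(n)$ agrees with the orientation sheaf of the (possibly non-orientable) quotient manifold $M/G$. Since $M/G$ is compact and connected (connectedness is part of the hypothesis), Poincaré duality on $M/G$ therefore identifies $H^{2n}_G(M,\Z(n)) = H^{2n}(M/G,\text{or}_{M/G})$ with $\Z$, and under this identification, the (twisted) Euler class of a real vector bundle on $M/G$ becomes the integer obtained by pairing with the fundamental class.

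Applied to $TM$: the $G$\nobreakdash-equivariant real vector bundle $TM$ descends to the real tangent bundle $T(M/G)$ of $M/G$, and $e(TM) \in H^{2n}_G(M,\Z(n))$ corresponds to $e(T(M/G))$ in $H^{2n}(M/G,\text{or}_{M/G})$. The Poincaré--Hopf / Chern--Gauss--Bonnet formula identifies the latter with $\chi_{\top}(M/G)$. Finally, since $G$ acts freely on the compact manifold $M$ and $|G|=2$, the covering map $M \to M/G$ is a $2$-sheeted covering, which gives $\chi_{\top}(M) = 2\,\chi_{\top}(M/G)$, so $e(TM)$ corresponds to $\chi_{\top}(M)/2 \in \Z$. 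Combining this with Proposition~\ref{stunst} gives exactly the statement of the corollary.

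There is no real obstacle here beyond bookkeeping: Proposition~\ref{stunst} provides all the nontrivial content, and the remainder is the standard translation between the $G$\nobreakdash-equivariant Euler class on $M$, the (possibly twisted) Euler class on $M/G$, and the Euler characteristic. The one point to state carefully is the compatibility of the identifications of the orientation sheaves used both in Proposition~\ref{stunst} (for the statement $c_n(E) = e(E)$) and in the Poincaré duality isomorphism $H^{2n}_G(M,\Z(n)) = \Z$ on $M/G$, so that both sides of the equation in Proposition~\ref{stunst} are compared in the same copy of $\Z$.
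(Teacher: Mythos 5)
Your proposal is correct and follows the paper's proof in its essential structure: both reduce the corollary to Proposition~\ref{stunst} applied to $E=TM$ (noting that $M/G$ is a compact $2n$\nobreakdash-manifold, hence of the homotopy type of a CW-complex of dimension $\leq 2n$), so that everything comes down to identifying $e(TM)$ with $\chi_{\top}(M)/2$ under $H^{2n}_G(M,\Z(n))=\Z$. The only divergence is in this last bookkeeping step: you descend to $M/G$, identify $e(TM)$ with the twisted Euler class of $T(M/G)$, and use the twisted Gauss--Bonnet/Poincar\'e--Hopf statement together with $\chi_{\top}(M)=2\chi_{\top}(M/G)$, whereas the paper instead pushes the computation to the non-equivariant cohomology of $M$, where the comparison map $\Z=H^{2n}_G(M,\Z(n))\to H^{2n}(M,\Z)$ composed with the degree is multiplication by $2$ and the ordinary Euler class has degree $\chi_{\top}(M)$ by \cite[Corollary~11.12]{milnorstasheff}; both evaluations are standard and give the same answer, the paper's having the minor advantage of only invoking the untwisted Euler class--Euler characteristic identity.
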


\begin{proof}
By Proposition \ref{stunst},  we must show that $e(TM)=\chi_{\top}(M)/2$ in \mbox{$H^{2n}_G(M,\Z(n))=\Z$}.
The composition of
the natural map $\Z=H^{2n}_G(M,\Z(n))\to H^{2n}(M,\Z)$
with the degree map $\deg\colon H^{2n}(M,\Z) = \Z^{\pi_0(M)} \to \Z$
is given by multiplication by~$2$.
Thus, the result follows from the equality $\deg(e)=\chi_{\top}(M)$,
where $e \in H^{2n}(M,\Z)$ denotes the (non-$G$-equivariant) Euler class of~$E$ on~$M$,
for which see \cite[Corollary~11.12]{milnorstasheff}.
\end{proof}

The next lemma is the heart of the proof of Theorem \ref{thtopac}.

\begin{lem}
\label{lemconsum}
Fix $n\geq 2$.
Let $M$ be a compact stably complex $\ci$ $G$\nobreakdash-manifold of dimension~$n$ with~$M^G=\varnothing$ and $M/G$ connected.  Then there exist a compact almost complex~$\ci$ $G$\nobreakdash-manifold~$\widetilde{M}$ of dimension~$n$ and a $G$\nobreakdash-equivariant $\ci$ map $\tilde{\pi}\colon \widetilde{M}\to M$ of degree~$1$ such that
$T\widetilde{M}$ and $\tilde{\pi}^*TM$
are stably isomorphic
as $G$\nobreakdash-equivariant real vector bundles
endowed with stable complex structures.
\end{lem}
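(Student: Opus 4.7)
The plan is to invoke Corollary \ref{corstac}: under the identification $H^{2n}_G(\widetilde M,\Z(n)) = \Z$, the manifold $\widetilde M$ will admit a compatible almost complex structure precisely when $c_n(\widetilde M) = \chi_{\top}(\widetilde M)/2$. Any stable isomorphism $T\widetilde M \cong \widetilde\pi^*TM$ of stably complex $G$-equivariant real vector bundles forces, since $\deg(\widetilde\pi) = 1$, the equality $c_n(\widetilde M) = \widetilde\pi^* c_n(M) = c_n(M)$. Thus the task will reduce to producing $\widetilde M$ and $\widetilde\pi$ meeting the stable tangent bundle condition and satisfying moreover $\chi_{\top}(\widetilde M) = 2c_n(M)$; crucially, the Euler class is an \emph{unstable} invariant of the real tangent bundle, so the Euler characteristic is free to vary under stable isomorphisms even though $c_n$ is not.

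Next I would verify that the required change $\chi_{\top}(\widetilde M) - \chi_{\top}(M) = 2c_n(M) - \chi_{\top}(M)$ is automatically divisible by~$4$. For this, I would reduce $c_n(M)$ modulo~$2$: this gives the top equivariant Stiefel--Whitney class $w_{2n}(TM) \in H^{2n}_G(M,\Z/2)$, which under the identification $H^{2n}_G(M,\Z/2) = H^{2n}(M/G,\Z/2)$ becomes $w_{2n}(T(M/G))$. Evaluating on the fundamental class of~$M/G$ then yields $\chi(M/G) \bmod 2 = (\chi_{\top}(M)/2) \bmod 2$, so $c_n(M) \equiv \chi_{\top}(M)/2 \pmod 2$ as needed.

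The construction will then proceed by iterated equivariant connected sum of~$M$ along conjugate pairs of points with compact stably complex $G$-manifolds $N$ of dimension~$n$ such that $N^G = \varnothing$ and $[TN] = 0$ in $\KR(N)$. For any such~$N$, the resulting manifold carries a natural smooth collapse map $\widetilde\pi:\widetilde M \to M$ of degree one, and $T\widetilde M$ is stably isomorphic as a stably complex $G$-bundle to $\widetilde\pi^*TM$: on the $M$-part the two bundles already coincide, while on the $N$-part the pullback $\widetilde\pi^*TM$ is trivial because $\widetilde\pi$ collapses there, and $TN$ restricted to~$N$ minus the attaching disks is stably trivial by the hypothesis $[TN] = 0$. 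Each such operation changes $\chi_{\top}$ by $\chi(N) - 4$.

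The main obstacle will be assembling, for each $n \geq 2$, a family of such gadgets whose Euler-characteristic contributions $\chi(N) - 4$ generate the subgroup $4\Z$ of~$\Z$. For negative contributions one can take $N = T_0 \sqcup \overline{T_0}$ with $\sigma$ swapping the factors and $T_0$ a complex $n$-torus, which has trivial tangent bundle and $\chi(T_0) = 0$, producing a contribution of $-4$. For the opposite direction, one exploits the fact that $N$ is required only to be stably complex (not almost complex): the constraint $[TN] = 0$ in~$\KR$ together with almost complexity would force $\chi(N) = 0$, but by equipping conjugate-doubled $2n$-spheres (or suitable Milnor-type manifolds from $\Omega^U$) with non-standard stably complex structures for which the $\KR$-class vanishes while the underlying real Euler number does not, one obtains gadgets realizing positive contributions. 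Iterating these connected sums yields $\widetilde M$ with $\chi_{\top}(\widetilde M) = 2c_n(M)$, whereupon Corollary \ref{corstac} produces the sought-for almost complex structure.
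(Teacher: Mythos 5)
Your overall strategy is the paper's: reduce to the numerical criterion of Corollary~\ref{corstac}, observe that $c_n(M)$ and $\chi_{\top}(M)/2$ agree mod~$2$ because both reduce to $w_{2n}(M/G)$, and then correct the discrepancy by $G$\nobreakdash-equivariant connected sums at conjugate pairs of points with pieces having stably trivial tangent bundle, exploiting that $\chi_{\top}$ is an unstable invariant while $c_n$ is stable. Your negative gadget (the doubled complex torus, contribution $-4$) is fine and plays the role of the paper's $\bS^1\times\bS^{2n-1}$. But there are two genuine gaps.

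First, your positive-direction gadgets do not work as stated. The Euler characteristic of a manifold is independent of the choice of stably complex structure, so no ``non-standard'' structure on a conjugate-doubled $2n$\nobreakdash-sphere helps: that gadget has $\chi(N)=2\cdot 2=4$, hence contribution $\chi(N)-4=0$ (consistent with connected sum with a sphere being a trivial operation), and it cannot move $\chi_{\top}$ upward. The parenthetical ``Milnor-type manifolds from $\Omega^U$'' also fails your own hypothesis, since those generators are not stably parallelizable, so $[TN]$ is not stably trivial. What is needed is a stably parallelizable $2n$\nobreakdash-manifold with $\chi>2$; the paper takes $\bS^2\times\bS^{2n-2}$ (so the doubled gadget has $\chi(N)=8$ and contribution $+4$), which is the missing ingredient in your plan.

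Second, the asserted stable isomorphism $T\widetilde M\cong\tilde\pi^*TM$ is not a consequence of both restrictions to the $N$\nobreakdash-part being stably trivial: one must extend the tautological identification over the $M$\nobreakdash-part across the gluing spheres, and the obstruction is the difference of the two stable trivializations on $\partial N^0\simeq\bS^{2n-1}$, i.e.\ a class in $\pi_{2n-1}(\OO)$, which is not zero for formal reasons. The paper identifies this discrepancy with the clutching function of $T\bS^{2n}$ and kills it because $T\bS^{2n}$ is stably trivial; some such argument is indispensable. Moreover, by insisting that $N$ itself be stably complex and matching the given complex structures on the two sides, you would face an additional clutching obstruction in $\pi_{2n-1}(\U)\cong\Z$; the paper sidesteps this entirely by first producing a stable isomorphism of \emph{real} $G$\nobreakdash-equivariant bundles and then transporting the stably complex structure from $\tilde\pi^*TM$ to $T\widetilde M$, so the auxiliary pieces need not carry any complex structure at all.
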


\begin{proof}
Let $N$ be a connected compact $\ci$ manifold of dimension~$2n$ with stably trivial tangent bundle (to be specified later).
Fix $x\in M$ and $y\in N$. Let $B_x\subset M$ (\resp $B_y\subset N$) be a small closed ball in a coordinate chart around $x$ (\resp around $y$). Define $M^0:=M\setminus(\mathring B_x\cup\sigma(\mathring B_x))$ and~$N^0:=N\setminus \mathring B_y$. Gluing $M^0$ and two copies of $N^0$ appropriately along their boundaries yields a compact $\ci$ $G$-manifold $M'$: the connected sum of $M$ and two copies of $N$.
Let $\pi\colon M'\to M$ be a $G$-equivariant $\ci$ map which is the identity on $M^0$ and which sends the two copies of $N^0$ into $B_x$ and $\sigma(B_x)$ respectively.

We claim that the two $G$\nobreakdash-equivariant real vector bundles $\pi^*(TM)$ and $TM'$ on $M'$ are stably isomorphic.  More precisely, we will show that the tautological $G$-equivariant isomorphism between them on $M^0$ extends to a stable $G$-equivariant isomorphism on~$M'$. To do so, it suffices to work non-$G$-equivariantly over a small neighborhood~$U$ of~$B_x$ in~$M$. 
The stable trivializations of $\pi^*(TM)$ on $\pi^{-1}(U)\cap M^0$ induced by a trivialization of~$TM$ on~$U$ and of $TM'$ on $N^0$ induced by a stable trivialization of $TN$ differ on the sphere~$\partial N^0\simeq\bS^{2n-1}$ by a map $\phi\colon \bS^{2n-1}\to\OO:=\cup_{r\geq 0}\OO(r)$.  The map $\phi$ is induced by the clutching (or gluing) function $\bS^{2n-1}\to\OO(2n)$ for the tangent bundle of~$\bS^{2n}$ (as shown by an analysis of the connected sum construction),  and hence is homotopically trivial because~$T\bS^{2n}$ is stably trivial. This concludes the proof of the claim.

We can therefore endow~$M'$ with a structure of stably complex $\ci$ $G$-manifold such that
 $TM'$ and $\pi^*(TM)$
are stably isomorphic
as $G$\nobreakdash-equivariant real vector bundles
endowed with stable complex structures.
The existence of such a stable isomorphism implies that
$c_n(M')=\pi^*c_n(M)$. If $N=\bS^1\times \bS^{2n-1}$, one computes that $\chi_{\top}(M')/2=\chi_{\top}(M)/2-2$, and if $N=\bS^2\times \bS^{2n-2}$, that $\chi_{\top}(M')/2=\chi_{\top}(M)/2+2$.  

As the reductions mod $2$ of $c_n(M)$ and of $\chi_{\top}(M)/2=\chi_{\top}(M/G)$ are both equal to $w_{G,2n}(M)=w_{2n}(M/G)$ in $H^{2n}_G(M,\Z/2)=H^{2n}(M/G,\Z/2)=\Z/2$ (see Proposition~\ref{propequivw} and \cite[Corollary 11.12]{milnorstasheff}),  it is possible to kill the quantity $c_n(M)-\chi_{\top}(M)/2$ after performing such operations finitely many times.  The resulting $\ci$ $G$-manifold $\widetilde{M}$ and $G$\nobreakdash-equivariant $\ci$ map $\tilde{\pi}\colon \widetilde{M}\to M$ have the required properties by Corollary~\ref{corstac}.
\end{proof}

\begin{thm}
\label{thtopac}
For all $n\geq 0$ and all $j\in\Z$, one has $J^{\top}_{n}=J^{\ac}_{n}$ and $J^{\top}_{\Z(j),n}=J^{\ac}_{\Z(j),n}$.
\end{thm}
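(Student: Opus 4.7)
The inclusions $J^{\top}_n \subseteq J^{\ac}_n$ and $J^{\top}_{\Z(j),n} \subseteq J^{\ac}_{\Z(j),n}$ hold tautologically from $\Theta^{\ac}_n \subseteq \Theta^{\top}_n$, so only the reverse inclusions require proof. I assume $n \geq 2$; the cases $n \leq 1$ are handled separately (by inspection, noting that in these low dimensions any stably complex structure is, after a harmless modification, equivalent to an almost complex one in view of Proposition~\ref{stunst} and Corollary~\ref{corstac}). The plan is to use Lemma~\ref{lemconsum} to produce, from any compact stably complex $G$\nobreakdash-manifold $M$ with $M^G = \varnothing$, an almost complex companion $\widetilde M$ together with a degree-$1$ map $\tilde\pi: \widetilde M \to M$ that pulls the stable tangent class back, and then to transfer vanishing statements from $\widetilde M$ to~$M$ by injectivity of $\tilde\pi^*$.

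Fix $P \in J^{\ac}_n$ (resp.\ $P \in J^{\ac}_{\Z(j),n}$) and $M \in \Theta^{\top}_n$; I wish to show $P(\oo, \oc(M)) = 0$ (resp.\ $P(\omega, c(M)) = 0$). Working component by component, I may assume $M/G$ is connected. If $M$ is compact, Lemma~\ref{lemconsum} furnishes $\widetilde M \in \Theta^{\ac}_n$ and a $G$-equivariant $\ci$ map $\tilde\pi: \widetilde M \to M$ of degree~$1$ with $\tau_{\widetilde M} = \tilde\pi^* \tau_M$ in $\KR(\widetilde M)$. The equality of stable tangent classes together with the naturality of equivariant Chern classes and of $\omega$ (resp.\ $\oo$) gives
\[
\tilde\pi^* P(\omega, c(M)) = P(\omega, c(\widetilde M)) = 0
\]
(and similarly mod~$2$). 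Since $G$ acts freely on $M$ and $\widetilde M$, the quotients $M/G$ and $\widetilde M/G$ are closed $\ci$ manifolds of dimension $2n$ whose orientation sheaves are canonically identified with $\Z(n)$ via the stably complex structures (see \textsection\ref{parstablycomplex}), and Borel equivariant cohomology of $M$ (resp.\ $\widetilde M$) coincides with the ordinary cohomology of the quotient with the induced local coefficients. The induced map $\tilde\pi/G: \widetilde M/G \to M/G$ has degree~$1$, so Poincaré duality (Proposition~\ref{Poincareduality}) provides a Gysin/Umkehr transfer $(\tilde\pi/G)_!$ satisfying $(\tilde\pi/G)_! \circ (\tilde\pi/G)^* = \mathrm{Id}$ on $H^*(M/G, \L)$ for any local system~$\L$; hence $\tilde\pi^*$ is injective on $H^*_G(M, \Z/2)$ and on $H^*_G(M, \Z(j))$, and $P(\omega, c(M)) = 0$.

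For $M$ non-compact, I reduce to the compact case as in Case~2 of the proof of Theorem~\ref{reltopo}: exhausting $M$ by compact stably complex $G$-submanifolds $M_i$ with boundary and doubling them (see \textsection\ref{parstablycxconstr}) yields compact $S_i \in \Theta^{\top}_n$, on which $P$ vanishes by the compact case; hence $P(\omega,c(M))|_{M_i} = 0$ for every $i$. Any compactly supported cohomology class on $M/G$ with values in $\Z/2$ or $\Z(j)$ comes by extension-by-zero from some $H^*(M_i, \partial M_i, \L) = H^*_c(\mathring M_i, \L)$, so by the projection formula for the open inclusion $\mathring M_i \hookrightarrow M$, $P(\omega, c(M)) \smile y = 0$ for every such $y$; Poincaré duality between cohomology and compactly supported cohomology on the $2n$-manifold $M/G$ (Proposition~\ref{Poincareduality}) then forces $P(\omega, c(M)) = 0$. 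The crux of the argument is thus the construction of the almost complex model $\widetilde M$ with $\tau_{\widetilde M} = \tilde\pi^* \tau_M$ and $\deg(\tilde\pi)=1$, which is precisely the content of Lemma~\ref{lemconsum} and rests on adjusting the single integral obstruction $c_n - \chi_\top/2 \in H^{2n}_G(M,\Z(n)) = \Z$ to zero by equivariant connected sum (Corollary~\ref{corstac}); the remaining steps are formal consequences of Poincaré duality and of the stability of equivariant Chern classes.
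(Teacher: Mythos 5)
Your compact case is essentially the paper's own argument: Lemma~\ref{lemconsum} plus the degree-one map, with your Umkehr identity $(\tilde\pi/G)_!\circ(\tilde\pi/G)^*=\Id$ being just another phrasing of the projection formula $\alpha|_M=\tilde\pi_*\tilde\pi^*(\alpha|_M)$ used in the paper. The gap is in your non-compact case. After exhausting $M$ by compact pieces $M_i$, doubling, and deducing $P(\omega,c(M))|_{M_i}=0$, you conclude by pairing against all compactly supported classes and invoking ``Poincaré duality between cohomology and compactly supported cohomology on the $2n$-manifold $M/G$ (Proposition~\ref{Poincareduality})''. But Proposition~\ref{Poincareduality} is stated and proved only for \emph{compact} manifolds with boundary, and the statement you actually need on the non-compact $M/G$ --- that a class in $H^k_G(M,\Z(j))$ pairing to zero against every compactly supported class must vanish --- fails in general: classes coming from $\varprojlim^1_i H^{k-1}_G(M_i,\Z(j))$ restrict to zero on every $M_i$, hence pair to zero with everything of compact support, yet need not be zero. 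With $\Z/2$ coefficients you are saved because the groups $H^*_G(M_i,\Z/2)$ are finite, so Mittag--Leffler holds and $H^*_G(M,\Z/2)=\varprojlim_i H^*_G(M_i,\Z/2)$ (this is exactly Case~2 of the proof of Theorem~\ref{reltopo}); with $\Z(j)$ coefficients this possible failure of Mittag--Leffler is precisely the difficulty that forces the elaborate $\Z$-polynomial lifting argument (Proposition~\ref{liftlin}) in Case~2 of the proof of Theorem~\ref{reltopoZ}. So, as written, your argument does not establish $J^{\top}_{\Z(j),n}=J^{\ac}_{\Z(j),n}$ on non-compact manifolds.

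The paper sidesteps this entirely with a one-line observation you could substitute for your exhaustion step: if $M$ is non-compact with $M/G$ connected, then $H^{2n}_G(M,\Z(n))=0$, so the single obstruction $c_n(TM)-e(TM)$ of Proposition~\ref{stunst} vanishes for trivial reasons, $M$ itself admits a compatible almost complex structure, and $\alpha|_M=0$ immediately --- no doubling, no duality, no inverse limits. (Your parenthetical treatment of $n\leq 1$ via Corollary~\ref{corstac} is also shakier than needed, since a stably complex structure need not be equivalent to an almost complex one even there; but those cases are elementary and the paper likewise leaves them to the reader.) With the non-compact step replaced as above, the rest of what you wrote is sound.
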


\begin{proof}
As the cases $n=0$ and $n=1$ are easily dealt with by hand, we assume that~$n\geq 2$.
Fix $\alpha\in J^{\ac}_{n}$ (or $\alpha\in J^{\ac}_{\Z(j),n}$). Let $M$ be a stably complex $\ci$ $G$\nobreakdash-manifold of dimension~$n$ with~$M^G=\varnothing$. To show that $\alpha|_M=0$, we may assume that $M/G$ is connected.  

If $M$ is not compact, then $H^{2n}_G(M,\Z(n))=0$. It follows from Proposition \ref{stunst} that~$M$ admits a structure of almost complex $\ci$ $G$-manifold, and hence that $\alpha|_M=0$.
If $M$ is compact, let $\tilde{\pi}\colon\widetilde{M}\to M$ be as in Lemma \ref{lemconsum}.  One has $\alpha|_{\widetilde{M}}=0$ because $\widetilde{M}$ is an almost complex $\ci$ $G$-manifold of dimension $n$ with $\widetilde{M}^G=\varnothing$. 
Since $T\widetilde{M}\simeq \tilde{\pi}^*TM$,  we deduce that $\tilde{\pi}^*(\alpha|_M)=\alpha|_{\widetilde{M}}=0$,  and the projection formula implies that $\alpha|_M=\tilde{\pi}_*\tilde{\pi}^*(\alpha|_M)=0$.
\end{proof}

\section{Computation and examples of relations}
\label{seccomputations}

In this section, we fix an integer $n\geq 0$ and we let the algebra $\dA$ act on the right on $H^*_G(\BU,\Z/2)$ by means of the class $-\kappa_{n}$, as explained in \S\ref{parrelations}. Using the results presented in \S\ref{parrelations} and \S\ref{parintrelations}, we give concrete examples of relations satisfied on all stably complex $\ci$ $G$-manifolds of dimension~$n$ with no $G$-fixed points.

\subsection{A formula for the total Wu class}

Our main computational tool is formula~(\ref{rightaction2}), applied on $(\BU\times EG)/G$.  Letting $v:=u_G(\kappa_n)$
denote the total $G$\nobreakdash-equivariant Wu class of the tautological bundle $\kappa_n$,
so that $v_i\in H^i_G(\BU,\Z/2)$ for each~$i$, this formula reads
\begin{equation}
\label{rightactionwu}
(x)\Sq=v\smile\chi(\Sq)(x)
\end{equation}
for all $x\in H^*_G(\BU,\Z/2)$.  In particular,  one has
\begin{equation}
\label{wur}
v=(1)\Sq.
\end{equation}
We view $v$ as an element of $\Z/2[[\oo,(\oc_i)_{i\geq 1}]]$ depending implicitly on~$n$. Let $M$ be a stably complex~$\ci$ $G$-manifold of dimension~$n$ with $M^G=\varnothing$. If~$\tau\colon M\to BU$ is a $G$\nobreakdash-equivariant map classifying $\tau_M$, we also denote by $v\in H^*_G(M,\Z/2)$ the class $\tau^*v=u_G(\tau_M)$. 

In order to use (\ref{rightactionwu}) or (\ref{wur}), we need a formula for $v\in \Z/2[[\oo,(\oc_i)_{i\geq 1}]]$ which is given in Corollary~\ref{Wukappa} below.  
For $m\geq 0$,  let $T_m\in\Q[c_1,\dots,c_m]$ be the $m$th Todd polynomial (see \cite[\S 1.7]{Hirzebruch}). 
By definition, the $(T_m)_{m\geq 0}$ form a multiplicative sequence of polynomials with characteristic series $\frac{x}{1-e^{-x}}$ (in the sense of \cite[\S 1.2]{Hirzebruch}); in particular~$T_0=1$ and each~$T_m$ is homogeneous of degree~$m$
with respect to the grading of $\Q[c_1,\dots,c_m]$ for which $\deg(c_i)=i$ for all~$i$.
As indicated in \mbox{\cite[\S 2.11]{atiyahhirzebruchrrdiff}}, the coefficients of $2^mT_m$ have nonnegative $2$-adic valuation. We can therefore define $t_m\in\Z/2[\oc_1,\dots,\oc_m]$ to be the reduction mod $2$ of $2^mT_m$.
 In view of \mbox{\cite[\S 2.9, (13)]{atiyahhirzebruchrrdiff}}, the~$t_m$ form a multiplicative sequence with characteristic series $1+\sum_{l\geq 0}x^{2^l}$.
Below, we shall write $t_m(\bar c)$ as shorthand for $t_m(\bar c_1, \bar c_2, \dots)$.

\begin{thm}
\label{Wuvb}
Let $E$ be a $G$-equivariant complex vector bundle of rank $r$ on a $G$-space $S$.
The following identity holds in the graded ring $H^*_G(S,\Z/2)$:
\begin{equation}
\label{uE}
u_G(E)=\sum_{m\geq 0} \bigg(1+\sum_{l\geq 0}\oo^{2^l}\bigg)^{r-2m}t_m(\oc(E)).
\end{equation}
\end{thm}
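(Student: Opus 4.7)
The plan is to combine Kahn's splitting principle for $G$\nobreakdash-equivariant complex vector bundles \cite[Th\'eor\`eme~1]{kahnchern} with a direct computation of $\chi(\Sq)$ on the first $G$-equivariant Chern class of a line bundle. After reducing to compact $S$ via the identification $\varprojlim_{C\subset S}H^*_G(C,\Z/2)=H^*_G(S,\Z/2)$ from \cite[Lemma~10.3]{milnorstasheff}, I would check that both sides of~(\ref{uE}) are multiplicative under Whitney sums: the left-hand side $u_G(E)=\chi(\Sq)(w_G(E))$ because $w_G$ is multiplicative (Whitney formula) and $\chi(\Sq)$ is a ring homomorphism, being the two-sided inverse of the ring homomorphism $\Sq$; and the right-hand side by direct expansion, using that $(t_m)$ is a multiplicative sequence and that $\oc(E\oplus E')=\oc(E)\,\oc(E')$. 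By Kahn's splitting principle, the theorem therefore reduces to the case where $E=L$ is a single $G$\nobreakdash-equivariant complex line bundle.

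Setting $x:=\oc_1(L)$ and $\alpha:=1+\sum_{l\geq 0}\oo^{2^l}=1+\chi(\Sq)(\oo)$ via~(\ref{chiSqomega}), Proposition~\ref{propequivw} gives $w_G(L)=1+\oo+x$, so additivity of $\chi(\Sq)$ yields $u_G(L)=\alpha+\chi(\Sq)(x)$. On the other hand, for a line bundle $t_m(\oc(L))=a_m x^m$ where $a_m$ is the coefficient of $y^m$ in the characteristic series $f(y):=1+\sum_l y^{2^l}$, so the right-hand side of~(\ref{uE}) reads $\alpha\cdot f(\alpha^{-2}x)$. Matching the two expressions, the theorem reduces to the identity
\begin{equation*}
\chi(\Sq)(x)=\sum_{l\geq 0}\alpha^{1-2^{l+1}}x^{2^l}.
\end{equation*}

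Since $\Sq$ is the two-sided inverse of $\chi(\Sq)$, I would prove the displayed identity by checking that $\Sq$ applied to its right-hand side yields $x$. The two ingredients are $\Sq(\alpha)=1+\oo$, obtained via the telescoping $\sum_l\Sq(\oo^{2^l})=\sum_l(\oo^{2^l}+\oo^{2^{l+1}})=\oo$, and $\Sq(x)=x(1+\oo+x)$: here $\Sq^1(x)=\oo x$ follows from the Wu formula $\Sq^1 w_2=w_1 w_2+w_3$ applied to the real rank-$2$ underlying bundle of $L$ (whose $w_3$ vanishes), while $\Sq^2(x)=x^2$ is the squaring formula in top degree. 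Multiplicativity of $\Sq$ then gives $\Sq(\alpha^{1-2^{l+1}}x^{2^l})=(1+\oo)^{1-2^{l+1}}x^{2^l}(1+\oo^{2^l}+x^{2^l})$, and using the characteristic-$2$ identity $(1+\oo)^{2^l}=1+\oo^{2^l}$ one verifies that the sum over~$l$ telescopes down to $(1+\oo)^{-1}x(1+\oo)=x$.

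The main obstacle will be the telescoping verification of the displayed identity: one has to carefully track how the $x^{2^{l+1}}$-contribution from the $l$th summand combines in characteristic~$2$ with that from the $(l+1)$st summand so that all but the leading terms cancel in pairs. Everything else is either formal (multiplicativity, splitting principle, reduction to the compact case) or a direct application of standard identities.
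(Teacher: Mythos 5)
Your proof is correct and follows essentially the same route as the paper's: reduce to compact $S$, use Kahn's splitting principle together with multiplicativity of both sides of~(\ref{uE}) to reduce to a line bundle, and settle the rank-one case by applying the invertible operation $\Sq$ (via $\Sq\chi(\Sq)=1$), using $\Sq\big(\sum_{l\geq 0}\oo^{2^l}\big)=\oo$ and $\Sq^1\oc_1=\oo\,\oc_1$. The only cosmetic differences are that the paper checks the rank-one identity by multiplying by $\big(1+\sum_{l\geq 0}\oo^{2^l}\big)^{-1}$ and applying $\Sq$ to both sides (rather than through your closed telescoping formula for $\chi(\Sq)(\oc_1)$), and that it obtains $\Sq^1\oc_1=\oo\,\oc_1$ from a Bockstein diagram instead of the Wu formula for the underlying rank-$2$ real bundle.
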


\begin{proof}
Arguing as at the beginning of the proof of Proposition~\ref{propequivw}, we may assume that $S$ is compact. We prove the theorem by induction on $r\geq 1$ (the case $r=0$ being trivial). 

Assume first that $r=1$. In this case, $\oc_i(E)=0$ for $i\geq 2$. Set $\oc_1:=\oc_1(E)$. By the definition and the computation (recalled above) of the characteristic series of~$(t_m)_{m\geq 0}$, one has $t_m(\oc(E))=\oc_1^{\, m}$ if $m=0$ or $m=2^k$ for some $k\geq 0$, and $t_m(\oc(E))=0$ otherwise. 
Multiplying~(\ref{uE}) with $\big(1+\sum_{l\geq 0}\oo^{2^l}\big)^{-1}$ and applying $\Sq$ therefore yields the equivalent identity
\begin{equation}
\label{uEbis}
\Sq\mkern1.5mu\bigg(\mkern-1.5mu\Big(1+\sum_{l\geq 0}\oo^{2^l}\Big)^{-1}\smile u_G(E)\mkern-1.5mu\bigg)=\Sq\mkern1.5mu\bigg(\mkern-1.5mu1+\sum_{k\geq 0} \Big(1+\sum_{l\geq 0}\oo^{2^l}\Big)^{-2^{k+1}}\oc_1^{\,2^k}\mkern-1.5mu\bigg)\rlap,
\end{equation}
which we shall now prove.
Identity (\ref{chiSqomega}) implies that 
\begin{equation}
\label{identiSqomega}
\Sq\mkern1.5mu\bigg(\mkern-1.5mu\Big(1+\sum_{l\geq 0}\oo^{2^l}\Big)^{-1}\bigg)=(1+\oo)^{-1}=\sum_{j\geq 0}\oo^j.
\end{equation}
Applying the Cartan formula (\ref{Cartan}), the identity (\ref{identiSqomega}), the definition of the $G$-equivariant Wu class given in \S\ref{parSteenalg}, and Proposition~\ref{propequivw}, one sees that the left side of (\ref{uEbis}) is equal to
\begin{equation}
\label{lswu}
\Sq\mkern1.5mu\bigg(\mkern-1.5mu\Big(1+\sum_{l\geq 0}\oo^{2^l}\Big)^{-1}\bigg)\smile w_G(E)=\sum_{j\geq 0}\oo^j\smile(1+\oo+\oc_1)=1+\bigg(\sum_{j\geq 0}\oo^j\smile\oc_1\bigg)\rlap.
\end{equation}
The Cartan formula (\ref{Cartan}), the identity (\ref{identiSqomega}) and the computation $\Sq(\oc_1)=\oc_1+\oo\oc_1+\oc_1^2$ (where the equality $\Sq^1 \oc_1 = \oo\oc_1$ follows from the commutativity of the diagram
\begin{align}
\begin{aligned}
\label{cd:bocktwist}
\xymatrix@R=3ex{
0 \ar[r] & \Z \ar[r]\ar[d] & \Z[G]\ar[d] \ar[r] & \Z(1)  \ar[d]\ar[r] & 0\\
0 \ar[r] & \Z/2 \ar[r] & \Z/4 \ar[r] & \Z/2 \ar[r] & 0
}
\end{aligned}
\end{align}
(see~\eqref{rc})) together show that the right side of (\ref{uEbis}) equals
\begin{equation}
\label{rswu}
1+\sum_{k\geq 0}\big(\sum_{j\geq 0}\oo^{2^{k+1}j}\smile(\oc_1^{\,2^k}+\oo^{2^k}\oc_1^{\,2^k}+\oc_1^{\,2^{k+1}})\big)=1+\big(\sum_{j\geq 0}\oo^j\smile\oc_1\big)
\end{equation}
as all terms involving higher powers of $\oc_1$ cancel out. Comparing (\ref{lswu}) and (\ref{rswu}) concludes the proof of~\eqref{uE} in this case.

Assume now that $r>1$.  By the splitting principle for $G$-equivariant complex vector bundles (see \cite[Th\'eor\`eme 1]{kahnchern}),
 we may assume that $E=E'\oplus E''$ with $E'$ and $E''$ of positive ranks $r'$ and $r''$.
One then computes
\begin{equation*}
\begin{alignedat}{5}
u_G(E)&=u_G(E')\smile u_G(E'')\\
&=\sum_{m\geq 0} \bigg(1+\sum_{l\geq 0}\oo^{2^l}\bigg)^{r'-2m}t_m(\oc(E'))\smile\sum_{m\geq 0} \bigg(1+\sum_{l\geq 0}\oo^{2^l}\bigg)^{r''-2m}t_m(\oc(E''))\\
&=\sum_{m\geq 0} \bigg(1+\sum_{l\geq 0}\oo^{2^l}\bigg)^{r-2m}t_m(\oc(E)),
\end{alignedat}
\end{equation*}
where the first equality combines the Whitney formula for $G$-equivariant Stiefel\nobreakdash--Whitney classes and the Cartan formula (\ref{Cartan}),  the second equality is the induction hypothesis, and the third equality holds by the Whitney formula for $G$-equivariant Chern classes as the~$(t_m)_{m\geq 0}$ form a multiplicative sequence.
\end{proof}

\begin{cor}
\label{Wukappa}
The following identity holds in $H^*_G(\BU,\Z/2)$:
\begin{equation}
\label{v}
v=\sum_{m\geq 0} \bigg(1+\sum_{l\geq 0}\oo^{2^l}\bigg)^{n-2m}t_m(\oc).
\end{equation}
\end{cor}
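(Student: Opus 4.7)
The plan is to deduce Corollary~\ref{Wukappa} from Theorem~\ref{Wuvb} by restricting to each finite Grassmannian $\BU(r) \subset \BU$ with $r \geq n$, where the virtual class $\kappa_n$ can be represented concretely. On such $\BU(r)$, the restriction $\kappa_n|_{\BU(r)}$ is the virtual difference $[E_r] - [\underline{\C}^{r-n}]$, where $E_r$ denotes the tautological rank-$r$ $G$-equivariant complex vector bundle and $\underline{\C}^{r-n}$ denotes the trivial rank-$(r-n)$ complex bundle endowed with the complex conjugation action. The operation $u_G$ is multiplicative: indeed, $w_G$ satisfies the Whitney formula, and $\chi(\Sq)$ acts as a ring homomorphism on the commutative ring $H^*_G(-,\Z/2)$, so $u_G = \chi(\Sq)(w_G)$ is multiplicative on vector bundles and hence also on $\KR$-theory classes via the natural extension described in~\S\ref{parcharclasses}. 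Consequently,
\begin{equation*}
u_G(\kappa_n|_{\BU(r)}) = u_G(E_r)\cdot u_G(\underline{\C}^{r-n})^{-1}\rlap.
\end{equation*}

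Next, Theorem~\ref{Wuvb} applied to $E_r$ (rank~$r$) yields the expected formula with $r$ in place of $n$. For the trivial bundle $\underline{\C}^{r-n}$, all equivariant Chern classes vanish: the only possibly nonzero one is $c_1(\underline{\C}) \in H^2(G,\Z(1))$, and this group is zero (easy computation of $H^*(\Z/2,\Z(1))$, or observe from the real-complex sequence~(\ref{rc}) that $H^2(G,\Z(1))$ injects into $H^2(\mathrm{pt},\Z)=0$). Theorem~\ref{Wuvb} therefore specializes to $u_G(\underline{\C}^{r-n}) = (1 + \sum_{l \geq 0}\oo^{2^l})^{r-n}$. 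Substituting these two expressions, the factor $(1 + \sum_{l \geq 0}\oo^{2^l})^{r-n}$ cancels, the exponent $r-2m$ becomes $n-2m$, and the stability of equivariant Chern classes (\S\ref{parcharclasses}) gives $\oc(\kappa_n|_{\BU(r)}) = \oc(E_r)$, so the identity of the corollary holds in $H^*_G(\BU(r),\Z/2)$.

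Finally, the isomorphism $H^*_G(\BU,\Z/2) = \varprojlim_r H^*_G(\BU(r),\Z/2)$ (the Mittag--Leffler condition is satisfied, as used in the passage from~(\ref{cohoBUn}) to~(\ref{cohoBU}) and~(\ref{cohoBU2})) lifts the identity from the $\BU(r)$ to~$\BU$. The main care to take is interpretational rather than technical: in the statement, the factor $(1+\sum_{l\geq 0}\oo^{2^l})^{n-2m}$ must be read as a formal inverse power series when $n-2m<0$, which is well-defined because the constant term is~$1$; moreover, in each cohomological degree the sum over~$m$ is finite because $t_m(\oc)$ has cohomological degree exactly~$2m$. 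No serious obstacle arises: the whole argument is a bookkeeping combination of Theorem~\ref{Wuvb} with the multiplicativity of $u_G$ and the stability of Chern classes in $\KR$-theory.
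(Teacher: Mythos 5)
Your argument is correct and is essentially the paper's own proof: both deduce the identity from Theorem~\ref{Wuvb} via the multiplicativity of $u_G$ and of the multiplicative sequence $(t_m)$, after representing the universal class by an honest difference of $G$-equivariant complex vector bundles on an exhaustion of $\BU$ and passing to the limit (Mittag--Leffler). The only difference is cosmetic: the paper restricts to $G$-invariant compact subsets, where the universal class is some unspecified difference of bundles, whereas you restrict to the $\BU(r)$ with the explicit representative $[E_r]$ minus the rank-$(r-n)$ trivial Real bundle and cancel its contribution.
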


\begin{proof}
It suffices to prove (\ref{v}) after restriction to each $G$-equivariant compact subset of~$\BU$ (argue  as at the beginning of the proof of Proposition~\ref{propequivw}).  On such a subset, the universal bundle is represented by a difference of $G$-equivariant complex vector bundles (see \S\ref{parGeqvb}). The statement therefore follows from Theorem \ref{Wuvb} and from the multiplicativity of the formula~(\ref{uE}).
\end{proof}

\subsection{Coefficients of Wu classes}

We now draw a few consequences from Corollary~\ref{Wukappa}. 
Let $N(n,k)\in\Z/2$ be the coefficient of $x^k$ in $\big(1+\sum_{l\geq 0}x^{2^l}\big)^{n}\in\Z/2[[x]]$. In view of Corollary~\ref{Wukappa},  it is the coefficient of the monomial~$\oo^k$ in the expression (\ref{v}) for the~$k$th Wu class $v_k$ in dimension $n$. For later use in the proofs of Theorem~\ref{omega2n-1} and Proposition~\ref{reln+1},  we gather a few properties of these numbers.

\begin{lem}
\label{lemNnk}
The following identities hold for all $n,k\geq 0$.
\begin{enumerate}[(i)]
\item $N(2n,2k)=N(n,k)$ and $N(2n,2k+1)=0$.
\item $N(2n+1,2k+1)=N(n,k)$ and $N(2n+1,2k)=N(n+1,k)$.
\item $N(n,n)=1$.
\item $N(2n+1, 2n+2)=1$.
\item $N(n+1,n)=0$ if $n>0$.
\end{enumerate}
\end{lem}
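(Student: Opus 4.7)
The plan is to reduce everything to a single algebraic identity in $\Z/2[[x]]$. Let $f(x):=1+\sum_{l\geq 0}x^{2^l}$, so that $N(n,k)=[x^k]f(x)^n$. Squaring in characteristic $2$ gives
\[
f(x)^2=1+\sum_{l\geq 0}x^{2^{l+1}}=f(x^2),
\]
and therefore $f(x)=f(x^2)+x$. Parts (i) and (ii) will then fall out by substitution. Indeed, $f(x)^{2n}=f(x^2)^n$ immediately yields (i) by separating odd and even powers. For (ii), the identity
\[
f(x)^{2n+1}=f(x)\cdot f(x^2)^n=f(x^2)^{n+1}+x\,f(x^2)^n
\]
splits the coefficient extraction: on $x^{2k+1}$ only the second term contributes, giving $N(n,k)$; on $x^{2k}$ only the first term contributes, giving $N(n+1,k)$.

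For (iii), I would induct on $n$. The base cases $n=0$ and $n=1$ are immediate from the definitions. If $n=2m\geq 2$, then (i) gives $N(2m,2m)=N(m,m)$, and if $n=2m+1$, then (ii) gives $N(2m+1,2m+1)=N(m,m)$; in both cases the induction hypothesis applies since $m<n$. Part (iv) is a one-line consequence: applying (ii) with $k$ replaced by $n+1$ and then invoking (iii), one finds $N(2n+1,2n+2)=N(n+1,n+1)=1$.

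Part (v) will again be proved by induction on $n\geq 1$. If $n$ is odd, then $n+1$ is even and the exponent $n$ is odd, so (i) yields $N(n+1,n)=0$ directly. If $n=2m$ with $m\geq 1$, then (ii) gives $N(2m+1,2m)=N(m+1,m)$, and since $m\geq 1$ with $m<n$ the induction hypothesis finishes the case (the base $m=1$ being covered by the odd-$n$ case, $N(2,1)=0$).

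No step here is a serious obstacle; the main point is the key identity $f(x)^2=f(x^2)$, after which every assertion in the lemma either reads off a coefficient or reduces to a strictly smaller instance. The only place requiring mild care is setting up the induction in (v) so that each recursive step indeed lands in a range already covered, which is why the hypothesis $n>0$ (and not $n\geq 0$) is needed.
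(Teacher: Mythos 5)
Your proof is correct and follows essentially the same route as the paper: everything rests on the identity $q(x)^2=q(x^2)$ for $q(x)=1+\sum_{l\geq 0}x^{2^l}$ in $\Z/2[[x]]$, with (iii)--(v) handled by the same inductions. The only (welcome) streamlining is in (ii), where your use of $q(x)=q(x^2)+x$ to write $q(x)^{2n+1}=q(x^2)^{n+1}+x\,q(x^2)^n$ reads off both identities at once, whereas the paper instead compares the expansion of $q(x)q(x^2)^n$ with that of $q(x)q(x)^n$ to equate $N(2n+1,2k)$ and $N(n+1,k)$ through the common expression $N(n,k)+\sum_{l\geq 0}N(n,k-2^l)$.
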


\begin{proof}
Set $q(x):=1+\sum_{l\geq 0}x^{2^l}\in\Z/2[[x]]$. 
The relation $q(x)^{2n}=q(x^2)^n$ implies at once that $N(2n,2k)=N(n,k)$ and $N(2n,2k+1)=0$,  proving (i). 
Similarly,  the identity $q(x)^{2n+1}=q(x)q(x^2)^n$ shows that $N(2n+1, 2k+1)=N(n,k)$. It also implies that
\begin{equation}
\label{reccheloue}
N(2n+1,2k)=N(n,k)+\sum_{l\geq 0}N(n,k-2^l)
\end{equation}
(taking the convention that $N(n,j)=0$ for $j<0$).
Furthermore, we deduce from the relation $q(x)^{n+1}=q(x)q(x)^n$ the equality $N(n+1,k)=N(n,k)+\sum_{l\geq 0}N(n,k-2^l)$. Combining it with (\ref{reccheloue}) finishes the proof of~(ii).  Assertion (iii) follows from (i) and (ii) by induction on $n$ (with base case $N(0,0)=1$). To prove (iv), use (ii) and (iii) to write $N(2n+1, 2n+2)=N(n+1,n+1)=1$.
Finally, we prove (v) by induction on $n\geq 1$. If~$n$ is odd, then (v) follows from (i).  If $n\geq 2$ is even, then $N(n+1,n)=N(\frac{n}{2}+1,\frac{n}{2})=0$ by~(ii) and the induction hypothesis.
\end{proof}

The next lemma will be used in Case \ref{case2} of the proof of Theorem \ref{omega2n-1}.

\begin{lem}
\label{wuexact}
Let $n\geq 1$ be an odd integer. Then $v_{n-1}=t_{\frac{n-1}{2}}(\oc)$ in $\Z/2[\oo,(\oc_i)_{i\geq 1}]$.
\end{lem}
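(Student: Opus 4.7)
The plan is to extract the degree $n-1$ homogeneous component of the expression for $v$ provided by Corollary~\ref{Wukappa}, and then invoke Lemma~\ref{lemNnk}(v) to kill almost every term. Specifically, I will write
\begin{align*}
v_{n-1} = \sum_{m=0}^{(n-1)/2} N(n-2m,\,n-1-2m)\,\oo^{n-1-2m}\,t_m(\oc),
\end{align*}
using that the degree of $t_m(\oc)$ is $2m$, that the coefficient of $\oo^{k}$ in $\bigl(1+\sum_{l\geq 0}\oo^{2^l}\bigr)^{r}$ equals $N(r,k)$ by definition of $N(r,k)$, and that contributions with $m > (n-1)/2$ vanish (the exponent $n-1-2m$ would be negative).

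The key observation is that, because $n$ is odd, each index $n-2m$ appearing above is an odd positive integer. For $m=(n-1)/2$, the summand reduces to $N(1,0)\,t_{(n-1)/2}(\oc)$; evaluating $\bigl(1+\sum_{l\geq 0}x^{2^l}\bigr)^{1}$ at $x=0$ gives $N(1,0)=1$, so this term contributes exactly $t_{(n-1)/2}(\oc)$, as desired. For each $m$ with $0 \le m < (n-1)/2$, I set $k := n-1-2m$, so that $k \ge 2$ and $n-2m = k+1$; Lemma~\ref{lemNnk}(v) then gives $N(k+1,k)=0$, killing the summand.

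Combining the two cases yields $v_{n-1}=t_{(n-1)/2}(\oc)$, which is the claim. The step requiring any care is matching the combinatorial identity to the precise form of Lemma~\ref{lemNnk}(v)—that is, verifying that $k = n-1-2m$ is strictly positive for the range $0 \le m < (n-1)/2$ (equivalently $n-2m \ge 3$), which is automatic from the parity assumption. Everything else is bookkeeping on the degrees of the graded pieces in Corollary~\ref{Wukappa}.
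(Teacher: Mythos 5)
Your proof is correct and is essentially the paper's own argument: the paper likewise expands $v_{n-1}=\sum_{m=0}^{(n-1)/2}N(n-2m,n-2m-1)\,\oo^{n-2m-1}t_m(\oc)$ via Corollary~\ref{Wukappa} and kills all terms with $m<\frac{n-1}{2}$ by Lemma~\ref{lemNnk}~(v). The only difference is that you spell out the remaining coefficient $N(1,0)=1$ and the vanishing of the $m>\frac{n-1}{2}$ terms, which the paper leaves implicit.
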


\begin{proof}
Corollary \ref{Wukappa} shows that 
$$v_{n-1}=\sum_{m=0}^{\frac{n-1}{2}}N(n-2m,n-2m-1)\,\oo^{n-2m-1}t_m(\oc).$$
By Lemma \ref{lemNnk} (v), the coefficient $N(n-2m,n-2m-1)$ vanishes unless $m=\frac{n-1}{2}$. 
\end{proof}

\subsection{Low-dimensional relations}

Here are some examples of relations with $\Z/2$ coefficients obtained by our method in low dimensions. This list is by no means exhaustive.

\begin{prop}
\label{lowrelations}
Let $M$ be a stably complex $\ci$ $G$-manifold of dimension~$n$ with $M^G=\varnothing$. 
\begin{enumerate}[(i)]
\item If $n=1$, then $\oo^2=\oc_1$ in $H^2_G(M,\Z/2)$.
\item If $n=2$, then $\oo^3=\oo\oc_1=0$ in $H^3_G(M,\Z/2)$.
\item If $n=3$, then $\oo^4+\oo^2\oc_1+\oc_1^2+\oc_2=0$ in $H^4_G(M,\Z/2)$ and $\oo^3\oc_1=0$ in $H^5_G(M,\Z/2)$. In addition, $\oo^2\oc_2=\oc_3$,  $\oo^2\oc_1^2=\oc_1^3$ and $\oo^6=\oc_1^3+\oc_3$ in $H^6_G(M,\Z/2)$.
\item If $n=4$, then $\oo^3\oc_1=0$ in $H^5_G(M,\Z/2)$.
\end{enumerate} 
\end{prop}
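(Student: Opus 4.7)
The strategy is to invoke Theorem~\ref{reltopo}, which asserts that every element of $K_n \subset H^*_G(\BU,\Z/2)$ vanishes on every stably complex $\ci$ $G$-manifold $M$ of dimension~$n$ with $M^G = \varnothing$. Elements of~$K_n$ are produced by evaluating the right action of the Steenrod algebra on~$H^*_G(\BU,\Z/2)$ at the universal class. The fundamental computational tool is the Wu-type identity~\eqref{rightactionwu}, which reads $(x)\Sq = v \smile \chi(\Sq)(x)$, where $v = u_G(\kappa_n)$ is the total equivariant Wu class of the universal bundle, provided in closed form by Corollary~\ref{Wukappa}:
\[
v = \sum_{m \geq 0} \bigg(1 + \sum_{l \geq 0} \oo^{2^l}\bigg)^{n-2m} t_m(\oc).
\]
Matching degrees yields the working formula $(x)\Sq^l = \sum_{i+j=l} v_i \smile \chi(\Sq^j)(x)$.

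The first step is to extract the Wu classes needed for each item: $v_2 = \oo^2 + \oc_1$ in dimensions $1$ and $2$; and for $n=3$ (resp.\ $n=4$), $v_2 = \oc_1$, $v_3 = \oo^3 + \oo\oc_1$ (resp.\ $v_3 = 0$), and $v_4 = \oo^4 + \oo^2\oc_1 + \oc_1^2 + \oc_2$; moreover $v_1 = 0$ in dimensions~$2$ and~$4$. Two further ingredients are required: the identities $\Sq^i\oo^j = \binom{j}{i}\oo^{i+j}$ from~\eqref{Aomega}, and $\Sq^1\oc_i = i\,\oo\oc_i$, which follows from diagram~\eqref{cd:bocktwist} via the splitting principle and Cartan's formula.

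Item~(i) is then immediate, since $(1)\Sq^2 = v_2 \in K_1$. Item~(iii) flows entirely from the Steenrod summand of~$K_3$: $(1)\Sq^4 = v_4$ yields the degree-$4$ relation; $(\oo^3)\Sq^2 = \oo^3\oc_1$ gives the degree-$5$ relation; the two degree-$6$ identities $\oo^2\oc_2 = \oc_3$ and $\oo^2\oc_1^2 = \oc_1^3$ arise from $(\oc_2)\Sq^2$ and $(\oc_1^2)\Sq^2$ respectively, after computing $\Sq^2\oc_2 = \oo^2\oc_2 + \oc_1\oc_2 + \oc_3$ (splitting principle plus Cartan) and $\Sq^2\oc_1^2 = (\Sq^1\oc_1)^2 = \oo^2\oc_1^2$; and the final identity $\oo^6 = \oc_1^3 + \oc_3$ follows by combining these two with $(\oo^2)\Sq^4 = \oo^6 + \oo^2\oc_1^2 + \oo^2\oc_2$.

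Items~(ii) and~(iv) are the subtler cases: their isolating relations lie beyond the Steenrod summand and require the integral-torsion summand $R^l_k$ in~\eqref{defKn}. For~(ii), the Steenrod part only yields $(\oo)\Sq^2 = \oo^3 + \oo\oc_1 \in K_2$, and one separates the two monomials by computing $(1)\Sq^2\Sq^1 = (v_2)\Sq^1 = \Sq^1 v_2 = \oo\oc_1$; this is a valid relation in~$K_2$ because $\Sq^2\Sq^1 \in R^3_1$, as $\Sq^1(\Sq^2\Sq^1 u_1) = \Sq^1(u_1^4) = 0$ in view of $\Sq^1(x^2) = 0$. Item~(iv) follows the same pattern: $(\oo)\Sq^4 \in K_4$ supplies $\oo^5 + \oo^3\oc_1 + \oo\oc_1^2 + \oo\oc_2 = 0$, while $(1)\Sq^4\Sq^1 = (v_4)\Sq^1 = \Sq^1 v_4 = \oo^3\oc_1$ isolates $\oo^3\oc_1$, thanks to $\Sq^4\Sq^1 \in R^5_3$ (indeed $\Sq^1(\Sq^4\Sq^1 u_3) = \Sq^1((\Sq^1 u_3)^2) = 0$). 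The main difficulty throughout is the identification of the correct admissible compositions of Steenrod squares belonging to~$R^l_k$: one must verify that the mod-$2$ Bockstein vanishes on the result of applying the composition to the universal Eilenberg--MacLane class, which is precisely what lets one extract single monomials rather than linear combinations of them.
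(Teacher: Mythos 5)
Your proposal is correct and follows essentially the same route as the paper: produce elements of $K_n$ via the right $\dA$-action, evaluate them using the Wu-class formula \eqref{rightactionwu} together with Corollary~\ref{Wukappa}, and conclude by Theorem~\ref{reltopo}; your generating identities agree with the paper's up to harmless variations (you use $(\oo^3)\Sq^2$ where the paper uses $(\oo^2)\Sq^3$, both equal to $\oo^3\oc_1$ and both lie in the first summand of $K_3$, and you assemble $\oo^6=\oc_1^3+\oc_3$ from the separate degree-$6$ identities rather than from the single element $(\oo^2)\Sq^4+(\oc_1^2+\oc_2)\Sq^2$). One caveat: vanishing of the mod~$2$ Bockstein on $a(u_k)$ is in general only a necessary condition for $a\in R^l_k$ (it shows the integral Bockstein is divisible by~$2$, not that it vanishes, and $H^*(K(\Z/2,k),\Z)$ can contain higher $2$-torsion), but the two memberships you need do hold: $\Sq^2\Sq^1\in R^3_1$ since $\Sq^2\Sq^1u_1=u_1^4$ and $H^5(K(\Z/2,1),\Z)=0$, and $\Sq^4\Sq^1\in R^5_3$ since $\Sq^4\Sq^1u_3=(\Sq^1u_3)^2$ is the reduction of the $2$-torsion integral class $(\tilde\beta u_3)^2$, where $\tilde\beta$ denotes the integral Bockstein.
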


\begin{proof}
All these relations are proved in two steps.  First, an identity in $H^*_G(\BU,\Z/2)$ is verified by applying (\ref{rightactionwu}) and Corollary \ref{Wukappa} blindly.  In doing so, one may need to use formulae for $\Sq^j(\oc_i)$ (such as $\Sq^2(\oc_2)=\oo^2\oc_2+\oc_1\oc_2+\oc_3$), which are deduced from  the Wu formula \cite[Problem 8-A]{milnorstasheff} thanks to Proposition~\ref{propequivw}. Secondly, this identity is seen to imply the desired relation by Theorem \ref{reltopo}. We only state the identities that we use:

When $n=1$,  we use the identity $(1)\Sq^2=\oo^2+\oc_1$.

When $n=2$, we rely on the identities $(\oo)\Sq^2=\oo^3+\oo\oc_1$ and $(1)\Sq^2\Sq^1=\oo\oc_1$.

When $n=3$, we exploit the identities $(1)\Sq^4=\oo^4+\oo^2\oc_1+\oc_1^2+\oc_2$ and $(\oo^2)\Sq^3=\oo^3\oc_1$, as well as $(\oc_2)\Sq^2=\oo^2\oc_2+\oc_3$,  $(\oc_1^2)\Sq^2=\oo^2\oc_1^2+\oc_1^3$ and $(\oo^2)\Sq^4+(\oc_1^2+\oc_2)\Sq^2=\oo^6+\oc_1^3+\oc_3$.

When $n=4$, we only need to compute that $(1)\Sq^4\Sq^1=\oo^3\oc_1$.
\end{proof}

\begin{rmks}
\label{remslowrelations}
Let us now comment on the significance of some of the relations of Proposition~\ref{lowrelations}.
We assume that $M$ is compact and connected. 

(i) When $n=1$, the relation $\oo^2=\oc_1$ in $H^2_G(M,\Z/2)=\Z/2$ means that $\oo^2_M$ vanishes if and only if $c_1\in H^2_G(M,\Z(1))=\Z$ is divisible by $2$,  or equivalently if and only if the (non-$G$-equivariant) Chern number $c_1(M)\in H^2(M,\Z)=\Z$ is divisible by $4$. 
This happens if and only if then genus~$g(M)$ of $M$ is even, as $c_1(M)=2-2g(M)$.
This fact was used in \cite{bw1} to detect the genus of algebraic curves lying on a smooth proper variety $X$ over~$\R$ with $X(\R)=\varnothing$, by means of their $G$-equivariant cycle class (see \cite[Theorem~3.6]{bw1}).

(ii) Similarly, when $n=3$, the relation $\oo^6=\oc_1^3+\oc_3$ means that $\oo^6_M$ vanishes if and only if the (non-$G$-equivariant) Chern number $(c_1^3+c_3)(M)\in H^6(M,\Z)=\Z$ is divisible by $4$.  It is remarkable that the vanishing of $\oo^6_M$, which a priori is an invariant of the $G$-space~$M$, can be read off from the space $M$ without its $G$-action (but using its stable complex structure).

(iii) The relation $\oo^3=0$ when $n=2$ is the reduction mod $2$ of the (stronger) integral relation $\omega^3=0$ (see Proposition \ref{omegalow}).

(iv) To comment on the relation $\oo^2\oc_2=\oc_3$ when $n=3$, let us suppose that $M=X(\C)$ for some irreducible smooth proper threefold $X$ over $\R$ with $X(\R)=\varnothing$. Consider Krasnov's $G$-equivariant cycle class map for $1$-cycles $\cl\colon\CH^2(X)\to H^4_G(X(\C),\Z(2))$ (see \cite[\S 1.6.1]{bw1}).  Let $\psi\colon H^4_G(X(\C),\Z(2))\to H^6_G(X(\C),\Z/2)=\Z/2$ be the map given by cup product with~$\oo^2$. It is shown in \cite[Theorem~3.6]{bw1} that if $B\subset X$ is a closed integral curve, then $\psi(\cl([B]))\neq 0$ if and only if $B$ is geometrically irreducible of even geometric genus.  One can therefore think of the relation $\oo^2\oc_2=\oc_3$
as computing the parity of the genus of a real algebraic curve representing $c_2(X)\in\CH^2(X)$.  Indeed, it shows that $\psi(\cl(c_2(X)))=0$ if and only the topological Euler characteristic ${\chi_{\top}(X(\C))=c_3(X(\C))\in H^6(X(\C),\Z)=\Z}$ is divisible by~$4$. 

(v) The discussion in (iv) also applies to the relation $\oo^2\oc_1^2=\oc_1^3$ when $n=3$.  It shows that the non-$G$-equivariant Chern number $c_1^3(X(\C))\in H^6(X(\C),\Z)=\Z$ controls the parity of the genus of any real algebraic curve representing $c_1(X)^2\in\CH^2(X)$, in the sense that $\psi(\cl(c_1(X)^2))=0$ if and only if $K_{X_{\C}}^3=-c_1^3(X(\C))$
is divisible by~$4$.
\end{rmks}

Let us record the following curious consequences of Remarks \ref{remslowrelations} (iv) and (v).  Let $X$ be a proper variety over $\R$. For $i\geq 0$, Koll\'ar \cite[Definition 1]{kollarelw} 
defined, following~\cite{elw}, the $i$th
\textit{intermediate index} $\ind_i(X)\in\Z$ of $X$ to be the gcd of the integers $\chi(X,\sF)$ when~$\sF$ ranges over all coherent sheaves on~$X$ whose support has dimension $\leq i$.

\begin{prop}
Let $X$ be a geometrically irreducible smooth proper threefold over $\R$ with $X(\R)=\varnothing$.  
 If either $\chi_{\top}(X(\C))$ or~$K_{X_{\C}}^3$ is not divisible by $4$,  then $\ind_1(X)=1$.

If moreover $X_{\C}$ is rationally connected or simply connected Calabi--Yau, then~$X$ satisfies the real integral Hodge conjecture for $1$-cycles in the sense of \cite[Definition 2.2]{bw1}.
\end{prop}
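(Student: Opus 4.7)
My plan is to handle the two assertions separately. For the first, since $X(\R)=\varnothing$, every closed point $p\in X$ satisfies $[\kappa(p):\R]=2$, so taking $\sF=\kappa(p)$ shows $\ind_1(X)\mid 2$. I argue by contradiction by supposing $\ind_1(X)=2$. If $\chi_{\top}(X(\C))$ is not divisible by $4$, Remarks~\ref{remslowrelations}~(iv) give $\psi(\cl(c_2(X)))\neq 0$ in $\Z/2$; if $K_{X_\C}^3$ is not divisible by $4$, Remarks~\ref{remslowrelations}~(v) give $\psi(\cl(c_1(X)^2))\neq 0$. In either case, let $z\in\CH^2(X)$ denote $c_2(X)$ or $c_1(X)^2$ accordingly, so that $\psi(\cl(z))\neq 0$.

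Now I write $z=\sum_i n_i[B_i]$ in $\CH^2(X)$ with $B_i\subset X$ integral curves, and for each~$i$ let $\widetilde\nu_i\colon\widetilde B_i\to X$ be the composition of the normalization $\widetilde B_i\to B_i$ with the inclusion $B_i\hookrightarrow X$. The coherent sheaf $\widetilde\nu_{i*}\sO_{\widetilde B_i}$ has support of dimension~$\leq 1$, and the finiteness of $\widetilde\nu_i$ yields $\chi(X,\widetilde\nu_{i*}\sO_{\widetilde B_i})=\chi(\widetilde B_i,\sO_{\widetilde B_i})$. The assumption $\ind_1(X)=2$ then forces $\chi(\widetilde B_i,\sO_{\widetilde B_i})$ to be even. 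When $B_i$ is geometrically irreducible, $\widetilde B_i$ is a smooth geometrically irreducible curve over~$\R$ with $g(\widetilde B_i)=g(B_i)$, and the evenness of $\chi(\widetilde B_i,\sO_{\widetilde B_i})=1-g(B_i)$ forces $g(B_i)$ to be odd, whence $\psi(\cl([B_i]))=0$ by Remarks~\ref{remslowrelations}~(iv). When $B_i$ is geometrically reducible, Remarks~\ref{remslowrelations}~(iv) directly yield $\psi(\cl([B_i]))=0$. Summing gives $\psi(\cl(z))=0$, a contradiction; hence $\ind_1(X)=1$.

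For the second assertion, granted $\ind_1(X)=1$ from the first part, the plan is to combine Voisin's theorem~\cite{voisinthreefolds}---which establishes the integral Hodge conjecture for $1$-cycles on uniruled complex threefolds (covering the rationally connected case) and also on simply connected Calabi--Yau threefolds---with a criterion from~\cite{bw1} that lifts the integral Hodge conjecture on $X_\C$ to the real integral Hodge conjecture on $X$ once $\ind_1(X)=1$. The main obstacle I foresee is locating the exact form of this bridging criterion in \cite{bw1} and verifying its hypotheses in both geometric settings, since a priori one could also need to control auxiliary $2$-torsion obstructions beyond~$\ind_1(X)$; this is where the detailed machinery of $G$-equivariant cycle class maps developed in~\cite{bw1} should do the work, with no further input from the relations of Sections~\ref{secrelations}--\ref{seccomputations} required beyond what has already been used to establish $\ind_1(X)=1$.
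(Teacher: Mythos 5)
Your proof is correct and takes essentially the same route as the paper, which deduces from Remarks \ref{remslowrelations} (iv) and (v) that $X$ contains a geometrically irreducible curve of even geometric genus and then invokes \cite[Corollaries 3.11 and 3.23]{bw1}; your first part simply unpacks the index computation behind Corollary 3.11 (and your argument even produces the even-genus curve, since some $B_i$ with $\psi(\cl([B_i]))\neq 0$ must be geometrically irreducible of even geometric genus by \cite[Theorem 3.6]{bw1}). The bridging statement you hoped for in the second part is exactly \cite[Corollary 3.23]{bw1}: for threefolds over $\R$ with no real points whose complexification is rationally connected or simply connected Calabi--Yau, it yields the real integral Hodge conjecture for $1$-cycles from the existence of such a curve (equivalently from $\ind_1(X)=1$), with Voisin's theorem as the complex input, so the auxiliary $2$-torsion obstructions you flagged are precisely what that reference disposes of.
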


\begin{proof}
By Remarks \ref{remslowrelations} (iv) and (v),  if either $\chi_{\top}(X(\C))$ or $K_{X_{\C}}^3$ is not divisible by $4$, then~$X$ contains a geometrically irreducible curve of even geometric genus. The proposition therefore follows from \cite[Corollaries 3.11 and 3.23]{bw1}.
\end{proof}

\subsection{Vanishing of powers of \texorpdfstring{$\omega$}{omega}}
\label{parvanomegai}

We turn to relations of the form $\omega^e=0$. Let us first prove some identities of this form in low dimensions.

\begin{prop}
\label{omegalow}
Fix $n\in\{2,4,5,6\}$. Set $e(2)=3$, $e(4)=7$, $e(5)=9$ and $e(6)=10$. If~$M$ is a stably complex $\ci$ $G$\nobreakdash-manifold of dimension~$n$ with $M^G=\varnothing$, then~$\omega^{e(n)}_M=0$.
\end{prop}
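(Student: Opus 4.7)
For $n\in\{2,4,5\}$, we have $e(n)=2n-1$ and none of these $n$ is of the form $2^k-1$, so the conclusion follows directly from Theorem~\ref{omega2n-1}. The principal case is $n=6$, where $e(6)=10$ is strictly smaller than $2n-1=11$, so Theorem~\ref{omega2n-1} only provides $\omega^{11}=0$ and a genuine refinement is needed.

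The plan for $n=6$ is to apply Theorem~\ref{reltopoZ} with $j=10$. Since $\omega^{10}=\beta_{\Z(10)}(\oo^9)$ for the Bockstein $\beta_{\Z(10)}$ associated with $0\to\Z(10)\xrightarrow{2}\Z(10)\to\Z/2\to 0$, it suffices to show that $\oo^9$ lies in the subgroup
$$K_6+\sum_{k,l\geq 0}\big(H^{12-2^{l+1}k}_G(\BU,\Z(6))/2\big)\Sq^{2^lk}\cdots\Sq^{2k}\Sq^k$$
appearing in the definition~\eqref{defKnZ}. Using Corollary~\ref{Wukappa}, one first computes the Wu class $v_8=\oo^8+\oo^4(\oc_1^2+\oc_2)$; the calculation is simplified by the observation that, for even $n$, the series $q=1+\sum_{l\geq 0}\oo^{2^l}$ contributes only even powers of $\oo$ to the total Wu class $v$. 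The right-action formula~\eqref{rightactionwu} in degree~$9$ then yields
$$(\oo)\Sq^8=v_8\smile\oo=\oo^9+\oo^5\oc_1^2+\oo^5\oc_2\in K_6,$$
reducing the problem to showing that $\oo^5(\oc_1^2+\oc_2)$ also lies in the displayed subgroup.

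The main obstacle is the combinatorial task of producing enough additional relations to exhibit $\oo^5(\oc_1^2+\oc_2)$ in the appropriate form. I would systematically enumerate operations $(x)\Sq^l$ for $x\in H^{9-l}_G(\BU,\Z/2)$ with $l>3$, together with the operations $(\bar y)\Sq^{2^lk}\cdots\Sq^k$ for integral classes $y$ of the correct twist parity, evaluating them via $(x)\Sq=v\smile\chi(\Sq)(x)$, the identities~\eqref{Aomega}--\eqref{chiSqomega} on $H^*(G,\Z/2)$, and the equivariant Wu-type formulas for $\Sq^j(\oc_i)$ obtained from the splitting principle. For example, one computes $(\oo^3)\Sq^6=\oo^7\oc_1+\oo^3\oc_1\oc_2\in K_6$, and the Bockstein $\Sq^1\in R^1_3$ applied to the reduction of $\omega^2c_1c_2\in H^8_G(\BU,\Z(6))$ contributes $\oo^3\oc_1\oc_2\in K_6$, so that $\oo^7\oc_1$ is in $K_6$ as well. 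Having assembled enough such relations to realize $\oo^9$ in the required subgroup, a linear-algebra verification over $\Z/2$ completes the argument, and Theorem~\ref{reltopoZ} delivers $\omega^{10}=0$ on every stably complex $\ci$ $G$-manifold of dimension~$6$ with no $G$-fixed points.
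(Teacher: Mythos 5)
Your reduction of the cases $n\in\{2,4\}$ to Theorem~\ref{omega2n-1} is legitimate (those instances of that theorem are proved by an induction that never invokes Proposition~\ref{omegalow}), but for $n=5$ it is circular: in the paper, the case $n\equiv 5\pmod 8$ of Theorem~\ref{omega2n-1} is settled for $n=5$ precisely by quoting Proposition~\ref{omegalow}, because the inductive input~\eqref{hyprec} is unavailable there (one would need $d=2$, and $5-2=3=2^2-1$). So the $n=5$ case requires an independent argument; the paper supplies the identity $(\oo^4+\oc_1^2+\oc_2)\Sq^4+(\oc_1\oc_2)\Sq^2=\oo^8+\oo^2(\oc_1^3+\oc_1\oc_2+\oc_3)$ of~\eqref{identi2456}, whose image under $\beta_{\Z(9)}$ lies in $K_{\Z(9),5}$ and equals $\omega^9$, and your proposal contains nothing that replaces it.

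For $n=6$ your strategy is the paper's (realize $\omega^{10}=\beta_{\Z(10)}(\oo^9)$ inside $K_{\Z(10),6}$ and apply Theorem~\ref{reltopoZ}), but the proof is not actually given: producing the explicit relation is exactly the content of the proposition, and you defer it to an enumeration you never carry out. Moreover, two of the computations you do offer are wrong. First, the sum in Corollary~\ref{Wukappa} runs over all $m\geq 0$, including $m=4$ with the negative power $q^{-2}$ of $q=1+\sum_{l\geq 0}\oo^{2^l}$, so $v_8=\oo^8+\oo^4(\oc_1^2+\oc_2)+t_4(\oc)$ with $t_4(\oc)=\oc_1^4+\oc_2^2+\oc_1\oc_3+\oc_4\neq 0$; hence $(\oo)\Sq^8=\oo^9+\oo^5(\oc_1^2+\oc_2)+\oo\,t_4(\oc)$, and the extra terms have odd twist, are therefore not killed by $\beta_{\Z(10)}$, and would also have to be absorbed. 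Second, applying $\Sq^1\in R^1_3$ on the right to the reduction of $\omega^2c_1c_2$ does not produce an element of $K_6$: the second sum in~\eqref{defKn} only allows reductions of classes in $H^{8}_G(\BU,\Z(6))$, i.e.\ of even twist, whereas $\omega^2c_1c_2$ has twist $5$; so your route to handling $\oo^7\oc_1$ breaks down (your computation $(\oo^3)\Sq^6=\oo^7\oc_1+\oo^3\oc_1\oc_2$ is, however, correct). For comparison, the identity the paper verifies is $(\oo^2\oc_1)\Sq^5+(\oo^5+\oo(\oc_1^2+\oc_2))\Sq^4+(1)\Sq^6\Sq^3=\oo^9+\oo^7\oc_1+\oo^3(\oc_1^3+\oc_3)$, whose decisive term $(1)\Sq^6\Sq^3$ comes not from $K_6$ but from the extra summand $\big(H^{0}_G(\BU,\Z(6))/2\big)\Sq^6\Sq^3$ in~\eqref{defKnZ} — an ingredient your sketch does not identify and without which the degree count does not close.
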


\begin{proof}
A direct computation based on (\ref{rightactionwu}) and Corollary \ref{Wukappa} shows the identity
\begin{equation}
\begin{alignedat}{5}
\label{identi2456}
(1)\Sq^2&=\oo^2+\oc_1 &&\textrm{\hspace{.2em} if }n=2,\\
(\oo)\Sq^5+(1)\Sq^4\Sq^2&=\oo^6+\oc_1^3+\oc_3&&\textrm{\hspace{.2em} if }n=4,\\
(\oo^4+\oc_1^2+\oc_2)\Sq^4+(\oc_1\oc_2)\Sq^2&=\oo^8+\oo^2(\oc_1^3+\oc_1\oc_2+\oc_3)&&\textrm{\hspace{.2em} if }n=5,\\
 (\oo^2\oc_1)\Sq^5\hspace{-.1em}+\hspace{-.1em}(\oo^5+\oo(\oc_1^2+\oc_2))\Sq^4\hspace{-.1em}+\hspace{-.1em}(1)\Sq^6\Sq^3&=\oo^9+\oo^7\oc_1+\oo^3(\oc_1^3+\oc_3)&&\textrm{\hspace{.2em} if }n=6.
\end{alignedat}
\end{equation}
In view of (\ref{defKnZ}), the image by $\beta_{\Z(e(n))}$ of the left side of (\ref{identi2456}) belongs to $K_{\Z(e(n)),n}$, 
hence so does the image by $\beta_{\Z(e(n))}$ of the right side of (\ref{identi2456}).  Since the latter equals $\omega^{e(n)}$, applying Theorem~\ref{reltopoZ} concludes the proof.
\end{proof}

Theorem \ref{omega2n-1} below provides identities of this kind for many more values of $n$.
We first prove a result which will allow us to argue by induction on the dimension.  We grade the ring $\Z[c_1,\dots,c_n]$ by setting $\deg(c_i)=i$.

\begin{lem}
\label{lemchernstab}
Let $M$ be a stably complex $\ci$ $G$-manifold of dimension~$n$ with $M^G=\varnothing$. 
Let $P\in\Z[c_1,\dots, c_n]$ be homogeneous of degree $d$. Then there exist a stably complex~$\ci$ $G$-manifold $M'$ of  dimension $n-d$ and a proper $G$-equivariant map $\pi\colon M'\to M$ such that $\pi_*1=P(c(M))$ in $H^{2d}_G(M,\Z(d))$.
\end{lem}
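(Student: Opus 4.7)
The plan is to reduce, via iteration and the projection formula, to realizing a single Chern class of a $G$-equivariant complex vector bundle as the pushforward of $1$. First, writing $P$ as a $\Z$-linear combination of monomials in the $c_i$, we may reduce to a single monomial $P = c_{i_1}\smile\cdots\smile c_{i_k}$ with $i_1 + \cdots + i_k = d$: negative coefficients are handled by replacing a summand $M'_\alpha$ by its opposite $-M'_\alpha$ (in the sense of \S\ref{parstablycxconstr}), and positive integer coefficients by disjoint unions.

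For such a monomial, I would construct a tower of proper $G$-equivariant maps between stably complex $\ci$ $G$-manifolds with no $G$-fixed points
\begin{equation*}
M = N_0 \longleftarrow N_1 \longleftarrow \cdots \longleftarrow N_k =: M',
\end{equation*}
where $\dim N_j = \dim N_{j-1} - i_j$ and each map $\rho_j: N_j \to N_{j-1}$ satisfies $(\rho_j)_* 1 = \pi_{j-1}^* c_{i_j}(M)$ in $H^{2i_j}_G(N_{j-1}, \Z(i_j))$, with $\pi_{j-1}: N_{j-1} \to M$ the composition of the previous maps. An immediate induction using the projection formula then shows that $\pi := \rho_1 \cdots \rho_k$ satisfies $\pi_* 1 = c_{i_1}(M) \smile \cdots \smile c_{i_k}(M)$. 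Each step reduces to the following auxiliary statement: given a stably complex $\ci$ $G$-manifold $N$ with $N^G = \varnothing$, a $G$-equivariant $\ci$ complex vector bundle $V$ on $N$ of rank $r$, and an integer $i \leq r$, there exists a proper $G$-equivariant map $\rho: N' \to N$ with $N'$ stably complex of dimension $\dim N - i$ and $\rho_* 1 = c_i(V)$ in $H^{2i}_G(N, \Z(i))$. In the iteration one applies this with $V = W_j$, a genuine $G$-equivariant complex bundle representing $\pi_{j-1}^* \tau_M + [\C^{K_j}]$ in $\KR(N_{j-1})$ for $K_j$ large enough; by stability of $G$-equivariant Chern classes, $c_i(W_j) = \pi_{j-1}^* c_i(M)$.

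To carry out this auxiliary construction, set $\widetilde N := N \times \P^{r-i}_\C$ with the diagonal $G$-action (where $G$ acts on $\P^{r-i}_\C$ by complex conjugation on $\C^{r-i+1}$), let $p: \widetilde N \to N$ be the projection, and let $\xi := c_1(\sO_{\P^{r-i}}(1))$. Choose $r-i+1$ $G$-equivariant $\ci$ sections $s_1, \dots, s_{r-i+1}$ of $V$; such sections exist in abundance because $V$ embeds $G$-equivariantly as a direct summand of a trivial bundle on $N$, as recalled in \S\ref{parGeqvb}. They induce a $G$-equivariant $\ci$ section $\bar\phi$ of the rank $r$ $G$-equivariant complex bundle $E := p^* V \otimes \sO_{\P^{r-i}}(1)$ on $\widetilde N$, given at $(x, [\lambda_1{:}\cdots{:}\lambda_{r-i+1}])$ by $\sum_j \lambda_j s_j(x)$. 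By perturbing the $s_j$ within the affine subspace of $G$-equivariant sections (using that $G$ acts freely on $\widetilde N$, so that Thom transversality applies to the quotient manifold $\widetilde N/G$), we may arrange $\bar\phi$ to be transverse to the zero section. Then $N' := Z(\bar\phi) \subset \widetilde N$ is a $G$-invariant $\ci$ submanifold of complex codimension $r$, hence of complex dimension $\dim N - i$, inheriting a stably complex structure from the zero-locus construction of \S\ref{parstablycxconstr}; its projection $\rho := p|_{N'}: N' \to N$ is proper because $\P^{r-i}_\C$ is compact.

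Finally, the pushforward computation: the inclusion $\iota: N' \hookrightarrow \widetilde N$ satisfies $\iota_* 1 = c_r(E)$ (the Euler class of the bundle of which $\bar\phi$ is a transverse section). The formula $c_r(W \otimes L) = \sum_{k=0}^r c_{r-k}(W) c_1(L)^k$ (verified via Chern roots), combined with the vanishing $\xi^{r-i+1} = 0$ on the trivial projective bundle $\widetilde N$, yields $c_r(E) = \sum_{k=0}^{r-i} p^* c_{r-k}(V) \smile \xi^k$. Since $p_*(\xi^k) = 0$ for $k < r - i$ and $p_*(\xi^{r-i}) = 1$, the projection formula gives $\rho_* 1 = p_* \iota_* 1 = c_i(V)$, completing the construction. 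The main obstacle is arranging $G$-equivariant transversality for $\bar\phi$ and keeping track of the $\Z(j)$ twists and stably complex structures throughout the iterated zero-locus constructions; these are standard but demand care in the equivariant setting with $\C$-antilinear $G$-action.
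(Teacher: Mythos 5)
Your construction is correct, but it is genuinely different from the one in the paper. You reduce to a single monomial $c_{i_1}\cdots c_{i_k}$ (handling signs and coefficients by opposites and disjoint unions, as the paper also does) and then realize each factor by the classical degeneracy-locus device: on $N\times\P^{r-i}_\C$ you take the zero locus of the tautological section of $p^*V\otimes\sO(1)$ built from $r-i+1$ equivariant sections, push forward, and use $p_*(\xi^{r-i})=1$ to get $\rho_*1=c_i(V)$; iterating and applying the projection formula gives the monomial. The paper instead treats an arbitrary $P$ in one step: it compresses the classifying map of $\tau_M$ to a finite Grassmannian $\Gr_r(\C^N)$ avoiding the fixed locus, embeds $M$ into $\Gr_r(\C^N)\times\C$, writes $P(c(M))=\tau^*[Z]$ for an algebraic cycle $Z$ on the Grassmannian (whose Chow ring is generated by Chern classes), resolves the singularities of $Z$, and perturbs $W\times\C\to\Gr_r(\C^N)\times\C$ into equivariant general position (Bierstone) so that $M'$ is the transverse preimage of $M$. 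Your route avoids resolution of singularities, the Chow ring of the Grassmannian, and Bierstone's theorem (ordinary Thom transversality on the free quotient $\widetilde N/G$ suffices, and your parametric perturbation of the $s_j$ does achieve it because $N^G=\varnothing$ lets you prescribe values of equivariant sections pointwise); the price is an iterated construction with repeated bookkeeping of the stably complex structures. One technical point you should patch: the zero-locus construction of \S\ref{parstablycxconstr} is stated for compact manifolds, because it needs a $G$-equivariant stable opposite of $E$, and your base $N\times\P^{r-i}_\C$ may be noncompact (the lemma allows noncompact $M$). This is repaired by the same obstruction-theoretic compression the paper uses: over a base whose quotient has finite CW dimension, the classifying map of a $G$-equivariant complex bundle factors through a finite Grassmannian, and the pullback of the complementary tautological bundle provides the stable opposite; with that remark (and Kahn's identification of the top equivariant Chern class with the equivariant Euler class, justifying $\iota_*1=c_r(E)$), your argument goes through.
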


\begin{proof}
Let $\tau\colon M\to\BU$ be a $G$-equivariant map classifying $\tau_M$ (see \S\S\ref{parGeqvb}-\ref{parstablycomplex}).  If $r,N\geq 0$ are big enough, the homotopy fiber of the inclusion $\Gr_r(\C^N)\to \BU$ is $2n$-connected, and it follows from obstruction theory applied $G$-equivariantly (\ie on the space $M/G$) that 
the map $\tau$ factorizes (up to $G$-homotopy) through $\tau\colon M\to \Gr_r(\C^N)$.  After possibly increasing $r$ and $N$, we may moreover ensure that $2n<r(N-r)$.

 By \cite[Theorem~1.4]{Bierstone}, we may assume that $\tau$ is in general position with respect to~$\Gr_r(\C^N)^G$ in the sense of \cite[Definition~1.2]{Bierstone}.  Since $M^G=\varnothing$, this implies that $\tau$ is transverse to $\Gr_r(\C^N)^G$,  and hence that $\tau(M)\cap \Gr_r(\C^N)^G=\varnothing$ 
as $2n<r(N-r)$. 
Choose a $G$\nobreakdash-invariant proper $\ci$ map $f\colon M\to \C$ (there even exists one with values in $\R\subset\C$).  Applying \cite[Chap. 2, Theorem~2.13]{Hirsch} to the proper map $$M/G\to \Big((\Gr_r(\C^N)\setminus\Gr_r(\C^N)^G)\times\C\Big)/G$$ induced by $(\tau,f)$ shows that we may assume,  after perturbing $\tau$ and $f$,  that the $\ci$ map $(\tau,f)\colon M\to\Gr_r(\C^N)\times\C$ is a closed embedding.

Since it suffices to prove the lemma for generators of the $\Z$-module $\Z[c_1,\dots,c_n]$ (making use of disjoint unions, and of the opposite construction of \S\ref{parstablycxconstr} to change orientations), and since the Chow ring of the real algebraic variety $\Gr_r(\C^N)$ is generated by the Chern classes of the tautological bundle (see \eg \cite[Proposition 14.6.5]{Fulton}),  we may assume that~$P(c(M))=\tau^*[Z]$ for some codimension $d$ real algebraic subvariety $Z\subset \Gr_r(\C^N)$. Let $W\to Z$ be a resolution of singularities. Let $g\colon W\to \Gr_r(\C^N)$ be the induced map.  By \cite[Theorem~1.4]{Bierstone},  a small $G$-equivariant perturbation $h$ of the map ${g\times\Id\colon W\times \C\to \Gr_r(\C^N)\times\C}$ is in general position with respect to $M\subset \Gr_r(\C^N)\times \C$,  and hence is transversal to $M$ as $h^{-1}(M)^G\subset h^{-1}(M^G)=\varnothing$.

Set $M':=h^{-1}(M)$ and let $\pi\colon M'\to M$ be the induced map. By transversality,  the stable complex structures on $M$, on $\Gr_r(\C^N)\times\C$, and on $W\times\C$ endow~$M'$ with a structure of stably complex $\ci$ $G$-manifold of dimension $n-d$.  Transversality also implies that $\pi_*1=(\tau,f)^*h_*1=(\tau,f)^*(g\times\Id)_*1=\tau^*g_*1=P(c(M))$ in $H^{2d}_G(M,\Z(d))$.
\end{proof}

\begin{cor}
\label{lemmarec}
Fix $0\leq d\leq n$ and let $P\in\Z[c_1,\dots, c_n]_d$ be homogeneous of degree $d$.  \linebreak
If $\omega^e\in J_{\Z(e),n-d}^{\top}$, then $\omega^e P(c)\in J_{\Z(e+d),n}^{\top}$ (where $J_{\Z(j),n}^{\top}$ is defined in \S \ref{parintrelations}).
\end{cor}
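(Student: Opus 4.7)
The plan is to reduce the vanishing of $\omega^e P(c)$ on an arbitrary $M \in \Theta_n^{\top}$ to the hypothesized vanishing of $\omega^e$ on a suitable auxiliary manifold of dimension $n-d$, via the projection formula.

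Concretely, I would fix a stably complex $\ci$ $G$-manifold $M$ of dimension $n$ with $M^G = \varnothing$ and apply Lemma~\ref{lemchernstab} to the polynomial $P$. This produces a stably complex $\ci$ $G$-manifold $M'$ of dimension $n-d$ together with a proper $G$-equivariant map $\pi : M' \to M$ such that $\pi_* 1 = P(c(M))$ in $H^{2d}_G(M,\Z(d))$. Because $\pi$ is $G$-equivariant and $M^G = \varnothing$, one automatically has $(M')^G = \varnothing$, so $M' \in \Theta_{n-d}^{\top}$.

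Next, the hypothesis $\omega^e \in J_{\Z(e), n-d}^{\top}$ applied to $M'$ yields $\omega^e|_{M'} = 0$ in $H^e_G(M',\Z(e))$. Pushing forward along~$\pi$ and invoking the projection formula for the proper $G$-equivariant map of stably complex $\ci$ $G$-manifolds $\pi : M' \to M$ (whose orientations on the respective quotients by~$G$ are provided by the stably complex structures, as recalled in~\S\ref{parstablycomplex}), I compute
\begin{align*}
\omega^e \smile P(c(M)) \;=\; \omega^e \smile \pi_* 1 \;=\; \pi_*\bigl(\pi^* \omega^e \smile 1\bigr) \;=\; \pi_*\bigl(\omega^e|_{M'}\bigr) \;=\; 0
\end{align*}
in $H^{e+2d}_G(M,\Z(e+d))$, since $\pi^*\omega^e = \omega^e|_{M'}$ (the class $\omega^e$ being pulled back from $BG$ and $\pi$ being $G$-equivariant). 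As $M$ was arbitrary, this gives $\omega^e P(c) \in J_{\Z(e+d), n}^{\top}$, as required.

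The only step that is not entirely formal is the invocation of the projection formula, which rests on having a well-defined Gysin pushforward $\pi_* : H^*_G(M',\Z(j)) \to H^{*+2d}_G(M,\Z(j+d))$ for a proper $G$-equivariant map between stably complex $\ci$ $G$-manifolds without $G$-fixed points; but after passing to the quotients by~$G$, this reduces to the standard Gysin/projection-formula package on the $\ci$ manifolds $M'/G$ and $M/G$ with local coefficients, in the setting of Poincaré duality used throughout Section~\ref{secrelations}.
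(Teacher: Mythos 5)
Your proposal is correct and follows essentially the same route as the paper: apply Lemma~\ref{lemchernstab} to produce $\pi:M'\to M$ with $\pi_*1=P(c(M))$, note $(M')^G=\varnothing$ so the hypothesis kills $\omega^e_{M'}$, and conclude by the projection formula that $\omega^e_M\smile P(c(M))=\pi_*\pi^*\omega^e_M=0$. The paper's proof is exactly this computation, so no further comparison is needed.
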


\begin{proof}
Let $M$ be a stably complex $\ci$ $G$-manifold of dimension~$n$ with $M^G=\varnothing$.  
Let $\pi\colon M'\to M$ be as in Lemma \ref{lemchernstab}. As $\omega^e\in J_{\Z(e),n-d}^{\top}$, it follows from the projection formula that $\omega^e_MP(c(M))=\omega^e_M\smile\pi_*1=\pi_*\pi^*\omega^e_M=\pi_*\omega^e_{M'}=0$ in $H^{e+2d}_G(M,\Z(e+d))$.
\end{proof}

\begin{thm}
\label{omega2n-1}
Let $n\geq 0$ be an integer not of the form $2^k-1$.
Let $M$ be a stably complex~$\ci$ $G$-manifold of dimension~$n$ with $M^G=\varnothing$. Then $\omega_M^{2n-1}=0$.
\end{thm}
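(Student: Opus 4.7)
I proceed by strong induction on $n$. The base cases $n\in\{2,4,5,6\}$—exactly the non-negative integers at most $6$ that do not have the form $2^k-1$—are handled by Proposition~\ref{omegalow}. So fix $n\ge 8$ with $n\ne 2^k-1$, and assume the theorem for all smaller integers not of that form.

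The heart of the plan is to exhibit, universally in $H^{2n-1}_G(\BU,\Z(2n-1))$, a decomposition
\[
\omega^{2n-1}=\alpha+\sum_i \omega^{2n_i-1}\,P_i(c_1,c_2,\ldots),
\]
with $\alpha\in K_{\Z(2n-1),n}$, each $n_i<n$ not of the form $2^k-1$, and each $P_i\in\Z[c_1,c_2,\ldots]$ of weighted degree $n-n_i$ (convention $\deg c_j=j$). Granted such a decomposition, pulling back to~$M$ makes the first term vanish by Theorem~\ref{reltopoZ}, and each $\omega_M^{2n_i-1}\,P_i(c(M))$ vanishes by Corollary~\ref{lemmarec} applied monomial by monomial, combined with the induction hypothesis. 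To build this integral decomposition, I would first establish a mod~$2$ congruence
\[
\oo^{2n-2}\equiv \bar\alpha+\sum_i \oo^{2n_i-2}\,Q_i(\oc)\quad\text{in } H^{2n-2}_G(\BU,\Z/2),
\]
with $\bar\alpha$ lying in the group $K_n+\sum_{k,l\ge 0}\bigl(H^{2n-2^{l+1}k}_G(\BU,\Z(n))/2\bigr)\Sq^{2^lk}\cdots\Sq^k$ appearing inside the Bockstein in~\eqref{defKnZ}. Applying $\beta_{\Z(1)}=\beta_{\Z(2n-1)}$ then yields the desired integral relation: $\beta_{\Z(1)}(\oo^{2n-2})=\omega^{2n-1}$ because $H^{2n-2}(G,\Z(1))=0$ obstructs $\oo^{2n-2}$ from lifting into $\Z(1)$-coefficients so its Bockstein must be the generator~$\omega^{2n-1}$ of $H^{2n-1}(G,\Z(1))$, and the Bocksteins of the Chern-class terms become integer combinations of $\omega^{2n_i-1}$ times integer polynomials in the $c_j$ since each $\oc_j$ admits a canonical integer lift to $c_j$.

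The construction of the mod~$2$ congruence is the main technical content. I would build it using Corollary~\ref{Wukappa} for the universal Wu classes $v_k=(1)\Sq^k$—noting that $v_k\in K_n$ whenever $k>n$, since $(1)\Sq^k$ then fits the condition $l>k'$ in the definition of $K_n$ with $x=1$—together with Lemma~\ref{wuexact} when $n$ is odd (identifying $v_{n-1}$ as the pure Chern-class polynomial $t_{(n-1)/2}(\oc)$) and the combinatorial identities on the coefficients $N(n,k)$ from Lemma~\ref{lemNnk}. The argument splits according to the parity of $n$: Case~1 ($n$ even) and Case~2 ($n$ odd), the latter being where Lemma~\ref{wuexact} plays its role. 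In both cases, the hypothesis $n\ne 2^k-1$ enters through Lemma~\ref{lemNnk}, guaranteeing by binary-expansion arithmetic that an appropriate combination of Wu classes has $\oo^{2n-2}$ appearing with coefficient~$1$. The main obstacle is executing this case analysis uniformly: for ``exceptional'' dimensions such as $n=13$, where $N(n,2n-1)$ happens to vanish so that a single Wu class will not suffice, one must combine several Wu classes $v_{2n-1},v_{2n-3},\ldots$ with the extended generators $(y)\Sq^{2^lk}\cdots\Sq^k$ arising from integrally-lifting classes $y\in H^*_G(\BU,\Z(n))$, and verify that the Chern factors $Q_i$ produced all have weighted degree $n-n_i$ with $n_i$ itself not of the form $2^k-1$ so that the induction hypothesis is actually available.
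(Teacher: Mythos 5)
Your high-level skeleton is the same as the paper's: induction on $n$, writing $\oo^{2n-2}$ (equivalently $\omega^{2n-1}$ after a Bockstein) as a sum of a piece lying in the subgroup that $\beta_{\Z(j)}$ sends into $K_{\Z(j),n}$ (killed by Theorem~\ref{reltopoZ}) plus terms $\oo^{2(n-d)-2}P(\oc)$ handled by the inductive hypothesis through Corollary~\ref{lemmarec}. However, the actual content of the theorem is the construction of such a decomposition for \emph{every} $n$ not of the form $2^k-1$, and this is precisely the part you defer (``I would first establish\dots'', ``the main obstacle is executing this case analysis uniformly''). Your sketch---two cases according to the parity of $n$, using $v_{n-1}=t_{(n-1)/2}(\oc)$ when $n$ is odd---does not go through as stated: Lemma~\ref{wuexact} only removes the $\oo$-monomials from $v_{n-1}$, and to exploit the identity $\oo^{2n-2}=(1)\Sq^{n-1}\smile\oo^{n-1}$ one must first kill the correction terms $(\oo^{2n-3})\Sq^1$ and $(\oo^{2n-4})\Sq^2$ coming from the right Cartan formula~(\ref{Cartanr}), whose binomial coefficients vanish only when $n\equiv 1\pmod 4$; moreover one must check that $n-d$ with $d=\tfrac{n-1}{2}$ is again not of the form $2^k-1$, which forces $n\equiv 1\pmod 8$. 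The remaining odd residues require genuinely different identities: $(1)\Sq^4$, i.e.\ $v_4=\oc_1^2+\oc_2$, for $n\equiv 5\pmod 8$, and for $n\equiv 2^l-1\pmod{2^{l+1}}$ (with $n\neq 2^l-1$) the class $v_{2^l}$ together with a Lucas-theorem computation of the binomial coefficients appearing in~(\ref{Cartanr}) and the vanishing $(\oo^{2n-3})\Sq^1=0$. None of this finer case analysis, nor the verification in each case that the codimensions $d$ used keep $n-d$ away from $2^k-1$, appears in your proposal, so the heart of the proof is missing.

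A second, smaller point you gloss over is the twist bookkeeping. In a decomposition $\omega^{2n-1}=\alpha+\sum_i\omega^{2n_i-1}P_i(c)$ with coefficients $\Z(2n-1)=\Z(1)$, the term $\omega^{2n_i-1}P_i(c)$ has twist $\Z(n+n_i-1)$, so only $n_i\equiv n\pmod 2$ can occur; equivalently, the inductive step via Corollary~\ref{lemmarec} is only available for even $d=n-n_i$ (this is exactly the restriction in the paper's statement~(\ref{hyprec})). The monomials with odd $d$ must instead be disposed of by observing that they lift integrally with the correct twist, so that $\beta_{\Z(1)}$ kills them outright. Your plan, which asks that \emph{all} Chern terms be treated by induction (and, implicitly, that the mod~$2$ congruence hold universally in $H^*_G(\BU,\Z/2)$ rather than merely modulo relations already known to hold on $M$ by Theorem~\ref{reltopo}), demands more than is true or needed and would have to be reorganized along these lines to work.
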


\begin{proof}
We prove, by induction on~$n$, the equivalent statement that $\beta_{\Z(1)}(\oo_M^{2n-2})=0$. The induction hypothesis combined with Corollary \ref{lemmarec} implies the following assertion.
\begin{equation}
\label{hyprec}
\parbox{35em}{If $0<d<n$ is even with $n-d$ not of the form $2^k-1$, and if $P\in\Z[c_1,\dots, c_n]$ is homogeneous of degree $d$,  then  $\beta_{\Z(1)}(\oo^{2n-2d-2}_MP(\oc(M)))=0$.
}
\end{equation}

We successively analyse four cases, exhausting all possible values of $n$.

\setcounter{Case}{0}
\begin{Case}
$n\equiv 0\pmod 2$ and $n\neq 0$.
\end{Case}

In $H^{2n}_G(M,\Z/2)$, consider the trivial identity
\begin{equation}
\label{eqcase1}
\oo^{2n-2}=v_n\,\oo^{n-2}-(v_n-\oo^n)\,\oo^{n-2}.
\end{equation}

Recall that $v_n=(1)\Sq^n$ (see (\ref{wur})).
Applying (\ref{Cartanr})
and noting that $(\oo^{2n-2-k})\Sq^k=0$ for $k>2$ by Theorem~\ref{reltopo},
the first term of the right-hand side of~(\ref{eqcase1}) is therefore equal to $(1)\Sq^n\smile \oo^{n-2}=(\oo^{2n-4})\Sq^2$.
It is annihilated by $\beta_{\Z(1)}$ by Theorem \ref{reltopoZ}.

The monomial~$\oo^n$ does not appear in $v_n-\oo^n\in\Z/2[\oo, (\oc_i)_{i\geq 1}]$, by Lemma~\ref{lemNnk}~(iii)
and Corollary~\ref{Wukappa}.
The monomials appearing in $(v_n-\oo^n)\,\oo^{n-2}$ are therefore all of the form~$\oo^{2n-2d-2}P(\oc)$, for $P\in \Z[c_1,\dots,c_n]$ homogeneous of degree $d$ with $0<d<n$. They belong to the kernel of~$\beta_{\Z(1)}$, by~(\ref{hyprec}) for $d$ even, and for $d$~odd because they are the reductions mod~$2$ of $\omega^{2n-2d-2}P(c)$.

\begin{Case}
\label{case2}
$n\equiv 1 \pmod 8$ and $n\neq 1$.
\end{Case}

In this case, we compute that
\begin{equation}
\label{eqcase2}
\oo^{2n-2}=(1)\Sq^{n-1}\smile \oo^{n-1}=\oo^{n-1}v_{n-1}=\oo^{n-1}t_{\frac{n-1}{2}}(\oc).
\end{equation}
The first equality in (\ref{eqcase2}) follows from (\ref{Cartanr}), because most terms vanish by Theorem~\ref{reltopo} and the classes
$\Sq^{n-2}(\oo^{n-1})=(n-1)\oo^{2n-3}$
and $\Sq^{n-3}(\oo^{n-1})=\frac{(n-1)(n-2)}{2}\oo^{2n-4}$ (see~(\ref{Aomega})) also vanish, as $n\equiv 1\pmod 4$. 
The second equality is (\ref{wur}) and the third is Lemma \ref{wuexact}.  Finally, the right-hand side of (\ref{eqcase2}) is annihilated by $\beta_{\Z(1)}$ thanks to (\ref{hyprec}) applied with $d=\frac{n-1}{2}$.  (Note that $n-d$ is never of the form~$2^k-1$ because $n\equiv 1 \pmod 8$ and $n\neq 1$.)

\begin{Case}
$n\equiv 5\pmod 8$.
\end{Case}

When $n=5$,  the theorem has already been proved in Proposition \ref{omegalow}. We therefore assume that $n\neq 5$ and we compute in $H^{2n}_G(M,\Z/2)$ that
\begin{equation}
\label{eqcase3}
\oo^{2n-2}=(1)\Sq^4\smile\oo^{2n-6}=\oo^{2n-6}v_4=\oo^{2n-6}(\oc_1^2+\oc_2),
\end{equation}
where the first equality follows from (\ref{Cartanr}) (a term vanishes by Theorem~\ref{reltopo} and the others can be computed using (\ref{Aomega})),  the second is (\ref{wur}), and the third is an instance of~(\ref{v}).  Since $n\equiv 5\pmod 8$ and~$n\neq 5$, the right side of (\ref{eqcase3}) is annihilated by $\beta_{\Z(1)}$ thanks to~(\ref{hyprec}) applied with $d=2$.

\begin{Case}
$n\equiv 2^l-1\pmod {2^{l+1}}$ and $n\neq 2^l-1$, for some $l\geq 2$.
\end{Case}

In $H^{2n}_G(M,\Z/2)$, consider the trivial identity
\begin{equation}
\label{eqcase4}
\oo^{2n-2}=v_{2^l}\,\oo^{2n-2^l-2}-(v_{2^l}-\oo^{2^l})\,\oo^{2n-2^l-2}.
\end{equation}
The monomial~$\oo^{2^l}$ does not appear in $v_{2^l}-\oo^{2^l}\in\Z/2[\oo, (\oc_i)_{i\geq 1}]$, by Lemma \ref{lemNnk} (iii)
and Corollary~\ref{Wukappa}.
All the monomials appearing in the second term of the right-hand side of~(\ref{eqcase4}) are therefore annihilated by~$\beta_{\Z(1)}$,  either because they have obvious lifts to $H^{2n-2}_G(M,\Z(1))$, or by (\ref{hyprec}); at least one of these two arguments works for each monomial, as $n\equiv 2^l-1\pmod {2^{l+1}}$ and $n\neq 2^l-1$.
To conclude the proof, we will show that the first term of  the right-hand side of (\ref{eqcase4}) vanishes on the nose. As $v_{2^l}=(1)\Sq^{2^l}$ by (\ref{wur}),  we may apply (\ref{Cartanr}) to compute it.  Most terms that arise vanish by Theorem~\ref{reltopo},  and we deduce that it is equal to
\begin{equation}
\label{eqcase4bis}
\binom{2n-2^l-2}{2^l}\oo^{2n-2}+\binom{2n-2^l-2}{2^l-1}(\oo^{2n-3})\Sq^1+\binom{2n-2^l-2}{2^l-2}(\oo^{2n-4})\Sq^2.
\end{equation}
Using (\ref{rightactionwu}) and (\ref{v}), one computes that $(\oo^{2n-3})\Sq^1=0$. 
The middle term of (\ref{eqcase4bis}) therefore vanishes. We finally verify that the other two terms of (\ref{eqcase4bis}) also vanish by noting that the binomial coefficients that appear are even,
as a straightforward application of Lucas' theorem \cite[Theorem~1]{finebinomial}.
\end{proof}

We note the following consequence of Theorem \ref{omega2n-1}.

\begin{cor}
\label{th:omega2n-1ouvert}
Let $n\geq 0$ be an integer not of the form $2^k-1$.
Let~$X$ be a smooth variety of dimension $n$ over~$\R$. Then $\omega^{2n-1}_{X(\C)\setminus X(\R)}\in H^{2n-1}_G(X(\C)\setminus X(\R),\Z(2n-1))$ vanishes.
\end{cor}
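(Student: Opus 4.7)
The plan is to reduce the statement to a direct application of Theorem~\ref{omega2n-1}. First I would observe that since $X$ is smooth of dimension $n$ over $\R$, the base-change $X_{\C}$ is smooth of dimension $n$, so $X(\C)$ carries a natural structure of complex manifold of complex dimension $n$, with complex conjugation $\sigma$ acting as an antiholomorphic involution. I would then set $M := X(\C) \setminus X(\R)$. As the complement of the closed subset $X(\R)$ in the complex manifold $X(\C)$, it is an open submanifold, hence itself a complex manifold of complex dimension $n$, and $\sigma$ restricts to an antiholomorphic involution on it; by the very definition of $M$, this restriction is fixed-point-free. Consequently $M$ is an almost complex (hence stably complex) $\ci$ $G$-manifold of dimension $n$ with $M^G = \varnothing$, i.e., $M \in \Theta_n^{\an} \subset \Theta_n^{\top}$.

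Second, I would simply invoke Theorem~\ref{omega2n-1}, which, since $n$ is not of the form $2^k - 1$, yields $\omega_M^{2n-1} = 0$ in $H^{2n-1}_G(M,\Z(2n-1))$; this is precisely the claim. The one step warranting attention is that Theorem~\ref{omega2n-1} is stated without any compactness assumption on~$M$; this is essential here, since even when $X$ is proper, $M = X(\C)\setminus X(\R)$ will typically be non-compact. The proof of Theorem~\ref{omega2n-1} delegates the handling of this non-compact case to Theorem~\ref{reltopoZ}, where it is dealt with by the exhaust-by-sublevel-sets and doubling construction of~\S\ref{parstablycxconstr}, so there is nothing further to check.

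I do not anticipate any genuine obstacle: the content of the corollary, relative to Theorem~\ref{omega2n-1}, is purely the observation that excising the real locus $X(\R)$ automatically yields the free $G$-action required to apply the topological result. If one wishes to be scrupulous about the case in which $X$ admits irreducible components of dimension strictly less than $n$, one decomposes $M$ as the disjoint union of the corresponding open pieces and applies the theorem to each pure-dimensional piece separately, combining it with the factorization $\omega^{2n-1} = \omega^{2d-1}\smile \omega^{2n-2d}$ on a piece of complex dimension $d < n$ to reduce to a lower-dimensional instance of Theorem~\ref{omega2n-1}; but in the intended setting where $X$ has pure dimension $n$ this subtlety does not arise.
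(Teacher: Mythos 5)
Your proof is correct and is exactly the paper's (implicit) argument: since $X$ is smooth, $X(\C)\setminus X(\R)$ is a complex manifold with a fixed-point-free antiholomorphic involution, hence a stably complex $\ci$ $G$-manifold of dimension $n$ with no $G$-fixed points, and Theorem~\ref{omega2n-1} (which requires no compactness) applies verbatim. Only your aside about components of dimension $d<n$ is slightly off --- reducing to Theorem~\ref{omega2n-1} in dimension $d$ via the factorization $\omega^{2n-1}=\omega^{2d-1}\smile\omega^{2n-2d}$ could fail when $d$ is of the form $2^k-1$ --- but on such a piece the class vanishes for trivial reasons, as the free quotient has real dimension $2d<2n-1$, so the whole group $H^{2n-1}_G$ is zero there.
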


We also put forward the particular case of Corollary \ref{th:omega2n-1ouvert} where $n=2$ and $X(\R)=\varnothing$.

\begin{cor}
\label{cor:omega3}
Let $X$ be a smooth surface over $\R$ with $X(\R)=\varnothing$. Then $\omega^3_{X}=0$ in~$H^3_G(X(\C),\Z(3))$.
\end{cor}

\begin{rmks}
\label{rmk:ane pas soif}
(i)
It is necessary,
in Theorem~\ref{omega2n-1},
to assume
that~$n$ is not of the form $2^k-1$.
Indeed, in Proposition~\ref{prop:omegai quadriques} below,
we will see that
for any integer~$k \geq 1$,
if $Q$ denotes the real anisotropic quadric of dimension $n=2^k-1$ and $M=Q(\C)$,
then $\omega_M^{2n}\neq\emptyset$.

(ii)
When $n>0$ is even, the vanishing of $\omega_M^{2n}$ in Theorem \ref{omega2n-1} is obvious: it is a torsion class in the torsion-free group $H^{2n}_G(M,\Z(2n))$
(see~\textsection\ref{parstablycomplex}).
In contrast, the vanishing of~$\omega_M^{2n}$ when $n$ is an odd integer not of the form $2^k-1$ is a nontrivial statement.

(iii)
We do not know how to prove Theorem \ref{omega2n-1} for all values of $n$ by producing identities similar to~(\ref{identi2456}) in a systematic way, without an induction based on 
Lemma~\ref{lemchernstab}.

(iv)
Theorem \ref{omega2n-1} is not optimal when $n=6$, as Proposition \ref{omegalow} shows. We investigate the optimality of our results concerning identities of the form $\omega^e=0$ in Section~\ref{secomegai}.
\end{rmks}

\begin{rmks}
\label{remsomega3}
(i)
In \cite[p.~758]{Krasnov}, Krasnov writes that he has not succeeded in finding a smooth surface $X$ over $\R$ with $X(\R)=\varnothing$ and $\oo^3_{X}$ nonzero.
Corollary~\ref{cor:omega3} explains why: in fact, no such surfaces exist.

(ii)
For particular classes of surfaces, Corollary~\ref{cor:omega3} (or a closely related statement)
appears in \cite[Lemma 4]{parimalasujatha},  \cite[Lemma 2.3]{vanhameldivisors} and
in \cite[Proposition~1.4 and Theorem~1.6]{Krasnov}; see also the assertion that the pull-back map~$\Br(\R)\to\Br(X)$ vanishes in the proof of \cite[Theorem~3.2]{sujathavanhamel}. The main tool used in most of these articles is the Hochschild--Serre spectral sequence, which is difficult to analyse when $H^1(X(\C),\Z/2)$ is nonzero. Our proof of Corollary~\ref{cor:omega3} bypasses the use of the Hochschild--Serre spectral sequence, and exploits Poincar\'e duality instead (see (iii) below). We note that some of Krasnov's arguments are closer to our approach (see the proof of \mbox{\cite[Theorem~1.6]{Krasnov}}).

(iii) Given its importance for the applications appearing
in Theorem~\ref{coindexsurfaces} and in
Corollary~\ref{2dim}, we
sketch a short and direct proof of Corollary \ref{cor:omega3},
obtained by distilling the proof of Theorem~\ref{omega2n-1}
for the special case of~$\omega^3_X$.
It avoids the technicalities required for the general case,
and
highlights the role of Poincar\'e duality in our arguments.
Replacing~$X$ with a smooth
compactification, we may assume that~$X$ is proper.
Now the assumption that $X(\R)=\emptyset$ implies that~$X(\C)/G$ is a compact~$\ci$ manifold.
As explained in
\cite[Proposition~1.10]{bw1}
or in Proposition~\ref{Poincareduality}~(1),
Poincaré duality on this manifold provides an isomorphism
$H^4_G(X(\C),\Q/\Z(2))=\Q/\Z$ which, when combined with the cup product
and with the natural identification
$H^4_G(X(\C),\Q/\Z(4))=H^4_G(X(\C),\Q/\Z(2))$, gives rise to a
perfect pairing
\begin{align}
\label{eq:poincare duality h33}
 H^3_G(X(\C),\Z(3))\times H^1_G(X(\C),\Q/\Z(1)) \to \Q/\Z\rlap.
\end{align}
To see that $\omega^3_X=0$, it now suffices to prove that $\omega^3\smile x=0$ for
$x\in H^1_G(X(\C),\Q/\Z(1))$. 
Let $y\in H^2_G(X(\C),\Z(1))_\torsion$ be the image of $x$ by the boundary map of
$$
0\to\Z(1)\to\Q(1)\to\Q/\Z(1)\to 0\rlap,
$$
and let~$\oy\in H^2_G(X(\C),\Z/2)$ be its reduction mod $2$. As $\omega^3=\beta_{\Z(3)}(\oo^2)$,
it suffices to show that~$\oo^2\smile \oy=0$ in $H^4_G(X(\C),\Z/2)$
(see \cite[Lemma~07MC]{SP}). As a consequence
of the Wu formula,
the second Wu class of the compact manifold $X(\C)/G$
can be written as
 $u_{G,2}(\tau_{X(\C)})=\oo^2+\oc_1$ (see Theorem~\ref{Wuvb});
hence $\oy^2 = (\oo^2 + \oc_1) \smile y$.
 It follows that $\oo^2\smile \oy$ is the reduction mod~$2$ of $(c_1\smile y)+y^2\in H^4_G(X(\C),\Z(2))$.
The class $(c_1\smile y)+y^2$ is torsion
since so is $y$;
as $H^4_G(X(\C),\Z(2))$ is torsion-free, this class must therefore vanish. This concludes the proof.
\end{rmks}

\begin{rmk}
Vanishing results for the $\omega^e$, such as Proposition \ref{omegalow} or Theorem \ref{omega2n-1}, can be used to obstruct the algebraicity of $G$-equivariant cohomology classes, as follows. 

Assume that $\omega^e\in J^{\top}_{\Z(e),n-c}$.  Let $X$ be a smooth variety of dimension $n$ over $\R$ with $X(\R)=\varnothing$.  If $Y$ is a smooth variety of dimension $n-c$ over $\R$ and $f\colon Y\to X$ is a proper morphism, then $\omega_X^e\smile f_*1=f_*\omega^e_Y=0$ by the projection formula.  All classes in the image of the $G$-equivariant cycle class map $\cl\colon \CH^c(X)\to H^{2c}_G(X(\C),\Z(c))$ are therefore in the kernel of cup product with~$\omega_X^e$, and a class in 
$H^{2c}_G(X(\C),\Z(c))$ whose cup product with~$\omega_X^e$ does not vanish cannot be algebraic.

Let us illustrate this strategy with an example extracted from \cite[Theorem 4.16]{Steenrodalgebraic} (where a different argument is used).  Let $Q$ be the real anisotropic quadric threefold and let~$E$ be an elliptic curve over $\R$ such that $E(\R)$ is not connected.  Set ${X:=E\times_{\R}Q}$. By \cite[Proposition 4.13]{Steenrodalgebraic}, there exist $a\in H^1_G(E(\C),\Z)$ and $b\in H^1_G(E(\C),\Z(1))$ such that $a\smile b\in H^2_G(E(\C),\Z(1))$ is the class of a real point of~$E$.  As $\oo^6_Q\in H^6_G(Q(\C),\Z/2)$ is nonzero (see Proposition~\ref{prop:omegai quadriques} below) hence equal to the class of a closed point of $Q$,  we deduce that~$a\smile b\smile\oo_X^6\in H^8_G(X(\C),\Z/2)$ is the class of a closed point of $X$ and therefore nonzero.  Consequently, the class $b\smile\omega_X^3\in H^4_G(X(\C),\Z(2))$ is not killed by $\omega^3$, and hence cannot be algebraic (as~$\omega^3\in J^{\top}_{\Z(3),2}$, see Proposition~\ref{omegalow} or Remark \ref{remsomega3} (iii)).
\end{rmk}

\subsection{Relations involving  \texorpdfstring{$\oo^{n+1}$}{bar omega n+1} or \texorpdfstring{$\omega^{n+1}$}{omega n+1}}

The relations obtained in the next two propositions will be used in the proofs of Theorems~\ref{evendim} and \ref{odddim}.

\begin{prop}
\label{reln+1}
There exists a polynomial $\gamma\in H^{n+1}_G(\BU, \Z/2)\subset\Z/2[\oo, (\oc_i)_{i\geq 1}]$ (see~(\ref{cohoBU2})) such that the monomial $\oo^{n+1}$ does not occur in $\gamma$, and such that $\oo^{n+1}+\gamma\in J^{\top}_n$.
\end{prop}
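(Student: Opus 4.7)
The plan is to exhibit an element $\alpha \in K_n$ whose monomial expansion in $\Z/2[\oo, (\oc_i)_{i \geq 1}]$ contains $\oo^{n+1}$ with coefficient~$1$; setting $\gamma := \alpha - \oo^{n+1}$ then produces the desired polynomial, because $\gamma$ does not involve $\oo^{n+1}$ by construction while $\oo^{n+1} + \gamma = \alpha \in K_n \subset J^{\top}_n$ by Theorem~\ref{reltopo}. The description of $K_n$ in~(\ref{defKn}) makes the two obvious candidates $(1)\Sq^{n+1} = v_{n+1}$ and $(\oo)\Sq^n$, which lie in the first summand of~(\ref{defKn}) with $(k,l) = (n-1, n+1)$ and $(k,l) = (n-1, n)$ respectively.

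For odd $n \geq 1$, I would take $\alpha := v_{n+1}$. In the formula of Corollary~\ref{Wukappa}, only the $m = 0$ summand contributes pure powers of $\oo$ to~$v_{n+1}$, so the coefficient of $\oo^{n+1}$ equals $N(n, n+1)$, which equals $1$ by Lemma~\ref{lemNnk}~(iv).

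For even $n \geq 2$, Lemma~\ref{lemNnk}~(i) gives $N(n, n+1) = 0$, so $v_{n+1}$ fails and I would fall back on $\alpha := (\oo)\Sq^n$. Combining~(\ref{rightactionwu}) with the identity $\chi(\Sq)(\oo) = \sum_{l\geq 0}\oo^{2^l}$ from~(\ref{chiSqomega}) yields $(\oo)\Sq = v \smile \sum_{l \geq 0}\oo^{2^l}$, whence $(\oo)\Sq^n$ is the degree-$(n+1)$ component of this product. Extracting the pure $\oo$-monomials via Corollary~\ref{Wukappa} shows that the coefficient of $\oo^{n+1}$ in $(\oo)\Sq^n$ equals $\sum_{l\geq 0} N(n, n+1-2^l)$. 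For $n$ even and $l \geq 1$, the index $n+1-2^l$ is odd, so all such terms vanish by Lemma~\ref{lemNnk}~(i); the $l=0$ term is $N(n,n) = 1$ by Lemma~\ref{lemNnk}~(iii). The degenerate case $n = 0$ is handled separately and trivially: for any $0$-dimensional $G$-manifold $M$ with $M^G = \varnothing$, the quotient $M/G$ is a finite set of points, hence $H^1_G(M,\Z/2) = 0$ and $\oo_M = 0$, so one may take $\gamma = 0$.

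I do not foresee any serious obstacle here: once Corollary~\ref{Wukappa} pins down which pure powers of $\oo$ appear in the total Wu class of $\kappa_n$, the proof reduces to a handful of elementary computations with the numbers $N(n,\cdot)$, all of which are supplied by Lemma~\ref{lemNnk}. The only mildly subtle point is recognising that $v_{n+1}$ fails in even dimensions, which forces the auxiliary use of $(\oo)\Sq^n$.
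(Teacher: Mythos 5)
Your proposal is correct and follows the same route as the paper: for odd $n$ one takes the relation $(1)\Sq^{n+1}=v_{n+1}$ and uses $N(n,n+1)=1$ (Lemma~\ref{lemNnk}~(iv)), while for even $n$ one takes $(\oo)\Sq^n=\sum_{l\geq 0}\oo^{2^l}v_{n+1-2^l}$ and checks that the coefficient of $\oo^{n+1}$ is $\sum_{l\geq 0}N(n,n+1-2^l)=N(n,n)=1$ via Lemma~\ref{lemNnk}~(i) and~(iii). Your separate (trivial) treatment of $n=0$ is a harmless extra precaution.
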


\begin{proof}
When $n$ is odd,  we take $\gamma:=v_{n+1}+\oo^{n+1}$. The vanishing of $v_{n+1}\in H^{n+1}_G(M, \Z/2)$ follows from Theorem \ref{reltopo} since $v_{n+1}=(1)\Sq^{n+1}$ (see (\ref{rightactionwu})).  
The coefficient $N(n,n+1)$ of~$\oo^{n+1}$ in $v_{n+1}$ is nonzero as $n$ is odd by Lemma \ref{lemNnk} (iv).

When $n$ is even, we take $\gamma:=(\oo)\Sq^n+\oo^{n+1}$. The vanishing of $(\oo)\Sq^n\in H^{n+1}_G(M, \Z/2)$ follows from Theorem \ref{reltopo}. It remains to show that the coefficient of $\oo^{n+1}$ in~$(\oo)\Sq^n$ is nonzero.  Combining (\ref{rightactionwu}) and (\ref{chiSqomega}), we compute that $(\oo)\Sq^n=\sum_{l\geq 0}\oo^{2^l}v_{n-2^l+1}$. It follows that the coefficient of interest is equal to $\sum_{l\geq 0}N(n,n-2^l+1)$. As $n$ is even,  Lemma \ref{lemNnk} (i) implies that all terms with $l>0$ vanish and we deduce from Lemma \ref{lemNnk}~(iii) that the term with~$l=0$ is $N(n,n)=1$. This concludes the proof.
\end{proof}

\begin{prop}
\label{reln}
For $n$ even, there exists $\gamma\in H^{n}_G(\BU, \Z(n))\subset\Z[\omega,(c_i)_{i\geq 1}]/(2\omega)$ (see~(\ref{cohoBU})) such that the monomial $\omega^{n}$ does not occur in $\gamma$, and with $\omega^{n+1}+\omega \gamma\in J_{\Z(n+1),n}^{\top}$.
\end{prop}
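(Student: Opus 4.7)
The plan is to obtain the desired relation as the image of $v_n := (1)\Sq^n \in H^n_G(\BU,\Z/2)$ under the Bockstein $\beta_{\Z(n+1)}$. First I would observe that $v_n$ belongs to the ``extra sum'' in the definition~\eqref{defKnZ} of $K_{\Z(n+1),n}$: taking $l=0$ and $k=n$, the corresponding summand is $\bigl(H^0_G(\BU,\Z(n))/2\bigr)\Sq^n$, and since $n$ is even, $H^0_G(\BU,\Z(n))/2 = \Z/2$ contains the class of~$1$. Theorem~\ref{reltopoZ} then gives $\beta_{\Z(n+1)}(v_n) \in K_{\Z(n+1),n} \subset J^{\top}_{\Z(n+1),n}$.

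To identify this Bockstein explicitly, Corollary~\ref{Wukappa}, combined with Lemma~\ref{lemNnk}~(iii), yields the formula
$$v_n = \sum_{m=0}^{n/2} \bar\omega^{n-2m}\, t_m(\bar c)$$
in $H^n_G(\BU,\Z/2)$. Since $\beta_{\Z(n+1)}$ depends only on the parity of $n+1$, it coincides with $\beta_{\Z(1)}$. Comparing the real-complex sequence $0 \to \Z(1) \to \Z[G] \to \Z \to 0$ with the Bockstein sequence $0 \to \Z(1) \xrightarrow{2} \Z(1) \to \Z/2 \to 0$ via the $G$-equivariant map $\Z[G] \to \Z(1)$ sending $1 \mapsto 1$ and $\sigma \mapsto -1$ (which defines a map of short exact sequences over the reduction $\Z \to \Z/2$), I would derive the formula
$$\beta_{\Z(1)}(\bar u) = \omega \smile u$$
valid for any $\bar u \in H^*_G(S,\Z/2)$ admitting an integral lift $u \in H^*_G(S,\Z)$.

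Applying this monomial by monomial, I choose an integral polynomial $P_m(c) \in \Z[c_1,c_2,\dots]$ (of degree $m$ in the grading $\deg c_i = i$) whose reduction modulo~$2$ is $t_m(\bar c)$; the natural lift $\omega^{n-2m}P_m(c)$ has twist $n - m$. For $m$ odd this twist is odd, so the class lies in $H^n_G(\BU,\Z(1))$ and its Bockstein $\beta_{\Z(1)}$ vanishes. For $m$ even it lies in $H^n_G(\BU,\Z)$ and its Bockstein equals $\omega^{n-2m+1}P_m(c)$. Summing and extracting the $m=0$ contribution, I obtain
$$\beta_{\Z(n+1)}(v_n) = \omega^{n+1} + \omega\gamma, \qquad \gamma := \sum_{\substack{m \geq 2 \\ m \text{ even}}} \omega^{n-2m}P_m(c) \in H^n_G(\BU,\Z(n)).$$
Since $t_m$ has no constant term for $m \geq 1$, the monomial $\omega^n$ does not appear in $\gamma$, completing the argument. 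The main obstacle is verifying the Bockstein identity $\beta_{\Z(1)}(\bar u) = \omega \smile u$; once it is in place, the rest is a careful tracking of twist parities.
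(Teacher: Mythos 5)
Your proposal is correct and follows essentially the same route as the paper: both hinge on the fact that $\beta_{\Z(n+1)}(v_n)=\beta_{\Z(n+1)}((1)\Sq^n)$ lies in $K_{\Z(n+1),n}$ (the $l=0$, $k=n$ summand of~(\ref{defKnZ}), available since $n$ is even), hence in $J^{\top}_{\Z(n+1),n}$ by Theorem~\ref{reltopoZ}, together with the fact that $\oo^n$ occurs in $v_n$ with coefficient $1$ (Lemma~\ref{lemNnk}~(iii)). The only difference is that the paper defines $\gamma$ implicitly by $\omega\gamma=\beta_{\Z(n+1)}(v_n+\oo^n)$, whereas you make $\gamma$ explicit via Corollary~\ref{Wukappa} and the identity $\beta_{\Z(1)}(\bar u)=\omega\smile u$ for untwisted integral lifts, which is a valid (slightly longer) elaboration of the same argument.
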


\begin{proof}
Define $\gamma$ to be such that $\omega\gamma=\beta_{\Z(n+1)}(v_n+\oo^n)$ in $H^{n+1}_G(\BU, \Z(n+1))$.
As the monomial $\oo^n$ appears with coefficient $1$ in $v_n\in \Z/2[\oo, (\oc_i)_{i\geq 1}]$ by Lemma \ref{lemNnk} (iii), the monomial $\omega^n$ does not appear in~$\gamma$.
As $\beta_{\Z(n+1)}(v_n)=\beta_{\Z(n+1)}((1)\Sq^n)\in K_{\Z(n+1),n}$ (use~(\ref{rightactionwu}) and the definition (\ref{defKnZ})) vanishes in $H^{n+1}_G(M, \Z(n+1))$ by Theorem \ref{reltopoZ},  we see that $\omega\gamma=\beta_{\Z(n+1)}(\oo^n)=\omega^{n+1}$ in 
$H^{n+1}_G(M, \Z(n+1))$.
\end{proof}

\section{The case of the \texorpdfstring{$\omega^e$}{omega e}}
\label{secomegai}

In \S\ref{parvanomegai}, we obtained relations of a particular form: vanishing results for the constant cohomology classes $\omega^e$ (Proposition~\ref{omegalow},
Theorem~\ref{omega2n-1}, Corollary~\ref{th:omega2n-1ouvert}).
In this section,
we investigate the optimality of these results, as well as possible extensions.
To this end, we use
tools from \textsection\textsection\ref{secrelations}--\ref{seccomputations},
namely Theorems~\ref{reltopo}
and~\ref{thtopac} and
Corollaries~\ref{Wukappa} and~\ref{th:omega2n-1ouvert}.

\subsection{Nonvanishing of the \texorpdfstring{$\omega^e$}{omega e}}

The most natural task in this direction is the construction of smooth real algebraic varieties of dimension $n$ with no real points (or more generally of stably complex $\ci$ $G$-manifolds of dimension $n$ with no $G$-fixed points) on which~$\omega^e$ (or~$\oo^e$) is nonzero for a value of $e$ that is as large as possible.

\subsubsection{Anisotropic quadrics}
\label{paranisotropic}

The simplest example to consider is the real anisotropic quadric of dimension $n$: the variety with projective equation $\{\sum_{i=0}^{n+1}x_i^2=0\}$ in $\P^{n+1}_\R$.

\begin{prop}
\label{prop:omegai quadriques}
Let $n,e\geq 0$ be integers.  Let $Q$ be the real anisotropic quadric of dimension~$n$.
The following conditions are equivalent:
\begin{enumerate}[(i)]
\item the class $\omega_Q^e \in H^e_G(Q(\C),\Z(e))$ vanishes;
\item the class $\oo_Q^e \in H^e_G(Q(\C),\Z/2)$ vanishes;
\item there exists $k\geq 0$ such that $n<2^k-1\leq e$.
\end{enumerate}
\end{prop}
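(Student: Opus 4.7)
The plan begins by identifying the $G$-space $Q(\C)$ with the real Grassmannian of oriented $2$-planes in $\R^{n+2}$, on which $G$ acts by orientation reversal. Writing a complex point $[x+iy]\in Q(\C)\subset\P^{n+1}(\C)$ with $x,y\in\R^{n+2}$ translates the defining equation into $|x|=|y|$ and $\langle x,y\rangle=0$; after normalising $|x|=|y|=1$ and quotienting the resulting Stiefel manifold by the $\bS^1$-rotation action, one obtains an oriented $2$-plane. The quotient $Q(\C)/G$ is then the unoriented Grassmannian $\Gr_2(\R^{n+2})$, and $\oo_Q\in H^1(\Gr_2(\R^{n+2}),\Z/2)$ classifies the orientation double cover of the tautological rank-$2$ bundle $\tau$; in particular $\oo_Q^e=w_1(\tau)^e$ in the Borel presentation
\[
H^*(\Gr_2(\R^{n+2}),\Z/2) = \Z/2[w_1,w_2]/(\bar w_{n+1},\bar w_{n+2}),
\]
where $\bar w_m$ is defined by $(1+w_1+w_2)\sum_m \bar w_m=1$ and satisfies $\bar w_m=w_1\bar w_{m-1}+w_2\bar w_{m-2}$ in characteristic~$2$.

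The implication (iii)$\Rightarrow$(ii) rests on the identity $w_1^{2^k-1}=\bar w_{2^k-1}$, valid in characteristic~$2$ via the splitting-principle calculation $(a+b)^{2^k-1}=(a^{2^k}+b^{2^k})/(a+b)=\sum_{i+j=2^k-1}a^ib^j$, combined with the recursion, which propagates to give $\bar w_m\in(\bar w_{n+1},\bar w_{n+2})$ for all $m\geq n+1$. Hence if $2^k-1>n$ then $w_1^{2^k-1}=0$, and multiplying by $w_1^{e-(2^k-1)}$ yields $w_1^e=0$ for any $e\geq 2^k-1$. The implication (i)$\Rightarrow$(ii) is the trivial reduction mod~$2$.

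For (ii)$\Rightarrow$(iii), let $k_0$ be the smallest integer with $2^{k_0}-1>n$, so that $n\geq 2^{k_0-1}-1$ and hence $2n\geq 2^{k_0}-2$. If (iii) fails then $e\leq 2^{k_0}-2\leq 2n$, and I aim to show $w_1^e\neq 0$. Iterating Pieri's rule and reducing via Pascal's identity gives the Schubert expansion
\[
w_1^e = \sum_k \binom{e+1}{k}\,\sigma_{e-k,k}
\]
in $H^*(\Gr_2(\R^{n+2}),\Z/2)$, where $k$ ranges over $\max(0,e-n)\leq k\leq\lfloor e/2\rfloor$ and the surviving Schubert classes are linearly independent. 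When $e\leq n$, the $k=0$ term contributes $\sigma_{e,0}\neq 0$. When $e>n$, the constraints force $e+1\in(2^{k_0-1},2^{k_0})$, so its leading binary digit is $2^{k_0-1}$; taking $k:=e+1-2^{k_0-1}$ gives a sub-pattern of $e+1$ with $\binom{e+1}{k}$ odd by Lucas' theorem, and the bounds $k\leq e/2$ and $k\geq e-n$ amount to $e\leq 2^{k_0}-2$ and $n+1\geq 2^{k_0-1}$, both of which hold.

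For (ii)$\Rightarrow$(i), the class $\omega_Q^e$ is $2$-torsion (pulled back from $H^e(G,\Z(e))=\Z/2$) and reduces mod~$2$ to $\oo_Q^e$; it therefore suffices to show that the torsion subgroup of $H^e_G(Q(\C),\Z(e))$ contains no element of order~$4$, for then any $2\beta$ with $4\beta=0$ has $\beta$ already $2$-torsion and hence vanishes after doubling. The group in question identifies with $H^e(\Gr_2(\R^{n+2}),\sL^{\otimes e})$, where $\sL$ is the orientation local system of $\tau$. The short exact sequence $0\to\sL\to p_*\Z\to\Z\to 0$ attached to the double cover $p\colon Q(\C)\to\Gr_2(\R^{n+2})$, combined with the Leray identification $H^*(\Gr_2(\R^{n+2}),p_*\Z)=H^*(Q(\C),\Z)$, yields a long exact sequence relating $H^*(\Gr_2,\sL)$, $H^*(Q(\C),\Z)$ and $H^*(\Gr_2,\Z)$; the middle group is torsion-free (cohomology of a smooth complex projective variety) and the right-hand group has only $2$-torsion (classical result on integer cohomology of real Grassmannians), which forces the same on $H^*(\Gr_2,\sL)$. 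The main obstacle is the Lucas-theoretic selection of $k$ in step (ii)$\Rightarrow$(iii); the other ingredients assemble standard tools.
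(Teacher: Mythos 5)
Your proof is correct, and it takes a genuinely different route from the paper's. You identify $Q(\C)$ with the oriented $2$\nobreakdash-plane Grassmannian, so that $Q(\C)/G=\Gr_2(\R^{n+2})$ and $\oo_Q=w_1(\tau)$, and you then settle (ii)$\Leftrightarrow$(iii) by an explicit computation in the Borel presentation: the identity $w_1^{2^k-1}=\bar w_{2^k-1}$ gives the vanishing when $n<2^k-1\leq e$, and a Lucas-theorem choice of $k$ with $\binom{e+1}{k}$ odd in the Schubert expansion of $w_1^e$ gives the nonvanishing when (iii) fails; finally you deduce (i) from (ii) by a torsion-exponent argument. The paper never computes the cohomology of $Q(\C)/G$: it proves (iii)$\Rightarrow$(i) by transporting the trivial vanishing of $\omega^e$ on the anisotropic quadric $Q'$ of dimension $2^{k-1}-1$ (where $e>2\dim Q'$) through Pfister's rational map $Q\dashrightarrow Q'$ and the projection formula, and it proves $\neg$(iii)$\Rightarrow\neg$(ii) by transporting the nonvanishing of $\oo^e$ on the quadric of dimension $2^{l+1}-2$ (obtained from weak Lefschetz and the Hochschild--Serre spectral sequence) through Pfister's map in the other direction. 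Your computation is more self-contained and yields finer information (the full expansion of $w_1^e$ in the Schubert basis); the paper's argument is softer, trading the combinatorics for Pfister's Satz~5 and functoriality.

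One step needs a better justification. In (ii)$\Rightarrow$(i) you assert that the torsion of $H^e(\Gr_2(\R^{n+2}),\sL^{\otimes e})$ has exponent $2$ because $H^*(\Gr_2(\R^{n+2}),\Z)$ has only $2$-torsion. As stated this does not follow: torsion classes lie in the image of the connecting map $\delta$ from $H^{e-1}(\Gr_2(\R^{n+2}),\Z)$, and a quotient of a group whose torsion is $2$-torsion can still contain elements of order $4$ coming from its free part. The claim itself is true, and the easy fix is the transfer: for the free double cover $p\colon Q(\C)\to Q(\C)/G$ one has $p_*p^*=2$, while $p^*$ kills all torsion because $H^*(Q(\C),\Z)$ is torsion-free; equivalently, $2\delta=0$ since the augmentation $p_*\Z\to\Z$ is split by the norm up to the factor $2$, so the image of $\delta$ is killed by $2$. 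With this one-line substitution your argument is complete.
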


\begin{proof}
That (i) implies (ii) is obvious. 

Assume that (iii) holds and let $Q'$ be the real anisotropic quadric of dimension $2^{k-1}-1$.
As $Q'(\R)=\varnothing$
and $e>2\dim(Q')$, we have $\omega_{Q'}^e=0$.
Pfister has shown the existence of a rational map $Q\dashrightarrow Q'$ (see \cite[Satz 5]{PfisterStufe}).
Resolving its indeterminacies yields a smooth projective variety $X$ over $\R$,  a birational morphism $f\colon X\to Q$
and a morphism $g\colon X\to Q'$. One has $\omega_Q^e=f_*\omega_X^e=f_*g^*\omega_{Q'}^e=0$ by the
projection formula,  proving (i).
 
If (iii) does not hold,  then there exists $l\geq 0$ such that $2^{l}-1\leq n$ and $e<2^{l+1}-1$.  Let~$Q''$ be the real anisotropic quadric of dimension $2^{l+1}-2$. 
By the weak Lefschetz theorem, the restriction map $H^i(\P^{2^{l+1}-1}(\C),\Z/2)\to H^i(Q''(\C),\Z/2)$ is bijective if $i<2^{l+1}-2$ and injective if $i=2^{l+1}-2$. The Hochschild--Serre spectral sequence~(\ref{HS}) implies that the restriction maps $H^{i}_G(\P^{2^{l+1}-1}(\C),\Z/2)\to H^{i}_G(Q''(\C),\Z/2)$ enjoy the same properties.  As~$\oo^e_{\P^{2^{l+1}-1}_{\R}}\neq 0$ (its restriction to any point $x\in\P^{2^{l+1}-1}(\R)$ is nonzero) and~$e\leq 2^{l+1}-2$, we deduce that $\oo^e_{Q''}\neq 0$. Pfister has shown the existence of a rational map $Q''\dashrightarrow Q$ (see \cite[Satz~5]{PfisterStufe}). Resolving its indeterminacies yields a smooth projective variety~$X'$ over~$\R$,  a birational morphism $f'\colon X'\to Q''$
and a morphism $g'\colon X'\to Q$.  The projection formula shows that $\oo_{Q''}^e=f'_*\oo_{X'}^e=f'_*g'^*\oo_{Q}^e$,
and hence $\oo^e_Q\neq 0$, which proves~(i).
\end{proof}

If~$X$ is a smooth variety over~$\R$ with no real points,
and~$Q$ denotes the real anisotropic quadric of the same dimension as~$X$,
it often happens that
 the class $\omega^e$ (resp.~$\oo^e$) vanishes on~$X$ for a wider range of values of~$e$
 than it does on~$Q$ (\eg consider powers of the anisotropic conic).
We have not, however, been able to find
any example in which this range of values
is smaller (see Remark~\ref{remexemplessc} (iii) below),
thus leaving the next question open.

\begin{quest}
Do there exist $n,e\geq 0$ and a smooth variety $X$ of dimension~$n$ over~$\R$ with no real points such that $\omega^e_X\neq 0$ but $\omega^e_Q=0$, where $Q$ is the real anisotropic quadric of dimension~$n$?
\end{quest}

\subsubsection{Almost complex examples}
\label{parnonalgexamples}

To produce better examples than those obtained in~\S\ref{paranisotropic}, we resort to non-algebraic almost complex~$\ci$ $G$\nobreakdash-manifolds (see Theorem~\ref{nonalgexamples} below).

If $S$ is a $G$-space and $k\geq 0$, we consider the restriction map
\begin{equation}
\begin{alignedat}{5}
\label{decompocan}
\rho_S^k\colon H^k_G(S,\Z/2)\to H^k_G(S^G,\Z/2)&=H^k(S^G\times BG,\Z/2) \\
&=\bigoplus_{p+q=k}H^p(S^G,\Z/2)\otimes H^q(BG,\Z/2)\\
&=H^0(S^G,\Z/2)\oplus\dots\oplus H^k(S^G,\Z/2),
\end{alignedat}
\end{equation}
where the equalities come from the definition of $G$-equivariant cohomology, the K\"unneth formula, and the computation of the cohomology of $G$.

Let $\alpha(m)$ denote the number of ones in the binary expansion of $m$. 

\begin{lem}
\label{doubleapplication}
Fix $n\geq 0$, and set $k:=2n+1-\alpha(n+1)$.  There exist a compact stably complex $\ci$ $G$\nobreakdash-manifold~$S$ of dimension $n+1$ and $\tau\in H^0(S^G,\Z/2)$ such that $(\tau,0,\dots, 0)$ is not in the image of the map $\rho_S^{k}$ defined in (\ref{decompocan}).
\end{lem}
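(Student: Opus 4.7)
The construction of~$S$ is subtle, and I outline my approach in stages.

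My first attempt would be the naive construction $S := E^{n+1}$, where $E := \C/(\Z+i\Z)$ is an elliptic curve with antiholomorphic involution $\sigma(z) = \bar z$. Since the $G$-reflection action on the imaginary-part circle of~$E$ admits fixed points (hence an equivariant section of the corresponding $\bS^1$-bundle over~$BG$), the Borel construction $(\bS^1_y \times EG)/G$ is homotopy equivalent to $BG \vee BG$, from which one obtains
\begin{align*}
H^*_G(E, \Z/2) = \Z/2[\oo_1, \oo_2, dx]/(\oo_1\oo_2, dx^2),
\end{align*}
with $\oo = \oo_1 + \oo_2$ and the classes $\oo_1, \oo_2$ restricting respectively to $(\oo,0)$ and $(0,\oo)$ on the two components of $E^G$. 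Taking $\tau := \mathbf{1}_F$ for any chosen component $F$ of $S^G = (E^G)^{n+1}$, a suitable linear combination of products $\prod_i s_{c_i, i} \cdot \oo^{k-n-1}$ (choosing $s_{c_i, i} \in \{\oo_1, \oo_2\}$ to match the coordinates of $F$) explicitly realizes $\mathbf{1}_F \otimes \oo^k$ in the image of~$\rho_S^k$, provided $k \geq n+1$. Since the prescribed value $k = 2n+1-\alpha(n+1) \geq n+1$ for all $n \geq 1$, the naive construction only works for~$n=0$ (where one verifies directly that $\tau = \mathbf{1}_F$ fails to lift simply because $E/G$ is connected while $E^G$ is not).

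For general~$n$, a more rigid construction is required. The strategy is to use the binary expansion $n+1 = \sum_{j=1}^\alpha 2^{i_j}$ and to construct, for each digit, a stably complex $G$-manifold $T_{i_j}$ of complex dimension $2^{i_j}$ that evades the above ``isolation phenomenon'': one wants $T_{i_j}$ with a single connected fixed component~$F_j$ and a more restricted equivariant cohomology, obtained for instance via the doubling construction of \S\ref{parstablycxconstr} applied to a carefully chosen compact stably complex $G$-manifold with boundary. Setting $S := \prod_j T_{i_j}$, $F := \prod_j F_j$ and $\tau := \mathbf{1}_F$, the Gysin/Thom long exact sequence for $S^G \hookrightarrow S$ together with the equivariant Euler class $e_G(\nu) = \sum_{i=0}^{n+1} w_i(\nu)\,\oo^{n+1-i}$ of the normal bundle~$\nu$ of~$S^G$ reduces the non-liftability of $(\tau, 0, \dots, 0)$ to a Brown--Peterson type non-vanishing of a specific polynomial in the Stiefel--Whitney classes of~$\nu$. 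The identity $k = 2n+1 - \alpha(n+1)$ is exactly the degree at which multiplicativity across binary digits matches the sharp Wu/Brown--Peterson vanishing and non-vanishing patterns for top-degree Stiefel--Whitney numbers.

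The main obstacle is the explicit construction of the building blocks $T_{i_j}$ and verifying the resulting Brown--Peterson non-vanishing. This parallels classical arguments of Wu, Massey and Brown--Peterson for closed smooth manifolds, now adapted to the $G$-equivariant setting with the additional complications coming from the Galois class~$\oo$ and the equivariant K\"unneth formula: one must track both how the equivariant Euler class of the normal bundle factors multiplicatively through the product and how the interaction between Steenrod operations and~$\oo$ controls which Stiefel--Whitney polynomials genuinely produce obstructions at the prescribed degree.
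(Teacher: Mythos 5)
Your proposal is a plan rather than a proof, and the two steps you defer (``the explicit construction of the building blocks $T_{i_j}$ and verifying the resulting Brown--Peterson non-vanishing'') are precisely where all the content lies; as written there is also a structural flaw. If each block $T_{i_j}$ has a single connected fixed component $F_j$, then $S^G=\prod_j F_j=F$ is connected and $\tau=\mathbf{1}_F$ is the constant class $1$; but then $(\tau,0,\dots,0)=\rho^k_S(\oo^k_S)$ \emph{is} in the image of $\rho^k_S$, since the restriction of $\oo^k_S$ to $S^G$ is exactly $1\otimes\oo^k$ under the decomposition (\ref{decompocan}). So the statement is false for that choice of $(S,\tau)$: you need $S^G$ disconnected and $\tau$ non-constant, and you need the obstruction computation to see the \emph{difference} between components, not just the class $\oo^k$.

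The paper's construction resolves both points at once: take $X=\prod_{i}\P^{2^{N_i}}_{\R}$ with $n+1=\sum_i 2^{N_i}$, let $M$ be a $G$-invariant closed tubular neighborhood of $X(\R)$ inside $X(\C)$ (a compact stably complex $G$-manifold with boundary retracting onto $X(\R)$), and let $S$ be its double; then $S^G\simeq X(\R)\sqcup X(\R)$ and one takes $\tau=(1,0)$. A Mayer--Vietoris argument identifies $\rho^k_S$ with the relevant restriction map, and the exact sequence of cohomology with supports reduces everything to the map ``cup product with the $G$-equivariant mod~$2$ Euler class of the normal bundle $N_{M^G/M}$''; by Krasnov's theorem (multiplication by $i$ identifies $N_{M^G/M}$ with $T(M^G)$ carrying the action $-1$), this Euler class is the full total Stiefel--Whitney class $w(X(\R))$, not the expression $\sum_i w_i(\nu)\,\oo^{n+1-i}$ you wrote. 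The quantitative input you leave unverified is then the classical fact that the dual class satisfies $w'_{n+1-\alpha(n+1)}(X(\R))\neq 0$ for this product of real projective spaces (the extremal case of Massey's theorem), which together with the degree count $k-n-1=n-\alpha(n+1)<n+1-\alpha(n+1)$ shows $(1,0,\dots,0)$ cannot lie in the image of cup product with $w(X(\R))$. Without the choice of $X$, the identification of the Euler class, and this non-vanishing, the plan does not yet constitute a proof.
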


\begin{proof}
Write $n+1=\sum_{i=1}^{\alpha(n+1)} 2^{N_i}$ as a sum of distinct powers of $2$.
Set $X:=\prod_{i=1}^{\alpha(n+1)}\P^{2^{N_i}}_{\R}$. 
Letting $w_i \in H^*(X(\R),\Z/2)$  denote the $i$th Stiefel--Whitney class of the tangent bundle of~$X(\R)$,
we consider the normal Stiefel--Whitney classes $w'_i \in H^*(X(\R),\Z/2)$ for $i\geq 0$,
defined
by the relation ${ww'=1}$,
where
$w=\sum_{i\geq 0}w_i$ and $w'=\sum_{i\geq 0}w_i'$.
  An application of the Whitney formula shows that $w'_{n+1-\alpha(n+1)}(X(\R))\neq 0$.

Let $M$ be a $G$-invariant closed tubular neighborhood of $X(\R)$ in $X(\C)$. This compact stably complex $\ci$ $G$-manifold of dimension $n+1$ with boundary  $G$\nobreakdash-equivariantly retracts onto~$X(\R)$.
 Let $S$ be the double of $M$, constructed as in \S\ref{parstablycxconstr}. 
One has $S=M\cup -M$ and $S^G=M^G\sqcup (-M)^G\simeq X(\R)\sqcup X(\R)$.
Choose $$\tau:=(1,0)\in H^0(S^G,\Z/2)=H^0(M^G,\Z/2)\oplus H^0((-M)^G,\Z/2).$$

Consider the Mayer--Vietoris exact sequence
\begin{equation}
\label{MV}
H^k_G(S,\Z/2)\to H^k_G(M,\Z/2)\oplus H^k_G(-M,\Z/2)\to H^k_G(\partial M,\Z/2).
\end{equation}
The middle term of~(\ref{MV}) can be identified, by retraction, with $H^k_G(S^G,\Z/2)$, and hence
the left arrow of~(\ref{MV}) with $\rho_S^k$. Consequently, we only have to show that $(\tau,0,\dots, 0)$ is not 
in the kernel of the right arrow of (\ref{MV}).  
We henceforth identify $H^j_G(M,\Z/2)\isoto H^j_G(M^G,\Z/2)$ with $H^0(M^G,\Z/2)\oplus\dots\oplus H^j(M^G,\Z/2)$ by means of $\rho^j_M$. As the rightmost term of (\ref{MV}) can be identified, by retraction, with $H^k_G(M\setminus M^G,\Z/2)$, we
need to show that the restriction map $H^k_G(M,\Z/2)\to H^k_G(M\setminus M^G,\Z/2)$ does not annihilate the class $(1,0,\dots,0)$. 

To this end, we consider the exact sequence of relative cohomology
\begin{equation}
\label{withsupport}
H^{k-n-1}_G(M^G,\Z/2)=H^k_{G}(M,M \setminus M^G,\Z/2)\to H^k_G(M,\Z/2)\to H^k_G(M\setminus M^G,\Z/2),
\end{equation}
where the left equality is the $G$-equivariant Thom isomorphism. Using the above identifications, the left map $H^{k-n-1}_G(M^G,\Z/2)\to H^k_G(M,\Z/2) \isoto H^k_G(M^G,\Z/2)$ of (\ref{withsupport}) is the cup product with the $G$-equivariant mod $2$ Euler class of the normal bundle $N_{M^G/M}$ of $M^G$ in $M$, which is nothing but its top $G$-equivariant Stiefel--Whitney class $w_{G,n+1}(N_{M^G/M})$.  Multiplication by~$i$ identifies  $N_{M^G/M}$ with the real vector bundle $T(M^G)$ endowed with the $G$\nobreakdash-action given by multiplication by $-1$.  It therefore follows from \cite[Theorem~2.1]{krasnovequivariant} that $w_{G,n+1}(N_{M^G/M})$ can be identified with $w(T(M^G))=w(X(\R))$.

We now conclude the proof.  As $w'_{n+1-\alpha(n+1)}(X(\R))\neq 0$ and as $k-n-1=n-\alpha(n+1)$, the left map of (\ref{withsupport}), which we have computed to be the cup product with $w(X(\R))$, cannot have $(1,0,\dots,0)$ in its image.
It follows that $(1,0,\dots, 0)$ is not annihilated by the right map of (\ref{withsupport}),  as desired.
\end{proof}

We finally reach the goal of this paragraph.

\begin{thm}
\label{nonalgexamples}
For $n\geq 0$, there exists a compact almost complex $\ci$ $G$\nobreakdash-manifold~$M$ of dimension $n$ with $M^G=\varnothing$ such that $\oo^{2n-\alpha(n+1)}_M\in H^{2n-\alpha(n+1)}_G(M,\Z/2)$ is nonzero. 
\end{thm}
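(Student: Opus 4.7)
My plan is to derive Theorem~\ref{nonalgexamples} from Lemma~\ref{doubleapplication} by converting the compact stably complex $G$-manifold $S$ of dimension $n+1$ produced there (which has $S^G\neq\varnothing$) into a compact almost complex $G$-manifold $M$ of dimension $n$ with $M^G=\varnothing$ and $\oo^{2n-\alpha(n+1)}_M\neq 0$.

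The first step is a dimension reduction accompanied by the removal of the fixed locus: I would produce a compact stably complex $G$-manifold $M$ of dimension $n$ with $M^G=\varnothing$ as the transverse zero locus inside~$S$ of a $G$-equivariant $\ci$ section~$s$ of a $G$-equivariant complex line bundle~$L$ on~$S$, chosen so that $s|_{S^G}$ is nowhere zero. The stably complex structure on~$M$ comes from the construction of~\S\ref{parstablycxconstr}. Lemma~\ref{lemconsum} then upgrades $M$ to a compact almost complex $G$-manifold~$\widetilde M$ of dimension $n$ with $\widetilde M^G=\varnothing$ and a degree-one $G$-equivariant map $\tilde\pi\colon\widetilde M\to M$ satisfying $\tilde\pi^*\oo^e_M=\oo^e_{\widetilde M}$; by the projection formula it therefore suffices to show $\oo^{2n-\alpha(n+1)}_M\neq 0$.

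Poincar\'e duality on the closed manifold $M/G$, valid because $M^G=\varnothing$, reduces this nonvanishing to finding a class $\gamma\in H^{\alpha(n+1)}_G(M,\Z/2)$ with $\oo^{2n-\alpha(n+1)}_M\smile\gamma\neq 0$ in $H^{2n}_G(M,\Z/2)$. The strategy is to transfer this product via the Gysin pushforward $i_*\colon H^*_G(M,\Z/2)\to H^{*+2}_G(S,\Z/2)$ along the inclusion $i\colon M\hookrightarrow S$, and then, using the Thom isomorphism $H^*_G(S,S\setminus S^G)\cong H^{*-n-1}_G(S^G,\Z/2)$ and the identification in the proof of Lemma~\ref{doubleapplication} of the obstruction to lifting $(\tau,0,\dots,0)$ as a cup product against $w(X(\R))$, deduce the resulting nonvanishing statement in $H^{2n+2}_G(S,\Z/2)$ from the non-liftability established in Lemma~\ref{doubleapplication} in degree $2n+1-\alpha(n+1)$.

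The main difficulty lies in choosing the line bundle~$L$ so that this chain of identifications goes through. Natural algebraic choices for~$L$ pulled back from $X=\prod_i\P^{2^{N_i}}_{\R}$ (for instance, coming from a positive-definite quadric in a single projective factor) have $\bar c_1(L)=0$ in~$H^2_G(S,\Z/2)$, which would force the Gysin map $i_*$ to vanish modulo~$2$. Circumventing this obstacle requires either a topological line bundle on~$S$ with nonzero mod-$2$ Chern class, or an auxiliary geometric operation that produces such a bundle, and then a careful degree and dimension analysis matching the Gysin image of $\oo^{2n-\alpha(n+1)}_M$ to the specific obstruction class whose nonvanishing is furnished by Lemma~\ref{doubleapplication}.
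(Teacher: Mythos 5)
Your setup is the right one, and it coincides with the paper's: cut a compact stably complex $G$\nobreakdash-manifold $M$ of dimension $n$ with $M^G=\varnothing$ out of the $(n+1)$\nobreakdash-dimensional $S$ of Lemma~\ref{doubleapplication} as the transverse zero locus of a $G$\nobreakdash-equivariant section that does not vanish on $S^G$, then pass to an almost complex example via Lemma~\ref{lemconsum} and the projection formula (the paper phrases this reduction through Theorem~\ref{thtopac}, which is the same mechanism). But the heart of the proof is precisely the step you leave open, and the way out you suggest points in the wrong direction. You do not need, and the paper does not use, a line bundle with nonzero mod~$2$ Chern class: the paper takes the \emph{trivial} bundle, $M=\{\phi=0\}$ for a $G$\nobreakdash-equivariant function $\phi:S\to\C$, and the essential requirement --- absent from your proposal --- is that the \emph{signs} of $\phi$ on $S^G$ realize the class $\tau$ of Lemma~\ref{doubleapplication} (positive where $\tau=0$, negative where $\tau=1$). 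With only ``$s|_{S^G}$ nowhere zero'' (say a section positive on all of $S^G$) the construction carries no information and no contradiction can arise.

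The reason your diagnosis misfires is that the relevant Gysin map is not the absolute one. You are right that pushing $\oo_M^{\,k-1}$ ($k=2n+1-\alpha(n+1)$) into $H^{k+1}_G(S,\Z/2)$ yields $\oo_S^{\,k-1}$ cupped with the mod~$2$ Euler class of the bundle, hence nothing for a trivial bundle; but the paper instead pushes forward into the \emph{relative} group $H^{k+1}_G(S,S^G,\Z/2)$, where the relative Thom-type class $\gamma\in H^2_G(S,S^G,\Z(1))$ attached to $\phi$ equals $b\circ a(\tau)$ and is nontrivial exactly because of the sign condition, even though the bundle is trivial. The argument is then by contradiction: if $\oo_M^{\,2n-\alpha(n+1)}=0$, then $0=i_*\oo_M^{\,k-1}=\tilde b\bigl(a(\tau)\smile\oo_S^{\,k-1}\bigr)$, and the long exact sequence of the pair $(S,S^G)$ produces $\delta\in H^{k}_G(S,\Z/2)$ with $\delta|_{S^G}=\tau\smile\oo_S^{\,k}$, i.e.\ $\rho^k_S(\delta)=(\tau,0,\dots,0)$, contradicting Lemma~\ref{doubleapplication}. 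Note also that your degree bookkeeping does not match what that lemma provides: it is a statement about the restriction map $\rho^k_S$ in degree $k=2n+1-\alpha(n+1)$, not a top-degree nonvanishing in $H^{2n+2}_G(S,\Z/2)$ (and since $S^G\neq\varnothing$ there is no Poincar\'e duality on $S/G$ to pair against an auxiliary class $\gamma\in H^{\alpha(n+1)}_G(M,\Z/2)$, which you in any case never construct). So the gap is genuine: the sign-matching of the section with $\tau$ and the relative (rel $S^G$) Gysin/exact-sequence mechanism are the missing ideas, and the search for a bundle with nontrivial mod~$2$ first Chern class is a dead end the paper deliberately avoids.
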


\begin{proof}
By Theorem \ref{thtopac}, it suffices to construct a compact stably complex $\ci$ $G$\nobreakdash-manifold of dimension $n$ with the required properties.

Let $S$ and $\tau\in H^0(S^G,\Z/2)$ be as in Lemma \ref{doubleapplication}. Choose a $\ci$ function $\phi\colon S\to \C$ that takes positive real values at the points of $S^G$ where $\tau=0$ and negative real values at those where $\tau=1$.  Replace $\phi$ with $\phi+\overline{\phi\circ\sigma}$ to make it $G$-equivariant, and perturb it to make it transversal to $0$ (to see that this is possible, use \cite[Theorem~1.4]{Bierstone} to put $\phi$ in general position with respect to $0\in\C$ in the sense of \cite[Definition~1.2]{Bierstone}, and note that this implies the desired transversality property since $G$ acts with trivial stabilizers on the zero locus of $\phi$).
Endow $M:=\{\phi=0\}\subset S$ with a structure of compact stably complex~$\ci$ $G$-manifold of dimension $n$ as explained in~\S\ref{parstablycxconstr}. One has $M^G=\varnothing$.

Let $\ci_S$ and $\ci_{S^G}$ be the sheaves of complex-valued $\ci$ functions on $S$ and $S^G$.
Consider the commutative diagram
\begin{equation}
\begin{aligned}
\label{diagexpS}
\xymatrix
@R=3ex
@C=0.33cm{
&&H^0_G(S^G,\Z/2)\ar^a[d] \\
H^0_G(S^G,(\ci_S)^*)\ar_{b'}[rd]\ar^d[r]&H^0_G(S^G,(\ci_{S^G})^*)\ar^e[ur]\ar^c[r]&H^1_G(S^G,\Z(1))\ar^{b}[d] \\
&H^1_G(S,S^G,(\ci_S)^*)\ar^{c'}[r]&H^2_G(S,S^G,\Z(1)),
}
\end{aligned}
\end{equation}
where $a$ is a boundary map of the short exact sequence of $G$-modules
\begin{equation}
\label{multpar2}
0\to \Z(1)\xrightarrow{2}\Z(1)\to\Z/2\to 0,
\end{equation}
where $b$ and $b'$ are boundary maps in relative cohomology long exact sequences, where~$c$ and~$c'$ are induced by $G$-equivariant exponential short exact sequences,  where $d$ is a restriction map,
and where~$e$ is the sign map (noting that all functions in $H^0_G(S^G,(\ci_{S^G})^*)$ are real-valued).  

Let us verify that the upper triangle of (\ref{diagexpS}) does commute.  Introduce the morphism $f\colon H^0_G(S^G,\Z/2)\to H^0_G(S^G,(\ci_{S^G})^*)$ induced by the morphism $\Z/2\to(\ci_{S^G})^*$ with image~$\{1,-1\}$.  One has $e\circ f=\Id$. The existence of an appropriate morphism from~(\ref{multpar2}) to the exponential sequence defining $c$ shows that $c\circ f=a$.  In addition,  as a function in~$\Ker(e)$ has a real logarithm, one has $\Ker(e)\subset\Ker(c)$. Now, for $x\in  H^0_G(S^G,(\ci_{S^G})^*)$, one computes 
$c(x)=cfe(x)+c(x-fe(x))=ae(x)$ as $x-fe(x)\in \Ker(e)\subset\Ker(c)$. 

Replacing $S^G$ with $S\setminus M$ in the bottom part of (\ref{diagexpS}) yields a diagram
\begin{equation}
\label{Thomspace}
H^0_G(S\setminus M,(\ci_S)^*)\to H^1_G(S,S\setminus M,(\ci_S)^*)\to H^2_G(S,S\setminus M,\Z(1)).
\end{equation}
Let $\beta\in H^2_G(S,S\setminus M,\Z(1))$ be the image of $\phi|_{S\setminus M}\in H^0_G(S\setminus M,(\ci_S)^*)$ in (\ref{Thomspace}).  (One can check that $\beta$ is the $G$-equivariant Thom class of the normal bundle of $M$ in $S$ defined in \cite[Proposition 5]{kahnchern}.)
Let $\gamma\in H^2_G(S,S^G,\Z(1))$ be the image of $\phi|_{S^G}\in H^0_G(S^G,(\ci_S)^*)$ in (\ref{diagexpS}). The compatibility of (\ref{diagexpS}) and (\ref{Thomspace}) shows that $\gamma$ is the image of $\beta$. In addition, the commutativity of (\ref{diagexpS}) shows that $\gamma=c'\circ b'(\phi|_{S^G})=b\circ a\circ e\circ d(\phi|_{S^G})=b\circ a(\tau)$.

At this point, set $k:=2n+1-\alpha(n+1)$ and assume for contradiction that $\oo^{k-1}_M=0$.  Let $i\colon M\hookrightarrow S$ be the inclusion.  Define $G$-equivariant push-forward maps 
\begin{equation}
\label{weirdGysin}
i_*\colon H^{*}_G(M,\Z/2)\to H^{*+2}_G(S,S^G,\Z/2)
\end{equation}
as follows.  Choose a $G$-equivariant open tubular neighborhood $U$ of $M$ in $S$. Define (\ref{weirdGysin}) to be the composition of the pull-back $H^{*}_G(M,\Z/2)\to H^{*}_G(U,\Z/2)$ by a $G$-equivariant retraction,  of the cup product $H^{*}_G(U,\Z/2)\to H^{*+2}_G(U,U\setminus M,\Z/2)$ with the restriction of~$\beta$, of the excision isomorphism $H^{*+2}_G(U,U\setminus M,\Z/2)=H^{*+2}_G(S,S\setminus M,\Z/2)$, and of the natural map
$H^{*+2}_G(S,S\setminus M,\Z/2)\to H^{*+2}_G(S,S^G,\Z/2)$. One computes that
$$0=i_*\oo_M^{k-1}=\gamma\smile\oo_S^{k-1}=b\circ a(\tau)\smile\oo_S^{k-1}=\tilde{b}(a(\tau)\smile\oo_S^{k-1}),$$
where $\tilde{b}\colon H^k_G(S^G,\Z/2)\to H^{k+1}_G(S,S^G,\Z/2)$ is a boundary map in a long exact sequence of relative cohomology.
The vanishing of $\tilde{b}(a(\tau)\smile\oo_S^{k-1})$ shows the existence of a class $\delta\in H^k_G(S,\Z/2)$ such that $\delta|_{S^G}=a(\tau)\smile\oo_S^{k-1}$. As the reduction of $a$ mod $2$, which is the boundary map associated with the short exact sequence $0\to \Z/2\to\Z/4(1)\to\Z/2\to 0$, is given by the cup product with $\oo$, we deduce that $\delta|_{S^G}=\tau\smile\oo_S^{k}$. It follows that
\begin{equation}
\label{rhos}
\rho_S^k(\delta)=\rho_{S^G}^k(\delta|_{S^G})=\rho_{S^G}^k(\tau\smile\oo_S^{k})=(\tau,0,\dots,0),
\end{equation}
where the third equality follows from the description of the last equality in (\ref{decompocan}). 

Equation (\ref{rhos}) contradicts our choice of $S$ and $\tau$ and concludes the proof.
\end{proof}

It is natural to wonder whether the examples constructed in Proposition~\ref{prop:omegai quadriques}
(see also Remark~\ref{rmk:ane pas soif}~(i))
 and Theorem~\ref{nonalgexamples} are (jointly) optimal. This gives rise to the following questions.

\begin{quests}
\label{questvanomega}
Let $M$ be a  stably complex $\ci$ $G$\nobreakdash-manifold of dimension~$n$ with~$M^G=\varnothing$.
If $n$ is not of the form $2^k-1$,  does 
$\oo_M^{2n+1-\alpha(n+1)}$ (or even $\omega_M^{2n+1-\alpha(n+1)}$) vanish?
\end{quests}

\begin{rmks}
\label{remexemplessc}
(i) 
The starting point of the construction in Lemma \ref{doubleapplication} is a compact~$\ci$ manifold of dimension $n+1$ (chosen to be a product of real projective spaces) whose characteristic class $w'_{n+1-\alpha(n+1)}$ does not vanish.  A theorem of Massey \cite[Theorem~I]{Massey} asserts that, in contrast, its characteristic classes $w'_i$ must vanish for ${i>n+1-\alpha(n+1)}$.  This indication that the construction might be optimal gives grounds for asking Questions~\ref{questvanomega}.

(ii) 
The smallest dimension for which our results leave Questions \ref{questvanomega} open is $n=10$.

(iii) 
Already when $n=5$, we do not known if the almost complex $G$-manifold of dimension $n$ constructed in Theorem \ref{nonalgexamples} can be chosen to be the set of complex points of a smooth proper variety of dimension $n$ over~$\R$ with no real points.
\end{rmks}

\subsection{Singular varieties}

Can our results concerning the vanishing of powers of $\omega$ (such as Proposition \ref{omegalow} 	and Theorem \ref{omega2n-1}) be extended to possibly singular real algebraic varieties with no real points? The next two propositions show that some can, while some cannot.

\begin{prop}
\label{propsing}
Let $X$ be a variety of dimension $n$ over $\R$ with $X(\R)=\varnothing$.  If $n$ is not of the form~$2^k-1$, one has $\omega_X^{2n}=0$ in $H^{2n}_G(X(\C),\Z(2n))$.
\end{prop}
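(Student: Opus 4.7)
The plan is to reduce to the smooth case via resolution of singularities and to transfer the vanishing through an abstract blowup Mayer--Vietoris sequence. First, we may assume $X$ is reduced, since this does not affect $X(\C)$ nor the class~$\omega_X^{2n}$. If $X$ is reducible, we write it as a union $X=X_1\cup X_2$ of proper closed subvarieties; the intersection $X_1\cap X_2$ has dimension~$<n$, so $(X_1\cap X_2)(\C)/G$ has topological dimension~$<2n$ and both $H^{2n-1}_G((X_1\cap X_2)(\C),\Z(2n))$ and $H^{2n}_G((X_1\cap X_2)(\C),\Z(2n))$ vanish. The Mayer--Vietoris sequence for the cover $X=X_1\cup X_2$ then shows that $\omega_X^{2n}$ vanishes as soon as its restrictions to~$X_1$ and~$X_2$ do; iterating, and treating components of dimension~$<n$ trivially, we reduce to the case where $X$ is irreducible of dimension~$n$.

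Applying Hironaka's resolution of singularities, choose a proper birational morphism $\pi:\tilde X\to X$ with $\tilde X$ smooth. Let $Z\subsetneq X$ be a closed subset containing the locus where $\pi$ fails to be an isomorphism (for instance, the singular locus of~$X$), and set $E:=\pi^{-1}(Z)$, so that $\pi$ restricts to an isomorphism $\tilde X\setminus E\isoto X\setminus Z$ and $\dim Z,\dim E\leq n-1$. Since $X(\R)=\varnothing$, the same holds for $\tilde X$,~$Z$ and~$E$, so $G$ acts freely on each of the corresponding sets of $\C$\nobreakdash-points. Because $\pi$ is proper and induces a homeomorphism on the complements, the Cartesian square
\begin{equation*}
\xymatrix@R=3ex{E(\C) \ar[r] \ar[d] & \tilde X(\C) \ar[d]^\pi \\ Z(\C) \ar[r] & X(\C)}
\end{equation*}
is a pushout of topological spaces, and it remains so upon passage to the free quotients by~$G$.

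The associated Mayer--Vietoris long exact sequence contains the piece
\begin{equation*}
H^{2n-1}_G(E(\C),\Z(2n))\to H^{2n}_G(X(\C),\Z(2n))\to H^{2n}_G(\tilde X(\C),\Z(2n))\oplus H^{2n}_G(Z(\C),\Z(2n))
\end{equation*}
in which the leftmost and rightmost groups vanish, since $E(\C)/G$ and $Z(\C)/G$ have topological dimension~$\leq 2n-2$. The middle arrow, which sends $\omega_X^{2n}$ to the pair $(\omega_{\tilde X}^{2n},\omega_Z^{2n})=(\omega_{\tilde X}^{2n},0)$, is therefore injective. Since $\tilde X$ is smooth of dimension~$n$ with $\tilde X(\R)=\varnothing$ and $n$ is not of the form $2^k-1$, Theorem~\ref{omega2n-1} applied to the almost complex $G$\nobreakdash-manifold $\tilde X(\C)$ gives $\omega_{\tilde X}^{2n-1}=0$, whence $\omega_{\tilde X}^{2n}=0$. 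Hence the image of $\omega_X^{2n}$ is zero, and injectivity forces $\omega_X^{2n}=0$. The main technical point is the validity of the Mayer--Vietoris sequence for the abstract blowup square, which rests on the standard pushout long exact sequence and on the compatibility of the Borel construction with topological pushouts for free $G$-actions.
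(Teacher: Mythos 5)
Your proof is correct and follows the same overall strategy as the paper---resolve singularities, show that pulling back to the resolution does not kill $\omega^{2n}$, then invoke the smooth case (Theorem~\ref{omega2n-1}, i.e.\ Corollary~\ref{th:omega2n-1ouvert})---but you establish the injectivity step by a genuinely different mechanism. The paper never leaves $G$\nobreakdash-equivariant sheaf cohomology on $X(\C)$: it applies the Leray spectral sequence of $\pi$ to $\Z(2n)$, bounds the dimensions of the supports of $\RR^q\pi_*\Z(2n)$ and of the cokernel of $\Z(2n)\to\pi_*\Z(2n)$, and uses the free-action identification $H^p_G(X(\C),\sF)=H^p(X(\C)/G,\sF)$ plus cohomological dimension to deduce that the pull-back $H^{2n}_G(X(\C),\Z(2n))\to H^{2n}_G(\tX(\C),\Z(2n))$ is injective; no reduction to irreducible $X$ and no triangulation input is required. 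You instead pass through the abstract blow-up square viewed as a topological pushout and its Mayer--Vietoris sequence, after a preliminary closed Mayer--Vietoris reduction to irreducible $X$. This works and is more topological in flavor, but the one point you should justify more carefully is exactly the existence of that Mayer--Vietoris sequence: it is not a formal consequence of having a pushout of topological spaces. You need (a) that $X(\C)$ carries the quotient topology from $\tX(\C)\sqcup Z(\C)$ (true because $\pi$ is proper on complex points, so the map is closed and surjective, hence a quotient map), and (b) that the pushout is a homotopy pushout, e.g.\ because the pairs involved can be $G$\nobreakdash-equivariantly triangulated with $E(\C)\subset\tX(\C)$ and $Z(\C)\subset X(\C)$ as subcomplexes, so that after passing to the free quotients one has CW pairs with $X(\C)/G\big/(Z(\C)/G)\cong\tX(\C)/G\big/(E(\C)/G)$, hence $H^*_G(X(\C),Z(\C))\cong H^*_G(\tX(\C),E(\C))$, and the sequence follows by splicing the two long exact sequences of pairs; your appeal to ``the standard pushout long exact sequence'' glosses over this. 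With that supplied---and with the same free-action/dimension argument that you and the paper both use to kill $H^{2n-1}_G(E(\C),\Z(2n))$ and $H^{2n}_G(Z(\C),\Z(2n))$ (note that only the $Z$\nobreakdash-summand of the right-hand term vanishes, not the whole group, which is all your argument actually needs)---your proof is complete.
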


\begin{proof}
Let $\pi\colon \tX\to X$ be a resolution of singularities.  An analysis of the dimension of the fibers of $\pi$ shows that, for $q>0$, the $G$-equivariant sheaf $\RR^q\pi_*\Z(2n)$ on $X(\C)$ is supported in dimension $\leq n-1-q/2$.  As $X(\R)=\varnothing$, it follows from the first spectral sequence of \cite[Th\'eor\`eme~5.2.1]{tohoku} that $H^p_G(X(\C), \RR^q\pi_*\Z(2n))=H^p(X(\C)/G,\sF)$, where $\sF$ is the sheaf on $X(\C)/G$ whose pull-back
to~$X(\C)$ is the $G$-equivariant sheaf $\RR^q\pi_*\Z(2n)$. Consequently, these groups vanish when $q>0$ and $p+q\geq 2n-1$ for dimension reasons.  We deduce from this, thanks to the Leray spectral sequence of $\pi$, that the map 
\begin{equation}
\label{map1}
H^{2n}_G(X(\C),\pi_*\Z(2n))\to H^{2n}_G(\tX(\C),\Z(2n))
\end{equation}
 is injective. In addition, the natural morphism $\Z(2n)\to\pi_*\Z(2n)$ of $G$-equivariant sheaves on $X(\C)$ is injective with cokernel $\sK$ supported in dimension $\leq n-1$. Arguing as above shows that $H^{2n-1}_G(X(\C),\sK)=0$,  hence that the map 
\begin{equation}
\label{map2}
H^{2n}_G(X(\C),\Z(2n))\to H^{2n}_G(X(\C),\pi_*\Z(2n))
\end{equation}
 is also injective. As the composition of (\ref{map2}) and (\ref{map1}) is the pull-back map, we see that if $\omega^{2n}$ were nonzero on $X$, it would be nonzero on $\tX$, contradicting Corollary~\ref{th:omega2n-1ouvert}.
\end{proof}

\begin{prop}
\label{singularomega3}
There exists an integral projective algebraic surface $X$ over $\R$ with $X(\R)=\varnothing$ such that the class $\oo^3_X\in H^3_G(X(\C),\Z/2)$ is nonzero.
\end{prop}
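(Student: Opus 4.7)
Since Theorem~\ref{th:omega3} implies $\oo_X^3 = 0$ on every smooth projective surface over~$\R$ with $X(\R)=\varnothing$, the desired~$X$ must be singular. The general strategy is to exploit Proposition~\ref{prop:omegai quadriques}: the $3$-dimensional anisotropic quadric $Y = Q^3 \subset \P^4_\R$ satisfies $\oo^3_Y \neq 0$, and for any proper surjective $\R$-morphism $f : Y \to X$ with $X$ an integral projective $\R$-surface, the projection formula gives $f^*\oo^3_X = \oo^3_Y$, forcing $\oo^3_X \neq 0$. If~$X(\R) = \varnothing$ can be arranged, we are done.

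The concrete construction I would try is the following. Start from $Y = Q^3$, and use the pencil of hyperplane sections cut out by a Galois-invariant $\R$-subscheme of planes through an appropriate Galois-conjugate pair of complex lines $\{\ell,\bar\ell\} \subset Q^3_{\mathbf{C}}$ (such pairs exist because the Fano variety of lines of~$Q^3$ is a form of $\P^3_{\mathbf{C}}$ with no real points). After resolving the indeterminacy locus (a Galois-invariant $\R$-subscheme of~$Q^3$ with no real points, so the blow-up still has empty real locus), one obtains a morphism onto an integral projective surface~$X$ over~$\R$. By construction $X(\R)\subseteq \tilde Y(\R) = \varnothing$, and provided the image lies in a Galois-invariant subvariety of the target projective space with no real points (rather than in $\P^1_\R$ or $\P^2_\R$), we obtain the desired~$X$.

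An alternative, more elementary, construction avoiding the morphism from~$Q^3$: take a smooth surface $Y$ over~$\R$ with $Y(\R) = \varnothing$ (for instance, $Y = Q^2$ or $Y = C \times C'$ with $C$, $C'$ anisotropic conics) and a disjoint pair $C_1, C_2 \subset Y$ of $G$-invariant anisotropic conics with a $G$-equivariant isomorphism $\phi : C_1 \isoto C_2$; form the pushout $X = Y/(C_1 \sim_\phi C_2)$, realized as an integral projective $\R$-scheme via a suitably chosen very ample linear system on~$Y$ whose restrictions to $C_1$ and to $C_2$ agree via~$\phi$. Then $X(\R)=\varnothing$, and one computes $H^*_G(X(\C);\Z/2) = H^*(X(\C)/G;\Z/2)$ from the Mayer--Vietoris long exact sequence for the pushout. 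Since $\oo^3_Y$ and $\oo^3_{C_1}$ both vanish (by Theorem~\ref{th:omega3} and for dimension reasons), the class $\oo^3_X$ must lie in the cokernel of the restriction map $H^2_G(Y(\C);\Z/2)\oplus H^2_G(C_1(\C);\Z/2) \to H^2_G(C_1(\C);\Z/2)^{\oplus 2}$. Decomposing $\oo^3_X = \oo_X \smile \oo^2_X$ and computing each factor from the classifying map $X(\C)/G \to BG$ factored through the pieces of the Mayer--Vietoris decomposition, one identifies $\oo^3_X$ with the nonzero element of this cokernel.

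The hard part is the last verification: confirming that the cup product $\oo_X \smile \oo^2_X$ indeed lands in the nontrivial summand of the Mayer--Vietoris cokernel, rather than vanishing. This reduces to an explicit computation of restriction maps $H^*_G(Y;\Z/2) \to H^*_G(C_i;\Z/2)$ together with the cup product structure across the MV decomposition; the morphism-from-$Q^3$ approach of the first paragraph, when it can be arranged, sidesteps this computation entirely, making the construction conceptually cleaner at the cost of more geometric bookkeeping in producing the morphism.
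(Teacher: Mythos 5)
Both of your routes stop short exactly where the real difficulty lies, and neither can be completed as described. In the first approach, the transfer step is fine (it is just functoriality of $\oo$, not the projection formula: any morphism $f$ satisfies $f^*\oo^3_X=\oo^3_{\tilde Y}$), but producing the morphism is the whole problem. Since a blow-up $\tilde Y$ of $Q^3$ is normal, any surjection onto an integral surface $X$ factors through the normalization of $X$; combined with Theorem~\ref{th:omega3} this means the image would have to be a \emph{normal, non-smooth} projective surface with no real points on which $\oo^3\neq 0$ --- i.e.\ you would already need (a stronger form of) the object you are trying to construct, and no such fibration is exhibited. Your concrete suggestion does not help: hyperplanes of $\P^4_\R$ through a conjugate pair of lines form a pencil, so the resolved map has image $\P^1_\R$ (or $\P^2_\R$ for a net), which is smooth and has real points, failing on both counts.

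The second approach is in the same family as the paper's proof (the paper pinches the blow-up of $\Gamma\times\Gamma$ at $p\times p$ along the strict transforms of $\Gamma\times p$ and $p\times\Gamma$, using Ferrand's theorem), but your version has two genuine gaps. First, the configuration you propose does not exist: on $Q^2$ or $C\times C'$, two disjoint curves are necessarily fibers of a common ruling of $\P^1\times\P^1$, and over $\R$ these are either real lines (impossible, as $Y(\R)=\varnothing$) or geometrically reducible conjugate pairs, never two disjoint anisotropic conics. Second, and more seriously, the nonvanishing of $\oo^3_X$ is not a formal consequence of the Mayer--Vietoris sequence; it depends delicately on \emph{which} curves are glued and how. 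For instance, if one repairs the configuration by taking $Y=C\times\P^1_\R$ and pinching the two sections $C\times\{0\}$ and $C\times\{\infty\}$, the class $pr_1^*\lambda$ (degree $1$ on the conic factor), which carries the nontrivial obstruction in $\Br(\R)$, still descends $G$-invariantly to the pinched surface; the Hochschild--Serre/exponential-sequence mechanism underlying Lemma~\ref{lemomega3} then forces $\omega^3_X=0$, hence $\oo^3_X=0$. The paper's choice --- gluing a ``horizontal'' copy of $\Gamma_\C$ to a ``vertical'' one --- is engineered precisely so that only classes symmetric in the two factors of $\Gamma\times\Gamma$ descend, and these all have trivial obstruction; the nonvanishing is then established not by the cup-product/MV computation you defer to the end, but via Lemma~\ref{lemomega3} ($H^2(X,\sO_X)=0$ plus surjectivity of $\Pic(X)\to\Pic(X_\C)^G$ give $\omega^3_X\neq 0$), together with a separate verification that $H^3_G(X(\C),\Z(3))$ is $2$-torsion to pass from $\omega^3_X\neq0$ to $\oo^3_X\neq 0$ --- a step your plan does not address at all.
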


\begin{proof}
Let $\Gamma\subset\P^2_{\R}$ be the real conic with no real points. Let $p\in\Gamma$ be a closed point. Let $\pi\colon\tX\to \Gamma\times\Gamma$ be the blow-up of $p\times p$ with projections $pr_i\colon\tX\to\Gamma$ for $i\in\{1,2\}$ (note that $\pi_\C\colon\tX_\C\to\Gamma_\C\times\Gamma_\C$ is the blow-up of four complex points). The strict transform of $(\Gamma\times p)\cup (p\times\Gamma)$ in~$\tX$ is the disjoint union of $\Gamma\times p$ and $p\times \Gamma$ (both isomorphic to~$\Gamma_\C$). Use \cite[Th\'eor\`eme~5.4]{Ferrand} to glue~$\tX$ along these two curves by means of the obvious isomorphism and thus get a morphism $\mu\colon\tX\to X$ to an integral proper surface $X$ over $\R$. 

 Let $\tau\colon\tX\to\tX$ be the involution exchanging the two copies of $\Gamma$ and let $\iota\colon \Gamma_{\C}\to X$ be the inclusion of the glued locus.
Any ample line bundle on $\tX$ whose isomorphism class is $\tau$\nobreakdash-invariant descends to a line bundle on~$X$, which is ample by \cite[Proposition~2.6.2]{ega31}. This shows that $X$ is projective. It remains to verify the hypotheses \ref{i} and \ref{ii} of Lemma~\ref{lemomega3} below.

The exact sequence $0\to\sO_X\to\mu_*\sO_{\tX}\to \iota_*\sO_{\Gamma_\C}\to 0$ and the vanishing of $H^2(\tX,\sO_{\tX})$ and $H^1(\Gamma_{\C},\sO_{\Gamma_\C})$ imply that $H^2(X,\sO_X)=0$.
One has a commutative diagram
\begin{equation}
\begin{aligned}
\label{PicBr}
\xymatrix
@R=3ex
@C=0.33cm{
\Pic(\tX)\ar[r]&\Pic(\tX_\C)^G \ar[r] & \Br(\R)\mathrlap{{}=\Z/2}\ar@{=}[d] \\
\Pic(X)\ar[r]\ar[u]&\Pic(X_\C)^G\ar[r]\ar[u]&\Br(\R)\mathrlap{{}=\Z/2\rlap,}
}
\end{aligned}
\end{equation}
by \cite[8.1/4]{neronmodels}. Let $\lambda\in\Pic(\Gamma_{\C})^G\simeq\Z$ be a generator. The group $\Pic(\tX_{\C})^G\simeq \Z^4$ is spanned by $pr_1^*\lambda$, by $pr_2^*\lambda$, and by two exceptional divisors that are defined over $\R$. The first two generators are sent to the nontrivial class of $\Br(\R)$ and the other two to the trivial class.  Consequently, the $\tau$-invariant classes in $\Pic(\tX_{\C})^G$ are all sent to zero in $\Br(\R)$.  It thus follows from (\ref{PicBr}) that the map $\Pic(X_\C)^G\to\Br(\R)$ is zero, and hence that \ref{i} holds.

The short exact sequence $0\to \Z(1)\to\mu_*\Z(1)\to \iota_*\Z(1)\to 0$ of $G$-equivariant sheaves on $X(\C)$ yields a long exact sequence
\begin{equation}
\label{longGeq}
H^2_G(\tX(\C),\Z(1))\to H^2_G((\Gamma\times p)(\C),\Z(1))\to H^3_G(X(\C),\Z(1))\to H^3_G(\tX(\C),\Z(1)).
\end{equation}
Since $H^2_G((\Gamma\times p)(\C),\Z(1))=H^2(\Gamma(\C),\Z)=\Z$ and since the image in this group of $\cl(pr^*_1 K_{\Gamma})\in H^2_G(\tX(\C),\Z(1))$ is twice a generator, one gets from (\ref{longGeq}) an exact sequence:
\begin{equation}
\label{shorterGeq}
\Z/2\to H^3_G(X(\C),\Z(1))\to H^3_G(\tX(\C),\Z(1)).
\end{equation}
The Leray spectral sequence for $\pi$ implies that $H^3_G((\Gamma\times\Gamma)(\C),\Z(1))\isoto H^3_G(\tX(\C),\Z(1))$. 
In the Hochschild--Serre spectral sequence (\ref{HS}), the only terms that can contribute to $H^3_G((\Gamma\times\Gamma)(\C),\Z(1))$ are $H^1(G, H^2((\Gamma\times\Gamma)(\C),\Z(1)))=H^1(G,\Z^2)=0$ and $H^3(G,\Z(1))$, whose image in $H^3_G(\Gamma(\C),\Z(1))=H^3(\Gamma(\C)/G,\Z(1))$ vanishes for dimension reasons, so that its image in $H^3_G((\Gamma\times\Gamma)(\C),\Z(1))$ also vanishes.  The rightmost term of (\ref{shorterGeq}) therefore vanishes, proving~\ref{ii}.
\end{proof}

\begin{lem}
\label{lemomega3}
Let $X$ be an integral proper variety over $\R$.  Consider the properties
\begin{enumerate}[(i)]
\item
\label{i} $H^2(X,\sO_X)=0$\textrm{ and }$\Pic(X)\to\Pic(X_{\C})^G$ is surjective;
\item 
\label{ii}$H^3_G(X(\C),\Z(3))$ is $2$-torsion.
\end{enumerate}
If \ref{i} holds, then the class $\omega^3_X\in H^3_G(X(\C),\Z(3))$ is nonzero.  If moreover \ref{ii} holds, then so is $\oo^3_X\in H^3_G(X(\C),\Z/2)$.
\end{lem}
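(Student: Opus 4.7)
The plan is to reduce the nonvanishing of $\omega_X^3$ to the nonvanishing of the pullback $\Br(\R)\to\Br(X)$, following the chain of equivalences implicit in the proof of Theorem~\ref{th:levelsurface}, and then to extract this nonvanishing from hypothesis~(i).

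First I would use that, since $\Z(3)=\Z(1)$, the long exact sequence associated with $0\to\Z(1)\xrightarrow{2}\Z(1)\to\Z/2\to 0$ reads
\[
H^2_G(X(\C),\Z(1))\xrightarrow{\pi}H^2_G(X(\C),\Z/2)\xrightarrow{\delta}H^3_G(X(\C),\Z(3))
\]
with $\delta(\bar\omega_X^2)=\omega_X^3$, so $\omega_X^3=0$ if and only if $\bar\omega_X^2$ lies in the image of~$\pi$. To compare $\pi$ with the cycle class map mod~$2$, I would apply the $G$-equivariant analytic exponential sequence $0\to\Z(1)\to\sO^{\an}\to(\sO^{\an})^{*}\to 0$ on $X(\C)$. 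Combining the Hochschild--Serre spectral sequence~(\ref{HS}) with GAGA, with $H^1(G,\C^*)=0$, and with the fact that $\C$ is induced as a $G$-module, one checks that $H^1_G(X(\C),(\sO^{\an})^*)=\Pic(X_\C)^G$ and $H^2_G(X(\C),\sO^{\an})=0$, where the latter uses $H^2(X,\sO_X)=0$ from~(i). It follows that $\Pic(X_\C)^G\to H^2_G(X(\C),\Z(1))$ is surjective, and composing with the surjection $\Pic(X)\to\Pic(X_\C)^G$ from~(i) yields a surjective cycle class map $\Pic(X)\to H^2_G(X(\C),\Z(1))$. Thus $\bar\omega_X^2\in\Ima(\pi)$ if and only if $\bar\omega_X^2$ is algebraic.

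Next I would invoke the Kummer exact sequence $0\to\Pic(X)/2\to H^2_G(X(\C),\Z/2)\to\Br(X)[2]\to 0$, using the identification $H^2_\et(X,\Z/2)=H^2_G(X(\C),\Z/2)$ of \cite[Corollary~15.3.1]{scheiderer}, together with the commutative diagram from the proof of Lemma~\ref{lem:level2}: this shows that $\bar\omega_X^2$ is algebraic if and only if the pullback $\Br(\R)\to\Br(X)$ vanishes. Finally, the low-degree Hochschild--Serre exact sequence for $\Gm$ on $X$ over~$\R$ reads
\[
0\to\Pic(X)\to\Pic(X_\C)^G\to\Br(\R)\to\Br(X),
\]
so the surjectivity in~(i) is equivalent to $\Br(\R)\to\Br(X)$ being injective, hence nonzero. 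Chaining all equivalences produces $\omega_X^3\neq 0$.

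For the second assertion, if $H^3_G(X(\C),\Z(3))$ is $2$-torsion, then multiplication by~$2$ on it is zero, so the long exact sequence attached to $0\to\Z(3)\xrightarrow{2}\Z(3)\to\Z/2\to 0$ shows that the reduction mod~$2$ map $H^3_G(X(\C),\Z(3))\to H^3_G(X(\C),\Z/2)$ is injective; since $\oo_X^3$ is the image of $\omega_X^3$ under this map, it is nonzero. I expect the main technical point to be verifying the needed compatibilities between the exponential, Kummer and Hochschild--Serre sequences for the possibly singular variety~$X$, where references often assume smoothness; once those are in place the argument is a diagram chase.
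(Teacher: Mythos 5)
Your strategy---running the proof of Theorem~\ref{th:levelsurface} backwards, reducing the nonvanishing of $\omega_X^3$ to the non-algebraicity of $\bar\omega_X^2$ and then to the injectivity of $\Br(\R)\to\Br(X)$ via the Kummer and five-term exact sequences---is genuinely different from the paper's and can be made to work, but as written it contains one incorrect step and one substantial deferred verification. The incorrect step is the claim that $H^1_G(X(\C),(\sO^{\an})^*)=\Pic(X_\C)^G$ follows from the Hochschild--Serre spectral sequence, GAGA, $H^1(G,\C^*)=0$ and the induced-module structure of $\C$: these ingredients only give an injection $H^1_G(X(\C),\sO_{X(\C)}^*)\hookrightarrow\Pic(X_\C)^G$, and the equality is false in general (for the anisotropic conic the left-hand side is $\Pic$ of the conic, a proper subgroup of index $2$ in $\Pic(X_\C)^G$); the cokernel is exactly the $d_2$-differential into $H^2(G,\C^*)$, i.e.\ precisely where hypothesis (i) has to enter. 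The clean repair is the identification the paper uses, $\Pic(X)=H^1_G(X(\C),\sO_{X(\C)}^*)$ (GAGA plus equivariant \v{C}ech theory), after which the boundary of the equivariant exponential sequence is surjective onto $H^2_G(X(\C),\Z(1))$ as soon as $H^2_G(X(\C),\sO_{X(\C)})=0$; so the surjectivity of the cycle class map needs only $H^2(X,\sO_X)=0$, and the Pic-surjectivity in (i) is what you should feed into the five-term sequence for $\Gm$ to get $\Br(\R)\hookrightarrow\Br(X)$.

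The deferred point is not a formality for singular $X$: your chain needs the comparison $H^2_G(X(\C),\Z/2)=H^2_{\et}(X,\Z/2)$ for a possibly non-smooth proper variety, needs the \'etale Kummer map on $\Pic(X)/2$ to agree under this comparison with the mod $2$ reduction of the analytic equivariant boundary map used in the surjectivity step, and needs $\bar\omega_X^2$ to match the pullback of the generator of $\Br(\R)[2]$; moreover Lemma~\ref{lem:level2} is stated for smooth proper $X$, so you must rework its diagram with the cohomological Brauer group $H^2_{\et}(X,\Gm)$ and check these compatibilities by hand. The paper's own proof is designed to avoid exactly this: it never leaves analytic $G$-equivariant cohomology, and shows directly that $H^3(G,\Z(1))\to H^3_G(X(\C),\Z(1))$ is injective using the commutative diagram~(\ref{expseq}) of exponential sequences, with (i) entering through the spectral sequence~(\ref{HS}) for $\sO_{X(\C)}^*$ to make $H^2(G,\C^*)\to H^2_G(X(\C),\sO_{X(\C)}^*)$ injective, and $H^2(X,\sO_X)=0$ making $H^2_G(X(\C),\sO_{X(\C)}^*)\to H^3_G(X(\C),\Z(1))$ injective. (Note that both your five-term sequence, via $H^2(G,H^0(X_\C,\Gm))=\Br(\R)$, and the paper's diagram, via the entry $H^2(G,\C^*)$, implicitly use $H^0(X(\C),\sO_{X(\C)})=\C$, i.e.\ geometric connectedness.) Your deduction of the second assertion from the $2$-torsion hypothesis via the Bockstein sequence is the same as the paper's and is fine.
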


\begin{proof}
The exponential exact sequence $0\to \Z(1)\xrightarrow{2\pi i}\sO_{X(\C)}\xrightarrow{\exp}\sO_{X(\C)}^*\to 0$ on $X(\C)$ is $G$\nobreakdash-equi\-variant and (applied on $X$ and on the point) yields a commutative exact diagram
\begin{equation}
\begin{aligned}
\label{expseq}
\xymatrix
@R=3ex
@C=0.27cm{
0=H^2_G(X(\C),\sO_{X(\C)})\ar[r]&H^2_G(X(\C),\sO_{X(\C)}^*) \ar[r] & H^3_G(X(\C),\Z(1))& \\
&H^2(G,\C^*)\ar[r]\ar[u]&H^3(G,\Z(1))\ar[u]\ar[r]& H^3(G,\C)=0,
}
\end{aligned}
\end{equation}
in which one sees that $H^2_G(X(\C),\sO_{X(\C)})=H^2(X(\C),\sO_{X(\C)})^G=0$ by combining \mbox{\cite[\S A.3]{Tight}}, \cite[Expos\'e XII, Corollaire 4.3]{SGA1} and \ref{i}. 

Serre's GAGA theorem and \v{C}ech computations of ($G$-equivariant) cohomology give identifications $\Pic(X_{\C})=H^1(X(\C),\sO_{X(\C)}^*)$ and $\Pic(X)=H^1_G(X(\C),\sO_{X(\C)}^*)$ (see \cite[\S A.2, \S A.4]{Tight}). We therefore deduce from \ref{i} and the Hochschild--Serre spectral sequence~(\ref{HS})
$$E_2^{p,q}=H^p(G,H^q(X(\C),\sO_{X(\C)}^*))\Rightarrow H^{p+q}_G(X(\C),\sO_{X(\C)}^*)$$
that the left vertical map $H^2(G,\C^*)\to H^2_G(X(\C),\sO_{X(\C)}^*)$ in (\ref{expseq}) is injective. 

 It now follows from (\ref{expseq}) that the pull-back map $H^3(G,\Z(1))\to H^3_G(X(\C),\Z(1))$ is injective, hence that $\omega^3_X$ is nonzero.  If \ref{ii} holds,  the long exact sequence of $G$\nobreakdash-equivariant cohomology associated with $0\to \Z(3)\xrightarrow{2}\Z(3)\to\Z/2\to 0$ implies that so is $\oo^3_X$.
\end{proof}

We leave the following question open when $n \geq 4$.

\begin{quest}
Fix an integer $n$ not of the form $2^k-1$.  Does there exist a variety~$X$ of dimension $n$ over $\R$, with $X(\R)=\varnothing$, such that $\oo^{2n-1}_X$ is nonzero?
\end{quest}

\subsection{\texorpdfstring{$\Q/\Z$}{Q/Z} coefficients}
\label{parQZ}

For $e\geq 0$,  let $\omega_{\Q/\Z}^e\in H^e(G,\Q/\Z(e+1))\simeq \Z/2$ be the nonzero element. (Beware that despite the notation, it is not an $e$th power for the cup product.)
 If $S$ is a $G$-space, we still denote by $\omega_{\Q/\Z}^e$ the induced class in $H^e_G(S,\Q/\Z(e+1))$.

The class $\omega^{e+1}$ is the image of $\omega_{\Q/\Z}^e$ by the boundary map of the short exact sequence of $G$-modules $0\to \Z(e+1)\to\Q(e+1)\to\Q/\Z(e+1)\to 0$. 
In situations where one has a vanishing result for $\omega^{e+1}$ (for instance in the setting of Proposition \ref{omegalow} or of Theorem~\ref{omega2n-1}),  it is therefore natural to ask whether this is explained by a stronger vanishing result for~$\omega_{\Q/\Z}^e$.
In the next two propositions, we show that it is sometimes the case,  and sometimes not. 

\begin{prop}
There exists a smooth projective surface $X$ over $\R$ with $X(\R)=\varnothing$ such that $\omega_{\Q/\Z}^2\in  H^2_G(X(\C),\Q/\Z(1))$ is nonzero.
\end{prop}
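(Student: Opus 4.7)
The plan is to take for~$X$ a very general smooth quartic surface in~$\P^3_\R$ with $X(\R)=\varnothing$. Such surfaces exist: the condition $X(\R)=\varnothing$ defines a nonempty Euclidean-open subset of the space of smooth real quartics (it contains, e.g., the Fermat quartic $\sum_{i=0}^3 x_i^4=0$), and within this subset, the Noether--Lefschetz theorem ensures that a very general quartic satisfies $\Pic(X)=\Pic(X_\C)=\Z\cdot[\sO_X(1)]$. Remark~(ii) following Theorem~\ref{th:levelsurface} then establishes that every such~$X$ has $s(\R(X))=4$; equivalently, by Lemma~\ref{lem:level2}, the class $\bar\omega_X^2\in H^2_G(X(\C),\Z/2)$ does not belong to the image of the cycle class map $\Pic(X)/2\to H^2_G(X(\C),\Z/2)$, so by the Kummer exact sequence $0\to\Pic(X)/2\to H^2_G(X(\C),\Z/2)\to\Br(X)[2]\to 0$ its image in~$\Br(X)[2]\subseteq\Br(X)$ is nonzero.

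To transfer this information to the $\Q/\Z(1)$-coefficient setting, I will use the commutative square
\[
\xymatrix@R=3ex{
H^2_G(X(\C),\Z/2) \ar[r] \ar[d] & \Br(X)[2] \ar@{^{(}->}[d] \\
H^2_G(X(\C),\Q/\Z(1)) \ar[r] & \Br(X)
}
\]
whose horizontal arrows are the boundary maps in the Kummer long exact sequences attached to $0\to\mu_2\to\Gm\to\Gm\to 0$ and $0\to\mu_\infty\to\Gm\to\Gm\to 0$---in which $H^i_\et(X,\mu_n)$ is identified with $H^i_G(X(\C),\mu_n)$ as in~\cite[Corollary~15.3.1]{scheiderer}, the bottom row is obtained by passage to the colimit over~$n$, and one uses that $\Br(X)$ is torsion since~$X$ is smooth---and whose left vertical arrow is induced by the inclusion $\mu_2\hookrightarrow\mu_\infty=\Q/\Z(1)$. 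The short exact sequence $0\to\Z/2\to\Q/\Z(1)\xrightarrow{2}\Q/\Z(1)\to 0$ of $G$-modules shows that the underlying map $H^2(G,\Z/2)\to H^2(G,\Q/\Z(1))$ is an isomorphism between copies of~$\Z/2$ which sends $\bar\omega^2$ to~$\omega^2_{\Q/\Z}$, and hence by functoriality the left vertical arrow above sends~$\bar\omega_X^2$ to~$\omega^2_{\Q/\Z,X}$.

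The conclusion then follows by a diagram chase: were $\omega^2_{\Q/\Z,X}$ to vanish, its image in~$\Br(X)$ would also vanish; by commutativity the image of~$\bar\omega_X^2$ in~$\Br(X)$ would vanish; and since $\Br(X)[2]\hookrightarrow\Br(X)$, the image of~$\bar\omega_X^2$ in~$\Br(X)[2]$ would vanish too, contradicting the first paragraph. The substantive content of the proof is thus entirely contained in Colliot-Th\'el\`ene's construction of real quartic surfaces with level~$4$, as recast in Remark~(ii) above; the only remaining work is the formal verification that the Brauer-group obstruction carried by~$\bar\omega_X^2$ is the same as that carried by~$\omega^2_{\Q/\Z,X}$, which presents no serious difficulty.
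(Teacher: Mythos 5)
Your proposal is correct, but it proves the statement by a genuinely different route than the paper. The paper takes an explicit K3 surface (the double cover $w^2+x^6+y^6+z^6=0$ of $\P^2_\R$ with the degree~$2$ line bundle), shows $\oo_X^2\neq 0$ (via the genus-detection theorem \cite[Theorem~3.6]{bw1} applied to a genus~$2$ member of the linear system), and then concludes by observing that the change-of-coefficients map $H^2_G(X(\C),\Z/2)\to H^2_G(X(\C),\Q/\Z(1))$ is \emph{injective}, because $H^1_G(X(\C),\Q/\Z(1))=0$ by Hochschild--Serre; since $\omega^2_{\Q/\Z}$ is the image of $\oo^2$, it is nonzero. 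You instead take a very general quartic with no real points, use the Noether--Lefschetz argument of Remark~(ii) after Theorem~\ref{th:levelsurface} to see that $\bar\omega_X^2$ is not algebraic, and detect nonvanishing by pushing further into the Brauer group: the image of $\bar\omega_X^2$ in $\Br(X)[2]$ is nonzero and factors through $H^2_G(X(\C),\Q/\Z(1))$ via the comparison with \'etale cohomology of $\mu_n$ and passage to the colimit. Both detection mechanisms are sound (your ``exact sequence'' $0\to\mu_\infty\to\Gm\to\Gm\to 0$ is not literally one, but this is cosmetic: both horizontal arrows of your square are induced by the sheaf inclusions $\mu_n\subset\Gm$, so commutativity is immediate). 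What each approach buys: yours needs the strictly stronger input that $\bar\omega_X^2$ is non-algebraic (equivalently $s(\R(X))=4$), hence the Noether--Lefschetz theorem and a non-explicit example, but in return it shows more, namely that even the image of $\omega^2_{\Q/\Z}$ in $\Br(X)$ (the pullback of the quaternion class) is nonzero; the paper's argument needs only $\oo_X^2\neq 0$ together with $H^1_G(X(\C),\Q/\Z(1))=0$ and yields an explicit surface. Note also that your quartic is itself a K3 with $H^1(X(\C),\Z/2)=0$, so the paper's injectivity mechanism would apply verbatim to it (indeed to the Fermat quartic, with $\oo_X^2\neq 0$ coming from Hochschild--Serre alone); the Brauer-group and Noether--Lefschetz detour is therefore heavier than necessary, though entirely valid.
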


\begin{proof}
Let $X$ be a projective real $\KT$ surface with $X(\R)=\varnothing$ carrying a line bundle $L$ of degree $2$. One may for instance let $\pi\colon X\to\P^2_{\R}$ be the double cover with equation $\{w^2+x^6+y^6+z^6=0\}$ and take $L:=\pi^*\sO_{\P^2_\R}(1)$.

Set $x:=\cl(L)\in H^2_G(X(\C),\Z(1))$ and let $\ox\in H^2_G(X(\C),\Z/2)$ be its reduction mod~$2$.  As a general section of $L$ is a smooth geometrically irreducible curve of genus $2$, it follows from \cite[Theorem 3.6]{bw1} that $\omega^2\smile x\in H^4_G(X(\C),\Z(3))$ is nonzero. Since the reduction mod~$2$ morphism
$H^4_G(X(\C),\Z(3))\to H^4_G(X(\C),\Z/2)$ is an isomorphism (by Poincar\'e duality), we further deduce that $\oo^2\smile\ox$ is nonzero, hence that $\oo^2\neq 0$ in $H^2_G(X(\C),\Z/2)$. 

The Hochschild--Serre spectral sequence (\ref{HS}) shows that $H^1_G(X(\C),\Q/\Z(1))=0$.  
The inclusion of $G$-modules $\iota\colon\Z/2\hookrightarrow\Q/\Z(1)$ therefore induces an injective morphism $H^2_G(X(\C),\Z/2)\to H^2_G(X(\C),\Q/\Z(1))$.
As $\omega_{\Q/\Z}^2$ is the image of $\oo^2$ by the map $H^2(G,\Z/2)\xrightarrow{\iota}H^2(G,\Q/\Z(1))$, we deduce that $\omega_{\Q/\Z}^2\in H^2_G(X(\C),\Q/\Z(1))$ is nonzero.
\end{proof}

In the next proposition and its proof, we let $\iota\colon\Z/2\hookrightarrow\Z/4(1)$ be the inclusion and we still denote by $\iota$ the morphisms it induces in cohomology.

\begin{prop}
\label{o8QZ}
Let $M$ be a stably complex $\ci$ $G$-manifold~$M$ of dimension~$5$ with $M^G=\varnothing$.  Then $\iota(\oo^8_M)\in H^8_G(M,\Z/4(1))$ and $\omega^8_{\Q/\Z}\in H^8_G(M,\Q/\Z(1))$ vanish.
\end{prop}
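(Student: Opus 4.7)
The plan is to deduce both vanishings from the mod-$2$ identity that already underlies the proof of Proposition~\ref{omegalow} for $n=5$, by exhibiting $\oo^8_M$ as a cup product with~$\oo_M$. First I would observe that the composite inclusion $\Z/2\hookrightarrow\Z/4(1)\hookrightarrow\Q/\Z(1)$ (sending $1$ to $1/2$) induces, at the point and in degree~$8$, an isomorphism $H^8(G,\Z/2)\isoto H^8(G,\Q/\Z(1))$ identifying $\oo^8$ with~$\omega^8_{\Q/\Z}$: both groups are isomorphic to~$\Z/2$, and the isomorphism can be read off from the long exact sequence associated with $0\to\Z/2\to\Q/\Z(1)\xrightarrow{2}\Q/\Z(1)\to 0$. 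By naturality, $\omega^8_{\Q/\Z,M}$ is the image of $\iota(\oo^8_M)$ under $H^8_G(M,\Z/4(1))\to H^8_G(M,\Q/\Z(1))$, so the second vanishing follows from the first, and it suffices to prove that $\iota(\oo^8_M)=0$.

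The long exact sequence associated with $0\to\Z/2\xrightarrow{\iota}\Z/4(1)\to\Z/2\to 0$ reduces this to writing $\oo^8_M=\partial(y)$ for some $y\in H^7_G(M,\Z/2)$, where $\partial$ denotes the connecting map. I would next identify $\partial$ with cup product by~$\oo$: the above extension is classified by its class in $H^1(G,\Hom(\Z/2,\Z/2))=\Z/2$, and since it does not split as a sequence of $G$-modules (the fixed submodule $\{0,2\}\subset\Z/4(1)$ reduces to zero modulo~$2$), this class is the nonzero element~$\oo$.

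A candidate~$y$ arises from the identity
\begin{equation*}
(\oo^4+\oc_1^2+\oc_2)\Sq^4+(\oc_1\oc_2)\Sq^2=\oo^8+\oo^2(\oc_1^3+\oc_1\oc_2+\oc_3)
\end{equation*}
in $H^8_G(\BU,\Z/2)$ (the case $n=5$ of~(\ref{identi2456})). The first summand on the left hand side lies in~$K_5$ via the $k<l$ part of~(\ref{defKn}), with $k=2$, $l=4$. For the second summand, I would verify that $\Sq^2\in R^2_2$: the class $\Sq^2\iota_2=\iota_2^2\in H^4(K(\Z/2,2),\Z/2)$ lifts integrally because $\Sq^1(\iota_2^2)=0$, and any such integral lift is torsion because $K(\Z/2,2)$ is rationally trivial. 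Hence $(\oc_1\oc_2)\Sq^2$ also lies in~$K_5$, and Theorem~\ref{reltopo} forces the left hand side of the displayed identity to vanish on~$M$, giving $\oo^8_M=\oo_M^2(\oc_1^3+\oc_1\oc_2+\oc_3)(M)=\oo_M\smile y$ with $y:=\oo_M(\oc_1^3+\oc_1\oc_2+\oc_3)(M)$.

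The step that requires the most attention is the verification that $\Sq^2\in R^2_2$: this strengthens what was needed in the proof of Proposition~\ref{omegalow}, where only the image of $(\oc_1\oc_2)\Sq^2$ under $\beta_{\Z(9)}$ enters the argument, and for which the membership in the extra sum of~(\ref{defKnZ}) already suffices. Alternatively, one may appeal to the explicit description of the $R^l_k$ in~\cite[Theorem~4.4]{BP}.
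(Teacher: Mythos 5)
The key step of your argument fails: $\Sq^2\notin R^2_2$. Indeed $H_3(K(\Z/2,2),\Z)=0$ and $H_4(K(\Z/2,2),\Z)\simeq\Z/4$, so $H^4(K(\Z/2,2),\Z)=0$ by universal coefficients; hence $\Sq^2\iota_2=\iota_2^2\neq 0$ admits no integral lift at all, let alone a torsion one. Your inference ``$\Sq^1(\iota_2^2)=0$, hence $\iota_2^2$ lifts integrally'' is not valid: $\Sq^1$ only computes the mod~$2$ reduction of the integral Bockstein, and here $\beta(\iota_2^2)$ is the element of order~$2$ in $H^5(K(\Z/2,2),\Z)\simeq\Z/4$, which is nonzero but reduces to $0$ mod~$2$ (this is the Pontryagin square phenomenon: $\iota_2^2$ lifts to $\Z/4$ coefficients, not to $\Z$). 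Consequently $(\oc_1\oc_2)\Sq^2$ is not placed in $K_5$, Theorem~\ref{reltopo} does not apply to it, and the asserted equality $\oo^8_M=\oo_M^2(\oc_1^3+\oc_1\oc_2+\oc_3)(M)$ in $H^8_G(M,\Z/2)$ is unjustified. This is exactly the point where the mod~$2$ formalism is too weak: as you note, Proposition~\ref{omegalow} only uses the vanishing of $\beta_{\Z(9)}$ of that term, via the second sum in~(\ref{defKnZ}), and the paper's proof of the present proposition does not attempt a mod~$2$ shortcut. Instead it works with the identity $(\oo^4)\Sq^4+(\oo^2)\Sq^4\Sq^2=\oo^8+\oo^6\oc_1+\oo^2(\oc_1^3+\oc_3)$, observes that the terms involving Chern classes are twisted Bocksteins of (reductions of) integral classes and hence are killed by $\iota$, kills $(\oo^4)\Sq^4$ by Theorem~\ref{reltopo}, and then proves $\iota((\oo^2)\Sq^4\Sq^2)=0$ by a genuinely new step: after reducing to compact $M$, it pairs against classes $z\in H^2_G(M,\Z/4)$, uses $\Sq^4\Sq^2(\oz)=\oz^4$ and torsion considerations, and concludes by Poincar\'e duality with $\Z/4$ coefficients. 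Your proposal has no substitute for this step.

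A secondary point: the connecting map of $0\to\Z/2\xrightarrow{\iota}\Z/4(1)\to\Z/2\to 0$ is not cup product with $\oo$; it is the twisted Bockstein $x\mapsto\Sq^1(x)+\oo\smile x$. The extension is locally isomorphic to $\Z/4$ (not locally split), so it is not classified by a class in $H^1(G,\Hom(\Z/2,\Z/2))$; its class in the relevant $\Ext^1$ of $G$-modules combines the $\Z/4$-direction (giving $\Sq^1$) with the $\oo$-direction. For your particular class $y=\oo(\oc_1^3+\oc_1\oc_2+\oc_3)$ this happens to be harmless, since $y$ is the reduction of the untwisted integral class $\omega(c_1^3+c_1c_2+c_3)$ and so $\Sq^1(y)=0$ (this is also how the paper argues, via the diagram~(\ref{cd:bocktwist}) tensored with $\Z(1)$), but the general claim and the argument you give for it are incorrect and would lead to false conclusions if applied to arbitrary classes.
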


\begin{proof}
The second statement is a consequence of the first, which we now prove. Arguing as in Case \ref{Case2mod2} of the proof of Theorem \ref{reltopo}, we may assume that $M$ is compact.

Let $\tau\colon M\to \BU\times\Z$ be a $G$-equivariant map classifying $\tau_M$, and let $\dA$ act on the right on $H^*_G(\BU,\Z/2)$ and $H^*_G(M,\Z/2)$ by means of $-\kappa_5$ and $-[TM]=-\tau^*\kappa$ respectively (where $\kappa$ and $\kappa_5$ are as in \S\ref{parintrelations}).  Using (\ref{rightactionwu}) and Corollary \ref{Wukappa}, one can show that
\begin{equation}
\label{identitedim5bis}
(\oo^4)\Sq^4+(\oo^2)\Sq^4\Sq^2=\oo^8+\oo^6\oc_1+\oo^2(\oc_1^3+\oc_3)
\end{equation}
in $H^8_G(\BU,\Z/2)$.  The class $\oo^6\oc_1+\oo^2(\oc_1^3+\oc_3)$ is the image of $\oo^5\oc_1+\oo(\oc_1^3+\oc_3)$ by the boundary map of $0\to\Z/2\xrightarrow{\iota}\Z/4(1)\to\Z/2\to 0$ (as follows from the commutativity of the
diagram~\eqref{cd:bocktwist} tensored with~$\Z(1)$), and hence is killed by $\iota$.  Moreover, the class~$(\oo^4)\Sq^4$ vanishes in $H^8_G(M,\Z/2)$ by Theorem \ref{reltopo}.  It therefore follows from (\ref{identitedim5bis}) that
$\iota(\oo^8)=\iota((\oo^2)\Sq^4\Sq^2)\textrm{ in }H^8_G(M,\Z/4(1))$.

It remains to show that the class $\iota((\oo^2)\Sq^4\Sq^2)$ vanishes. To do so, fix $z\in H^2_G(M,\Z/4)$ and let $\bar{z}\in H^2_G(M,\Z/2)$ be its reduction mod $2$.  One computes that
\begin{equation*}
\label{sansiota}
(\oo^2)\Sq^4\Sq^2\smile \bar z=\oo^2\smile \Sq^4\Sq^2(\oz)=\oo^2\smile\Sq^2(\oz^3)=(\oo^2)\Sq^2\smile\oz^3=\oo^2\oc_1\smile\oz^3=0
\end{equation*}
in $H^{10}_G(M,\Z/2)$, where the first and third equalities follow from Proposition \ref{droitegauche}, the second equality stems from the Cartan formula (\ref{Cartan}) since $\Sq^1(\oz)=0$, and the fourth equality is a computation based on (\ref{rightactionwu}) and Corollary \ref{Wukappa}. As for the last vanishing, note that~$\oo^2\oc_1\smile\oz^3$ lifts to a $2$-torsion class in $H^{10}_G(M,\Z/4(1))$ and that the reduction mod $2$ map 
$\Z/4=H^{10}_G(M,\Z/4(1))\to H^{10}_G(M,\Z/2)=\Z/2$ vanishes on $2$-torsion classes.

We deduce that $\iota((\oo^2)\Sq^4\Sq^2)\smile z=\iota((\oo^2)\Sq^4\Sq^2\smile \bar z)=0$ for all $z\in H^2_G(M,\Z/4)$, and Poincar\'e duality (see Proposition \ref{Poincareduality} (1)) shows that $\iota((\oo^2)\Sq^4\Sq^2)=0$, as desired.
\end{proof}

\section{The coindex of real varieties with no real points}
\label{seccoindex}

Recall that the \textit{coindex} $\coind(S)$ of a $G$-space $S$ with $S^G=\varnothing$ is the smallest nonnegative integer~$m$ such that there exists a $G$-equivariant map $f\colon S\to\bS^m$
(or $\infty$ if there is no such~$m$),  where the sphere $\bS^m$ is endowed with the antipodal involution.
A natural source of $G$\nobreakdash-spaces with no fixed point are sets of complex points of smooth varieties over $\R$ with no real points.  In this section, we study the coindex of such $G$-spaces (and more generally of stably complex~$\ci$ $G$-manifolds with no $G$-fixed points) with the hope that the (stable) complex structure may force the coindex to be smaller than expected.
Our guiding question~is:

\begin{quest}
What is the highest possible value of $\coind(X(\C))$, where $X$ is a smooth variety of dimension $n$ over $\R$ with $X(\R)=\varnothing$?
\end{quest}

We shall apply
Theorem~\ref{omega2n-1}
and establish
Theorems~\ref{coindexnintro}
and~\ref{coindexsurfacesintro}
from the introduction.

\subsection{The primary obstruction}

As pointed out in \cite[p.~419]{CF1}, the coindex of a $G$\nobreakdash-space $S$ with $S^G=\varnothing$ can be studied using obstruction theory applied to the space~$S/G$. Assuming that $S/G$ has a structure of CW complex, as is the case in the situations we consider, 
constructing a $G$\nobreakdash-equivariant map $f\colon S\to\bS^m$ inductively over the $k$-skeleton of $S/G$ for increasing values of~$k$ requires the vanishing of successive obstruction classes living in $H^{k+1}_G(S,\pi_k(\bS^m))$, where~$G$ acts on $\pi_k(\bS^m)$ through the antipodal involution (see \eg \cite[Theorem 34.2]{Steenrodbundles} where, by convention, $\pi_0$ means degree $0$ reduced homology).

The first obstruction encountered lives in $H^{m+1}_G(S,\pi_m(\bS^m))=H^{m+1}_G(S,\Z(m+1))$. This primary obstruction does not depend on any choice, and is functorial in the $G$\nobreakdash-space~$S$ (see \cite[Theorem~35.7]{Steenrodbundles}). It is therefore induced, for all $G$-spaces $S$, by a class in $H^{m+1}_G(EG,\Z(m+1))=H^{m+1}(G,\Z(m+1))=\{0,\omega^{m+1}\}$. We deduce that this class is either always zero, or always equal to $\omega^{m+1}_S$.  It cannot be always zero,  as its vanishing on~$S=\bS^{m+1}$ would imply the existence of a $G$-equivariant map $\bS^{m+1}\to\bS^m$, contradicting the Borsuk--Ulam theorem \cite[Satz II]{Borsuk}. We deduce that this class is equal to $\omega^{m+1}_S$.  

\begin{thm}
\label{coindexn}
Let $M$ be a stably complex~$\ci$ $G$-manifold of dimension~$n$ with $M^G=\varnothing$.
\begin{enumerate}[(i)]
\item One has $\coind(M)\leq 2n$, and $\coind(M)\leq 2n-1$ if $n$ is not of the form $2^k-1$.
\item Suppose that $M$ has no compact connected component. Then $\coind(M)\leq 2n-1$, and $\coind(M)\leq 2n-2$ if $n$ is not of the form $2^k-1$.
\end{enumerate}
\end{thm}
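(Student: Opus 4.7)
The plan is to apply the obstruction-theoretic framework set up just above the statement of the theorem. Since $M^G=\varnothing$, the group $G$ acts freely on $M$, so $M/G$ is a $\ci$ manifold of real dimension $2n$. Two CW-dimension bounds will drive the proof: $M/G$ always carries a CW structure of dimension $\leq 2n$ (by smooth triangulation); and, when $M$ has no compact connected component, each connected component of $M/G$ has the homotopy type of a CW complex of dimension $\leq 2n-1$, since a non-compact connected smooth $d$-manifold admits a proper Morse function bounded below with no index-$d$ critical points (equivalently, a handle decomposition with no top-dimensional handles).

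To prove $\coind(M)\leq m$, one must build a $G$-equivariant map $M\to\bS^m$, or equivalently a section of the associated $\bS^m$-bundle over $M/G$. Since $\bS^m$ is $(m-1)$-connected, the construction proceeds without obstruction on the $m$-skeleton of $M/G$. The primary obstruction lives in $H^{m+1}_G(M,\pi_m(\bS^m))=H^{m+1}_G(M,\Z(m+1))$ and is equal to $\omega_M^{m+1}$, as recalled in the paragraph preceding the theorem. All subsequent obstructions sit in $H^{k+1}_G(M,\pi_k(\bS^m))$ for $k\geq m+1$, hence in cohomological degrees $\geq m+2$.

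Applying this with $m=2n$ yields the first part of (i): the primary obstruction lies in $H^{2n+1}_G(M,-)=0$ by the CW-dimension bound, and there is nothing further to check. Applying it with $m=2n-1$ under the hypothesis that $n$ is not of the form $2^k-1$: the primary obstruction is $\omega_M^{2n}=\omega_M\smile\omega_M^{2n-1}=0$ by Theorem~\ref{omega2n-1}, and the secondary obstructions again lie in $H^{\geq 2n+1}_G(M,-)=0$. Applying it with $m=2n-1$ in the non-compact case gives the first part of (ii): the primary obstruction now lies in $H^{2n}_G(M,-)$, which vanishes by the sharper CW-dimension bound. Finally, applying it with $m=2n-2$ in the non-compact case when $n$ is not of the form $2^k-1$: the primary obstruction is $\omega_M^{2n-1}=0$ by Theorem~\ref{omega2n-1}, and the secondary obstructions live in $H^{\geq 2n}_G(M,-)=0$.

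The only genuine input, beyond classical dimension bookkeeping, is Theorem~\ref{omega2n-1}; everything else is a mechanical run through the obstruction-theoretic formalism recalled just before the statement. The step that would require the most care to write up cleanly is the CW-dimension estimate $\leq 2n-1$ in the non-compact case of (ii), but this is a standard consequence of Morse theory on open manifolds and can be cited rather than reproved.
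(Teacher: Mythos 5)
Your proposal is correct and follows essentially the same route as the paper: the paper's proof also runs the obstruction theory recalled just before the statement, notes that all obstructions beyond the primary one lie in equivariant cohomology groups vanishing for dimension reasons (sharpened by one degree when no component of $M/G$ is compact), and kills the only possibly nonzero primary obstruction $\omega_M^{2n}$ (resp.\ $\omega_M^{2n-1}$) via Theorem~\ref{omega2n-1}. Your explicit CW-dimension bookkeeping for the non-compact case is exactly what the paper leaves implicit.
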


\begin{proof}
The first assertions of both (i) and (ii) hold because all the obstructions live in cohomology groups that vanish. Similarly, the second assertion of (i) (\resp of (ii)) holds because the only obstruction that lives in a cohomology group that might not vanish is equal to $\omega^{2n}_M$ (\resp to $\omega^{2n-1}_M$). This obstruction therefore vanishes by Theorem~\ref{omega2n-1}. 
\end{proof}

The following algebraic variant of Theorem \ref{coindexn} allows for singularities.

\begin{thm}
\label{coindexn2}
Let $X$ be a variety of dimension $n$ over $\R$ with $X(\R)=\varnothing$.  
Then $\coind(X(\C))\leq 2n$. If either $X$ has no proper irreducible component, or $n$ is not of the form $2^k-1$, then $\coind(X(\C))\leq 2n-1$.
\end{thm}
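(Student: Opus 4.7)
The plan is to mimic the proof of Theorem~\ref{coindexn}: endow $X(\C)/G$ with a CW structure of real dimension $\leq 2n$ (available by semi-algebraic triangulation of $X(\C)$ compatible with the $G$-action) and apply obstruction theory to construct a $G$-equivariant map $X(\C)\to \bS^m$. As recalled at the start of \S\ref{seccoindex}, the successive obstructions to extending such a map across skeleta lie in $H^{k+1}_G(X(\C),\pi_k(\bS^m))$ for $k\geq m$, and the primary obstruction (at $k=m$) equals $\omega^{m+1}_X\in H^{m+1}_G(X(\C),\Z(m+1))$. For $m=2n$, every obstruction group sits in degree $>2n$ and hence vanishes for dimension reasons, yielding $\coind(X(\C))\leq 2n$. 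For $m=2n-1$, the same dimension bound kills every obstruction except the primary one $\omega^{2n}_X\in H^{2n}_G(X(\C),\Z(2n))$, so it suffices to prove the vanishing of this class. When $n$ is not of the form $2^k-1$, this is Proposition~\ref{propsing}.

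To treat the case when $X$ has no proper irreducible component, I may assume $X$ is reduced and decompose it as $X=\bigcup_i X_i$ into irreducible components. The resulting finite surjective morphism $\pi'\colon\bigsqcup_i X_i\to X$ satisfies $R^q\pi'_*=0$ for $q>0$, while the cokernel of $\Z(2n)\hookrightarrow \pi'_*\Z(2n)$ on $X(\C)$ is supported on $\bigcup_{i\neq j}(X_i\cap X_j)(\C)$, which has complex dimension $<n$. The injectivity argument at the end of the proof of Proposition~\ref{propsing} therefore applies verbatim to $\pi'$ and shows that the pullback map $H^{2n}_G(X(\C),\Z(2n))\to\bigoplus_i H^{2n}_G(X_i(\C),\Z(2n))$ is injective, reducing the problem to proving $\omega^{2n}_{X_i}=0$ for each~$i$. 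This is immediate when $X_i$ is not geometrically irreducible (then $\omega_{X_i}=0$) and when $\dim X_i<n$ (then the target group vanishes for dimension reasons). In the remaining case, $X_i$ is geometrically irreducible of dimension~$n$ and, by hypothesis, non-proper. Taking a resolution of singularities $\pi\colon\widetilde{X}_i\to X_i$, the smooth geometrically irreducible variety $\widetilde{X}_i$ is also non-proper (for otherwise its proper surjective image $X_i$ would be proper), so $\widetilde{X}_i(\C)/G$ is a connected non-compact orientable $\ci$ manifold of real dimension~$2n$; its top cohomology with twisted integer coefficients vanishes by Poincar\'e--Lefschetz duality, so $\omega^{2n}_{\widetilde{X}_i}=0$. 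The injectivity part of the argument in Proposition~\ref{propsing}, which does not require $n\neq 2^k-1$ (that hypothesis is only used in its last sentence, through Corollary~\ref{th:omega2n-1ouvert}), then transfers this vanishing down to $X_i$.

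The main obstacle is handling the reducible case cleanly: Proposition~\ref{propsing} is stated for a single resolution of singularities, whereas here $X$ may be reducible and one must combine a component-wise reduction with the resolution step. The idea is that the Leray-style injectivity argument inside Proposition~\ref{propsing} depends only on the dimension of the support of the higher direct images of~$\pi$ and of the cokernel of $\Z(2n)\to\pi_*\Z(2n)$, not on $\pi$ being birational, and therefore can be applied twice---first to the finite map $\pi'$ to separate components, then to a resolution of each component. The role of the dimensional hypothesis $n\neq 2^k-1$ is replaced, in this case, by the standard Poincar\'e--Lefschetz vanishing $H^{2n}(M,\widetilde{\Z})=0$ for a connected non-compact orientable $2n$-manifold~$M$.
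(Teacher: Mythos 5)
Your proposal is correct and takes essentially the paper's approach: the paper's own proof is the one-line instruction to rerun the obstruction-theoretic argument of Theorem~\ref{coindexn}, with the primary obstruction $\omega^{2n}_X$ killed by Proposition~\ref{propsing} when $n\neq 2^k-1$, and, when $X$ has no proper irreducible component, by the vanishing of top-degree cohomology on a (non-compact) resolution transferred down via the injectivity mechanism of Proposition~\ref{propsing}, exactly as you do. Your preliminary component-separation step and the word ``orientable'' are harmless but not needed: the coefficient module $\Z(2n)=\Z$ is untwisted and $H^{2n}$ of a connected non-compact $2n$-manifold vanishes for arbitrary local coefficients, and one may apply the Proposition~\ref{propsing} argument once to the disjoint union of resolutions of the irreducible components of $X_{\red}$.
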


\begin{proof}
Run the proof of Theorem \ref{coindexn}, using Proposition \ref{propsing} instead of Theorem \ref{omega2n-1}.
\end{proof}

\begin{rmk}
(i)
We do not know if there exists $n\geq 0$ such that all stably complex~$\ci$ $G$-manifolds $M$ of dimension $n$ with $M^G=\varnothing$ have coindex~$\leq 2n-2$.

(ii)
The hypothesis on $n$ in the second half of Theorem \ref{coindexn} (i) cannot be dispensed with, as the obstruction~$\omega^{2n}_M$  need not vanish if $n=2^k-1$, see Proposition~\ref{prop:omegai quadriques}.

(iii)
There is no algebraic variant with singularities of the second half of Theorem \ref{coindexn}~(ii). Indeed,  let $X$ be the surface constructed in Proposition \ref{singularomega3}. Let $X^0\subset X$ be the complement of a smooth closed point.  As $\oo^3_X\neq 0$ by Proposition \ref{singularomega3}, one has $\oo^3_{X^0}\neq 0$ by purity. So the primary obstruction $\omega^3_{X^0}$ to $X^0(\C)$ having coindex $\leq 2n-2$ does not vanish.
\end{rmk}

\subsection{The secondary obstruction for surfaces}
\label{subsec:secondary}

To understand to what extent Theorem~\ref{coindexn} (i) can be improved when $n=2$, we  must describe a secondary obstruction.  This is the content of the next proposition.
For any $G$\nobreakdash-space~$S$ and any $i\geq 0$,
we slightly abuse notation and 
write
$\Nr_{\Z(i)}\colon  H^i(S,\Z(i-1))\to H^i_G(S,\Z(i))$
for the norm map defined as the composition
of the norm map $H^i(S,\Z(i)) \to H^i_G(S,\Z(i))$
appearing in (\ref{rc})
with the canonical (but non-$G$\nobreakdash-equivariant) isomorphism
$H^i(S,\Z(i-1))=H^i(S,\Z(i))$.

\begin{prop}
\label{prop2obs}
Let $M$ be a stably complex
$\ci$ $G$-manifold of dimension $2$ with $M^G=\varnothing$. Then $\coind(M)\leq 2$ if and only if  there exists $a\in H^2(M,\Z(1))$ with $a^2=0$ and $\Nr_{\Z(2)}(a)=\omega_M^2$.
\end{prop}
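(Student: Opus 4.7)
For the forward implication ($\Rightarrow$), I would identify $\bS^2$ with the set of complex points of the real anisotropic conic~$C$, and, given a $G$-equivariant map $f\colon M\to\bS^2$, take $a:=f^*\beta$ for a generator $\beta\in H^2(\bS^2,\Z(1))$. Then $a^2=f^*(\beta^2)=0$ because $H^4(\bS^2,\Z(2))=0$. By functoriality it suffices to verify $\Nr_{\Z(2)}(\beta)=\omega_{\bS^2}^2$ on~$\bS^2$ itself. Using $(\bS^2\times EG)/G\simeq\P^2(\R)$, I would compute $H^3_G(\bS^2,\Z(1))=H^3(\P^2(\R),\widetilde\Z)=0$ for dimension reasons; then by~\eqref{rc} the map $\Nr_{\Z(2)}$ is onto $H^2_G(\bS^2,\Z(2))=\Z/2$, and $\Nr_{\Z(2)}(\beta)\neq 0$ because $\beta$ itself is not in the image of the restriction $H^2_G(\bS^2,\Z(1))\to H^2(\bS^2,\Z(1))$, the latter being multiplication by~$2$ (its image is $\Pic(C)\subset\Pic(C_\C)=\Z$, spanned by $[\sO_C(2)]$).

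For the reverse implication ($\Leftarrow$), I would start by observing that $\omega_M^2\in H^2_G(M,\Z(2))$ is $2$-torsion, so $\Nr_{\Z(2)}(2a)=0$; by~\eqref{rc} this produces an equivariant lift $\tilde a\in H^2_G(M,\Z(1))$ with restriction~$2a$. This $\tilde a$ classifies a $G$-equivariant map $g\colon M\to K(\Z(1),2)$, which I would model by $\P^\infty(\C)$ with its quaternionic involution---a fixed-point-free antiholomorphic involution whose $2$-skeleton is $\P^1(\C)=\bS^2$ with the antipodal $G$-action. The desired $f$ would be obtained by lifting $g$ along $\bS^2\hookrightarrow\P^\infty(\C)$. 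Since $M$ has real dimension~$4$, the only obstruction lies in $H^4_G(M,\pi_4(\P^\infty(\C)/\bS^2))$: the cofiber is $3$-connected with $\pi_4=H_4=\Z$, on which $G$ acts trivially (because $\sigma^*\alpha^2=\alpha^2$ for the tautological class~$\alpha$), so the obstruction lives in $H^4_G(M,\Z)=H^4_G(M,\Z(2))$ and, via the equivariant Hopf invariant formula, equals the cup square $\tilde a\smile\tilde a$.

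Finally, $\mathrm{res}(\tilde a^2)=(2a)^2=0$, so $\tilde a^2$ lies in the kernel of $\mathrm{res}\colon H^4_G(M,\Z)\to H^4(M,\Z)$; this kernel vanishes component by component, since on non-compact components $H^4_G$ itself vanishes, and on compact components the stably complex structure in complex dimension~$2$ yields $\widetilde\Z_M=\Z(2)=\Z$ as trivial $G$-module, so $M/G$ is oriented and restriction on top cohomology is multiplication by~$2$ along the degree-$2$ cover $M\to M/G$. The main obstacle is to justify rigorously the equivariant Hopf invariant formula, i.e.\ the identification of the secondary obstruction with~$\tilde a^2$; this should reduce to a universal computation on $(\P^\infty(\C),\bS^2)$ using that $H^4(\P^\infty(\C),\Z)$ is generated by the square of the universal class, combined with naturality of the cup product.
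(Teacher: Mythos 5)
Your forward implication is fine and is essentially the paper's argument (the nonvanishing of $\Nr_{\Z(2)}(\beta)$ follows more directly from the surjectivity of $\Nr_{\Z(2)}\colon H^2(\bS^2,\Z(1))\simeq\Z\to H^2_G(\bS^2,\Z(2))\simeq\Z/2$, which forces a generator to map to the generator; your detour through $\Pic$ of the conic is correct in spirit but needs the extra identification of the image of the restriction map with $\Pic(C)$).

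The reverse implication has a genuine gap, and it is located at the representability step. A class $\tilde a\in H^2_G(M,\Z(1))$ classifies, for free $M$, a lift of $M/G\to BG$ to the Borel construction of a $K(\Z(1),2)$ \emph{with a $G$-fixed point} (e.g.\ $\P^\infty(\C)$ with complex conjugation), i.e.\ to the twisted Eilenberg--MacLane fibration over $BG$, whose $k$-invariant in $H^3(G,\Z(1))$ is zero. The quaternionic $\P^\infty(\C)$ is a different $G$-space: it is free, its Borel construction is $\P^\infty(\C)/G\simeq B\mathrm{Pin}(2)\to BG$, and this fibration has \emph{no} section (its $k$-invariant is the nonzero class $\omega^3$; equivalently $\oo^3=0$ in $H^*(B\mathrm{Pin}(2),\Z/2)$ while $\oo^3\neq 0$ on $BG$, so there is no $G$-map $EG\to\P^\infty(\C)_{\mathrm{quat}}$, and no equivariant class restricting to the generator of $H^2(\P^\infty(\C),\Z)$ --- only to twice it, which is the Brauer obstruction for the Severi--Brauer varieties $\P^{2n-1}(\C)_{\mathrm{quat}}$). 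Consequently $G$-maps $M\to\P^\infty(\C)_{\mathrm{quat}}$ are not classified by $H^2_G(M,\Z(1))$: their existence is itself obstructed by $\omega^3_M$, and when they exist they form a torsor under $H^2_G(M,\Z(1))$, with no control over the underlying non-equivariant class $g^*\alpha$ whose square is what the compression obstruction restricts to. You cannot repair this by switching to the conjugation model either: that model is representing, but its $2$-skeleton is $\P^1(\C)$ with the conjugation (fixed-circle) involution, not the antipodal sphere, so compressing into it says nothing about the coindex.

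A decisive internal check that something must break: your argument only uses that $\Nr_{\Z(2)}(a)$ is $2$-torsion (to lift $2a$), never the full hypothesis $\Nr_{\Z(2)}(a)=\omega^2_M$. Run it with $a=0$: it would show that \emph{every} stably complex $\ci$ $G$-manifold of dimension $2$ with $M^G=\varnothing$ has coindex $\leq 2$, contradicting Proposition~\ref{Enriques} (real Enriques surfaces with no real points have coindex $3$). Relatedly, the ``equivariant Hopf invariant formula'' you flag as the remaining obstacle cannot hold in the form you need, and is not even well defined: two lifts $\tilde a$ of $2a$ differ by $\omega\smile h$ with $h\in H^1_G(M,\Z)$, and their squares differ by $\omega^2 h^2$. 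The paper's proof supplies exactly the missing ingredients: the hypothesis $\Nr_{\Z(2)}(a)=\omega^2_M$ kills the primary obstruction $\omega^3_M$, giving a $G$-map on the $3$-skeleton; this map is then \emph{modified by the primary difference class} so that the pullback of $[\bS^2]$ is $a$ itself (not $2a$); and the secondary obstruction, living in $H^4_G(M,\Z)$ which injects into $H^4(M,\Z)$, is identified via Steenrod's non-equivariant theorem with $a^2=0$.
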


\begin{proof}
Applying (\ref{rc}) to $S=\bS^2$ yields an exact sequence
$$H^2(\bS^2,\Z(1))\xrightarrow{\Nr_{\Z(2)}}H^2_G(\bS^2,\Z(2))\to H^3_G(\bS^2,\Z(1))=0$$
where the last group vanishes for dimension reasons. It follows that the generator~$\omega^2_{\bS^2}$ of $H^2_G(\bS^2,\Z(2))\simeq\Z/2$ lifts to the generator $b:=[\bS^2]$ of $H^2(\bS^2,\Z(1))$.
If $f\colon M\to\bS^2$ is $G$-equivariant, then $a:=f^*b$ has the required properties, as $b^2\in H^4(\bS^2,\Z(2))=0$.

Conversely, let $a\in H^2(M,\Z(1))$ be such that $a^2=0$ and $\Nr_{\Z(2)}(a)=\omega_M^2$. Write the following portion of the real-complex exact sequence (\ref{rc}) for $M$:
\begin{equation}
\label{realcxX}
H^2_G(M,\Z(1))\to H^2(M,\Z(1))\xrightarrow{\Nr_{\Z(2)}}H^2_G(M,\Z(2))\xrightarrow{\omega}H^3_G(M,\Z(1)).
\end{equation}
By (\ref{realcxX}), the existence of $a$ with $\Nr_{\Z(2)}(a)=\omega^2_M$ implies that $\omega^3_M=0$.  
Fix a CW complex structure on $M$ with $k$-skeleton $M_{\leq k}$. 
As the primary obstruction~$\omega^3_M$ to $M$ having coindex~$\leq 2$ vanishes, there exists a $G$-equivariant map $f_{\leq 3}\colon M_{\leq 3}\to\bS^2$.  Define $a':=f_{\leq 3} ^*b$ in $H^2(M_{\leq 3},\Z(1))= H^2(M,\Z(1))$.  One has $\Nr_{\Z(2)}(a')=f_{\leq 3} ^*\Nr_{\Z(2)}(b)=f_{\leq 3} ^*\omega^2_{\bS^2}=\omega^2_{M}$.  
By~(\ref{realcxX}), there exists $c\in H^2_G(M,\Z(1))$ such that $a'-a$ is induced by~$c$.  After modifying~$f_{\leq 3}$ by the primary difference $c$ (as in \cite[Theorem~37.5]{Steenrodbundles}), we may assume that $a'=a$ (by \cite[Lemma 37.9]{Steenrodbundles}).
The obstruction to  $G$\nobreakdash-equivariantly extending~$f_{\leq 3}|_{M_{\leq 2}}$ to all of $M$ lives in $H^4_G(M,\pi_3(\bS^2))=H^4_G(M,\Z)$ because the antipody of $\bS^2$ acts trivially on $\pi_3(\bS^2)\simeq \Z$.
 We claim that this obstruction vanishes, which will conclude the proof.  As the natural map $H^4_G(M,\Z)\to H^4(M,\Z)$ is injective by Poincar\'e duality, it suffices to show that the obstruction to (non\nobreakdash-$G$\nobreakdash-equivariantly) extending $f_{\leq 3}|_{M_{\leq 2}}$ to all of $M$, which lives in $H^4(M,\Z)$,  vanishes. But this obstruction has been shown by Steenrod \cite[Theorem~24.1]{Steenrodspheres} to be equal to $(a')^2=a^2=0$.
\end{proof}

\begin{rmk}
It is also possible to describe secondary obstructions in higher dimension, thanks to the work of Liao \cite[Remark~1, p.~184]{Liao}. 
\end{rmk}

\subsection{Two lemmas on \texorpdfstring{$G$}{G}-equivariant cohomology}

The next two lemmas will be useful in our study of the coindex of real algebraic surfaces in \S\ref{parcoindcs}.

\begin{lem}
\label{normomega2}
Let $S$ be a $G$-space and fix $i\geq 1$.  If $a\in H^i(S,\Z(i-1))$ is such that $\Nr_{\Z(i)}(a)=\omega^i_S$, then $a\in H^i(S,\Z(i-1))^G$.
\end{lem}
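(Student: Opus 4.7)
The plan is to exploit the standard identity that the composition of the norm map (corestriction) with the forgetful map to non-equivariant cohomology equals $1+\sigma^*$, combined with the fact that $\omega^i_S$ lies in the kernel of that forgetful map.

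First, I would introduce the forgetful map $\pi\colon H^i_G(S,\Z(i))\to H^i(S,\Z(i))$, induced by the $G$\nobreakdash-equivariant map $\Z(i)\to\Z(i)[G]$ together with the Shapiro identification $H^*_G(S,\Z(i)[G])=H^*(S,\Z(i))$. Two standard facts are then key: on the one hand, $\pi(\omega^i_S)=0$, because $\omega^i_S$ is pulled back from $H^i(G,\Z(i))$ along $S\to\pt$ and $\omega^i$ maps to zero in the non-equivariant cohomology $H^i(\pt,\Z(i))=0$ for $i\geq 1$; on the other hand, the restriction-corestriction formula for the cyclic group $G=\Z/2$ gives $\pi\circ\Nr_{\Z(i)}(x)=x+\sigma^*x$, where $\sigma^*$ denotes the natural $G$\nobreakdash-action on $H^i(S,\Z(i))$ combining the involution of~$S$ with the twist of the coefficients.

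Applying $\pi$ to the hypothesis $\Nr_{\Z(i)}(a)=\omega_S^i$ would then yield the identity $\tilde a+\sigma^*\tilde a=0$ in $H^i(S,\Z(i))$, where $\tilde a$ denotes the image of~$a$ under the canonical non-equivariant isomorphism $H^i(S,\Z(i-1))=H^i(S,\Z(i))$.

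The last step would be to transport this identity back to $H^i(S,\Z(i-1))$. Both groups share a common underlying abelian group $H^i(S,\Z)$, and in this description the $G$\nobreakdash-action on $H^i(S,\Z(j))$ is $(-1)^j$ times the action $\tau$ induced by the involution of~$S$ alone. The actions on $H^i(S,\Z(i))$ and on $H^i(S,\Z(i-1))$ therefore differ by an overall sign on the common underlying group, and the relation $\tilde a+\sigma^*\tilde a=0$ becomes $a-\sigma^*a=0$ in $H^i(S,\Z(i-1))$, i.e.\ $\sigma^*a=a$, which is the desired $G$\nobreakdash-invariance.

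The only step requiring real care will be the sign bookkeeping in the last paragraph; the remainder is a formal consequence of the classical properties of the norm map and of the definition of $\omega^i_S$ as a pullback from $BG$.
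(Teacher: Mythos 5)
Your proposal is correct and follows essentially the same route as the paper: compose $\Nr_{\Z(i)}$ with the forgetful map to non-equivariant cohomology, use that this composite is $1+\sigma^*$ (hence $a-\sigma(a)$ after the sign twist between $\Z(i)$ and $\Z(i-1)$), and note that $\omega^i_S$ dies under the forgetful map since it is pulled back from $H^i(\pt,\Z(i))=0$. The sign bookkeeping you flag is exactly the point the paper handles via the canonical non-equivariant identification $H^i(S,\Z(i-1))=H^i(S,\Z(i))$, and your treatment of it is accurate.
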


\begin{proof}
The composition $H^i(S,\Z(i-1))\xrightarrow{\Nr_{\Z(i)}}H^i_G(S,\Z(i))\xrightarrow{\phi} H^i(S,\Z(i))=H^i(S,\Z(i-1))$ of the norm map,
 the natural map~$\phi$, and the canonical non-$G$\nobreakdash-equivariant
isomorphism
 $H^i(S,\Z(i-1))=H^i(S,\Z(i))$
already used
at the beginning of~\textsection\ref{subsec:secondary}
 sends $a$ to $a-\sigma(a)$.
  We deduce that $a-\sigma(a)=\phi(\Nr_{\Z(i)}(a))=\phi(\omega^i_S)=0$.
\end{proof}

\begin{lem}
\label{Geqauto}
Let $S$ be a $G$-space and fix $i\geq 0$. If $a\in H^i(S,\Z)$, then~$a\smile \sigma(a)$ is in the image of the natural map $H^{2i}_G(S,\Z(i))\to H^{2i}(S,\Z)$.
\end{lem}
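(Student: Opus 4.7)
The plan is to realize $a\smile\sigma(a)$ through a classifying-space argument with a natural $G$-equivariant structure. Represent $a$ by a map $a:S\to K(\Z,i)$, and consider the composition
\[
S \xrightarrow{(a,\, a\circ\sigma)} K(\Z,i)\times K(\Z,i) \xrightarrow{\mu} K(\Z,2i),
\]
where $\mu$ classifies the external cup product $\iota_i\otimes\iota_i$; by construction this composite classifies $a\smile\sigma(a)\in H^{2i}(S,\Z)$.

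The key step is to endow each space with compatible $G$-actions: the original action on $S$, the swap action on $K(\Z,i)\times K(\Z,i)$, and on $K(\Z,2i)$ the action multiplying $\iota_{2i}\in\pi_{2i}=\Z$ by $(-1)^i$; with this last action, $K(\Z,2i)$ becomes a classifying $G$-space for the functor $H^{2i}_G(-,\Z(i))$. The first arrow is strictly $G$-equivariant, since $(a(\sigma s),a(s))$ is the swap of $(a(s),a(\sigma s))$. The second arrow $\mu$ is $G$-equivariant up to homotopy: the swap on $K(\Z,i)\times K(\Z,i)$ acts on $\mu^*\iota_{2i}=\iota_i\otimes\iota_i$ by the graded-commutativity sign $(-1)^{i^2}=(-1)^i$, matching the twist on the target. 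Once $\mu$ is rectified to a strictly $G$-equivariant map, the composition defines a $G$-equivariant map $S\to K(\Z,2i)$, hence a class in $H^{2i}_G(S,\Z(i))$ whose image under the forgetful map is $a\smile\sigma(a)$ by construction.

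The main obstacle is this rectification from homotopy-$G$-equivariance to strict $G$-equivariance. The cleanest route is to model $K(\Z,\text{-})$ by simplicial abelian groups: there the cup product is induced by multiplication $\Z\otimes\Z\to\Z$, and the swap on $\Z\otimes\Z$ carries exactly the Koszul sign $(-1)^i$, so $\mu$ is strictly $G$-equivariant for the specified actions. Alternatively one can argue at the cochain level: if $\tilde\alpha\in Z^i(S\times EG,\Z)$ represents $a$ (pulled back along the projection), then $\beta:=\tilde\alpha\smile\sigma^*\tilde\alpha$ satisfies $\sigma^*\beta=(-1)^i\beta$ modulo a coboundary coming from Steenrod's $\smile_1$-product, and this coboundary can be absorbed to yield a strictly $G$-invariant cocycle in $Z^{2i}(S\times EG,\Z(i))$ lifting $a\smile\sigma(a)$. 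Either route relies on the critical identity $(-1)^{i^2}=(-1)^i$, which is the arithmetic heart of the lemma: without this sign match there would be a genuine obstruction to the lift.
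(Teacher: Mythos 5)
Your opening move is the same as the paper's: the strictly $G$-equivariant map $(a,a\circ\sigma):S\to K(\Z,i)\times K(\Z,i)$ with the swap action. But the way you then produce the equivariant class contains a genuine gap, and it sits exactly at the crux of the lemma. Your first rectification route asserts that in the simplicial abelian group model the external product $\mu:K(\Z,i)\times K(\Z,i)\to K(\Z,2i)$ is \emph{strictly} symmetric up to the Koszul sign because it is ``induced by multiplication $\Z\otimes\Z\to\Z$''. That is not true for the standard model: the pairing is built from the Alexander--Whitney map, which is commutative only up to (higher) homotopy; strict graded-commutativity fails at the cochain/simplicial level, and this failure is precisely the difficulty. (A strictly symmetric model does exist, e.g.\ realizing the pairing as $\widetilde\Z[S^i]\wedge\widetilde\Z[S^i]\to\widetilde\Z[S^i\wedge S^i]$, i.e.\ the strict commutativity of the Eilenberg--MacLane ring spectrum, but your stated justification does not provide it.) Your second route is circular: knowing $\sigma^*\beta=(-1)^i\beta$ modulo a coboundary only says the class is invariant in twisted cohomology, and upgrading an invariant class to a strictly invariant cocycle, equivalently to a class in Borel equivariant cohomology, is exactly the lifting problem the lemma poses; in general it is obstructed (the obstruction lives in the Hochschild--Serre spectral sequence), and you give no reason it vanishes here. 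Relatedly, the assertion that $K(\Z,2i)$ with the $(-1)^i$ action ``classifies $H^{2i}_G(-,\Z(i))$'' is imprecise for Borel theory: naive $G$-homotopy classes of $G$-maps into it do not compute $H^{2i}_G$; what one actually needs is a canonical class in $H^{2i}_G(K(\Z,2i),\Z(i))$ restricting to the fundamental class, and constructing it is again the same lifting problem (solved, e.g., by the spectral sequence plus the fixed basepoint).

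For comparison, the paper sidesteps all strictification: it stays on $K(\Z,i)\times K(\Z,i)$ with the swap action, observes via the K\"unneth and universal coefficient theorems that $H^q(K\times K,\Z)$ is an induced $G$-module for $0<q<2i$, so the Hochschild--Serre spectral sequence gives an exact sequence $H^{2i}_G(K\times K,\Z(i))\to H^{2i}(K\times K,\Z(i))^G\to H^{2i+1}(G,\Z(i))\to H^{2i+1}_G(K\times K,\Z(i))$ in which the last map is split injective by restriction to a $G$-fixed point; hence the invariant class $\tau\times\tau$ lifts equivariantly, and one pulls the lift back along $(f,f\circ\sigma)$. If you want to keep your factorization through $K(\Z,2i)$, you would have to (a) use a genuinely strictly symmetric model of the pairing as above, and (b) still prove the existence of the universal equivariant class on $K(\Z,2i)$ with its involution, which again comes down to the same spectral-sequence-plus-fixed-point argument (its cohomology vanishes in degrees strictly between $0$ and $2i$, and the basepoint is fixed). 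As written, the proposal does not close either of these steps.
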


\begin{proof}
Let $K:=K(\Z,i)$ be an Eilenberg--MacLane space and 
let $G$ act on $K\times K$ by exchanging the two factors. 
The K\"unneth exact sequence (see \cite[Theorem 3B.6]{Hatcher})
$$0\to\bigoplus_{r+s=q}H_r(K,\Z)\otimes H_s(K,\Z) \to H_{q}(K\times K,\Z)\to \bigoplus_{r+s=q-1} \Tor^{\Z}_1(H_r(K,\Z), H_s(K,\Z))\to 0$$
and the universal coefficient theorem (see \cite[Theorem 3.2]{Hatcher})
$$0\to \Ext^1_{\Z}(H_{q-1}(K\times K,\Z),\Z)\to H^q(K\times K,\Z)\to\Hom(H_q(K\times K,\Z),\Z)\to 0$$
imply, in view of the Hurewicz theorem, that $H^q(K\times K,\Z)$ is an extension of $G$-modules of the form $A[G]$ when~$0<q<2i$.  We deduce that $H^p(G, H^q(K\times K,\Z(i)))=0$ for $p>0$ and $0<q<2i$. 
The Hochschild--Serre spectral sequence (\ref{HS})
$$E_2^{p,q}=H^p(G,H^q(K\times K,\Z(i)))\Rightarrow H^{p+q}_G(K\times K,\Z(i))$$
therefore yields an exact sequence
\begin{equation}
\label{sesHS}
H^{2i}_G(K\times K,\Z(i))\to H^{2i}(K\times K,\Z(i))^G\to H^{2i+1}(G, \Z(i))\to H^{2i+1}_G(K\times K,\Z(i)).
\end{equation}
As the restriction to a $G$-fixed point of $K\times K$ yields a retraction of the right arrow of~(\ref{sesHS}) which is therefore injective, the left arrow of (\ref{sesHS}) is surjective. 
Letting $\tau\in H^i(K,\Z)$ denote the universal class,  it follows that $\tau\times\tau\in H^{2i}(K\times K,\Z(i))^G$ lifts to a class $\rho\in H^{2i}_G(K\times K,\Z(i))$.

Let $f\colon S\to K$ be a map classifying $a$.  Then $g:=(f,f\circ\sigma)\colon S\to K\times K$ is $G$-equivariant and the image of $g^*\rho$ by the natural map $H^{2i}_G(S,\Z(i))\to H^{2i}(S,\Z)$ is~${f^*\tau\smile\sigma^* f^*\tau=a\smile \sigma(a)}$.
\end{proof}

\subsection{The coindex of real algebraic curves and surfaces}
\label{parcoindcs}

We now focus on $G$-spaces of the form $X(\C)$, where $X$ is a low-dimensional smooth proper variety over $\R$.

\begin{lem}
\label{coind0ou1}
Let $X$ be an irreducible smooth proper variety over $\R$ with $X(\R)=\varnothing$.
\begin{enumerate}[(i)]
\item $\coind(X(\C))=0$ if and only if $X$ is not geometrically irreducible.
\item $\coind(X(\C))=1$ if and only if the natural map $\Pic^0(X)\to \Pic^0(X_\C)^G$ is not onto.
\end{enumerate}
\end{lem}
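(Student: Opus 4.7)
For (i), a $G$-equivariant continuous map $X(\C)\to\bS^0=\{\pm 1\}$ amounts to a clopen decomposition $X(\C) = Y\sqcup\sigma(Y)$. Because $X(\C)^G = X(\R) = \varnothing$, such a decomposition exists iff $X(\C)$ is disconnected, and for smooth irreducible~$X$ this happens iff $X$ fails to be geometrically irreducible.

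For (ii), I first dispose of the case when $X$ is not geometrically irreducible: by (i), $\coind(X(\C)) = 0 \ne 1$, while $X_\C = Y\sqcup\sigma(Y)$ implies, via Weil restriction, that $\Pic^0(X_\C)^G \cong \Pic^0(Y) \cong \Pic^0(X)$, so the Picard map is surjective and both sides of~(ii) are false. Assume henceforth that $X$ is geometrically irreducible. Since $\pi_k(\bS^1)=0$ for all $k \ge 2$, obstruction theory gives $\coind(X(\C))\le 1$ iff the (single) primary obstruction $\omega_X^2 \in H^2_G(X(\C),\Z)$ vanishes. On the Picard side, the Hochschild--Serre spectral sequence for~$\Gm$, combined with Hilbert~90 ($H^1(G,\C^*)=0$), yields an exact sequence
\[
0\to\Pic(X)\to\Pic(X_\C)^G\xrightarrow{\partial}H^2(G,\C^*)=\Br(\R)=\Z/2,
\]
so failure of surjectivity of $\Pic^0(X)\to\Pic^0(X_\C)^G$ is equivalent to the existence of an $L \in \Pic^0(X_\C)^G$ with $\partial(L)\ne 0$. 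The task therefore reduces to the equivalence
\[
\omega_X^2 = 0 \iff \partial|_{\Pic^0(X_\C)^G} \ne 0.
\]

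The plan to prove this equivalence is to apply the real--complex exact sequence~(\ref{rc}) with $\sF = \Z(1)$, which reduces $\omega_X^2 = 0$ to the condition that $\omega_X$ lie in the image of the norm $\Nr_{\Z(1)}\colon H^1(X(\C),\Z(1)) \to H^1_G(X(\C),\Z(1))$, and to then compare with the $G$-equivariant exponential sequence $0\to\Z(1)\to\sO\to\sO^*\to 0$ on~$X(\C)$ through a commutative diagram. The witness is constructed as follows: given $L \in \Pic^0(X_\C)^G$ with $\partial(L)\ne 0$, pick a smooth nowhere-vanishing section~$s$ of~$L$ (available because $c_1(L)=0$ topologically) and a Galois isomorphism $\phi\colon\sigma^*L\isoto L$, rescaled by means of a $G$-invariant Hermitian metric so that $\phi\circ\sigma^*\phi=-\mathrm{id}_L$ (the factor $-1$ records that $\partial(L)\ne 0$). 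The transition function $f\colon X(\C)\to\U(1)$ defined by $\phi(\sigma^*s) = fs$ then satisfies $f\cdot(f\circ\sigma) = -1$, and its topological winding class $[f] \in H^1(X(\C),\Z(1))$ should provide a preimage of~$\omega_X$ under $\Nr_{\Z(1)}$. The reverse implication retraces this construction.

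The main obstacle is the careful bookkeeping of $G$-module twists throughout the diagram chase. The real--complex exact sequence intertwines $\Z$- and $\Z(1)$-coefficients, and the identification of the concrete class $[f]$ (arising from a Hermitian $\C^\infty$ trivialization of~$L$) with a preimage of the algebraically defined class $\omega_X$ under the norm map requires unwinding the definition of~$\Nr_{\Z(1)}$ in terms of the short exact sequence $0\to\Z(1)\to\Z[G]\to\Z\to 0$.
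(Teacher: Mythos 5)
Your skeleton coincides with the paper's: part (i), the disposal of the non-geometrically-irreducible case of (ii), and the obstruction-theoretic identification $\coind(X(\C))=1\Leftrightarrow\omega_X^2=0$ (granting $\coind\geq 1$ and $\pi_k(\bS^1)=0$ for $k\geq 2$) are exactly the paper's steps; the paper then simply quotes \cite[Corollary~3.4]{vanhameldivisors} for the equivalence between $\omega_X^2=0$ and the non-surjectivity of $\Pic^0(X)\to\Pic^0(X_\C)^G$, whereas you propose to reprove that equivalence. Your ``$\Leftarrow$'' direction is essentially sound, and can even be streamlined: once $\phi\circ\sigma^*\phi=-\Id$ and $s$ is unitarized with respect to a $G$-invariant Hermitian metric, the relation you get is $f\cdot\overline{f\circ\sigma}=-1$, i.e.\ $f\circ\sigma=-f$ for the $\U(1)$-valued function $f/|f|$, so $f$ is literally a $G$-equivariant map $X(\C)\to\bS^1$ and $\coind\leq 1$ follows directly, with no need to identify $\Nr_{\Z(1)}([f])$ with $\omega_X$. (A minor slip: when $X_\C\simeq Y\sqcup\sigma(Y)$, the identification $\Pic^0(X)\simeq\Pic^0(X_\C)^G$ comes from $X$ being $Y$ viewed as an $\R$-scheme, not from Weil restriction, whose base change would be $Y\times\sigma(Y)$; your conclusion there is nevertheless correct.)

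The genuine gap is the ``$\Rightarrow$'' direction, which you dismiss with ``the reverse implication retraces this construction.'' It does not. From $\omega_X^2=0$ you only obtain topological data: a class $a\in H^1(X(\C),\Z(1))$ with $\Nr_{\Z(1)}(a)=\omega_X$, equivalently a continuous $G$-map $f:X(\C)\to\bS^1$. You must then manufacture an \emph{algebraic} line bundle $L\in\Pic^0(X_\C)$ with $G$-invariant class and $\partial(L)\neq 0$, and $f$ is merely continuous, not holomorphic; the naive candidate --- the trivial holomorphic bundle with descent datum ``multiplication by $f$'' --- is not allowed, and in fact no holomorphic descent datum on $\sO_{X_\C}$ can satisfy $\phi\circ\sigma^*\phi=-\Id$, since any holomorphic isomorphism $\sigma^*\sO_{X_\C}\to\sO_{X_\C}$ is a constant $g\in\C^*$ and $g\bar g>0$. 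Passing from the topological class to an algebraically trivial line bundle with the prescribed obstruction in $\Br(\R)$ requires genuinely algebro-geometric input: the exponential sequence $0\to\Z(1)\to\sO_{X(\C)}\to\sO_{X(\C)}^*\to 0$ together with the vanishing of $H^p(G,H^1(X_\C,\sO_{X_\C}))$ for $p>0$ (so that, for instance, $\Pic^0(X_\C)^G$ surjects onto $H^1(G,H^1(X(\C),\Z(1)))$), which is what lets one realize the relevant degree-one topological classes by elements of $\Pic^0(X_\C)^G$ and compare $\partial(L)$ with $\omega_X^2$. This is precisely the content of van Hamel's Corollary~3.4, which the paper cites rather than reproves; as written, your plan omits this step entirely.
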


\begin{proof}
Assertion (i) is obvious and we now consider (ii). 
If $X$ is not geometrically irreducible, then $\Pic^0(X)\to \Pic^0(X_\C)^G$ is onto. Consequently, to prove (ii), we may assume that $X$ is geometrically irreducible and hence that $\coind(X(\C))\geq 1$.  Obstruction theory then shows that $\coind(X(\C))=1$ if and only if~$\omega^2_X=0$ (as $\bS^1$ is a $K(\Z,1)$, the primary obstruction is the only one).  Assertion~(ii) now follows from \cite[Corollary~3.4]{vanhameldivisors} (where the exponent $G$ is missing in the statement, and whose proof does not use the projectivity hypothesis).
\end{proof}

It is easy to entirely describe the coindex of curves.

\begin{prop}
\label{coindexcurves}
Let $X$ be an irreducible smooth proper curve over $\R$ with $X(\R)=\varnothing$.
\begin{enumerate}[(i)]
\item $\coind(X(\C))=0$ if and only if $X$ is not geometrically irreducible.
\item $\coind(X(\C))=1$ if and only if $X$ is geometrically irreducible of odd genus.
\item $\coind(X(\C))=2$ if and only if $X$ is geometrically irreducible of even genus.
\end{enumerate}
\end{prop}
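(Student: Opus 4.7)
The plan is to use the obstruction-theoretic criterion of Section~\ref{seccoindex} together with the relation $\oo^2 = \oc_1$ from Proposition~\ref{lowrelations}(i), reducing the statement to a computation of the degree of the tangent bundle. Part~(i) is Lemma~\ref{coind0ou1}(i); from now on I assume $X$ is geometrically irreducible, so $\coind(X(\C)) \geq 1$ by that lemma. Theorem~\ref{coindexn}(i) with $n=1$ gives $\coind(X(\C)) \leq 2$, and the discussion preceding Theorem~\ref{coindexn} shows that $\coind(X(\C)) = 1$ exactly when the primary obstruction $\omega^2_X \in H^2_G(X(\C),\Z(2))$ vanishes. It is thus enough to prove that $\omega^2_X = 0$ if and only if the genus $g$ of $X$ is odd.

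Set $W := X(\C)/G$. Since $X(\R) = \emptyset$ and $X$ is a geometrically irreducible smooth curve, $W$ is a closed connected non-orientable surface, so $H^2(W,\Z) = \Z/2$ and $H^3(W,\Z) = 0$. The orientation sheaf of $W$ corresponds to the $G$-module $\Z(1)$, as recalled in~\textsection\ref{parstablycomplex}, so Poincaré duality gives $H^2_G(X(\C),\Z(2)) \cong H^2(W,\Z) = \Z/2$ and $H^3_G(X(\C),\Z(2)) \cong H^3(W,\Z) = 0$. Using the long exact sequence induced by $0 \to \Z(2) \xrightarrow{2} \Z(2) \to \Z/2 \to 0$, the reduction modulo~$2$ map $H^2_G(X(\C),\Z(2)) \to H^2_G(X(\C),\Z/2)$ is therefore an isomorphism, which identifies $\omega^2_X$ with $\oo^2_X$; by Proposition~\ref{lowrelations}(i), the latter equals $\oc_1$ in $H^2_G(X(\C),\Z/2) = \Z/2$.

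It remains to compute $\oc_1$ in terms of~$g$. Under Poincaré duality, the forgetful map $H^2_G(X(\C),\Z(1)) \to H^2(X(\C),\Z(1))$ corresponds to the transfer $H_0(W,\Z) \to H_0(X(\C),\Z)$ associated with the connected double cover $X(\C) \to W$, which is multiplication by~$2$. As it sends the equivariant Chern class $c_1 \in H^2_G(X(\C),\Z(1)) = \Z$ to the ordinary Chern number $c_1(X(\C)) = \chi_{\top}(X(\C)) = 2-2g$, we obtain $c_1 = 1-g$, so $\oc_1 = (1-g) \bmod 2$ vanishes exactly when $g$ is odd. This proves~(ii), and~(iii) is immediate from $\coind(X(\C)) \in \{1,2\}$. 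The step most likely to require care is the identification of the forgetful map with multiplication by~$2$: I would handle it by making the Poincaré-duality identifications on both sides of the double cover $X(\C) \to W$ explicit and recognizing the resulting map as the transfer on~$H_0$.
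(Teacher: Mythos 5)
Your proof is correct, but it takes a genuinely different route from the paper's. The paper disposes of Proposition~\ref{coindexcurves} in one line, by combining Lemma~\ref{coind0ou1} --- which converts $\coind(X(\C))=1$ into the non-surjectivity of $\Pic^0(X)\to\Pic^0(X_\C)^G$ --- with the classical theorem of Geyer and Gross--Harris on real curves without real points, which supplies the genus-parity criterion. You instead stay entirely inside the paper's own toolkit: the same obstruction-theoretic reduction to the vanishing of $\omega^2_X$ (this is also the first half of the proof of Lemma~\ref{coind0ou1}~(ii)), then the relation $\oo^2=\oc_1$ of Proposition~\ref{lowrelations}~(i), and finally a direct evaluation of the equivariant class $c_1\in H^2_G(X(\C),\Z(1))=\Z$: since the forgetful map to $H^2(X(\C),\Z)=\Z$ is Poincar\'e dual to the transfer on $H_0$ of the connected double cover $X(\C)\to X(\C)/G$, hence is multiplication by $2$, one gets $c_1=1-g$ and so $\oo^2_X=0$ exactly when $g$ is odd. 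The step you flagged as delicate is indeed fine as stated ($\pi_*[X(\C)]$ is twice the twisted fundamental class of the quotient, so the pullback in top degree is multiplication by $2$), and your parity is the correct one --- it can be sanity-checked on the anisotropic conic, whose quotient is $\P^2(\R)$, where $\omega^2\neq 0$ and $\coind=2$ by Borsuk--Ulam --- even though the wording of Remark~\ref{remslowrelations}~(i) (``vanishes \dots\ if and only if the genus \dots\ is even'') has the parity inverted there: $2-2g$ is divisible by $4$ precisely when $g$ is odd. What your argument buys is a self-contained proof that avoids the citations to van Hamel and to Gross--Harris/Geyer; what the paper's citation buys is brevity and the explicit Picard-group formulation of the criterion, which is the form recorded in Lemma~\ref{coind0ou1}~(ii).
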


\begin{proof}
This follows from Lemma \ref{coind0ou1} and \cite[Proposition 2.2 (2)]{grossharris} (which goes back to Geyer, see \cite[p.~91]{geyer}).
\end{proof}

We also fully understand the coindex of surfaces thanks to Theorem \ref{coindexn},  Proposition~\ref{prop2obs} and Lemma \ref{coind0ou1}. These results imply that the coindex of a surface with no real points belongs to $\{0,1,2,3\}$. We now show that all these values arise.

\begin{thm}
\label{coindexsurfaces}
The possible values of $\coind(X(\C))$, when~$X$ ranges over the irreducible smooth projective surfaces over $\R$ with $X(\R)=\varnothing$, are $0$, $1$, $2$ and $3$.
\end{thm}

\begin{proof}
One has $\coind(X(\C))\leq 3$ by Theorem \ref{coindexn} (i).
Let $C_0=\P^1_\C$, let $C_1$ be a genus~$1$ curve with no real point and let~$C_2$ be the real conic with no real point.
For $m \in \{0,1,2\}$,
the surface $X:=\P^1_{\R}\times_{\R}C_m$ satisfies $\coind(X(\C))=m$,
by Proposition~\ref{coindexcurves}.
To find $X$ with $\coind(X(\C))=3$,
we apply Proposition \ref{Enriques} below.
\end{proof}

\begin{prop}
\label{Enriques}
Let $X$ be an Enriques surface over $\R$ such that $X(\R)=\varnothing$. Then $\coind(X(\C))=3$.
\end{prop}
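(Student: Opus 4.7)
I would establish the value $3$ by sandwiching. The upper bound $\coind(X(\C)) \leq 3$ is immediate from Theorem~\ref{coindexn}(i), since $n = 2$ is not of the form $2^k - 1$. For the lower bound, an Enriques surface is geometrically irreducible, and satisfies $\Pic^0(X_\C) = 0$, so the map $\Pic^0(X) \to \Pic^0(X_\C)^G$ is vacuously surjective; by Lemma~\ref{coind0ou1}, this rules out both values $0$ and $1$. So $\coind(X(\C)) \in \{2, 3\}$.

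To rule out $\coind(X(\C)) = 2$ I apply Proposition~\ref{prop2obs}. Assume for a contradiction that there exists $a \in H^2(X(\C), \Z(1))$ with $a \smile a = 0$ and $\Nr_{\Z(2)}(a) = \omega_X^2$. By Lemma~\ref{normomega2}, $a$ is $G$-invariant, i.e.\ $\sigma^* a = -a$ in $H^2(X(\C), \Z)$. Since $H^1(X, \sO_X) = H^2(X, \sO_X) = 0$ for an Enriques surface, the cycle class map $\Pic(X_\C) \to H^2(X(\C), \Z(1))$ is an isomorphism; write $a = c_1(L)$, so that $L \in \Pic(X_\C)$ satisfies $\sigma^* L = L^{-1}$ and $L \cdot L = 0$ in the intersection lattice.

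The plan is then to identify the class $\Nr_{\Z(2)}(a) \in \Z/2 \cdot \omega_X^2 \subseteq H^2_G(X(\C), \Z(2))$ with $\tfrac{L \cdot (L + K_X)}{2} \cdot \omega_X^2 \bmod 2$, in direct analogy with the Poincaré duality computation of Lemma~\ref{lem:psicld} and the map~$\psi$ of Proposition~\ref{prop:psivanishestorsion}. Granting this, the contradiction is immediate: since $L \cdot L = 0$ and $K_X \cdot L = 0$ (the latter because $2 K_X = 0$ in $\Pic(X_\C)$ forces $2(K_X \cdot L) = 0$ in $\Z$), the integer $L \cdot (L + K_X)$ vanishes, so $\Nr_{\Z(2)}(a) = 0$, contradicting $\Nr_{\Z(2)}(a) = \omega_X^2 \neq 0$ (the latter nonvanishing being exactly the primary obstruction already used to establish $\coind(X(\C)) \geq 2$).

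The main obstacle will be establishing the claimed identification between $\Nr_{\Z(2)}$ on the $G$-invariant part of $H^2(X(\C), \Z(1))$ and the Poincaré-dual invariant~$\psi$. I would tackle this by tracking both maps through the Hochschild--Serre spectral sequences computing $H^2_G(X(\C), \Z(j))$ for $j \in \{1, 2\}$ --- which simplify considerably for Enriques surfaces thanks to $H^1(X(\C), \Z) = 0$ and $H^2(G, \Z(1)) = 0$ --- and testing the identification on explicit classes $\cl(D)$ arising from divisors $D \in \Pic(X)$, using the surjectivity of $\cl: \Pic(X) \twoheadrightarrow H^2_G(X(\C), \Z(1))$ from~\cite[Proposition~2.9]{bw1} together with the characterization of $\psi \circ \cl$ provided by Lemma~\ref{lem:psicld}.
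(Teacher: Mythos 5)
Your set-up (the upper bound via Theorem~\ref{coindexn}(i), reduction to Proposition~\ref{prop2obs}, $G$-invariance of $a$ via Lemma~\ref{normomega2}, and the identification $a=c_1(L)$ with $L\in\Pic(X_\C)^G$ using $H^1(X,\sO_X)=H^2(X,\sO_X)=0$) coincides with the paper's. But the step that actually rules out coindex~$2$ — the claimed identification $\Nr_{\Z(2)}(a)=\tfrac{L\cdot(L+K_X)}{2}\,\omega_X^2 \bmod 2$ — is not proved, and it is false. By exactness of the real--complex sequence~(\ref{rc}) (with $\sF=\Z(2)$, so that the preceding term is $H^2_G(X(\C),\Z(3))=H^2_G(X(\C),\Z(1))$), the norm $\Nr_{\Z(2)}$ vanishes on \emph{every} class that lifts to $H^2_G(X(\C),\Z(1))$, in particular on the image of $\cl(D)$ for any $D\in\Pic(X)$ — regardless of the parity of $\tfrac{D\cdot(D+K_X)}{2}$, which can perfectly well be odd (it equals the genus invariant $\psi(\cl(D))=\omega_X^2\smile\cl(D)$ of Lemma~\ref{lem:psicld}, a class in $H^4_G$ obtained by Poincar\'e duality on the $4$-manifold $X(\C)/G$, not the degree-$2$ norm class). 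So the "analogy" with Lemma~\ref{lem:psicld} conflates two different invariants. Moreover, your proposed verification strategy cannot close the gap: any $a$ with $\Nr_{\Z(2)}(a)=\omega_X^2$ is, again by exactness of~(\ref{rc}), precisely \emph{not} in the image of $H^2_G(X(\C),\Z(1))$, hence not of $\Pic(X)$, so testing the formula on classes $\cl(D)$ with $D\in\Pic(X)$ gives no information about the relevant $a$ (and where it does apply, it contradicts the formula). A small additional slip: $G$-invariance of $a$ in $\Z(1)$-coefficients corresponds to $L$ being fixed by the Galois action on $\Pic(X_\C)$, i.e.\ $L\in\Pic(X_\C)^G$, not to $\sigma^*L\simeq L^{-1}$.

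What actually controls $\Nr_{\Z(2)}(a)$ for $G$-invariant $a$ is the Brauer obstruction to descending $L$ to $\R$: if $L$ comes from $\Pic(X)$, then $a$ lifts equivariantly (Krasnov's cycle class map) and $\Nr_{\Z(2)}(a)=0$. The paper's proof shows that on an Enriques surface this obstruction always vanishes: for $l\gg0$ the linear system $|L+2lA_\C|$ has even dimension (\cite[VIII, Proposition~16.1~(iii)]{BPvdV}), and an even-dimensional complex projective space has no nontrivial real forms because $\Br(\R)=\Z/2$, so the $G$-action on $|L+2lA_\C|$ has a fixed point and $L$ is defined over~$\R$. Hence $\Nr_{\Z(2)}(a)=0$ for \emph{every} invariant $a$, contradicting $\Nr_{\Z(2)}(a)=\omega_X^2\neq0$; note that the hypothesis $a^2=0$ is never needed. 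Your argument as it stands is missing exactly this descent input, and the intersection-theoretic substitute you propose for it does not hold.
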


\begin{proof}
We apply Theorem~\ref{coindexn} (i) and Proposition \ref{prop2obs}, and assume for contradiction that there exists $a\in H^2(X(\C),\Z(1))$ with $a^2=0$ and $\Nr_{\Z(2)}(a)=\omega_X^2$.
By Lemma \ref{normomega2}, one has $a\in H^2(X(\C),\Z(1))^G$. The cycle class map $\Pic(X_\C)\to H^2(X(\C),\Z(1))$ is bijective as $H^1(X,\sO_X)= H^2(X,\sO_X)=0,$ so $a$ corresponds to a line bundle $L\in\Pic(X_\C)^G$. 

Let $A$ be an ample line bundle on $X$. If $l\gg 0$,  we deduce from \cite[VIII, Proposition 16.1 (iii)]{BPvdV} that 
$\P (H^0(X_{\C},L\otimes A_{\C}^{\otimes 2l}))$ has even dimension. As there are no nontrivial real forms of even-dimensional complex projective spaces (this follows from \cite[Theorem 1.5]{ArtinBS} since $\Br(\R)=\Z/2$),  the natural $G$-action on $\P (H^0(X_{\C},L\otimes A_{\C}^{\otimes 2l}))$ has a fixed point.  Consequently, the line bundle $L\otimes A_{\C}^{\otimes 2l}$ is defined over~$\R$, hence so is $L$.

Using Krasnov's cycle class map to $G$\nobreakdash-equivariant cohomology (see \cite[\S 1.6.1]{bw1}), we deduce that $a$ lifts to $H^2_G(X(\C),\Z(1))$,
so that $\Nr_{\Z(2)}(a)=0$,
by (\ref{rc}). This contradicts our assumption that $\Nr_{\Z(2)}(a)=\omega_X^2$. Indeed,  the Hochschild--Serre spectral sequence (\ref{HS}) for $S=X(\C)$ and $\sF=\Z(2)$ shows that $\omega_X^2$ is nonzero.
\end{proof}

Proposition \ref{Enriques} dashes the hope of proving Corollary \ref{cor:omega3} in a geometric manner by constructing a $G$-equivariant map $X(\C)\to\bS^2$.  It also shows that Theorems \ref{coindexn} (i) and \ref{coindexn2} are optimal for surfaces.  Surprisingly, the bound provided by Theorem~\ref{coindexn2} in the surface case can always be improved after an appropriate modification.

\begin{thm}
\label{coindexblowup}
Let $X$ be a surface over $\R$ with $X(\R)=\varnothing$.  There exists a projective birational morphism $\pi\colon Y\to X$ such that $\coind(Y(\C))\leq 2$.
\end{thm}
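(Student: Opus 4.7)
The plan is to verify the criterion of Proposition~\ref{prop2obs}: to prove $\coind(Y(\C)) \leq 2$ for some blow-up $\pi : Y \to X$, I will construct an element $\tilde a \in H^2(Y(\C),\Z(1))$ with $\tilde a^2 = 0$ and $\Nr_{\Z(2)}(\tilde a) = \omega_Y^2$. Standard reductions allow me to assume $X$ is smooth, projective, and geometrically irreducible: if $X$ is not geometrically irreducible then $\omega_X = 0$ and $\coind(X(\C)) = 0$; singularities can be resolved without creating real points, since $X(\R) = \varnothing$ forces every closed point of $X$ to have residue field $\C$; and if $X$ is smooth but non-proper then $H^4(X(\C),\Z(2)) = 0$ for dimension reasons, so any lift $a$ automatically satisfies $a^2 = 0$ and $Y = X$ suffices.

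Under these assumptions, Theorem~\ref{th:omega3} gives $\omega_X^3 = 0$, so the real-complex exact sequence~\eqref{rc} produces a class $a \in H^2(X(\C),\Z(1))$ with $\Nr_{\Z(2)}(a) = \omega_X^2$, and Lemma~\ref{normomega2} ensures $a$ is $G$-invariant. The crucial observation, which I expect to be the main conceptual hurdle, is that $a^2 \in H^4(X(\C),\Z(2)) = \Z$ is automatically \emph{even}. Indeed, under the canonical non-equivariant isomorphism $H^2(X(\C),\Z(1)) = H^2(X(\C),\Z)$, the $G$-invariance of $a$ becomes $\sigma^*a = -a$, so Lemma~\ref{Geqauto} says $a \smile \sigma^*a = -a^2$ lies in the image of the forgetful map $H^4_G(X(\C),\Z(2)) \to H^4(X(\C),\Z) = \Z$; since antiholomorphic involutions on complex surfaces preserve orientation, the quotient $X(\C)/G$ is an orientable $4$-manifold and the covering $X(\C) \to X(\C)/G$ has degree $2$, making this image equal to $2\Z$.

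Next I arrange $a^2 \geq 0$. For any ample real line bundle $L_0 \in \Pic(X)$, Krasnov's $G$-equivariant cycle class $c_1(L_0) \in H^2_G(X(\C),\Z(1)) = H^2_G(X(\C),\Z(3))$ has image in $H^2(X(\C),\Z(1))$ lying in the kernel of $\Nr_{\Z(2)}$, so replacing $a$ by $a + m\,c_1(L_0)$ does not change $\Nr_{\Z(2)}(a)$. For $m$ large, $(a + m\,c_1(L_0))^2 = m^2 L_0^2 + O(m)$ becomes a large positive integer, still even by the previous paragraph. Writing $a^2 = 2s$ with $s \geq 0$, Lagrange's four-square theorem provides $\alpha_1, \alpha_2, \alpha_3, \alpha_4 \in \Z$ with $\sum_i \alpha_i^2 = s$.

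Finally, I choose four distinct closed (necessarily degree-$2$) points $p_1, \ldots, p_4$ on $X$, and let $\pi : Y \to X$ be their blow-up, with exceptional divisors $E_i, \bar E_i$ over each $p_i$. Set $\tilde a := \pi^* a + \sum_{i=1}^4 \alpha_i\,([E_i] + [\bar E_i])$. The identities $\pi^*a \cdot (E_i + \bar E_i) = 0$ and $(E_i + \bar E_i)^2 = -2$ give $\tilde a^2 = a^2 - 2\sum_i \alpha_i^2 = 0$, while each $E_i + \bar E_i$ is a real divisor on $Y$, so $[E_i + \bar E_i] \in \ker(\Nr_{\Z(2)})$ and $\Nr_{\Z(2)}(\tilde a) = \pi^*\omega_X^2 = \omega_Y^2$. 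Proposition~\ref{prop2obs} then concludes. Without the evenness of $a^2$, this construction---which can only subtract even numbers from $a^2$---could never reduce it to zero; this is precisely what prevents an Enriques surface from already satisfying $\coind(X(\C)) \leq 2$ before any blow-up.
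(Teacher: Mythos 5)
Your proposal is correct and follows essentially the same route as the paper's proof: Theorem~\ref{th:omega3} plus the real--complex sequence~\eqref{rc} to produce a lift $a$ of $\omega_X^2$, Lemmas~\ref{normomega2} and~\ref{Geqauto} to see that $a^2$ is even, adding a multiple of an ample class to make $a^2\geq 0$, Lagrange's four-square theorem, blow-up at four closed (degree~$2$) points whose exceptional classes have square $-2$ and are killed by $\Nr_{\Z(2)}$, and finally Proposition~\ref{prop2obs}. The only differences are bookkeeping in the preliminary reductions and that you spell out why the image of $H^4_G(X(\C),\Z(2))\to H^4(X(\C),\Z(2))$ equals $2\Z$ (orientability of $X(\C)/G$ and the degree-$2$ cover), a detail the paper leaves implicit.
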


\begin{proof}
By applying Chow's lemma \cite[Th\'eor\`eme 5.6.1]{EGA2} and then compactifying and blowing up,
we may assume that~$X$ is smooth and projective.
By Lemma \ref{coind0ou1} (i), we may further assume that $X$ is geometrically irreducible. 
Let $\pi\colon Y\to X$ be the blow-up of $X$ at four closed points.
Let $b\in H^2(X(\C),\Z(1))$ be the class of an ample line bundle on $X$, and let $c_1,\dots, c_4\in H^2(Y(\C),\Z(1))$ be the classes of the exceptional divisors of $\pi$.  Krasnov's equivariant cycle class map (see \cite[\S 1.6.1]{bw1}) provides
lifts of  $b$ (\resp of $c_1,\dots,c_4$) in $H^2_G(X(\C),\Z(1))$ (\resp in $H^2_G(Y(\C),\Z(1))$), hence
these classes are annihilated by $\Nr_{\Z(2)}$.

Since~$\omega^3_X=0$ by Corollary \ref{cor:omega3}, the real-complex exact sequence~(\ref{rc}) shows the existence of a class $d\in H^2(X(\C),\Z(1))$ with $\Nr_{\Z(2)}(d)=\omega^2_X$.
After adding to $d$ an appropriate multiple of $b$, we may assume that the integer $d^2\in H^4(X(\C),\Z(2))=\Z$ is nonnegative.  In view of Lemmas~\ref{normomega2} and \ref{Geqauto}, the class $d^2$ is in the image of the natural morphism $H^4_G(X(\C),\Z(2))\to H^4(X(\C),\Z(2))$. In other words, the integer $d^2$ is even.
By Lagrange's theorem, one can find $\lambda_1,\dots,\lambda_4\in\Z$ such that $\sum_{i=1}^4\lambda_i^2=\frac{d^2}{2}$. Define $a:=\pi^*d+\sum_{i=1}^4\lambda_ic_i$.  As $c_i^2=-2$ and $c_i\smile\pi^*d=0$ for $1\leq i\leq 4$, and as $c_i\smile c_j=0$ for $i\neq j$, one computes that $a^2=0$. In addition,  $\Nr_{\Z(2)}(a)=\pi^*\Nr_{\Z(2)}(d)=\pi^*\omega^2_X=\omega^2_{Y}$. Applying Proposition \ref{prop2obs} now shows that $\coind(Y(\C))\leq 2$.
\end{proof}

\section{The level of real function fields}
\label{seclevel}

This final section gathers applications to sums of squares problems.
We apply
results from~\textsection\ref{seccomputations}---specifically,
 Propositions~\ref{reln+1}
and~\ref{reln}---and establish Theorems~\ref{uniruledintro},
\ref{levelsurfaceintro} and~\ref{th:intro17}
announced in the introduction.

\subsection{Real closed fields}

In this whole section, we work over an arbitrary real closed field~$R$, which we fix. We set $C:=R(\sqrt{-1})$ and $G:=\Gal(C/R)\simeq \Z/2$.
We need to allow non-archimedean real closed fields for the proof of Theorem~\ref{thlowerdegree}
 to go through over~$\R$ (for the reduction to smooth hypersurfaces).
However, we insist that the results  presented below are already new and interesting when $R=\R$ and $C=\C$.

We refer to \cite[\S 1]{bw1} for generalities on the cohomology and $G$-equivariant cohomology of algebraic varieties over $R$.
Let $X$ be a variety over $R$. If $A$ is a $G$-module, one can define $G$-equivariant cohomology groups $H^k_G(X(C),A)$ using semi-algebraic cohomology. The cohomology classes~$\omega\in H^1_G(X(C),\Z(1))$ and $\oo\in H^1_G(X(C),\Z/2)$ are well-defined (see \eg \cite[\S\S 1.1.1--1.1.2]{bw1}). If $X$ is smooth, one defines Chern classes 
${c_i\in H^{2i}_G(X(C),\Z(i))}$ as the images of the Chern classes in Chow theory by the $G$\nobreakdash-equiv\-ari\-ant cycle class map (see \cite[\S 1.6.1]{bw1}). Let $\oc_i\in H^{2i}_G(X(C),\Z/2)$ denote their reductions mod~$2$. 

Statements concerning algebraic varieties over $C$ that are invariant under extensions of algebraically closed fields are true if they are true for $C=\C$.  We will frequently appeal to such arguments under the name of Lefschetz principle.
In a similar vein,  any purely cohomological statement which is valid over $\R$ can be extended to $R$ by a spreading out argument (see \cite[\S 7]{delfsknebuschonthehomology} for an example of application of this method).
It follows that any relation between~$\oo$ and the~$\oc_i$ (or $\omega$ and the~$c_i$) that holds over the reals also holds over~$R$.  We will freely use this fact in the remainder of~\textsection\ref{seclevel}.

\subsection{Levels of real function fields}

Recall the theorem \cite[Theorem~2]{Pfister2} of Pfister, which we discussed in \S\ref{parintrosquares} over the reals.

\begin{thm}
\label{thPfister}
Let $X$ be an irreducible smooth variety of dimension $n$ over $R$.  If $s(R(X))$ is finite, then $s(R(X))\leq 2^n$. 
\end{thm}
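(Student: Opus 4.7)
The plan is to deduce the bound $s(R(X)) \leq 2^n$ from a cohomological vanishing statement, via the Milnor conjecture (proved by Voevodsky). The desired reformulation is an analogue over $R$ of the criterion mentioned in the introduction and formalized in Lemma~\ref{lowlevelcrit}: under Voevodsky's identification of $K^M_*(R(X))/2$ with $H^*(G_{R(X)},\Z/2)$, the Milnor symbol $\{-1,-1,\dots,-1\}$ ($k$ factors) is matched with the pullback of $\omega^k \in H^k(G,\Z/2)$, and the vanishing of this symbol is precisely the condition that the $k$-fold Pfister form $\langle\langle -1,\dots,-1\rangle\rangle=\langle 1,\dots,1\rangle$ (with $2^k$ entries) is hyperbolic over $R(X)$, i.e.\ that $s(R(X))\leq 2^{k-1}$. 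Therefore it suffices to prove that $\omega^{n+1}$ vanishes in $H^{n+1}(G_{R(X)},\Z/2)$.

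First I would reduce to the case where $X$ is smooth and geometrically integral by passing to a dense open subset of an irreducible component; the assumption $s(R(X))<\infty$ then forces $R(X)$ to be non-formally-real. To control $H^{n+1}(G_{R(X)},\Z/2)$, I would bound the $2$-cohomological dimension of $R(X)$ by~$n$. Setting $C:=R(\sqrt{-1})$, the quadratic extension $L:=R(X)(\sqrt{-1})=C(X_C)$ is the function field of an $n$-dimensional variety over the algebraically closed field $C$; since $\mathrm{cd}(C)=0$, Serre's theorem on the cohomological dimension of function fields (or the Bloch--Ogus resolution) yields $\mathrm{cd}_2(L)\leq n$. It remains to transfer this bound to $R(X)$.

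For this transfer step, I would invoke the classical fact that for a non-real field $K$ of characteristic $\neq 2$, one has $\mathrm{cd}_2(K)=\mathrm{cd}_2(K(\sqrt{-1}))$. The reason is that the multiplication by $\langle 1,1\rangle$ map on the Witt ring annihilates $R(X)$ (because $-1$ is a sum of squares), which in turn, via Voevodsky's theorem and the Arason--Pfister exact sequence for a quadratic extension, translates into the statement that restriction $H^k(G_{R(X)},\Z/2)\to H^k(G_L,\Z/2)$ is injective in large degrees and that the Hochschild--Serre differentials absorb the high-$p$ contributions $H^p(\Z/2,H^q(G_L,\Z/2))$ that would otherwise persist. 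Putting this together, $\mathrm{cd}_2(R(X))\leq n$, which gives $H^{n+1}(G_{R(X)},\Z/2)=0$ and hence the desired vanishing of $\omega^{n+1}$.

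The main obstacle is the step $\mathrm{cd}_2(R(X))=\mathrm{cd}_2(L)$: the Galois group $\mathrm{Gal}(L/R(X))$ can be nontrivial of order $2$ and thus has infinite $2$-cohomological dimension on its own, so one genuinely needs the non-reality of $R(X)$ to kill the high-weight terms of the Hochschild--Serre spectral sequence. An alternative route, which avoids this subtlety, is to follow Pfister's original argument: run an induction on $n$, using at each step that over $L=K(\sqrt{-1})$ the $(n+1)$-fold Pfister form $\langle\langle -1,\dots,-1\rangle\rangle$ becomes hyperbolic (via Tsen--Lang, $L$ being a $C_n$-field), and then push this hyperbolicity back to $K=R(X)$ using Scharlau's transfer for the quadratic extension $L/K$, combined with the theory of round Pfister forms to avoid losing a factor of $2$.
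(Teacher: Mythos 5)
The paper does not actually prove Theorem~\ref{thPfister}: it is quoted from Pfister, whose original argument is essentially the elementary alternative you sketch in your final paragraph (induction plus transfer through the quadratic extension, using that $C(X_C)$ is a $C_n$-field). Your main route is genuinely different, and it does go through. Since $s(R(X))<\infty$, the field $K:=R(X)$ is not formally real, so by Artin--Schreier its absolute Galois group has no $2$-torsion; Serre's theorem that a $p$-torsion-free profinite group has the same $\mathrm{cd}_p$ as any open subgroup then gives $\mathrm{cd}_2(K)=\mathrm{cd}_2(L)$ for $L:=K(\sqrt{-1})=C(X_C)$ (after discarding the trivial case where $X$ is not geometrically irreducible, when $s=1$), and $\mathrm{cd}_2(L)\leq n$ by Serre's bound for function fields over the algebraically closed field $C$. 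Hence $H^{n+1}(K,\Z/2)=0$, and by Elman--Lam together with Voevodsky (the same mechanism that underlies Theorem~\ref{lowlevelcrit}) the vanishing of the symbol $(-1)^{n+1}$ forces the Pfister form $2^{n+1}\langle 1\rangle$ to be hyperbolic, hence isotropic, so $s(K)\leq 2^{n+1}-1$, i.e.\ $s(K)\leq 2^n$. What this buys: smoothness of $X$ plays no role, and the bound becomes a pure statement about cohomological dimension, in the spirit of the rest of the paper; what it costs: it invokes Voevodsky's theorem for a 1967 result that has an elementary proof.

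The one place to tighten is your justification of the ``classical fact'' $\mathrm{cd}_2(K)=\mathrm{cd}_2(K(\sqrt{-1}))$: the sketch via multiplication by $\langle 1,1\rangle$ on the Witt ring and Hochschild--Serre differentials is not a proof and conflates the Witt-theoretic and cohomological pictures. Either cite the fact in the form used above (Artin--Schreier torsion-freeness plus Serre's theorem on profinite groups), or bypass it entirely: the exact sequence of $G_K$-modules $0\to\Z/2\to\Z/2[\Gal(L/K)]\to\Z/2\to 0$ and Shapiro's lemma yield Arason's long exact sequence
\begin{equation*}
\cdots\to H^k(L,\Z/2)\xrightarrow{\Cores}H^k(K,\Z/2)\xrightarrow{\smile(-1)}H^{k+1}(K,\Z/2)\xrightarrow{\Res}H^{k+1}(L,\Z/2)\to\cdots,
\end{equation*}
and since $H^m(L,\Z/2)=0$ for $m>n$, cup product with $(-1)$ is injective on $H^m(K,\Z/2)$ for $m>n$. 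As $s(K)<\infty$ makes $2^m\langle 1\rangle$ hyperbolic once $2^{m-1}\geq s(K)$ (roundness), its symbol $(-1)^m$ vanishes for such $m$, and injectivity then gives $(-1)^{n+1}=0$ directly, which is all your argument needs.
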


There, we explained that the $2^n$ bound is optimal when $n\leq 2$ and we raised,  following Pfister, the question whether it is optimal for all $n$.

\begin{quest}
\label{qPfister}
For $n\geq 0$, does there exist an irreducible smooth variety $X$ of dimension~$n$ over $R$ such that $s(R(X))=2^{n}$?
\end{quest}

A powerful tool to study Question \ref{qPfister} is the following consequence of the Milnor conjecture proved by Voevodsky \cite{Voevodsky}.  Recall that a cohomology class of a variety over $R$ or $C$
is said to have \textit{coniveau} $\geq c$ if it vanishes after restriction to the complement of a Zariski
closed subset of codimension $\geq c$. (We use this definition in several contexts: possibly $G$-equivariant cohomology of the space of $C$-points or of the space of $R$\nobreakdash-points.)

\begin{thm}
\label{lowlevelcrit}
Let $X$ be an irreducible smooth variety over $R$ and fix $k\geq 1$. The following assertions are equivalent.
\begin{enumerate}[(i)]
\item One has $s(R(X))\leq 2^{k-1}$.
\item The class $\omega^k_X$ has coniveau $\geq 1$.
\item The class $\oo^k_X$ has coniveau $\geq 1$.
\end{enumerate}
\end{thm}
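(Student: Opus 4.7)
The plan is to pass to the generic point and apply Voevodsky's theorem. Since $G$-equivariant cohomology of the $C$-points of smooth $R$-varieties, with coefficients $\Z/2$ or $\Z(k)$, commutes with filtered colimits over the directed system of dense open subschemes, a class on~$X$ has coniveau $\geq 1$ iff its pullback to the generic point $\eta=\Spec R(X)$ vanishes. Setting $F:=R(X)$, conditions (ii) and (iii) thus translate to the vanishing of $\omega^k_F\in H^k(F,\Z(k))$ and of $\oo^k_F\in H^k(F,\Z/2)$ respectively.

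For (i)~$\Leftrightarrow$~(iii), the norm-residue isomorphism (Milnor conjecture, Voevodsky) identifies $H^k(F,\Z/2)$ with $K^M_k(F)/2$ and sends $\oo^k_F$ to the Milnor symbol $\{-1,\dots,-1\}$ ($k$ factors), since Kummer theory identifies $\oo\in H^1(G,\Z/2)$ with $[-1]\in R^*/R^{*2}$. A classical result of Pfister (combined with the Arason--Pfister Hauptsatz) then states that this symbol vanishes in $K^M_k(F)/2$ precisely when the $k$-fold Pfister form $\langle\!\langle-1,\dots,-1\rangle\!\rangle_k=\langle 1,1\rangle^{\otimes k}$ is hyperbolic, equivalently when $-1$ is a sum of $2^{k-1}$ squares in~$F$.

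Reduction modulo~$2$ gives (ii)~$\Rightarrow$~(iii) at once. For the converse, I would exploit that $\omega^k$ is $2$-torsion (as $2\omega=0$) and equals the Bockstein $\delta(\oo^{k-1})$ associated to $0\to\Z(k)\xrightarrow{2}\Z(k)\to\Z/2\to 0$, then invoke the full Bloch--Kato conjecture (Voevodsky--Rost) for all $2$-power coefficients to bootstrap the mod-$2$ vanishing of $\oo^k_F$ to the integral vanishing of $\omega^k_F$ at the generic point.

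The main obstacle will be this (iii)~$\Rightarrow$~(ii) implication: a naive Bockstein argument only yields $\omega^k|_U=2\alpha$ for some~$\alpha$, and since $\omega^k|_U$ is itself $2$-torsion this falls short of $\omega^k|_U=0$. Closing the gap requires the deeper Bloch--Kato input indicated above, or alternatively a direct geometric argument producing the integral vanishing from a sum-of-squares representation of~$-1$ (for instance by constructing a $G$-equivariant morphism from a dense open of $X$ to a Pfister-type auxiliary variety on which $\omega^k$ vanishes integrally).
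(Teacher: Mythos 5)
Your equivalence (i)$\Leftrightarrow$(iii) and the easy implication (ii)$\Rightarrow$(iii) are correct and reproduce the classical part of the statement; note that the paper itself gives no self-contained argument here, but simply translates the lemma into \'etale cohomology via the comparison isomorphisms $H^k_G(U(C),\Z/2)=H^k_\et(U,\Z/2)$ and $H^k_G(U(C),\Z(k))\otimes_\Z\Z_2=H^k_\et(U,\Z_2(k))$ recalled from \cite{bw1} and quotes \cite[Proposition~3.3]{Hilbert17}. The remaining implication (iii)$\Rightarrow$(ii) (equivalently (i)$\Rightarrow$(ii)) is precisely the non-classical content of the lemma, and your proposal does not prove it --- as you acknowledge. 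Invoking the norm-residue (Bloch--Kato) theorem with $\Z/2^n$ coefficients is not by itself a proof: it only identifies $H^k(F,\mu_{2^n}^{\otimes k})$ with $K^M_k(F)/2^n$, and one must still show that $s(F)\leq 2^{k-1}$ forces the symbol $\{-1,\dots,-1\}$ to be divisible by $2^n$ in $K^M_k(F)$ for every $n$ (this is where the real work lies; it can be done, for instance, by a projection-formula argument over $F(i)\supseteq C$, which contains all $2$-power roots of unity), and one must then convert vanishing modulo $2^n$ at the generic point, simultaneously for all $n$, into the vanishing of the $2$-torsion class $\omega^k$ in $H^k_G(U(C),\Z(k))$ on a single dense open $U$ --- the step for which the paper's proof records the injection $H^k_G(U(C),\Z(k))[2]\hookrightarrow H^k_\et(U,\Z_2(k))$. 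Relatedly, your ``pass to the generic point'' step for $\Z(k)$-coefficients only produces a colimit group, not a Galois cohomology group of $F$, so a limit/colimit comparison also has to be justified before any field-theoretic input applies.

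Your alternative suggestion --- mapping a dense open of $X$ to a ``Pfister-type auxiliary variety on which $\omega^k$ vanishes integrally'' --- is circular for $k\geq 3$: the natural target, the affine quadric $Q=\{-1=\sum_{i=1}^{2^{k-1}}y_i^2\}$, has $Q(C)$ $G$-equivariantly homotopy equivalent to the sphere $\bS^{2^{k-1}-1}$ with the antipodal involution, on which $\oo^k\neq 0$ because $k\leq 2^{k-1}-1$; a fortiori $\omega^k_Q\neq 0$, and the class only vanishes on a dense open of $Q$, which is exactly the statement one is trying to prove for $Q$ itself. So the write-up has a genuine gap at (iii)$\Rightarrow$(ii); the paper closes it by citing \cite[Proposition~3.3]{Hilbert17}, where the integral ($2$-adic) assertion is established.
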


\begin{proof}
As recalled in \cite[§1.1.1]{bw1},
for any variety~$U$ over~$R$,
there are canonical isomorphisms
$H_G^k(U(C),\Z/2) = H^k_\et(U,\Z/2)$
and
$H_G^k(U(C),\Z(k)) \otimes_\Z \Z_2 = H^k_\et(U,\Z_2(k))$.
The latter induces an injection
$H^k_G(U(C),\Z(k))[2] \hookrightarrow H^k_\et(U,\Z_2(k))$.
From these observations, it follows that the statement of Theorem~\ref{lowlevelcrit}
is equivalent to \cite[Proposition 3.3]{Hilbert17}.
\end{proof}

The appearance of powers of $\omega$ in Theorem \ref{lowlevelcrit} explains why the results of the previous sections have a bearing on questions concerning the level of real function fields.

\subsection{Improving Pfister's bound}
\label{subsec:improvingpfister}

In this section, we improve the bound obtained by Pfister in Theorem \ref{thPfister}, under appropriate geometric hypotheses.

\subsubsection{Geometric hypotheses}
 
We first present the geometric properties that will play a role in our results (they will be applied to a variety $Y$ of the  
form $X_C$ for some smooth proper variety $X$ of dimension $n$ over $R$).
If $Y$ is a smooth variety over $C$ and $A$ is an abelian group, we denote by $\sH_Y^q(A)$ the Zariski sheaf on $Y$ associated with the presheaf $U\mapsto H^q(U(C),A)$.  The \textit{unramified cohomology} group $H^q_{\nr}(Y,A)$ is by definition the group $H^0(Y,\sH^q_Y(A))$.  The validity of the Gersten conjecture for semialgebraic cohomology (see the discussion in \cite[\S 5.1]{bw1}) implies that these groups are birational invariants
of smooth projective varieties over $C$ (see \cite[Theorem 4.1.1]{CTbir} for the argument in the entirely similar situation of \'etale cohomology with torsion coefficients).

\begin{prop}
\label{prophyp}
Let $Y$ be a smooth proper variety of dimension $n$ over~$C$.  Each of the following properties implies the next one.
\begin{enumerate}[(i)]
\item The variety $Y$ is uniruled.
\item The group $\CH_0(Y)$ is supported in codimension~$1$ in the sense
of \cite{blochsrinivas},
i.e.\  there exists a closed subvariety $Z\subset Y$ of codimension $\geq 1$ such that for any algebraically closed field~$C'$ containing~$C$, the push-forward map $\CH_0(Z_{C'})\to \CH_0(Y_{C'})$ is onto.
\item The unramified cohomology group $H^n_{\nr}(Y(C),\Q)$ vanishes; equivalently, the unramified cohomology group $H^n_{\nr}(Y(C),\Z)$ vanishes.
\item All cohomology classes in $H^n(Y(C),\Q)$ have coniveau $\geq 1$; equivalently, all cohomology classes in $H^n(Y(C),\Z)$ have coniveau $\geq 1$.
\item The group $H^n(Y,\sO_Y)$ vanishes.
\end{enumerate}
\end{prop}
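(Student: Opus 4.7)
The plan is to establish the four implications in turn, together with the integral/rational equivalences within~(iii) and~(iv).

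For (i)$\Rightarrow$(ii), I would use that a uniruled $Y$ of dimension~$n$ admits a dominant rational map $\phi\colon \P^1_C\times W\dashrightarrow Y$ with $\dim W=n-1$. Uniruledness is preserved under any algebraically closed extension $C'\supset C$, so every closed point of $Y_{C'}$ lies on the image of some fibre $\P^1_{C'}\times\{w\}$ and is rationally equivalent on that fibre to a point of $Z_{C'}$, where $Z\subset Y$ denotes the codimension~$\geq 1$ closure of $\phi(\{0\}\times W)$. For (ii)$\Rightarrow$(iii), the main tool is the Bloch--Srinivas decomposition of the diagonal: under~(ii) there exist an integer $N\geq 1$, a divisor $D\subset Y$, and cycles $\Gamma_1,\Gamma_2\in\CH^n(Y\times Y)$, supported on $D\times Y$ and on $Y\times Z$ respectively, with $N\Delta_Y=\Gamma_1+\Gamma_2$. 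Acting on $H^n_\nr(Y,A)$, the correspondence $\Gamma_1$ is killed by the birational invariance of unramified cohomology (via restriction to $Y\setminus D$, which annihilates $\Gamma_1$), while $\Gamma_2$ factors through $H^n_\nr(\widetilde Z,A)=0$ for any resolution $\widetilde Z\to Z$ (since $\sH^n_{\widetilde Z}(A)=0$ when $\dim\widetilde Z<n$, by Andreotti--Frankel applied to Stein affine opens). Thus $N\cdot\mathrm{id}=0$ on $H^n_\nr(Y,A)$, whence $H^n_\nr(Y,\Q)=0$.

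The equivalence (iii)$\Leftrightarrow$(iv) then holds coefficient-wise via the Bloch--Ogus identification $H^n_\nr(Y,A)=H^n(Y,A)/N^1H^n(Y,A)$. The $\Z$/$\Q$ equivalence within~(iii), and hence also within~(iv), would follow from the torsion-freeness of the Zariski sheaf $\sH^n_Y(\Z)$ on a smooth $n$-dimensional variety, which via the Gersten resolution reduces to showing that $\varinjlim_{V}H^n(V,\Z)$ is torsion-free as $V$ ranges over the dense open subsets of $Y$: the torsion of $H^n(V,\Z)$ comes from $H_{n-1}(V,\Z)_\torsion$ by universal coefficients, and such torsion can be killed by further shrinking~$V$. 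For (iv)$\Rightarrow$(v), I would invoke Deligne's theorem that the coniveau filtration is contained in the Hodge filtration, $N^1H^n(Y,\C)\subseteq F^1H^n(Y,\C)$. Combined with~(iv), this gives $H^n(Y,\C)=N^1H^n(Y,\Q)\otimes_\Q\C\subseteq F^1H^n(Y,\C)$, so the Hodge component $H^n(Y,\sO_Y)=H^n(Y,\C)/F^1H^n(Y,\C)$ vanishes; over an arbitrary algebraically closed field of characteristic zero, this input transfers by the Lefschetz principle.

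The step I expect to be most delicate is the integral refinement asserted in~(iii): the Bloch--Srinivas decomposition only yields $H^n_\nr(Y,\Z)$ as an $N$-torsion group, so the sharper vanishing of $H^n_\nr(Y,\Z)$ requires the separate torsion-freeness assertion above. Locating or reproving this in the semialgebraic framework over $C=R(\sqrt{-1})$ is the main bookkeeping difficulty, but should reduce by the Lefschetz principle to the known case $C=\C$.
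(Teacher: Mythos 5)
Your overall route coincides with the paper's: rational curves plus rational equivalence for (i)$\Rightarrow$(ii), the Bloch--Srinivas decomposition of the diagonal acting on unramified cohomology for (ii)$\Rightarrow$(iii) (the paper simply quotes \cite[Proposition~3.3~(i)]{ctvoisin} for this), local vanishing for (iii)$\Rightarrow$(iv), and $N^1\subseteq F^1$ together with the Lefschetz principle for (iv)$\Rightarrow$(v). Two steps, however, have genuine gaps as written. In (i)$\Rightarrow$(ii), the assertion that \emph{every} closed point of $Y_{C'}$ lies on the image of a fibre $\P^1_{C'}\times\{w\}$ is unjustified: the chosen dominant map only covers a dense constructible subset, the limiting rational curves through the remaining points need not be members of your family, and they need not meet your particular $Z=\overline{\phi(\{0\}\times W)}$ (note also that $\{0\}\times W$ could lie in the indeterminacy locus, and that the extended value of $\phi|_{\P^1\times\{w\}}$ at an indeterminacy point need not lie in $Z$). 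You either need the standard lemma that $\CH_0$ of a proper variety is generated by closed points of any given dense open subset, or you should do what the paper does: first reduce to $Y$ projective by Chow's lemma and resolution, using the birational invariance of all five properties --- a reduction you also tacitly need for the Bloch--Srinivas decomposition \cite{blochsrinivas} --- and then take $Z$ to be the support of an ample divisor, which meets every member of a covering family of rational curves.

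More seriously, the $\Z$/$\Q$ equivalences in (iii) and (iv) rest on the torsion-freeness of $H^n_{\nr}(Y(C),\Z)$, equivalently of the generic stalk of $\sH^n_Y(\Z)$, and your justification --- ``the torsion of $H^n(V,\Z)$ comes from $H_{n-1}(V,\Z)_{\torsion}$ and such torsion can be killed by further shrinking $V$'' --- is exactly the statement to be proved, not an argument for it: there is no elementary reason why a degree-$n$ torsion class should die on a smaller dense open. This is a theorem of Colliot-Th\'el\`ene and Voisin whose proof relies on the norm residue isomorphism (Bloch--Kato) theorem; the paper cites it as \cite[Th\'eor\`eme~3.1]{ctvoisin} for $C=\C$ and \cite[Proposition~5.1~(ii)]{bw1} over a general real closed base, so the correct move is to invoke it rather than to treat it as routine --- the transfer to the semialgebraic setting is in fact the easy part, contrary to what your closing paragraph suggests. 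A smaller point: the identification $H^n_{\nr}(Y,A)=H^n(Y(C),A)/N^1H^n(Y(C),A)$ is not available in general; the coniveau spectral sequence only yields an injection of $H^n/N^1$ into $H^0(Y,\sH^n_Y(A))$. Fortunately only that injection --- i.e.\ the fact that a class dying in $H^0(Y,\sH^n_Y(A))$ dies on a dense open --- is what (iii)$\Rightarrow$(iv) requires, and this is how the paper argues.
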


\begin{proof}
As all these properties are birational invariants, we may assume, by Chow's lemma \cite[Th\'eor\`eme 5.6.1]{EGA2} and resolution of singularities, that $Y$ is projective.
The two conditions appearing in (iii) (\resp in (iv)) are equivalent as $H^n_{\nr}(Y(C),\Z)$ is torsion-free (see \cite[Th\'eor\`eme~3.1]{ctvoisin} if $C=\C$, or \cite[Proposition~5.1~(ii)]{bw1} in general).

To see that (i) implies (ii),  let $Z$ be the support of any ample divisor in $Y$: it will meet all members of a family of rational curves covering $Y_{C'}$.  
The decomposition of the diagonal argument given in \cite[Proposition~3.3~(i)]{ctvoisin} over $\C$ works over $C$ and shows that (ii) implies (iii).
If~(iii) holds,  all classes in $H^n(Y(C),\Q)$ vanish Zariski locally,  so (iv) also holds.
 The last implication (iv)$\Rightarrow$(v) holds over $\C$ by Hodge theory, hence over $C$ by the Lefschetz principle.
\end{proof}

\begin{rmk}
\label{remhyp}
Conjecturally, properties (ii), (iii), (iv) and (v) are all equivalent. Indeed, the generalized Hodge conjecture predicts that (v) implies (iv) and the generalized Bloch conjecture that (iv) implies (ii). When $n=2$, the implications (v)$\Rightarrow$(iv) and (iv)$\Rightarrow$(iii) are unconditional: the former is a consequence of the Lefschetz $(1,1)$ theorem (and of the Lefschetz principle) while the latter results from purity.  The implication (iii)$\Rightarrow$(ii) is not known even when $n=2$,  in which case it amounts to the usual Bloch conjecture.
\end{rmk}

The guiding question of this section, to which Theorems \ref{evendim}, \ref{odddim} and \ref{conicbundles} and Corollaries~\ref{2dim}, ~\ref{coreven} and \ref{cor3} give partial answers, is the following.

\begin{quest}
Let $X$ be an irreducible smooth proper variety of dimension $n\geq 1$ over $R$ with $X(R)=\varnothing$. Assume that $Y:=X_C$ satisfies one of the properties of Proposition~\ref{prophyp}. Does it follow that $s(R(X))\leq 2^{n-1}$?
\end{quest}

\subsubsection{Even-dimensional varieties}

We first consider the case of varieties of even dimension.

\begin{thm}
\label{evendim}
Let $X$ be an irreducible smooth proper variety of dimension $n \geq 1$ over~$R$ with $X(R)=\varnothing$. If $n$ is even and all classes in $H^n(X(C),\Q)$ have coniveau $\geq 1$, then $s(R(X))\leq 2^{n-1}$. 
\end{thm}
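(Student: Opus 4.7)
The plan is to apply Theorem~\ref{lowlevelcrit}, which reduces the assertion $s(R(X))\leq 2^{n-1}$ to the statement that $\omega_X^n\in H^n_G(X(C),\Z(n))$ has coniveau~$\geq 1$, i.e.\ vanishes after restriction to a dense Zariski open subset of~$X$. The idea is to combine the topological relation furnished by Proposition~\ref{reln} (available because $n$ is even) with the geometric coniveau hypothesis through the real-complex exact sequence.

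First I would invoke Proposition~\ref{reln} to obtain a class $\gamma\in H^n_G(\BU,\Z(n))$ in which the monomial $\omega^n$ does not appear and such that $\omega^{n+1}+\omega\gamma$ vanishes on every stably complex $\ci$ $G$-manifold of dimension~$n$ with no $G$-fixed points. Pulling back along the classifying map of~$TX$ yields
\begin{equation*}
\omega\smile(\omega_X^n+\gamma(X))=0\quad\text{in }H^{n+1}_G(X(C),\Z(n+1)),
\end{equation*}
where $\gamma(X)$ denotes the evaluation of~$\gamma$ on the Chern classes of~$X$. By the real-complex exact sequence~(\ref{rc}), there is then a class $\alpha\in H^n(X(C),\Z(n))=H^n(X(C),\Z)$ with $\Nr(\alpha)=\omega_X^n+\gamma(X)$.

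Next I would show that $\gamma(X)$ has coniveau~$\geq 1$. Since $\omega^n$ is the only monomial of cohomology degree~$n$ in the graded ring $\Z[\omega,(c_i)_{i\geq 1}]/(2\omega)$ that involves no $c_i$, every monomial of $\gamma$ contains some $c_i$ with $i\geq 1$ as a factor; each such $c_i\in H^{2i}_G(X(C),\Z(i))$ is the image under Krasnov's equivariant cycle class map of a codimension $i\geq 1$ algebraic cycle on~$X$, so has coniveau~$\geq 1$, and hence so does $\gamma(X)$. Separately, by the equivalence in Proposition~\ref{prophyp}~(iv), the hypothesis forces every class in $H^n(X(C),\Z)$ to have coniveau~$\geq 1$; in particular $\alpha$ vanishes on $X_C\setminus Z$ for some closed $Z\subset X_C$ of codimension~$\geq 1$. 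Replacing~$Z$ by the $G$-invariant $Z\cup\sigma(Z)$, which descends to an $R$-subvariety of~$X$, we see that $\Nr(\alpha)$ also has coniveau~$\geq 1$.

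Combining, $\omega_X^n=\Nr(\alpha)-\gamma(X)$ has coniveau~$\geq 1$, and Theorem~\ref{lowlevelcrit} finishes the proof. The key conceptual step is the use of the real-complex sequence to convert the topological identity from Proposition~\ref{reln} into a non-equivariant lift~$\alpha$, on which the geometric hypothesis about $H^n(X(C),\Q)$ can then be brought to bear.
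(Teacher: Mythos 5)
Your proposal is correct and follows essentially the same route as the paper: Proposition~\ref{reln} to produce $\gamma$ with $\omega^{n}$ absent, the real-complex exact sequence~(\ref{rc}) to lift $\omega_X^n+\gamma(X)$ to a class in $H^n(X(C),\Z)$, the coniveau bound on the $c_i$ together with Proposition~\ref{prophyp}~(iv) to see that both terms have coniveau $\geq 1$, and Theorem~\ref{lowlevelcrit} to conclude. The only cosmetic difference is that you phrase the relation via a classifying map for $TX$, whereas over a general real closed field $R$ one quotes the transfer of relations from $\R$ to $R$ explained at the start of Section~\ref{seclevel}; your extra remark on replacing $Z$ by $Z\cup\sigma(Z)$ is a welcome precision the paper leaves implicit.
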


\begin{proof}
By Proposition \ref{reln}, there exists $\gamma\in H^{n}_G(\BU, \Z(n))\subset\Z[\omega,(c_i)_{i\geq 1}]/(2\omega)$ (see~(\ref{cohoBU})) such that $\omega^{n}$ does not occur in $\gamma$ and such that $\omega_X^{n+1}=\omega_X \gamma$ in $H^{n+1}_G(X(C), \Z(n+1))$.  As $c_i \in H^{2i}_G(X(C),\Z(i))$ has coniveau~$\geq i$ for all~$i$ and as $\omega^{n}$ does not appear in $\gamma$, the class $\gamma\in H^{n}_G(X(C), \Z(n))$ has coniveau $\geq 1$ (in fact $\geq 2$).
The real-complex exact sequence~(\ref{rc}) and the identity $\omega_X^{n+1}=\omega_X\gamma$ imply that there exists $\delta\in H^{n}(X(C),\Z)$ such that
\begin{equation}
\label{identiteeven}
\omega_X^n=\gamma+\Nr_{\Z(n)}(\delta)\textrm{ in }H^n_G(X(C),\Z(n)).
\end{equation}
By our hypothesis and the equivalence in Proposition~\ref{prophyp}~(iv),  the class $\delta$ has coniveau~$\geq 1$.
By~(\ref{identiteeven}), we deduce that~$\omega_X^n$ has coniveau~$\geq 1$.
Theorem~\ref{lowlevelcrit}~(ii)$\Rightarrow$(i) now shows that~$s(R(X))\leq 2^{n-1}$.
\end{proof}

\begin{cor}
\label{2dim}
Let $X$ be an irreducible smooth proper surface over $R$ such that $X(R)=\varnothing$. If~$H^2(X,\sO_X)=0$, then $s(R(X))\leq 2$.
\end{cor}

\begin{proof}
Combining Theorem \ref{evendim} with the Lefschetz $(1,1)$ theorem,
applied in the same way as in Remark~\ref{remhyp},
proves the corollary.
\end{proof}

\begin{cor}
\label{coreven}
Let $X$ be an irreducible smooth proper variety of dimension $n\geq 1$ over~$R$ with $X(R)=\varnothing$. If $n$ is even and $X_C$ is uniruled,  then $s(R(X))\leq 2^{n-1}$. 
\end{cor}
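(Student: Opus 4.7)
The plan is to observe that Corollary \ref{coreven} follows immediately by combining Theorem \ref{evendim} with Proposition \ref{prophyp}, applied to $Y := X_C$. More precisely, I would proceed as follows.

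First, I would note that $Y = X_C$ is a smooth proper variety of dimension $n$ over $C$. Since $X_C$ is assumed to be uniruled, Proposition~\ref{prophyp} (i)$\Rightarrow$(iv) applies: every class in $H^n(X(C),\Q) = H^n(Y(C),\Q)$ has coniveau $\geq 1$. The chain of implications in Proposition~\ref{prophyp} is precisely set up so that this step requires no additional work beyond quoting it.

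With the coniveau hypothesis in hand, I would then invoke Theorem~\ref{evendim} directly: since $n$ is even, $X(R)=\varnothing$, and all classes in $H^n(X(C),\Q)$ have coniveau $\geq 1$, we conclude $s(R(X))\leq 2^{n-1}$, as desired. There is no real obstacle here, as the corollary is merely the geometric specialization of Theorem~\ref{evendim}; all the substantive content (the relation $\omega^{n+1} = \omega\gamma$ from Proposition~\ref{reln}, the real-complex exact sequence manipulation, and the appeal to Theorem~\ref{lowlevelcrit}) has already been absorbed into Theorem~\ref{evendim}, while the geometric input (uniruledness implies the coniveau condition) has been packaged into Proposition~\ref{prophyp} via a decomposition-of-the-diagonal argument. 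The proof is therefore essentially a two-line citation.
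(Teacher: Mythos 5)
Your proposal is correct and is exactly the paper's own argument: uniruledness gives the coniveau condition via Proposition~\ref{prophyp} (i)$\Rightarrow$(iv), and Theorem~\ref{evendim} then yields $s(R(X))\leq 2^{n-1}$. Nothing more is needed.
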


\begin{proof}
This follows at once from Theorem \ref{evendim} and Proposition~\ref{prophyp} (i)$\Rightarrow$(iv).
\end{proof}

\begin{rmks}
\label{evendimrmks}
(i)
The hypothesis $H^2(X,\sO_X)=0$ cannot be omitted from Corollary~\ref{2dim}:
without it,
Colliot-Thélène~\cite{ctnoetherlefschetz} shows how
the Noether--Lefschetz theorem---which exploits
the nontriviality of $H^2(X,\sO_X)$ via
Hodge theory---can be used
to construct
real surfaces~$X$ with $s(\R(X))=4$.
This can be reformulated using the point of view adopted here;
we explain how, in the case of a
quartic surface $X \subset \P^3_\R$
such that~$X_\C$ is very general,
with $X(\R)=\emptyset$. (One could equally well consider
a sextic double cover
of the plane, as in~\cite{ctnoetherlefschetz}.)
The image in~$H^2(X(\C),\Z/2)$ of the class $[\sO_X(1)] \in \Pic(X)$ is nonzero,
hence the image of this class in $H^2_G(X(\C),\Z/2)$ cannot coincide with $\bar\omega_X^2$.
In addition, as $H^1(X(\C),\Z/2)=0$, the Hochschild--Serre spectral sequence implies
that $\bar\omega_X^2\neq 0$.
Finally, as~$X_\C$ is very general, we have $\Pic(X_\C)=\Z[\sO_{X_{\C}}(1)]$ by the Noether--Lefschetz theorem,
hence $\Pic(X)=\Z[\sO_{X}(1)]$.
Therefore~$\bar\omega_X^2$ is not algebraic.
By Theorem~\ref{lowlevelcrit}, we conclude that $s(\R(X))>2$,
hence that~$s(\R(X))=4$ since~$X(\R)=\emptyset$ (see \textsection\ref{parintrosquares}).

(ii)
In contrast with Corollary \ref{2dim}, when $n\geq 4$, we do not know if one can replace, in Theorem \ref{evendim}, the coniveau  hypothesis with
the assumption that $H^n(X,\sO_X)=0$. In view of Remark \ref{remhyp}, this would follow from the generalized Hodge conjecture.

(iii)
One can check that the relation used  in the proof of Theorem \ref{evendim} when $n=4$ is $\omega^5+\omega(c_1^2+c_2)=0$, valid for all smooth varieties of dimension $4$ over $R$ with no real points.
\end{rmks}

\subsubsection{Odd-dimensional varieties}
\label{secodd}

Here is a counterpart to Theorem \ref{evendim} in odd dimensions.
In its statement, we denote by $N^p H^{n+1}(X(C),\Z)$ the coniveau
filtration on $H^{n+1}(X(C),\Z)$ (see \cite[\textsection5.1]{bw1}).

\begin{thm}
\label{odddim}
Let $X$ be an irreducible smooth proper variety of dimension $n$ over $R$ with $X(R)=\varnothing$. Suppose that $n\geq 3$ is odd,  that $H^n_{\nr}(X(C),\Q)=0$, and that
the $2$\nobreakdash-torsion subgroup of $H^{n+1}(X(C),\Z)/N^2 H^{n+1}(X(C),\Z)$
is trivial.
Then $s(R(X))\leq 2^{n-1}$. 
\end{thm}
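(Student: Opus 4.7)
The plan is to combine the mod-$2$ relation of Proposition~\ref{reln+1} with Bloch--Ogus theory, following the strategy of~\cite{Hilbert17}.

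Applied to $X$, Proposition~\ref{reln+1} yields an identity $\oo_X^{n+1}=\gamma$ in $H^{n+1}_G(X(C),\Z/2)$, where $\gamma\in\Z/2[\oo,(\oc_i)_{i\geq 1}]$ contains no pure $\oo^{n+1}$ monomial. Each remaining monomial carries at least one factor $\oc_i$ with $i\geq 1$, and such $\oc_i$, being the reduction modulo $2$ of an algebraic equivariant Chern class, has coniveau $\geq 1$. Hence $\gamma$, and so $\oo_X^{n+1}$, has coniveau $\geq 1$, which means $\oo_\eta^{n+1}=\oo\smile\oo_\eta^n=0$ in $H^{n+1}_G(\eta,\Z/2)$ at the generic point $\eta=\Spec R(X)$. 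Applying the real-complex exact sequence~\eqref{rc} with $\sF=\Z/2$ then furnishes a class $\bar\delta_\eta\in H^n(C(X),\Z/2)$ such that $\oo_\eta^n=\Nr(\bar\delta_\eta)$. By Theorem~\ref{lowlevelcrit}, proving $s(R(X))\leq 2^{n-1}$ reduces to showing $\oo_\eta^n=0$, or equivalently $\bar\delta_\eta=0$.

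The vanishing of $\bar\delta_\eta$ would be approached through the Bloch--Ogus spectral sequence for $X_C$, whose validity in semi-algebraic cohomology is justified in \cite[\textsection 5.1]{bw1}. The class $\bar\delta_\eta$ lies in the generic stalk of $\sH^n_{\Z/2}$, and the Bloch--Ogus resolution identifies it by its residues on divisors together with its unramified part. The hypothesis $H^n_{\nr}(X(C),\Q)=0$, combined with torsion-freeness of $H^n_{\nr}(X(C),\Z)$ for smooth proper varieties over $C$ and the long exact sequence of unramified cohomology attached to $0\to\Z\to\Z\to\Z/2\to 0$, should embed $H^n_{\nr}(X(C),\Z/2)$ into the $2$-torsion of $H^{n+1}_{\nr}(X(C),\Z)$; the latter, via the Bloch--Ogus identification $H^{n+1}_{\nr}(X(C),\Z)\cong H^{n+1}(X(C),\Z)/N^1 H^{n+1}(X(C),\Z)$ and the hypothesis on $H^{n+1}(X(C),\Z)/N^2$, should vanish. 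For the residue analysis, I would use that $\oo_\eta^n$ extends to a global class on $X$ and hence has trivial residues, together with the compatibility of the norm map with residues, to iteratively reduce to lower-dimensional problems on divisors of $X_C$ and conclude that all residues of $\bar\delta_\eta$ also vanish.

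The main obstacle is this final residue step: combining the integral coniveau hypothesis on $H^{n+1}(X(C),\Z)$ with the mod-$2$ structure of the Bloch--Ogus spectral sequence, and bootstrapping over the codimension filtration, is precisely the delicate Bloch--Ogus manipulation performed in \cite{Hilbert17} and cannot be entirely bypassed. The unusual form of the second hypothesis is tailored precisely so that $H^{n+1}_{\nr}(X(C),\Z)[2]$ vanishes after this translation, and the residue-norm compatibility then forces $\bar\delta_\eta$ to be unramified---at which point the vanishing of $H^n_{\nr}(X(C),\Z/2)$ closes the argument.
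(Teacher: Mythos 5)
Your skeleton (Proposition~\ref{reln+1} plus Bloch--Ogus descent as in \cite{Hilbert17}) is the right one, but the way you feed the relation into the descent leaves a genuine gap. All you extract from Proposition~\ref{reln+1} is that $\oo^{n+1}_X$ has coniveau $\geq 1$ modulo $2$; by Theorem~\ref{lowlevelcrit} with $k=n+1$ this is equivalent to $s(R(X))\leq 2^n$, which holds for \emph{every} such $X$ by Pfister's theorem (Theorem~\ref{thPfister}). So this intermediate statement is information-free, and writing $\oo^n_\eta=\Nr(\bar\delta_\eta)$ at the generic point gives no leverage: $\bar\delta_\eta$ is defined only up to a large indeterminacy, its residues are not controlled by those of $\oo^n_\eta$ (the norm of a ramified class can be unramified), and in any case $\bar\delta_\eta=0$ is sufficient but not equivalent to $\oo^n_\eta=0$. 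What the descent step of \cite[Proposition 3.5 (ii)$\Rightarrow$(i)]{Hilbert17} actually needs, besides $H^n_{\nr}(X(C),\Z)=0$, is the \emph{integral} statement that $\omega^{n+1}_X$ has coniveau $\geq 2$; producing that statement is where all the work, and the second hypothesis, must enter, and your sketch never produces it.

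This is precisely what the paper's proof does: decompose $\gamma=\gamma_0+\gamma_1$ according to the parity of $e+\sum_i ie_i$ in each monomial $\oo^e\prod_i\oc_i^{\,e_i}$, lift to integral classes $\gamma'_0\in H^{n+1}_G(X(C),\Z)$ and $\gamma'_1\in H^{n+1}_G(X(C),\Z(1))$ whose relevant images have coniveau $\geq 2$ (this uses $n\geq 3$ odd and the absence of $\oo^{n+1}$), and chase the diagram comparing $0\to\Z(1)\to\Z(1)\to\Z/2\to0$ with $0\to\Z(1)\to\Z[G]\to\Z\to0$ to obtain $\zeta\in H^{n+1}(X(C),\Z)$ with $N_\Z(\zeta)=\omega^{n+1}_X+\gamma'_0$ and $2\zeta=(\gamma'_0)_\C+(\gamma'_1)_\C$ of coniveau $\geq 2$; the hypothesis that $H^{n+1}(X(C),\Z)/N^2H^{n+1}(X(C),\Z)$ has trivial $2$-torsion then upgrades the coniveau bound from $2\zeta$ to $\zeta$, hence to $\omega^{n+1}_X$, and only then does Bloch--Ogus (with $H^n_{\nr}(X(C),\Z)=0$, supplied by Proposition~\ref{prophyp}) give coniveau $\geq 1$ for $\omega^n_X$. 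Your alternative use of the torsion hypothesis does not work as stated: Bloch--Ogus identifies $N^1H^{n+1}$ with the kernel of $H^{n+1}(X(C),\Z)\to H^{n+1}_{\nr}(X(C),\Z)$, so one gets an injection of $H^{n+1}/N^1$ into the unramified group rather than an isomorphism, which is the wrong direction for deducing that $H^{n+1}_{\nr}(X(C),\Z)[2]$ vanishes; moreover $2$-torsion-freeness of $H^{n+1}/N^2$ is not inherited by the further quotient $H^{n+1}/N^1$, and the sheaf-level exact sequence does not embed $H^n_{\nr}(X(C),\Z/2)$ into $H^{n+1}_{\nr}(X(C),\Z)[2]$ merely because $H^n_{\nr}(X(C),\Q)=0$.
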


\begin{proof}
By Proposition \ref{reln+1}, there exists $\gamma\in H^{n+1}_G(\BU, \Z/2)\subset\Z/2[\oo, (\oc_i)_{i\geq 1}]$ (see~(\ref{cohoBU2})) such that the monomial $\oo^{n+1}$ does not occur in $\gamma$ and such that the equality $\oo_X^{n+1}=\gamma$ holds in $H^{n+1}_G(X(C), \Z/2)$.
One can uniquely decompose~$\gamma$ as $\gamma = \gamma_0 + \gamma_1$ where~$\gamma_0$ (resp.~$\gamma_1$)
is the sum of those
monomials $\oo^e \prod_{i\geq 1} \oc_i^{\mkern1mue_i}$ appearing in~$\gamma$ such that $e + \sum_{i\geq 1} ie_i$ is even (resp.~odd).
For $j \in \{0,1\}$,
taking the sum of the corresponding monomials
 $\omega^e \prod_{i\geq 1} c_i^{\mkern1mue_i}$
provides a lift~$\gamma'_j \in H^{n+1}_G(\BU,\Z(j))\subset\Z[\omega,(c_i)_{i\geq 1}]/(2\omega)$ (see~(\ref{cohoBU}))
 of~$\gamma_j$.

We still denote by~$\gamma'_j$ the image of $\gamma'_j$ in $H^{n+1}_G(X(C),\Z(j))$,
and we denote by $(\gamma'_j)_\C$ its image
by the map $H^{n+1}_G(X(C),\Z(j)) \to H^{n+1}(X(C),\Z)$ induced by the
natural $G$\nobreakdash-equivariant map $\Z(j) \to \Z[G]$.
For later use, we make the following two observations,
which readily follow from the assumption that
 the monomial~$\oo^{n+1}$ does not appear in~$\gamma$
and from the assumption that $n\geq 3$:
the class $\gamma'_0$, in $H^{n+1}_G(X(C),\Z)$, has coniveau~$\geq 2$;
the class $(\gamma'_1)_\C$, in $H^{n+1}(X(C),\Z)$, has coniveau~$\geq 2$.

The morphism of exact sequences of $G$\nobreakdash-modules
\begin{align}
\begin{aligned}
\xymatrix@R=3ex{
0 \ar[r] & \Z(1) \ar[r]^2 & \Z(1) \ar[r] & \Z/2 \ar[r] & 0 \\
0 \ar[r] & \Z(1) \ar[r] \ar@{=}[u] & \Z[G] \ar[r] \ar[u] & \Z \ar[u] \ar[r] & 0
}
\end{aligned}
\end{align}
induces a commutative diagram with exact rows
\begin{align*}
\xymatrix@R=1.5ex@C=1em{
& \gamma'_1 \ar@{}|{\vinbas}[d]\\
H^{n+1}_G(X(C),\Z(1)) \ar[r] & H^{n+1}_G(X(C),\Z(1)) \ar[r] & H^{n+1}_G(X(C),\Z/2) \ar[r] & H^{n+2}_G(X(C),\Z(1))\\\\
H^{n+1}_G(X(C),\Z(1)) \ar@{=}[uu] \ar[r] & H^{n+1}(X(C),\Z) \ar[r]^{N_\Z} \ar[uu]_{N_{\Z(1)}} & H^{n+1}_G(X(C),\Z) \ar[uu] \ar[r] & H^{n+2}_G(X(C),\Z(1)) \ar@{=}[uu] \\
& \zeta  \ar@{}|{\vinhaut}[u] & \omega_X^{n+1}+\gamma'_0 \ar@{}|{\vinhaut}[u]
}
\end{align*}
(whose bottom row is~\eqref{rc} for $\sF=\Z$).
As $\oo_X^{n+1}=\gamma$ in $H^{n+1}_G(X(C),\Z/2)$,
the classes $\gamma'_1$ and $\omega_X^{n+1}+\gamma_0'$ have the same image in $H^{n+1}_G(X(C),\Z/2)$.
By a chase in the above diagram,
we deduce the existence of $\zeta \in H^{n+1}(X(C),\Z)$
such that
\begin{align}
\label{eq:nzzeta}
N_\Z(\zeta)=\omega_X^{n+1}+\gamma'_0 \;\;\text{ in } H^{n+1}_G(X(C),\Z)
\end{align}
and
\begin{align}
N_{\Z(1)}(\zeta)=\gamma'_1 \;\;\text{ in } H^{n+1}_G(X(C),\Z(1))\rlap.
\end{align}
It follows that
$\zeta+\sigma(\zeta)=(\gamma_0')_\C$
and $\zeta-\sigma(\zeta)=(\gamma_1')_\C$,
hence $2\zeta = (\gamma_0')_\C + (\gamma_1')_\C$.
In particular, the class~$2\zeta$ has coniveau~$\geq 2$.
By the last assumption of Theorem~\ref{odddim}, we conclude that~$\zeta$ has coniveau~$\geq 2$.
This implies, by~\eqref{eq:nzzeta}, that~$\omega_X^{n+1}$ has coniveau~$\geq 2$.

By the equivalence in Proposition~\ref{prophyp} (iii), one has $H^n_{\nr}(X(C),\Z)=0$.  We may therefore apply \cite[Proposition 3.5 (ii)$\Rightarrow$(i)]{Hilbert17} (or rather its variant in semialgebraic cohomology obtained by using the comparison theorem between semialgebraic cohomology and $2$-adic \'etale cohomology, see \cite[\S 1.1.1]{bw1})
 to deduce that $\omega^n_X$ has coniveau~$\geq 1$ (there, the hypothesis that $\CH_0(X_C)$ is supported in codimension $\geq 1$ is only used through the consequence that $H^n_{\nr}(X(C),\Z)=0$, and the projectivity hypothesis is not used).  Applying Theorem~\ref{lowlevelcrit}~(ii)$\Rightarrow$(i) now shows that $s(R(X))\leq 2^{n-1}$.
\end{proof}

\begin{rmks}
\label{remamplehyp}
(i) The last assumption
of Theorem~\ref{odddim}
is satisfied
if
$H^{n+1}(X(C),\Z)=N^2H^{n+1}(X(C),\Z)$.
This stronger condition can, in turn,
be ensured
by verifying that $X_C$ contains a smooth ample hyperplane section $Y\subset X_C$ such that all classes in $H^{n-1}(Y(C),\Z)$ have coniveau~$\geq 1$. Indeed, the Gysin morphism $H^{n-1}(Y(C),\Z)\to H^{n+1}(X(C),\Z)$ is surjective as a consequence of the weak Lefschetz theorem (either reduce to the case $C=\C$ by the Lefschetz principle,
 or apply \cite[Proposition~1.14]{bw1}).
The coniveau hypothesis on $Y$ can then be verified by making use of Proposition~\ref{prophyp}.

(ii) When $n=3$ and $C=\C$, the torsion subgroup of
$H^{n+1}(X(\C),\Z)/N^2H^{n+1}(X(\C),\Z)$
measures the defect of the integral Hodge conjecture on~$X_\C$: it coincides with the group denoted~$Z^4(X_\C)$
in~\cite{ctvoisin}, as a consequence of the well-known fact that smooth proper complex threefolds
satisfy the rational Hodge conjecture.
\end{rmks}

\begin{cor}
\label{cor3}
Let $X$ be an irreducible smooth proper threefold over $R$ with $X(R)=\varnothing$. If~$X_C$ is uniruled,  then $s(R(X))\leq 4$. 
\end{cor}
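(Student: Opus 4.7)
The plan is to deduce Corollary~\ref{cor3} directly from Theorem~\ref{odddim} applied with $n=3$. This requires verifying the two hypotheses of that theorem under the assumption that~$X_C$ is uniruled: the vanishing of $H^3_{\mathrm{nr}}(X(C),\Q)$, and the triviality of the $2$\nobreakdash-torsion subgroup of $H^4(X(C),\Z)/N^2H^4(X(C),\Z)$.

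The first hypothesis is immediate from Proposition~\ref{prophyp}~(i)$\Rightarrow$(iii), which applies since $X_C$ is uniruled.

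For the second hypothesis, I first treat the case $R=\R$, $C=\C$. By Remark~\ref{remamplehyp}~(ii), the torsion subgroup of $H^4(X(\C),\Z)/N^2H^4(X(\C),\Z)$ coincides with the obstruction group $Z^4(X_\C)$ to the integral Hodge conjecture for $1$\nobreakdash-cycles considered in~\cite{ctvoisin}. Voisin's theorem~\cite{voisinthreefolds} asserts that the integral Hodge conjecture for $1$\nobreakdash-cycles holds on uniruled complex threefolds, so $Z^4(X_\C)=0$, and in particular its $2$\nobreakdash-torsion subgroup vanishes. For a general real closed field~$R$, one reduces to the case $R=\R$ by the spreading out principle recalled at the beginning of Section~\ref{seclevel}: uniruledness of $X_C$ and the triviality of the $2$\nobreakdash-torsion of $H^4(X(C),\Z)/N^2H^4(X(C),\Z)$ can both be transferred to a suitable model of~$X_C$ over a finitely generated subring of~$C$, and then to~$\C$ via the comparison between semialgebraic cohomology over~$R$ and $\ell$\nobreakdash-adic \'etale cohomology (this comparison is used throughout the paper, see~\textsection1.1.1 of~\cite{bw1}).

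With both hypotheses of Theorem~\ref{odddim} established, the theorem yields $s(R(X))\leq 2^{n-1}=4$. The main external input is Voisin's theorem; the remaining work consists of the (routine) spreading-out reduction to $R=\R$, which is the only step where one must be careful to verify that the cohomological condition at hand is indeed stable under base change between algebraically closed fields.
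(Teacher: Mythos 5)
Your proposal is correct and follows essentially the same route as the paper: apply Theorem~\ref{odddim} with $n=3$, get $H^3_{\nr}(X(C),\Q)=0$ from Proposition~\ref{prophyp}~(i)$\Rightarrow$(iii), and kill the torsion of $H^4(X(C),\Z)/N^2H^4(X(C),\Z)$ via Remark~\ref{remamplehyp}~(ii) together with Voisin's integral Hodge theorem for uniruled threefolds, transferring from $\C$ to general $C$ by the Lefschetz principle. The only cosmetic difference is that you spell out the spreading-out/comparison argument that the paper compresses into the phrase ``by the Lefschetz principle.''
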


\begin{proof}
We check that the hypotheses of Theorem~\ref{odddim} are satisfied.
Proposition~\ref{prophyp}~(i)$\Rightarrow$(iii) shows that $H^3_{\nr}(X(C),\Q)=0$.
By Remark~\ref{remamplehyp}~(ii)
and by Voisin's theorem~\cite[Theorem~2]{voisinthreefolds}
according to which smooth proper uniruled complex threefolds satisfy the integral Hodge conjecture,
the quotient $H^{4}(X(C),\Z)/N^2 H^{4}(X(C),\Z)$
is torsion-free if $C=\C$, and hence in general by the Lefschetz principle.
\end{proof}

\begin{rmks}
\label{remsodd}
(i)
The relation that is used (through Theorem \ref{odddim}) in the proof of Corollary \ref{cor3} is $\oo^4+\oo^2\oc_1+\oc_1^2+\oc_2=0$, valid for all smooth threefolds over $R$ with no real points (see Proposition \ref{lowrelations} (iii)).

(ii)
Let $X$ be an
irreducible smooth proper variety of dimension $n \geq 1$ over $R$ with $X(R)=\emptyset$ such that~$X_C$ is uniruled. We have seen in Corollaries \ref{coreven} and \ref{cor3} that $s(R(X))\leq 2^{n-1}$
if~$n$ is even or $n=3$.  This raises the question whether the same conclusion may also hold for odd $n\geq 5$.
We do not know the answer. Combining Theorem~\ref{odddim} and Proposition~\ref{prophyp} (i)$\Rightarrow$(iii) shows that it would suffice to give a positive answer to the following question.
\end{rmks}

\begin{quest}
\label{q:torsionfree}
Let~$Y$ be a smooth projective uniruled variety of dimension~$n$ over~$C$.
Is the quotient $H^{n+1}(Y(C),\Z)/N^2 H^{n+1}(Y(C),\Z)$ torsion-free?
\end{quest}

We refer to Lemma \ref{dn} for a positive answer to Question \ref{q:torsionfree} in a significant particular case. The group $H^{n+1}(Y(C),\Z)/N^2 H^{n+1}(Y(C),\Z)$ can be checked to be a birational invariant
among smooth proper varieties (by applying resolution of singularities, the formula for the cohomology
of a blow-up \cite[Theorem~7.31]{voisinbookhodge} and the contravariant functoriality of the coniveau filtration
\cite[Lemma~2.1]{arapurakang} if $C=\C$, and then in general by the Lefschetz principle).
As we have seen
in the proof of Corollary~\ref{cor3}, Question~\ref{q:torsionfree} has a positive answer for $n \leq 3$,
by Voisin's work.
When $n \geq 4$, Question~\ref{q:torsionfree} is open
even under the stronger assumption that~$Y$ is rationally connected.
When~$Y$ is rationally connected, it is also not known whether
the group appearing in Question~\ref{q:torsionfree} can be nontrivial.

\subsubsection{Conic bundles}

As a first step towards the question raised in Remark~\ref{remsodd}~(ii),
we give,
 in the next theorem, a positive answer to it in the special case of conic bundles over an arbitrary base.
Following the suggestion contained in this remark,
it would be possible to establish Theorem~\ref{conicbundles}
by giving a positive answer to Question~\ref{q:torsionfree}
when~$Y$ has a conic bundle structure,
thus exploiting the full force of Theorems~\ref{evendim} and~\ref{odddim}.
We have opted to give a different, much more direct proof below.

\begin{thm}
\label{conicbundles}
Let $f\colon X\to B$ be a morphism of irreducible smooth proper varieties over~$R$ whose generic fiber is a conic.  Assume that $n:=\dim(X)\geq 2$ and that $X(R)=\varnothing$. Then $s(R(X))\leq 2^{n-1}$. 
\end{thm}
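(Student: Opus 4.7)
The plan is to invoke Theorem~\ref{lowlevelcrit} and reduce the claim to showing that $\omega_X^n\in H^n_G(X(C),\Z(n))$ has coniveau~$\geq 1$, equivalently that $\omega^n$ vanishes in $H^n(R(X),\Z/2)$. The argument splits according to whether $B(R)$ is empty.

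If $B(R)=\varnothing$, then $R(B)$ is not formally real, and Pfister's Theorem~\ref{thPfister} applied to~$B$ (of dimension $n-1$) yields $s(R(B))\leq 2^{n-1}$. The inclusion $R(B)\subset R(X)$ then gives $s(R(X))\leq s(R(B))\leq 2^{n-1}$ directly.

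In the interesting case $B(R)\neq\varnothing$, let $\alpha\in H^2(R(B),\Z/2)=\Br(R(B))[2]$ denote the class of the generic fiber~$C_\eta$. By Arason's theorem on the kernel of restriction along a conic function field, one has
\[
\ker\bigl(H^n(R(B),\Z/2)\to H^n(R(X),\Z/2)\bigr)=\alpha\smile H^{n-2}(R(B),\Z/2).
\]
Since $\omega_X^n$ is pulled back from $\omega_{R(B)}^n$, the task reduces to showing $\omega_{R(B)}^n\in\alpha\smile H^{n-2}(R(B),\Z/2)$. I would establish the stronger identity that $\delta:=\omega_{R(B)}^n-\alpha\smile\omega_{R(B)}^{n-2}$ vanishes in $H^n(R(B),\Z/2)$. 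First, over a dense open $U\subseteq B$ on which $\pi$ restricts to a smooth conic bundle, the hypothesis $X(R)=\varnothing$ forces each fiber $X_b$ for $b\in U(R)$ to be an anisotropic real conic, so its Brauer class agrees with the nontrivial element of $\Br(R)[2]$, which coincides with~$\omega^2$; hence $\delta$ vanishes at each ordering associated to such a point. The locus $\{v\in\mathrm{Sper}(R(B)):\alpha|_v=\omega^2\}$ is open-and-closed (by Tarski--Seidenberg, as isotropy of a quadratic form over a real closed field is determined by signatures) and contains the dense subset coming from $U(R)$, hence equals all of $\mathrm{Sper}(R(B))$.

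To conclude $\delta=0$, I would invoke Serre's injectivity theorem (a consequence of the Milnor conjecture): since $R(B)$ has transcendence degree $n-1$ over the real closed field~$R$, its virtual cohomological $2$-dimension satisfies $\mathrm{vcd}_2(R(B))\leq n-1<n$, so the natural restriction map $H^n(R(B),\Z/2)\to\prod_v H^n(K_v,\Z/2)$ is injective. Therefore $\delta=0$; Arason's theorem then gives $\omega^n=0$ in $H^n(R(X),\Z/2)$, and Theorem~\ref{lowlevelcrit} finishes the argument. The principal technical point will be the Artin--Lang-style density argument transferring the hypothesis $X(R)=\varnothing$ (a statement about closed points) to a uniform conclusion about $\alpha$ at every ordering of $R(B)$; this density is what makes the hypothesis~$X(R)=\varnothing$ strong enough to control the cohomology at all real places of~$R(B)$.
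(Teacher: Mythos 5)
Your argument is correct, but it follows a genuinely different route from the paper's. The paper stays inside $G$\nobreakdash-equivariant (semialgebraic) cohomology: over a dense open $U\subset B$ where $f$ is a smooth conic bundle it writes down the Leray exact sequence $H^{n-3}_G(U(C),\Z(n-1))\xrightarrow{\chi}H^n_G(U(C),\Z(n))\xrightarrow{f_U^*}H^n_G(X_U(C),\Z(n))$, transports the middle group to the real points via $H^n_G(U(C),\Z(n))\simeq\bigoplus_{i\,\mathrm{even}}H^i(U(R),\Z/2)$ and purity, and then checks fibrewise at each $x\in U(R)$ that $\omega^n=\chi_x(\omega^{n-3})$, using that $H^n_G(X_x(C),\Z(n))=0$ for the anisotropic conic fibre when $n\geq 3$; this gives $\omega^n_{X_U}=0$ directly (the case $n=2$ is quoted from Corollary~\ref{coreven}). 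You instead work in Galois cohomology of the function field: Arason's theorem identifies $\ker\bigl(H^n(R(B),\Z/2)\to H^n(R(X),\Z/2)\bigr)$ with $\alpha\smile H^{n-2}(R(B),\Z/2)$, and you prove the stronger identity $\omega^n=\alpha\smile\omega^{n-2}$ in $H^n(R(B),\Z/2)$ by checking it at every ordering (specialization of the conic at real points of $U$, plus Artin--Lang density and the clopenness of sign conditions on $\mathrm{Sper}\,R(B)$) and then invoking injectivity of $H^n(R(B),\Z/2)\to\prod_v H^n(R(B)_v,\Z/2)$ in degree $n>\mathrm{vcd}_2(R(B))=n-1$. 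Your route treats $n=2$ and $n\geq 3$ uniformly, makes the role of the hypothesis $X(R)=\varnothing$ very transparent (anisotropy at all real places of $R(B)$), and yields the explicit relation $\{-1\}^n=\alpha\cdot\{-1\}^{n-2}$; the paper's route avoids Arason's theorem and the high-degree local--global principle altogether and stays within the equivariant formalism developed earlier in the article.

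Two points to tidy up. First, the injectivity you use is not ``Serre's injectivity theorem'' and is not a consequence of the Milnor conjecture: the statement that $H^n(F,\Z/2)\to\prod_{v\in\mathrm{Sper}\,F}H^n(F_v,\Z/2)$ is injective for $n>\mathrm{vcd}_2(F)$ is due to Arason--Elman--Jacob and, in full generality, Scheiderer (real versus \'etale cohomology); with the correct reference the step is fine, and $\mathrm{vcd}_2(R(B))\leq n-1$ follows from Tsen--Lang/Serre applied to $C(B_C)$. Second, the passage from $X(R)=\varnothing$ to $\alpha|_v\neq 0$ for \emph{every} ordering $v$ should be spelled out: after shrinking $U$ one writes the generic conic as $\langle a,b,c\rangle$ with $a,b,c$ regular and invertible on $U$ and identifies $X_b$ with the conic $\langle a(b),b(b),c(b)\rangle$ for $b\in U(R)$; for an ordering $v$ compatible with the place centred at $b$, signs of units are preserved, so anisotropy of the fibre forces $\alpha|_v\neq 0$, and Artin--Lang guarantees that every nonempty basic open subset of $\mathrm{Sper}\,R(B)$ contains such an ordering. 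These are exactly the ``technical points'' you flagged, and they do go through.
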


\begin{proof}

If $n=2$, the result follows
from Corollary~\ref{coreven}.
We henceforth assume that~$n\geq 3$.
Let $U\subset B$ be a dense affine open subset over which~$f$ is smooth. Let $f_U:X_U\to U$ be the restriction of $f$ over $U$.  One computes
 that $(f_U)_*\Z(n)=\Z(n)$, that $\RR^2(f_U)_*\Z(n)=\Z(n-1)$,  and that $\RR^i(f_U)_*\Z(n)=0$ for all other values of $i$. The Leray spectral sequence for~$f_U$ therefore induces an exact sequence
\begin{equation}
\label{Leraysequence}
H^{n-3}_G(U(C),\Z(n-1))\xrightarrow{\chi} H^{n}_G(U(C),\Z(n))\xrightarrow{f_U^*} H^{n}_G(X_U(C),\Z(n)),
\end{equation}
where $\chi$ is a differential on page $3$ of the spectral sequence.  Consider the composition
\begin{equation}
\label{yetanothersequence}
H^{n}_G(U(C),\Z(n))\isoto H^n_G(U(R),\Z(n))=\bigoplus_{i\textrm{ even }}H^i(U(R),\Z/2)
\end{equation}
of the restriction map, which is an isomorphism as~$U$ is affine and $n>\dim(U)$ (see \cite[Lemma~1.16]{bw1}), with the canonical decomposition of \cite[(1.31)]{bw1}. It follows from \cite[Theorem 2.1]{scheidererpurity} that $H^i(U(R),\Z/2)$ has coniveau $\geq 1$ when $i\geq 1$
(see also the proof of \cite[Lemma~1.2~(i)]{vanhameltorsion} for a more direct argument).
Thus, after shrinking $U$, we may assume that the images of~$\omega^n_U$ and $\chi(\omega^{n-3}_U)$ in $\bigoplus_{i\textrm{ even }}H^i(U(R),\Z/2)$ both belong to the summand $H^0(U(R),\Z/2)$.

We claim that these two images are equal.
To verify this, we need only check that their restrictions
to any $x\in U(R)$ are equal.  The analogue of the exact sequence (\ref{Leraysequence}) obtained after restriction to $x$ reads
\begin{equation}
\label{HSsequence}
H^{n-3}(G,\Z(n-1))\xrightarrow{\chi_x} H^{n}(G,\Z(n))\to H^{n}_G(X_x(C),\Z(n))=0,
\end{equation}
where $X_x=f^{-1}(x)$ and where the group on the right vanishes because $X_x(R)=\varnothing$ and~$n\geq 3$. As $\omega^{n-3}$ and $\omega^n$ are generators of $H^{n-3}(G,\Z(n-1))$ and $H^{n}(G,\Z(n))\simeq\Z/2$ respectively, we deduce from the exactness of (\ref{HSsequence}) that $\omega^n=\chi_x(\omega^{n-3})$, as desired.

As~(\ref{yetanothersequence}) is an isomorphism, we deduce, from the claim, that $\omega^n_U=\chi(\omega^{n-3}_U)$, and it then follows from~(\ref{Leraysequence}) that~$\omega^n_{X_U}=f_U^*(\omega_U^n)=0$. Consequently, the class $\omega^n_X$ has coniveau $\geq 1$ and Theorem~\ref{lowlevelcrit}~(ii)$\Rightarrow$(i) implies that $s(R(X))\leq 2^{n-1}$.
\end{proof}

\begin{rmk}
The proofs of Theorems \ref{evendim}, \ref{odddim} and \ref{conicbundles} rely on an analysis of the coniveau of classes of the form $\omega^e$. It should be interesting to investigate such questions in a systematic manner,  extending the study of the vanishing of the $\omega^e$ undertaken in \S\ref{parvanomegai} and Section \ref{secomegai}.
\end{rmk}

\subsubsection{Highest known levels}

To put the results of \textsection\ref{subsec:improvingpfister} into perspective,
it is natural to look for irreducible varieties~$X$ over~$R$
with $X(R)=\emptyset$ and~$s(R(X))$  as large as possible.
Pfister showed in \cite[Satz~5]{PfisterStufe} that if $Q$ is the real
anisotropic quadric of dimension~$n$,
then~$s(R(Q))$ is the largest power of~$2$ less than or equal to~$n+1$.
This already demonstrates that there is no upper bound for levels of function
fields of real algebraic varieties with no real points.
One can do slightly better:

\begin{prop}
For every $n \geq 1$, there exists an irreducible variety~$X$ over~$R$ of dimension~$n$
such that~$s(R(X))$
is
the largest power of~$2$ less than or equal to~$n+2$.
\end{prop}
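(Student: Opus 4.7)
The plan is to construct, for each $n \geq 1$, an explicit irreducible $n$\nobreakdash-dimensional variety whose function field realizes the claimed level. Set $m := \lfloor \log_2(n+2)\rfloor$, so that $2^m$ is the largest power of $2$ not exceeding $n+2$. One checks immediately that $m \leq n$ for every $n \geq 1$. The key reduction is that the level of a field is invariant under purely transcendental extensions: $s(K(t_1,\dots,t_r)) = s(K)$ whenever $s(K)$ is finite (a standard consequence of the Cassels--Pfister argument on leading coefficients of polynomial identities). Granting this, it suffices to exhibit, for each $m \geq 1$, an irreducible variety $Y_m$ of dimension $m$ with $s(R(Y_m)) = 2^m$, and then take $X := Y_m \times_R \P^{n-m}_R$.

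For the construction of $Y_m$, I would take
$$Y_m := \Bigl\{ y^2 + \prod_{i=1}^{m}(1+t_i^2) = 0 \Bigr\} \subset \mathbb{A}^{m+1}_R.$$
This is a geometrically irreducible affine hypersurface of dimension $m$: the polynomial is quadratic in $y$, with discriminant $-\prod_i(1+t_i^2)$, which is visibly not a square in $C(t_1,\dots,t_m)$ since it has simple zeros at $t_i = \pm\sqrt{-1}$. The upper bound $s(R(Y_m)) \leq 2^m$ is immediate: iterating the Brahmagupta--Fibonacci identity $(a^2+b^2)(c^2+d^2) = (ac-bd)^2 + (ad+bc)^2$ shows that $\prod_{i=1}^m(1+t_i^2)$ is a sum of $2^m$ squares in $R[t_1,\dots,t_m]$; since $-1 = y^{-2}\prod_i(1+t_i^2)$ in $R(Y_m)$, it is a sum of $2^m$ squares there.

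The main obstacle, and the substance of the argument, is the lower bound $s(R(Y_m)) \geq 2^m$, which amounts to proving that $\prod_{i=1}^m(1+t_i^2)$ is \emph{not} a sum of $2^m - 1$ squares in $F_m := R(t_1,\dots,t_m)$. This is a special case of Pfister's fundamental theorems on multiplicative (Pfister) forms: the $m$\nobreakdash-fold Pfister form $\langle\langle -1,\dots,-1\rangle\rangle = 2^m\langle 1\rangle$ is anisotropic over $R$, hence also over $F_m$ (for instance by iterated specialization $t_m \mapsto 0$), and Pfister's result then identifies $s(F_m(\sqrt{-a}))$ with the largest power of $2$ dominated by the number of squares needed to represent $a$ over~$F_m$. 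Applied to $a = \prod_i(1+t_i^2)$, this gives exactly $s(R(Y_m)) = 2^m$. In the write-up, I would cite Pfister~\cite{PfisterStufe} directly for this result, in the same spirit as the citation already used for the level of the anisotropic quadric at the beginning of this subsection. Combining all of the above, $X = Y_m \times_R \P^{n-m}_R$ is an irreducible variety of dimension $n$ with $s(R(X)) = s(R(Y_m)) = 2^m$, as required.
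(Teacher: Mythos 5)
Your construction collapses at the lower bound, and for a reason you yourself quote: the Brahmagupta--Fibonacci identity does not merely show that $\prod_{i=1}^{m}(1+t_i^2)$ is a sum of $2^m$ squares, it shows it is a sum of \emph{two} squares. Sums of two squares are closed under multiplication (they are the values of the norm form of $C/R$ over $R(t_1,\dots,t_m)$), so $\ell\bigl(\prod_i(1+t_i^2)\bigr)=2$ for every $m$; for instance $(1+t_1^2)(1+t_2^2)=(1-t_1t_2)^2+(t_1+t_2)^2$. Consequently $-1=y^{-2}\prod_i(1+t_i^2)$ is a sum of two squares in $R(Y_m)$ and $s(R(Y_m))\leq 2$ for all $m$. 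The theorem you invoke (Pfister, or \cite[Chapter~XI, Theorem~2.7]{lamthealgebraic}), which computes $s\bigl(F(\sqrt{-a})\bigr)$ as the largest power of $2$ below the length of $a$, is correct, but applied with the true length $2$ it gives level $2$, not $2^m$. There is also a structural problem with the reduction itself: after passing to $m=\lfloor\log_2(n+2)\rfloor$ and using invariance of the level under purely transcendental extensions (which is fine), you need an irreducible $m$\nobreakdash-dimensional variety of level exactly $2^m$, i.e.\ attaining Pfister's bound $s\leq 2^{\dim}$. For $m\geq 3$ the existence of such varieties is precisely the open Question~\ref{qPfister}, so even with a repaired choice of $a$ this route would require solving an open problem as soon as $n\geq 6$.

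The fix is to aim for an element of length exactly $n+2$ over a rational function field rather than for maximal level in low dimension. Start from $f(x_0,x_1)$ that is a sum of $4$ but not $3$ squares in $R(x_0,x_1)$ (the Motzkin polynomial, by Cassels--Ellison--Pfister \cite{cep}, or the polynomials of \cite{ctnoetherlefschetz} when $R=\R$); by Cassels' theorem (\cite[Chapter~IX, Corollary~2.3]{lamthealgebraic}, applied $n-2$ times) the element $a:=f+x_2^2+\dots+x_{n-1}^2$ has length exactly $n+2$ in $R(x_0,\dots,x_{n-1})$, because adding the square of a fresh variable raises the length by exactly one --- this is what blocks the multiplicativity phenomenon that ruined your product. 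Then $X:=\{f+x_2^2+\dots+x_n^2=0\}$ is an irreducible $n$\nobreakdash-dimensional hypersurface with $R(X)=R(x_0,\dots,x_{n-1})(\sqrt{-a})$, and \cite[Chapter~XI, Theorem~2.7]{lamthealgebraic} gives $s(R(X))=$ the largest power of $2$ less than or equal to $n+2$.
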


\begin{proof}
Starting from $f(x_0,x_1) \in R(x_0,x_1)$ which is a sum of~$4$ but not  $3$ squares
in $R(x_0,x_1)$ (\eg the Motzkin polynomial, see~\cite{cep}, or,
when $R=\R$, the polynomials from \cite{ctnoetherlefschetz}),
let~$X$ denote the affine variety with equation $f(x_0,x_1)+x_2^2 + \dots + x_n^2 = 0$.
It follows from \cite[Chapter~IX, Corollary~2.3 (applied $n-2$ times) and Chapter~XI, Theorem~2.7]{lamthealgebraic}
that~$s(R(X))$ is
the largest power of~$2$ less than or equal to~$n+2$.
\end{proof}

This falls a long way short of answering Question~\ref{qPfister}.
The following observation suggests a positive answer to it for $n=3$.  Fix $d \geq 6$
such that $\congru{d}{2}{4}$.
Let~$X \subset \P^4_\R$
be a very general degree~$d$ hypersurface
 with $X(\R)=\emptyset$.
The equality $s(\R(X))=8$
would result if one knew that the group $\CH_1(X)$ is generated by the class of a plane section of~$X$
(a question briefly discussed in \cite[Question~9.12]{bw2}).
Indeed, if $s(\R(X))\leq 4$,
applying Theorem~\ref{lowlevelcrit}
and \cite[Proposition 3.5 (i)$\Rightarrow$(ii)]{Hilbert17}
would imply
that $\omega^4_X$ has coniveau~$\geq 2$,
\ie that $\omega^4_X$ belongs to the image of the cycle class map
$\CH_1(X) \to H^4_G(X(\C),\Z(2))=H^4_G(X(\C),\Z(4))$. However, by computing this group explicitly,
one can verify that~$\omega^4_X$ is not a multiple of the cycle class of a plane section.

\subsection{Application to Hilbert's 17th problem}
\label{parH17}

Let $f\in R[x_1,\dots,x_n]$ be a nonnegative polynomial, i.e.\ a polynomial that only takes nonnegative values
on~$R^n$. By Artin's solution \cite[Satz 4]{Artin17} to Hilbert's 17th problem, one can write~$f$ as a sum of squares in $R(x_1,\dots,x_n)$. Pfister obtained in \cite[Theorem~1]{Pfister2} a quantitative refinement of Artin's result: the rational function $f$ is a sum of~$2^n$ squares in $R(x_1,\dots, x_n)$. One may ask, as Pfister did in \cite[Problem 1]{PfisterICM}, whether this bound is optimal.

\begin{quest}
\label{qPfister2}
For $n\geq 1$, does there exist a nonnegative $f\in R[x_1,\dots,x_n]$ that is not a sum of $2^n-1$ squares in $R(x_1,\dots, x_n)$?
\end{quest}

A positive answer to Question \ref{qPfister2} would yield a positive answer to Question \ref{qPfister} (choose~$X$ with function field $R\big(x_1,\dots,x_n,\sqrt{-f}\mkern1mu\big)$ and apply
\cite[Chapter~XI, Theorem~2.7]{lamthealgebraic}). 
One may therefore view the former as a concrete motivation to study the latter.  Conversely,  studying the level of real function fields provides insights into Question \ref{qPfister2}. This point of view was used in \cite{Hilbert17} to show that low degree polynomials cannot be used to answer Question \ref{qPfister2} positively:

\begin{thm}[{\cite[Theorem~0.1]{Hilbert17}}]
\label{thH17}
Let $n\geq 1$. Let $f\in R[x_1,\dots,x_n]$ be nonnegative of degree $d$.  If either
\begin{enumerate}[(i)]
\item $d\leq 2n-2$, or
\item $d=2n$ and either $n$ is even, or $n=3$, or $n=5$,
\end{enumerate}
then $f$ is a sum of~$2^n-1$ squares in $R(x_1,\dots, x_n)$. 
\end{thm}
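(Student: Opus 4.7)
The plan is to translate the Hilbert~17 statement into a question about the level of a real function field via the standard Pfister construction, apply the Milnor--Voevodsky criterion of Theorem~\ref{lowlevelcrit} to reduce to a coniveau statement on $\omega^n$, and then establish this coniveau statement using Bloch--Ogus theory as in \cite{Hilbert17}.

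First I would carry out the reduction. Attach to~$f$ the affine hypersurface $X_0 = \{y^2 + f = 0\} \subset \A^{n+1}_R$ of dimension~$n$, with function field $K := R(x_1,\dots,x_n)\big(\sqrt{-f}\mkern1mu\big)$, and let~$X$ be a smooth projective compactification of~$X_0$ over~$R$. Since~$f$ is nonnegative, $X(R)$ has dimension~$<n$, so $s(K) < \infty$. The Pfister-type argument mentioned in~\S\ref{parH17} (which rests on \cite[Chapter~XI, Theorem~2.7]{lamthealgebraic}) shows that, to conclude that~$f$ is a sum of $2^n - 1$ squares in $R(x_1,\dots,x_n)$, it suffices to prove the inequality $s(K) \leq 2^{n-1}$.

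To achieve the latter, I would apply Theorem~\ref{lowlevelcrit}: the task reduces to showing that $\omega^n_X$ has coniveau $\geq 1$ on~$X$, which is where the hypothesis on $d = \deg f$ comes in. When $d \leq 2n-2$, a suitable model of~$X$ (for instance a desingularization of the projective closure of~$X_0$ in $\P^{n+1}_R$) satisfies a vanishing of unramified cohomology in top degree, and a Bloch--Ogus-based argument converts this vanishing into the required coniveau statement for~$\omega^n_X$. The borderline cases $d = 2n$ with $n$ even, $n = 3$, or $n = 5$ each require a separate, dimension-specific ingredient: for $n = 3$, Voisin's integral Hodge conjecture for uniruled threefolds \cite{voisinthreefolds}; and for $n$ even or $n = 5$, ad hoc cohomological arguments tailored to the specific dimensions at hand.

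The hard part will be the borderline case $d = 2n$. Here the projective closure of $X_0$ is no longer Fano, so the standard uniruledness-based inputs that work for $d \leq 2n-2$ no longer apply, and one must instead bring in genuine dimension-specific integral Hodge conjecture results --- available only in low dimensions, which is precisely what restricts the statement to $n$ even, $n=3$, and $n=5$ when $d=2n$. Removing this dimension restriction at the threshold $d = 2n$ seems to require positive answers to open questions such as Question~\ref{q:torsionfree}.
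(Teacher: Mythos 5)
You should first be aware that the paper does not prove Theorem~\ref{thH17}: it is quoted from \cite[Theorem~0.1]{Hilbert17}, and the surrounding text only describes that proof (reduction to the level of $K=R(x_1,\dots,x_n)(\sqrt{-f})$, i.e.\ of the function field of a double cover of $\P^n_R$, the Milnor--Voevodsky criterion of Theorem~\ref{lowlevelcrit}, and then ``substantial cohomological computations on double covers of projective space'' in \cite[\S 4--5]{Hilbert17}); the remark after the statement notes that case (i) also follows from elementary quadratic-form methods of Leep. Your skeleton --- Lam's theorem to reduce the claim to $s(K)\leq 2^{n-1}$, Theorem~\ref{lowlevelcrit} to translate this into $\omega^n$ having coniveau $\geq 1$ on a smooth projective model, then Bloch--Ogus theory plus Voisin's theorem at $n=3$ --- is indeed the strategy of the cited reference, and the reduction steps are correct (the finiteness of $s(K)$ is better justified by noting that $K$ is not formally real, via Artin--Lang, than by pointing at the real locus of the singular affine model).

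As a proof, however, the proposal stops exactly where the work begins, and it misplaces the difficulty. First, for odd $n$, vanishing of $H^n_{\nr}$ plus ``a Bloch--Ogus-based argument'' does \emph{not} give coniveau $\geq 1$ for $\omega^n$: one needs in addition that $\omega^{n+1}$ has coniveau $\geq 2$ (equivalently, control of $H^{n+1}(X(C),\Z)$ modulo $N^2$ and its $2$-torsion), cf.\ \cite[Proposition~3.5]{Hilbert17} and Theorem~\ref{odddim}; verifying this for the double covers in question is the heart of \cite[\S 4]{Hilbert17} and is missing from your plan even in the range $d\leq 2n-2$. Second, your diagnosis of the threshold $d=2n$ is off: the smooth double cover of $\P^n_C$ branched in degree $2n$ is still Fano (its canonical bundle is the pullback of $\sO(-1)$), hence uniruled, so the uniruledness-based inputs ($H^n_{\nr}=0$, coniveau $\geq 1$ in degree $n$) survive at $d=2n$; what degenerates is the weak Lefschetz/hyperplane-section control of $H^{n+1}$ needed for odd $n$, since the hyperplane sections become Calabi--Yau double covers. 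Consequently the even-$n$ case of $d=2n$ is handled by the same uniform mechanism as $d\leq 2n-2$ (no dimension-specific integral Hodge input, contrary to your grouping of ``$n$ even'' with $n=5$), and the genuinely hard borderline cases are the odd ones, resolved only at $n=3$ (Voisin) and $n=5$ (the specific geometric argument of \cite[\S 5]{Hilbert17}, in the spirit of Lemma~\ref{dn6}). Since these steps are only named rather than carried out, what you have is a correct road map of \cite{Hilbert17}, not a proof.
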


\begin{rmk}
(i)
It was pointed out to us by David Leep that case (i) of Theorem \ref{thH17} could also be deduced from the more elementary methods of \cite{Leep, LeepBecher}. 

(ii) 
Case (ii) of Theorem \ref{thH17} (where $d=2n$) can be shown to hold for all $n\geq 2$. By \cite[Proposition 6.3]{Hilbert17}, it suffices to answer \cite[Question 5.1]{Hilbert17}. This can be done by adapting the arguments of Lemma \ref{dn} below, once the results of \cite{Diamond} are extended from hypersurfaces of projective spaces to double covers of projective spaces.
\end{rmk}

The proof of Theorem \ref{thH17} given in \cite{Hilbert17} relies, among other tools, on substantial cohomological computations on double covers of projective space (see \mbox{\cite[\S 4]{Hilbert17}}). One of our motivations for writing the present article was to replace these concrete computations by more conceptual arguments (allowing, for instance, to replace $\P^n_R$ by other real algebraic varieties).
To illustrate this point of view, we now show how to use Theorems \ref{evendim} and \ref{odddim} to significantly improve the required number of squares, at the expense of a slightly stronger hypothesis on the degree.

\begin{thm}
\label{thlowerdegree}
Let $f\in R[x_1,\dots,x_n]$ be nonnegative of degree~$d$. If~$2\leq d\leq n$ and $(d,n)\neq(2,2)$, then $f$ is a sum of $2^{n-1}$ squares in $R(x_1,\dots, x_n)$.
\end{thm}

\begin{proof}
Let $F\in R[X_0,\dots,X_n]$ be the homogenization of $f$. The exact same specialization argument as in \cite[\S 6B]{Hilbert17} allows us to assume that the zero locus $X\subset \P^n_{R}$ of $F$ is smooth. The nonnegativity of $f$ then implies that $X(R)=\varnothing$. That $d\leq n$ implies that~$X_C$ is Fano, hence uniruled (see \cite[Theorem~3.4]{Debarre}).

We claim that $s(R(X))\leq 2^{n-2}$. If $n$ is odd, this follows from Corollary~\ref{coreven}.
If $n$ is even, then $n \geq 4$ and
this will be a consequence of Theorem~\ref{odddim} and 
Proposition~\ref{prophyp}~(i)$\Rightarrow$(iii) once we have verified that $H^{n}(X(C),\Z)$ has coniveau $\geq 2$. If $d\leq n-1$,  a general hyperplane section of~$X_C$ is again Fano, hence uniruled, and we can 
apply Remark \ref{remamplehyp}~(i). If $d=n$, we apply Lemma \ref{dn} below with $Y=X_C$.

If $K$ is a field of characteristic not $2$, we let $\{x\}\in H^1(K,\Z/2)\simeq K^*/(K^*)^2$ denote the class of $x\in K^*$. Consider $\alpha:=\{f\}\smile\{-1\}^{n-1}\in H^{n}(R(x_1,\dots, x_n),\Z/2)$. The residue of $\alpha$ along the divisor $X\subset\P^n_R$ is equal to $\{-1\}^{n-1}\in H^{n-1}(R(X),\Z/2)$ (see \cite[Proposition 1.3]{CTO}) and hence vanishes, by Theorem \ref{lowlevelcrit} (i)$\Rightarrow$(iii) (noting that~$\{-1\}$ corresponds to $\oo$ by the comparison theorem between \'etale and $G$-equivariant semi-algebraic cohomology \cite[Corollary~15.3.1]{scheiderer}). 
It follows that the class $\alpha$ is an unramified cohomology class.  
As unramified cohomology is a stable birational invariant \cite[Proposition~1.2]{CTO}, there exists $\beta\in H^n(R,\Z/2)$ such that $\alpha$ is induced by $\beta$.

Fix a real closed extension $R'$ of $R(x_1,\dots,x_n)$.  Artin has shown in \cite[Satz 4]{Artin17} that $f$ is a sum of squares in $R(x_1,\dots,x_n)$, hence a square in $R'$. The image of $\alpha$ in $H^n(R',\Z/2)$, which is also the image of $\beta$ in $H^n(R',\Z/2)$, therefore vanishes. As the natural morphism $H^{n}(R,\Z/2)\to H^{n}(R',\Z/2)$ is an isomorphism (both groups are isomorphic to~$\Z/2$ with~$\{-1\}^n$ as a generator), the class $\beta$ vanishes. Hence $\alpha=0$,
which, as a consequence of the Milnor conjecture proved by Voevodsky, is equivalent to $f$ being a sum of~$2^{n-1}$ squares in $R(x_1,\dots,x_n)$
(see \cite[Proposition 2.1]{Henselian}).
\end{proof}

The next lemma follows from Diamond's work \cite{Diamond}.

\begin{lem}
\label{dn}
Fix $n\geq 4$ even. Let $Y\subset\P^n_C$ be a smooth hypersurface of degree $n$. Then all classes in $H^n(Y(C),\Z)$ have coniveau $\geq 2$.
\end{lem}

\begin{proof}
Arguing as in \cite[Lemmas 5.4 and 5.5]{Hilbert17}, we may assume that $C=\C$ and~$Y$ is general. 
Let $G$ be the Grassmannian of lines in $\P^n_{\C}$, with incidence variety $I\subset \P^n_\C\times G$ and projections $q\colon I\to \P^n_{\C}$ and $p\colon I\to G$.
Let $F_Y\subset G$ be the variety of lines of $Y$, with incidence variety $I_Y\subset Y\times G$ and projections $q_Y\colon I_Y\to Y$ and~$p_Y\colon I_Y\to G$. By \cite[Theorem 8]{BvdV}, the variety~$F_Y$ is smooth of the expected dimension $n-3$.

Consider the natural exact sequence $0\to \sE\to\sO_G^{\oplus(n+1)}\to\sQ\to 0$, where
$\sE$ is the rank~$2$ tautological bundle on~$G$. Then $\P(\sE)=I$ with structural morphism $p\colon\P(\sE)\to G$ and~$c_1(\sO_{\P(\sE)}(1))=q^*\alpha$, where~$\alpha:=c_1(\sO_{\P^n(\C)}(1))$. Letting $s_i$ denote the $i$-th Segre class \cite[\S 3.1]{Fulton}, we get $p_*q^*\alpha^{\frac{n}{2}-1}=s_{\frac{n}{2}-2}(\sE)=c_{\frac{n}{2}-2}(\sQ)$ (see \mbox{\cite[\S 3.2]{Fulton}}). As~$c_{\frac{n}{2}-2}(\sQ)$ is part of a $\Z$-basis of $H^{n-4}(G(\C),\Z)$ (see \cite[\S 14.7]{Fulton}), and as the restriction morphism $H^{n-4}(G(\C),\Z)\to H^{n-4}(F_Y(\C),\Z)$ is bijective (see \cite[Theorem 1.1]{Diamond}), the class $(p_*q^*\alpha^{\frac{n}{2}-1})|_{F_Y}=(p_Y)_*(q_Y)^*(\alpha|_Y)^{\frac{n}{2}-1}$ is part of a $\Z$-basis of $H^{n-4}(F_Y(\C),\Z)$. 
By Poincar\'e duality, there is $\beta\in H^{n-2}(F_Y(\C),\Z)$ with $\deg((p_Y)_*(q_Y)^*(\alpha|_Y)^{\frac{n}{2}-1}\smile\beta)=1$.

Set $\gamma:=(q_Y)_*(p_Y)^*\beta$. By the projection formula, one has $\deg((\alpha|_Y)^{\frac{n}{2}-1}\smile\gamma)=1$. This shows that $\gamma$ is a generator of $H^n(Y(\C),\Z)\simeq \Z$. As $\beta$ has coniveau $\geq 1$ (it vanishes on any affine open subset of $F_Y$) and $p$ is not dominant (by dimension), the class $\gamma$ has coniveau $\geq 2$. This concludes the proof of the lemma.
\end{proof}

\bibliographystyle{myamsalpha}
\bibliography{niveau}
\end{document}